\def\sqr#1#2{{\vcenter{\hrule height.#2pt
        \hbox{\vrule width.#2pt height#1pt \kern#1pt
                \vrule width.#2pt}
        \hrule height.#2pt}}}
\theoremstyle{theorem}
\newtheorem{Theorem}{Theorem}[section]
\newtheorem{Claim}{Claim}[Theorem]
\newtheorem{Lemma}[Theorem]{Lemma}
\newtheorem{Corollary}[Theorem]{Corollary}
\newtheorem{Proposition}[Theorem]{Proposition}
\theoremstyle{definition}
\newtheorem{Set-up}[Theorem]{Set-up}
\newtheorem{Conjecture}[Theorem]{Conjecture}
\newtheorem{Assumptions and Discussion}[Theorem]{Assumptions and Discussion}
\newtheorem{Remark}[Theorem]{Remark}
\newtheorem{Example}[Theorem]{Example}
\newtheorem{Definition}[Theorem]{Definition}
\newtheorem{Problem}[Theorem]{Problem}
\newtheorem*{acknowledgement}{Acknowledgements}
\DeclareMathOperator{\Spec}{Spec}
\DeclareMathOperator{\Proj}{Proj}
\def\m{{\mathfrak m}}
\def\q{{\mathfrak q}}
\def\p{{\mathfrak p}}
\def\pp{{\mathfrak p}}
\newcommand{\ol}[1]{\overline{#1}}
\newcommand{\ovl}[1]{\overline{#1}}
\newcommand{\ul}[1]{\underline{#1}}
\def\ZZ{{\mathbb Z}}
\def\NN{{\mathbb N}}
\def\CC{{\mathbb C}}
\def\AA{{\mathbb A}}
\def\t{\mathbf{t}}
\newcommand{\F}{\mathcal{F}}
\newcommand{\OO}{\mathcal{O}}
\newcommand{\II}{\mathcal{I}}
\newcommand{\PP}{\mathbb{P}}
\def\Llra{\Longleftrightarrow}
\def\lra{\longrightarrow}
\newcommand{\be}{\begin{equation*}}
\newcommand{\ee}{\end{equation*}}
\newcommand{\bee}{\begin{equation}}
\newcommand{\eee}{\end{equation}}
\def\rk{{\rm rk}}
\def\h{{\rm ht}}
\def\Ass{{\rm Ass}}
\def\Max{{\rm Max}}
\def\Spec{{\rm Spec}}
\def\depth{{\rm depth}}
\def\sat{{\rm sat}}
\definecolor{lighterorange}{cmyk}{0,0.42,0.66,0.0}
\numberwithin{equation}{section}
\begin{document}

\setlength{\baselineskip}{1.2em}

%\date{today}

\title{The Alexander--Hirschowitz theorem and related problems}

\author{Huy T\`ai H\`a}
\address{Department of Mathematics \\
	Tulane University \\
	6823 St. Charles Avenue \\
	New Orleans, LA 70118}
\email{tha@tulane.edu}
%\urladdr{http://www.math.tulane.edu/~tai/}

\author{Paolo Mantero}
\address{Department of Mathematical Sciences \\
University of Arkansas \\
Fayetteville, AR 72701}
\email{pmantero@uark.edu}
%\urladdr{}

\maketitle

\begin{abstract}
We present a proof of a celebrated theorem of Alexander and Hirschowitz determining when a general set of double points in $\PP^n$ has the expected Hilbert function.
Our intended audience are Commutative Algebraists who may be new to interpolation problems. In particular, the main aim of our presentation is to provide a self-contained proof containing all details (including some we could not find in the literature). Also, considering our intended audience, we have added (a) short appendices to make this survey more accessible and (b) a few open problems related to the Alexander--Hirschowitz theorem and the interpolation problems.

\end{abstract}

\begin{center}
	\textsl{Dedicated to David Eisenbud, on the occasion of his 75th birthday.}
\end{center}

\tableofcontents

\section{Introduction: the Alexander--Hirschowitz Theorem} \label{sec.introduction}

The \emph{polynomial interpolation problem} originates from the simple fact that a polynomial in one variable over $\CC$ is completely determined by its zeros. In fact, given $r \le d$ distinct points $x_1, \dots, x_r$ on the affine line $\AA^1_\CC$ and positive integers $m_1, \dots, m_r$ such that $m_1+\ldots + m_r=d+1$, a polynomial $f(x) = a_0 + a_1x + \dots + a_dx^d$ of degree $d$ is uniquely determined by the following $(d+1)$ vanishing conditions on its derivatives, namely $f^{(j)}(x_i) = 0$ for all $i = 1, \dots, r$ and $j = 0, \dots, m_i-1$. Equivalently, the matrix arising from these vanishing conditions, which determines the parameters $a_0, \dots, a_d$, has \emph{maximal rank}. A natural question, that has been studied for a long time, is: \emph{what happens in higher dimension, meaning for polynomials in several variables?}

The problem is much more difficult for several variables, even when the multiplicities $m_1, \dots, m_r$ are all equal and the ambient space is a projective space over the complex numbers. The aim of this paper is to explore a fundamental result due to Alexander and Hirschowitz, obtained in a series of papers \cite{Hi, Al, AH1, AH2, AH3} (and simplification to its proof given by Chandler in \cite{Ch00, Ch02}),  which shows that, if $m_1 = \dots = m_r = 2$ and the points are chosen to be \emph{general} points in a projective space, then the same phenomenon happens for homogeneous polynomials in several variables, except for a few identified exceptional cases.

Before proceeding, it may be useful to clarify our intention with this survey. There are already a few surveys discussing part of this topic in the literature. For instance, the surveys by C. Ciliberto  \cite{Ci01} and J. Harris \cite{Ha11} introduce (Hermite) interpolation problems and related results, and include brief discussions of some of the geometric ideas behind the Alexander--Hirschowitz theorem. A survey by R. A. Lorentz \cite{Lo} discusses these topics from a more Numerical Analysis  perspective. A survey by M. C. Brambilla and G. Ottaviani \cite{BO} discusses the history and presents many details of the core arguments needed in the proof of the Alexander--Hirschowitz theorem.

These existing surveys assume advanced knowledge and tools from Algebraic Geometry, and are written in languages that may be more familiar to an algebraic geometer (cf. \cite{BO}) or an analyst (cf. \cite{Lo}). Some of the stated facts from these surveys may not appear so obvious for a young reader who is not specifically well trained in algebraic geometry; for instance, the use of curvilinear subschemes and the semi-continuity of Hilbert function. Furthermore, the recent large body of work on symbolic powers of ideals in commutative algebra has drawn our attention and convinced us that it is a good time to reintroduce the Alexander--Hirschowitz theorem to commutative algebraists.

For these reasons, and partly due to a personal interest, our survey is intended for an audience consisting of young commutative algebraists. We aim to present a self-contained proof of the Alexander-Hirschowitz theorem and, particularly, to provide all details that may not be easy to see for commutative algebraists who are new to this research area. We will follow an approach similar to the one of \cite{BO}. However, our style of presentation reflects our choices in using algebraic notions and techniques. At the same time, we still identify and appreciate the fundamental geometric ideas at the core of the proof.

We should mention that, to the best of our knowledge, there is no survey or paper with a completely self-contained proof of the Alexander--Hirschowitz theorem. While \cite{BO} does include many details of the core argument, its emphasis is geared towards techniques that historically have been used to approach the Interpolation Problem, and the tight connections of this problem with secant varieties. We have also discovered in the literature a few computational inaccuracies and incorrect statements; while they are minor, yet a rechecking was required. Additionally, we shall include all necessary tools in a few appendices; there we state basic results on symbolic powers of ideals, secant varieties, Hilbert functions, generic points, curvilinear schemes and the semi-continuity of Hilbert function.

The proof we present in this survey incorporates all up-to-date simplifications of the arguments in the original proof of the Alexander--Hirschowitz theorem, including, for instance, the work done by K. Chandler \cite{Ch00, Ch02}, and Brambilla and Ottaviani \cite{BO} (regarding the case of cubics).

We shall now give a number of important notations and terminology needed to state the Alexander--Hirschowitz theorem. Fix a positive integer $n$ and let $R = \CC[x_0,\ldots,x_n]=\CC[\PP^n]$ be the homogeneous coordinate ring of $\PP^n=\PP^n_\CC$. For a zero-dimensional subscheme $X \subseteq \PP^n$, let $I_X \subseteq R$ denote its defining ideal. It is a basic fact the Hilbert function $H_{R/I_X}$ of $R/I_X$ is bounded above by its \emph{multiplicity} $e(R/I_X)$ and the Hilbert function of $R$ (see Corollary \ref{cor.count2}). Particularly, $H_{R/I_X}(d) \le \min\left\{e(R/I_X), {n+d \choose d}\right\}$ for all $d \in \NN$. We say that a zero-dimensional subscheme $X$ in $\PP^n$ {\em has maximal Hilbert function in degree $d$}, or simply \emph{is $\text{AH}_n(d)$},
if
	$$H_{R/I_X}(d) = \min\left\{e(R/I_X), {n+d \choose d}\right\}.$$
This is equivalent to what is often referred to as \emph{imposing independent conditions on degree $d$ hypersurfaces in $\PP^n$}. This latter name however could be slightly misleading because the given property is equivalent to the linear system of equations associated to the points having maximal rank; it is not equivalent to the stronger property that these equations are linearly independent. Thus, we choose to use the notation $\text{AH}_n(d)$, which has essentially been used already in \cite{Ch00, BO}.

 Another basic fact about Hilbert function of zero-dimensional subschemes in $\PP^n$, see Propositions \ref{e(M)} and \ref{e(R/I)2}, is that
 $H_{R/I_X}(d) = e(R/I_X) \ \text{ for all } d \gg 0.$
Thus, we say that $X$ \emph{is multiplicity $d$-independent} if
$$H_{R/I_X}(d) = e(R/I_X).$$
In the known literature, this property is commonly referred to as being simply \emph{$d$-independent}. We add the word ``multiplicity'' to the terminology to emphasize the fact that the Hilbert function of $R/I_X$ at degree $d$ equals its multiplicity, in this case, and to avoid the potential confusion between the similar-sounding properties of {\em imposing independent conditions in degree $d$} and {\em being $d$-independent}.

Let $Y = \{P_1, \dots, P_r\}$ be a set of distinct points in $\PP^n$ and suppose that the defining ideal of $P_i$ is $\pp_i \subseteq R$ for all $i = 1, \dots, r$. Then, the defining ideal of $Y$ is $I_Y = \pp_1 \cap \dots \cap \pp_r$. A celebrated theorem of Zariski and Nagata (Theorem \ref{ZariskiNagata}) implies that the \emph{symbolic square} $I_Y^{(2)} = \pp_1^2 \cap \dots \cap \pp_r^2$ consists of all homogeneous polynomials in $R$ passing through each point of $Y$ at least twice. Let $X$ be the zero-dimensional subscheme in $\PP^n$ defined by $I_Y^{(2)}$. We call $X$ the set of $r$ \emph{double points supported on $Y$}, and write $X = 2Y = \{2P_1, \dots, 2P_r\}$ for simplicity of notation. %A double point $2P$ is called \emph{general} if its support $P$ is (see Definition \ref{DefGeneral} for the precise definition of general set of points). Particularly, 
$X=2Y$ is called a \emph{general} set of $r$ double points if $Y$ is a general set of $r$ simple points (see Definition \ref{DefGeneral} for the precise definition of general sets of simple points).

We are ready to state the main theorem surveyed in this paper.

\begin{Theorem}[Alexander-Hirschowitz]\label{AH}
	\label{AH}
	Let $n,d$ be positive integers. Let $X$ be a general set of $r$ double points in $\PP^n_\CC$. Then, $X$ is $\text{AH}_n(d)$ with the following exceptions:
	\begin{enumerate}
		\item $d=2$ and $2 \le r \le n$;
		\item $d=3$, $n=4$ and $r = 7$; and
		\item $d=4$, $2 \le n \le 4$ and $r = {n+2 \choose 2}-1$.
	\end{enumerate}
\end{Theorem}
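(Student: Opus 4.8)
The plan is to follow the now-standard strategy for proving the Alexander--Hirschowitz theorem, organized around an induction on $n$ and $d$ with the horizontal and vertical m\'ethode d'Horace d\'egener\'ee (Horace differentielle) of Alexander--Hirschowitz, streamlined by Chandler and by Brambilla--Ottaviani. First I would set up the framework: by semicontinuity of the Hilbert function (see the cited appendix), it suffices to exhibit, for each fixed $n$ and $d$ outside the exceptional list, one configuration of $r$ double points whose ideal achieves the bound $\min\{(n+1)r, \binom{n+d}{d}\}$ in degree $d$; and one reduces immediately to the two ``critical'' values $r = \lfloor \binom{n+d}{d}/(n+1)\rfloor$ and $r+1$, since $\text{AH}_n(d)$ for one value of $r$ in the appropriate range propagates to smaller $r$ (adding general points only lowers the Hilbert function in a controlled way) — so one must show that $r$ general double points impose $(n+1)r$ conditions when $(n+1)r \le \binom{n+d}{d}$, and the complementary statement for $r+1$ points.

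The core of the argument is the degeneration. Specialize a subset of the double points so that their supports lie on a hyperplane $H \cong \PP^{n-1}$. For a double point $2P$ with $P \in H$, its ideal restricts to $H$ as the ideal of the simple point $P$ (the trace), while the residual with respect to $H$ is again the simple point $P$ (or, after one differentiation, a scheme on $H$). This produces the Castelnuovo-type exact sequence relating the degree-$d$ piece of $I_X$, the degree-$d$ piece of the trace $\text{Tr}_H X$ inside $R_H = \CC[x_0,\dots,x_{n-1}]$, and the degree-$(d-1)$ piece of the residual $\text{Res}_H X$; the key numerical point is to choose how many double points to place on $H$ so that both the trace and the residual fall under already-established instances of $\text{AH}_{n-1}(d)$ and $\text{AH}_n(d-1)$ (or $\text{AH}_{n-1}(d-1)$), and then chase ranks through the sequence. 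The Horace differentielle refinement, needed when the na\"ive count is off by a little, replaces some full double points on $H$ by ``$(2,3)$-points'' or by a double point plus a tangent-direction condition, so that fractional adjustments to the counts on $H$ become available; this is exactly the device that handles cubics cleanly following Brambilla--Ottaviani.

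After the inductive engine is in place, the remaining work is the base of the induction and the exceptional analysis. I would verify directly the low cases: $n=1$ (the classical one-variable statement recalled in the introduction), $d=1$ (trivial: double points impose $\min\{2r,n+1\}$ conditions on hyperplanes as soon as they are in general position up to spanning), and then $d=2$, $d=3$, $d=4$ in low dimension, where the listed exceptions genuinely occur and must be both exhibited as exceptions and shown to be the only ones. For $d=2$ the exception $2\le r\le n$ comes from the fact that the quadrics singular at $r\le n$ general points form a space of dimension strictly larger than expected (a quadric of rank $\le$ something), and one checks the Hilbert function exactly; for $d=3,n=4,r=7$ and for $d=4$ in $\PP^2,\PP^3,\PP^4$ the failures are tied to classical projective geometry — the rational normal scroll / Veronese phenomena — and must be computed by hand or via a known explicit description. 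Outside these, one shows the degeneration argument carries through with no defect.

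The main obstacle, I expect, is not the structure of the induction but the bookkeeping at the seams: making the numerical choices in the Horace differentielle so that every invoked sub-instance is genuinely non-exceptional (one must constantly steer around the very exceptions being classified), and carefully handling the handful of ``sporadic'' small cases where the generic degeneration collides with an exceptional locus and a more delicate, often ad hoc, specialization or a direct Macaulay-style computation is required. Equivalently: proving that the exceptions listed are the \emph{only} ones is substantially harder than proving the generic statement, and it is precisely here — in the cubic case of Brambilla--Ottaviani and in the quartic cases in $\PP^2,\PP^3,\PP^4$ — that the proof requires its most careful and least formulaic steps, and where (as the introduction hints) we will supply details not readily found in the literature.
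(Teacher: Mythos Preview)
Your outline is correct and matches the paper's strategy essentially point for point: semicontinuity reduction to a single configuration and to the two critical values of $r$, the Castelnuovo/trace--residual sequence on a hyperplane, the Horace diff\'erentielle to fix the off-by-a-few arithmetic, a separate treatment of cubics \`a la Brambilla--Ottaviani, and ad hoc verification (including computer checks) of the sporadic low cases and the listed exceptions. One small correction and one remark: for $d=1$ a double point in $\PP^n$ has degree $n+1$, not $2$, so the count is $\min\{r(n+1),\,n+1\}$ (always $n+1$), not $\min\{2r,\,n+1\}$; and where you speak of ``$(2,3)$-points'' the paper's concrete device is the Curvilinear Lemma (reducing $V$-independence of a subscheme of a union of double points to that of its curvilinear subschemes), which is what makes the limiting/deformation step in the diff\'erentielle argument go through.
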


Our proof of Theorem \ref{AH} follows an outline similar to the one of \cite{BO}. Theorem \ref{AH} is proved by double-induction, on $n$ and $d$. For sporadic small values of $n$ and $d$ the inductive hypotheses are not satisfied. Some of these sporadic cases are indeed the exceptions appearing in the statement, but the other ones are not and they are checked to be $\text{AH}_n(d)$ on an {\em ad hoc} basis. In general, for the inductive step, two fundamental ingredients of the proof are the so-called \emph{m\'ethode d'Horace diff\'erentielle} and the use of 0-dimensional schemes of prescribed length and with support on a set of points. Their refined and delicate use is at the core of the simplifications of the original proof.

We now outline the structure of this survey. In section \ref{sec.core}, we present the proof of Theorem \ref{AH} for $d \geq 4$ and $n \ge 2$, when induction works. This is the most technical section of the paper. We will summarize the main ideas behind the core inductive argument before giving the details of this inductive step in Theorem \ref{core}.  Theorem \ref{core} is then employed to prove Theorem \ref{AH} as well as other results in the survey.
In section \ref{exceptional}, we discuss the  exceptional cases, leaving out some details when $n = 2$ and when $d = 3$ until later in Sections \ref{sec.P2} and \ref{sec.cubic}. In Section \ref{sec.P2}, we give the proof of Theorem \ref{AH} when $n = 2$, i.e., for points on the projective plane. We have chosen to write a proof which employs Theorem \ref{core} to provide the reader with another illustration of the use of this core inductive argument. In Section \ref{sec.cubic}, we conclude the proof of the Alexander--Hirschowitz theorem by examining the case when $d=3$, i.e., for cubics. The paper continues with a list of open problems and questions in Section \ref{sec.open}.

As mentioned, we end the paper with a number of short appendices to complement the previous sections. In Appendix \ref{app.secant}, we briefly illustrate the connection between the (homogeneous, Hermite) double interpolation problem and computing the dimension of certain secant varieties as well as determining the Waring rank of forms.
In Appendix \ref{app.Symbolic}, we recall the definition of symbolic powers and the statement of a fundamental theorem of Zariski and Nagata drawing the connection between symbolic powers of ideals of points and the interpolation problems. Since this paper is largely about the Hilbert function of zero-dimensional subschemes in $\PP^n$, we have included an appendix about Hilbert functions and, especially, the \emph{lower semi-continuity} property of Hilbert function; see Appendices \ref{app.Hilbertfn} and \ref{app.semi-cont}.
The proof of Theorem \ref{AH} uses a number of known facts about \emph{Hilbert schemes of points} and \emph{curvilinear subschemes}, which may not be obvious for an algebraist (they were not obvious for us), so we include an appendix about Hilbert schemes and curvilinear subschemes; see Appendix \ref{app.curvilinear}.

Finally, for sake of clarity, we have chosen to work over $\CC$, however, a large number of results would still be valid over any perfect field (and using divided powers rather than the usual derivatives, in case the characteristic of the field is positive).

\begin{acknowledgement} The authors would like to thank Irena Peeva for the invitation to write a paper for this volume. The first author is partially supported by Louisiana Board of Regents (grant \#LEQSF(2017-19)-ENH-TR-25).
\end{acknowledgement}

%%%%%%%%%%%%%%%%%%%%%%%%%%%%%%%%%%%%%%%%

\section{The general case ($d\geq 4$ and $n\geq 3$)} \label{sec.core}

In this section, we discuss the core inductive argument for the proof of Theorem \ref{AH}. It is known, see Proposition \ref{e(R/I)2}, that if $X$ is a set of $r$ double points in $\PP^n$ then
$e(R/I_X) = r(n+1).$ Thus, a set $X$ of $r$ double points in $\PP^n$ is $\text{AH}_n(d)$ if and only if
$$H_{R/I_X}(d) = \min\left\{{n+d\choose n}, r(n+1)\right\}.$$
The following observations allow us to \emph{specialize}, i.e., to deduce the statement of Theorem \ref{AH} by constructing a specific set $X$ of $r$ double points in $\PP^n$ which is $\text{AH}_n(d)$, and to consider at most two values of $r$.

\begin{Remark} \label{rmk.Hsemi}
Fix $n,d\in \ZZ_+$.
\begin{itemize}
\item (Corollary \ref{special}) If there exists \emph{one} collection of $r$ double points in $\PP^n$ that is $\text{AH}_n(d)$, then any general set of $r$ double points in $\PP^n$ is $\text{AH}_n(d)$.	
\item (Corollary \ref{rpts}) To prove that any set of $r$ general double points in $\PP^n$ is $\text{AH}_n(d)$, it suffices to verify the statement for the following two (possibly coinciding) values of $r$:
	$$\left\lfloor \dfrac{1}{n+1}{n+d \choose d}\right\rfloor \le r \le \left\lceil \dfrac{1}{n+1}{n+d \choose d}\right\rceil.$$
\end{itemize}
\end{Remark}

A key ingredient for the inductive argument of Theorem \ref{AH} is the so--called Castelnuovo's Inequality which we now recall. 

\begin{Lemma}[Castelnuovo's Inequality] \label{lem.Cast}
Let $R$ be a polynomial ring. Let $I$ be a homogeneous ideal and let $\ell$ be a linear form in $R$. Set $\widetilde{I}=I:\ell$, $\overline{R} = R/(\ell)$, and $\overline{I} = I \overline{R}$. Then,
\begin{align}
H_{R/I}(d) \geq H_{R/\widetilde{I}}(d-1) + H_{\ovl{R}/(\ovl{I})^{\text{sat}}}(d). \label{eq.Castelnuovo}
\end{align}
Additionally, the equality holds for every $d$ if and only if $\ovl{I}$ is saturated in $\ovl{R}$.
\end{Lemma}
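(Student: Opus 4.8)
The plan is to analyze the short exact sequences of graded $R$-modules that arise from multiplication by the linear form $\ell$. First I would consider the exact sequence
\[
0 \to (R/\widetilde{I})(-1) \xrightarrow{\ \cdot\ell\ } R/I \to R/(I+(\ell)) \to 0,
\]
where the kernel of multiplication by $\ell$ on $R/I$ is exactly $(I:\ell)/I = \widetilde{I}/I$, so that the image of $(R/I)(-1)$ under $\cdot\ell$ is $(R/\widetilde I)(-1)$. Taking Hilbert functions in degree $d$ gives $H_{R/I}(d) = H_{R/\widetilde I}(d-1) + H_{R/(I+(\ell))}(d)$. Next I would observe that $R/(I+(\ell)) \cong \overline R/\overline I$, so $H_{R/(I+(\ell))}(d) = H_{\overline R/\overline I}(d)$. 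Hence equality holds in the exact form
\[
H_{R/I}(d) = H_{R/\widetilde I}(d-1) + H_{\overline R/\overline I}(d).
\]

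The remaining point is to compare $H_{\overline R/\overline I}(d)$ with $H_{\overline R/(\overline I)^{\mathrm{sat}}}(d)$. Since $\overline I \subseteq (\overline I)^{\mathrm{sat}}$, there is a surjection $\overline R/\overline I \twoheadrightarrow \overline R/(\overline I)^{\mathrm{sat}}$, so $H_{\overline R/\overline I}(d) \ge H_{\overline R/(\overline I)^{\mathrm{sat}}}(d)$ for every $d$, which combined with the displayed equality yields the inequality \eqref{eq.Castelnuovo}. For the equality statement: if $\overline I$ is saturated in $\overline R$, then $\overline I = (\overline I)^{\mathrm{sat}}$ and equality holds in every degree trivially. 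Conversely, if equality holds in \eqref{eq.Castelnuovo} for every $d$, then $H_{\overline R/\overline I}(d) = H_{\overline R/(\overline I)^{\mathrm{sat}}}(d)$ for all $d$; since the surjection $\overline R/\overline I \twoheadrightarrow \overline R/(\overline I)^{\mathrm{sat}}$ is a graded surjection that is an isomorphism in every degree, it is an isomorphism, forcing $\overline I = (\overline I)^{\mathrm{sat}}$, i.e. $\overline I$ is saturated.

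The main obstacle, though it is a mild one, is verifying carefully that the kernel of $\cdot\ell : (R/I)(-1) \to R/I$ is precisely $(\widetilde I/I)(-1)$ and that this identifies the image as $(R/\widetilde I)(-1)$ — this is just the standard isomorphism theorem, $R/I \big/ (I:\ell)/I \cong R/(I:\ell)$, shifted appropriately — together with keeping the degree shifts consistent so that the Hilbert function bookkeeping in degree $d$ is correct. One should also note that $\overline I$ need not be saturated in general even if $I$ is, which is exactly why the saturation appears and why the inequality can be strict; no finiteness or Noetherian subtleties arise since everything in sight is a finitely generated graded module over a polynomial ring.
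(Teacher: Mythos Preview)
Your proof is correct and follows essentially the same approach as the paper: the standard short exact sequence $0 \to (R/\widetilde{I})(-1) \xrightarrow{\cdot\ell} R/I \to \overline{R}/\overline{I} \to 0$, then the inequality from $\overline{I} \subseteq (\overline{I})^{\mathrm{sat}}$, with equality forcing $\overline{I} = (\overline{I})^{\mathrm{sat}}$. Your version is slightly more detailed in justifying the kernel identification and the equality-implies-isomorphism step, but the argument is the same.
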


\begin{proof}
From the standard exact sequence
$$
0 \lra R/I:\ell(-1) \stackrel{\cdot \ell}{\lra} R/I \lra R/(I,\ell) \lra 0,
$$
and the fact that $\ovl{I}\subseteq (\ovl{I})^{\text{sat}}$, one obtains
$$
H_{R/I}(d) = H_{R/\widetilde{I}}(d-1) + H_{\ovl{R}/\ovl{I}}(d) \geq H_{R/\widetilde{I}}(d-1) + H_{\ovl{R}/(\ovl{I})^{\text{sat}}}(d).
$$
It is also clear that the equality holds for every $d$ if and only if $H_{\ovl{R}/\ovl{I}}(d) = H_{\ovl{R}/(\ovl{I})^{\text{sat}}}(d)$ for all $d$, which is the case if and only if $\ovl{I} = (\ovl{I})^{\text{sat}}$.
\end{proof}

An intuitive natural approach to Theorem \ref{AH} is to apply Casteluovo's Inequality to obtain a proof by induction on $n\geq 1$. Indeed, Terracini already employed this method to study the case of $n=3$ by partly reducing to the case of $n=2$. We shall capture the modern version of Terracini's argument.

\begin{Theorem}[Terracini's Inductive Argument] \label{thm.Terr}
Fix integers $r\geq q\geq 1$ and $d\in \ZZ_+$ satisfying either
$$r(n+1) - {d+n-1 \choose n} \leq qn \leq {d+n-1\choose n-1}\quad  \text{ or }\quad{d+n-1\choose n-1} \leq qn \leq r(n+1) - {d+n-1 \choose n}.$$
Let $L$ be a hyperplane in $\PP^n$. If \begin{enumerate}
\item a set of $q$ general double points in $L \simeq \PP^{n-1}$ is $\text{AH}_{n-1}(d)$, and
\item the union of a set of $r-q$ general double points in $\PP^n$ and a set of $q$ general simple points in $L$ is $\text{AH}_n(d-1)$,
\end{enumerate}
then a set of $r$ general double points in $\PP^n$ is $\text{AH}_n(d)$.
\end{Theorem}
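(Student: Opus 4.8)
The plan is to apply Castelnuovo's Inequality (Lemma~\ref{lem.Cast}) with respect to the linear form $\ell$ cutting out the hyperplane $L$, to a carefully chosen specialization of the $r$ double points. First I would place $q$ of the double points so that their support lies on $L$, and keep the remaining $r-q$ double points generic in $\PP^n$; by Corollary~\ref{special} (the semi-continuity statement in Remark~\ref{rmk.Hsemi}), it suffices to exhibit one such configuration $X$ that is $\text{AH}_n(d)$. Writing $I = I_X$, $\wdt I = I:\ell$, and $\ovl I = I\ovl R$ with $\ovl R = R/(\ell)$, Castelnuovo gives
$$H_{R/I}(d) \geq H_{R/\wdt I}(d-1) + H_{\ovl R/(\ovl I)^{\sat}}(d).$$
The strategy is to identify the two schemes on the right: the \emph{residual} scheme defined by $\wdt I = I:\ell$ should be the union of the $r-q$ free double points together with $q$ \emph{simple} points on $L$ (one loses one order of vanishing along $L$ for each double point supported on $L$ when dividing by $\ell$), while the \emph{trace} scheme defined by $(\ovl I)^{\sat}$ in $\ovl R \simeq \CC[\PP^{n-1}]$ should be the $q$ double points viewed inside $L \simeq \PP^{n-1}$. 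Hypotheses (1) and (2) then say precisely that these two schemes are $\text{AH}_{n-1}(d)$ and $\text{AH}_n(d-1)$ respectively, so we can compute both terms on the right as the relevant minima.

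The next step is the arithmetic. Using $e(R/\wdt I) = (r-q)(n+1) + q$ and $e(\ovl R/(\ovl I)^{\sat}) = qn$ (double points in $\PP^{n-1}$ have multiplicity $n$), hypotheses (1) and (2) give
$$H_{R/\wdt I}(d-1) = \min\left\{\binom{d+n-1}{n},\, r(n+1) - qn\right\}, \qquad H_{\ovl R/(\ovl I)^{\sat}}(d) = \min\left\{\binom{d+n-1}{n-1},\, qn\right\}.$$
The two numerical hypotheses on $qn$ are exactly what is needed so that, when we add these two minima, the sum equals $\min\{\binom{d+n}{n},\, r(n+1)\}$: in the first regime the residual term contributes its multiplicity $r(n+1)-qn$ and the trace term contributes its Hilbert-function bound $\binom{d+n-1}{n-1}$, which sum to $\binom{d+n}{n}$ via Pascal only when $qn \le \binom{d+n-1}{n-1}$ and $r(n+1)-qn \le \binom{d+n-1}{n}$; the second regime is the symmetric bookkeeping. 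So the right-hand side of Castelnuovo equals $\min\{\binom{d+n}{n},\, r(n+1)\}$. Combined with the a~priori upper bound $H_{R/I}(d) \le \min\{\binom{d+n}{n}, e(R/I_X)\} = \min\{\binom{d+n}{n}, r(n+1)\}$ (Corollary~\ref{cor.count2} and Proposition~\ref{e(R/I)2}), we get equality, i.e.\ $X$ is $\text{AH}_n(d)$, completing the specialization argument.

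The main obstacle I anticipate is justifying the identification of the residual and trace schemes rigorously, rather than just intuitively. One must check that when $2P_i$ has $P_i \in L$, the colon ideal $\pp_i^2 : \ell$ is the ideal of a simple point at $P_i$ (for a suitably general choice of the points on $L$, so no unexpected collisions of components occur), and that intersecting over all $i$ behaves well with the colon operation; this is where the genericity of the $q$ points on $L$ and of the $r-q$ points off $L$ is used. Similarly, one must verify that $(\ovl{I_X})^{\sat}$ is exactly the ideal $(\pp_1^2 \cap \cdots \cap \pp_q^2)\ovl R$ of the $q$ double points inside $L$ — the saturation is needed precisely because $\ovl{I_X}$ itself may fail to be saturated (this is the content of the ``additionally'' clause in Lemma~\ref{lem.Cast}). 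A clean way to organize this is to prove it for a single double point and then take intersections, using that for a general configuration the ideals of the components are ``in general position'' so that Hilbert functions add as expected; alternatively one can cite the standard facts about residuation of double points along a hyperplane. Once these scheme-theoretic identifications are in place, the rest is the Pascal-triangle bookkeeping sketched above.
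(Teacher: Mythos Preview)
Your approach is exactly the paper's: specialize $q$ of the supports onto $L$, apply Castelnuovo's inequality, identify the residual $\wdt I$ as $r-q$ double points together with $q$ simple points on $L$ and the trace $(\ovl I)^{\sat}$ as $q$ double points inside $L\simeq\PP^{n-1}$, then invoke hypotheses (1) and (2). The paper carries out precisely the scheme-theoretic identifications you flag as the main obstacle, computing $I:\ell = I_{Y_1}\cap I_{Y_2}^{(2)}$ and $(\ovl I)^{\sat}=\ovl{I_{Y_1}}^{(2)}$ by analyzing minimal primes of $(I,\ell)$.

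One correction to your bookkeeping: in the first numerical regime \emph{both} summands achieve their multiplicity values, namely $r(n+1)-qn$ and $qn$, summing to $r(n+1)$ (which equals the desired minimum since the inequalities force $r(n+1)\le\binom{d+n}{n}$); in the second regime \emph{both} summands achieve the binomial values $\binom{d+n-1}{n}$ and $\binom{d+n-1}{n-1}$, summing via Pascal to $\binom{d+n}{n}$. Your description mixes the two cases.
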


\begin{proof}
Without loss of generality, we may assume that $x_n=0$ is the equation of $L$. Let $R=\CC[x_0,\ldots,x_n]$, and let $\ovl{R}=\CC[x_0,\ldots,x_{n-1}]\simeq R/(x_n)$. Let $Y_1$ be a set of $q$ general simple points in $L \simeq \PP^{n-1}$, with defining ideal $\ovl{I_{Y_1}}\subseteq \ovl{R}$. If we consider $Y_1$ as a set of points in $\PP^n$, then its defining ideal is $I_{Y_1}:=(\ovl{I_{Y_1}}, x_n)R$. Let  $Y_2$ be a set of $r-q$ general simple points in $\PP^n - L$ with defining ideal $I_{Y_2}\subseteq R$. Let $I=I_{Y_1}^{(2)} \cap I_{Y_2}^{(2)}$ be the defining ideal of $2Y=2Y_1 \cup 2Y_2$, and $\ovl{I}=I\ovl{R}$.
By Remark \ref{rmk.Hsemi} and Corollary \ref{cor.count2}, it suffices to show that $H_{R/I} \ge \min\left\{{n+d\choose n}, r(n+1)\right\}$.

Since $Y_1\subseteq L$ and none of the points in $Y_2$ lies on $L$, we have
$$
\widetilde{I}:=I:x_n = (I_{Y_1}^{(2)}: x_n)  \cap (I_{Y_2}^{(2)} : x_n) = I_{Y_1} \cap I_{Y_2}^{(2)},
$$
which is the defining ideal of the union of a set of  $q$ general simple points in $H$ and $r-q$ general double points in $\PP^n$. Next we show that $(\ovl{I})^{sat} = \ovl{I_{Y_1}}^{(2)}$. First, observe that  $\h(\ovl{I})=\dim(\ovl{R})-1$, so $(\ovl{I})^{\sat}$ is the intersection of the minimal components of $\ovl{I}$. These minimal components are the images in $\ovl{R}$ of the minimal components of $(I,x_n)$. Now, the primes containing $(I,x_n)=(I_{Y_1}^{(2)} \cap I_{Y_2}^{(2)}, x_n)$ are precisely the primes containing $x_n$ and either $I_{Y_1}$ or $I_{Y_2}$. Since for any $\p\in {\rm Min}(I_{Y_2})$ we have $x_n\notin \p$, then $(\p,x_n)=(x_0,\ldots,x_n)$ is the maximal ideal of $R$ and, thus, it is not a minimal prime of $(I,x_n)$ (which has height $n$). On the other hand, for any $\p\in {\rm Min}(I_{Y_1})$ we have $\h(\p)=n$ and $x_n\in \p$, so $\p\in {\rm Min}(I,x_n)$. It follows that the minimal primes $\p$ of $(I,x_n)$ are precisely the minimal primes of $I_{Y_1}$ and when we localize at any of them we get $(I,x_n)_{\p} = (I_{Y_1}^{(2)},x_n)_{\p}$. It follows that $(I,x_n)^{sat} = (I_{Y_1}^{(2)},x_n)^{sat}$ and, by taking images in $\ovl{R}$, we derive that $(\ovl{I})^{sat} = \ovl{I_{Y_1}}^{(2)}$.

Now, by assumptions (1) and (2), we have
$$
 H_{R/\widetilde{I}}(d-1)=\min\left\{{n+d-1\choose n}, q + (n+1)(r-q)\right\}$$
 and
 $$H_{\ovl{R}/\ovl{I}^{\sat}}(d) = \min\left\{{(n-1)+d\choose n-1}, qn\right\}.
$$
These inequalities together with Lemma \ref{lem.Cast} yield
$$
\begin{array}{ll}
H_{R/I}(d) & \geq H_{R/\widetilde{I}}(d-1) + H_{\ovl{R}/\ovl{I}^{\sat}}(d)\\
		& = \min\left\{{n+d-1\choose n}, q + (n+1)(r-q)\right\} + \min\left\{{n-1+d\choose n-1}, qn\right\}.
\end{array}
$$

Now, if $r(n+1) - {d+n-1 \choose n} \leq qn \leq {d+n-1\choose n-1}$, then $ \min\left\{{n+d\choose n}, r(n+1)\right\}=r(n+1)$, and
$$
\min\left\{{n+d-1\choose n}, q + (n+1)(r-q)\right\} + \min\left\{{n-1+d\choose n-1}, qn\right\} =q + (n+1)(r-q) +  qn = r(n+1).
$$
Thus,
$$
H_{R/I}(d) \geq r(n+1) = \min\left\{{n+d\choose n}, r(n+1)\right\}.
$$
Similarly, if ${d+n-1\choose n-1} \leq qn \leq r(n+1) - {n-1+d \choose n}$ holds, then
$$
\begin{array}{ll}
H_{R/I}(d) & \geq\min\left\{{n+d-1\choose n}, q + (n+1)(r-q)\right\} + \min\left\{{n-1+d\choose n-1}, qn\right\}\\	
		& = {n+d-1\choose n} + {n-1+d\choose n-1}\\
		& = {n+d\choose n}\\
		& = \min\left\{{n+d\choose n}, r(n+1)\right\}.
\end{array}
$$
This concludes the proof.
\end{proof}

Assumption (1) in Theorem \ref{thm.Terr} is usually provided by the inductive hypothesis.
Assumption (2) is more delicate, because we have a mix of double points and simple points --- Proposition \ref{prop.hyperp} provides the tool to handle this situation.
What prevents one from using Theorem \ref{thm.Terr} to prove Theorem \ref{AH} is the fact that there may not be an integer $q$ satisfying both of the numerical assumptions of Theorem \ref{thm.Terr}.
For instance, to prove the case where $n=3$ and $d=6$, by Remark \ref{rmk.Hsemi}, we need to prove that a set of $r=21$ general double points satisfies $\text{AH}_3(6)$. To apply Theorem \ref{thm.Terr}, we need to find $q\in \ZZ$ with $84-56 \leq 3q \leq 28$, i.e. $q=28/3$. Thus, Theorem \ref{thm.Terr} is not applicable. There are in fact infinitely many choices of $n$ and $d$ for which we run into the same problem, i.e., when we cannot apply Theorem \ref{thm.Terr} directly.

The \emph{m\'ethode d'Horace diff\'erentielle} of \cite{AH1} is designed to overcome this difficulty. For a subscheme $X \subseteq \PP^n$ and a hyperplane $L$ defined by a linear form $\ell$, we use $\widetilde{X}$ to denote the \emph{residue} of $X$ with respect to $L$; that is, the subscheme of $\PP^n$ defined by the ideal $I_X : \ell$. The underlying ideas of the {\em m\'ethode d'Horace diff\'erentielle} are:
\begin{enumerate}
	\item[Step 1.] Fix a hyperplane $L \simeq \PP^{n-1}$ in $\PP^n$. For a suitable choice of $q$ and $\epsilon$, choose a general collection $2\Psi$ of $r-q-\epsilon$ double points not in $L$, a general collection $2\Lambda$ of $q$ double points in $L$, and a general collection $2\Gamma$ of $\epsilon$ double points in $L$.
	\item[Step 2.] By induction on the dimension, the sets $2\Lambda \cup 2\Gamma_{|L}$ and $\Psi \cup 2\Lambda \cup 2\Gamma_{|L}$ have maximal Hilbert function in degree $(d-1)$ in $L \simeq \PP^{n-1}$. One shows that to prove the theorem it suffices to prove that $2\Gamma$ is multiplicity $[I_{2\Psi \cup 2\Lambda}]_d$-independent (see Definition \ref{def:mult} below).
	\item[Step 3.] The last statement in Step 2 is proved using deformation. For $\t = (t_1, \dots, t_\epsilon) \in K^\epsilon$ we take a flat family of general points $\Gamma_\t$ lying on a family of hyperplanes $\{L_{t_1}, \dots, L_{t_\epsilon}\}$ having $\Gamma$ as a limit when $\t\lra 0$, and the problem reduces to showing that  $2\Gamma_{\t}$ is multiplicity $[I_{2\Psi \cup 2\Lambda}]_d$-independent for some $\t$.
	\item[Step 4.] To establish this latter fact, the existence of $\t$ in Step 3, we argue by contradiction and another deformation argument reduces the problem to understanding the Hilbert function of schemes of the form $2\Psi \cup 2\Lambda \cup \Theta_\t$, for a suitable curvilinear subscheme $\Theta_\t$ supported on $\Gamma_\t$ and contained in $2\Gamma_\t$ (see Appendix \ref{app.curvilinear} for basic facts about curvilinear schemes). Since $\Gamma_\t$ is a family of curvilinear schemes, the family has a limit which can be used in the process. Finally, arguments employing the semi-continuity of the Hilbert function, the Castelnuovo inequality (\ref{eq.Castelnuovo}) and the material developed in Step 2 allows us to arrive at the desired conclusion.
\end{enumerate}

The deformation argument in Step 4 of the {\em m\'ethode d'Horace diff\'erentiell}e is possible by the use of curvilinear subschemes and, particularly, Lemma \ref{lem.reductionCL}, which we shall now introduce.

\begin{Definition}
	Let $V$ be a $\CC$-vector space of homogeneous polynomials of the same degree in $R = \CC[x_0, \dots, x_n]$ and let $I \subseteq R$ be a homogeneous ideal. Let $I \cap V$ denote the $\CC$-vector space of forms (necessarily of the same degree) belonging to both $I$ and $V$.
\end{Definition}

Recall that  a zero-dimensional subscheme  $X \subseteq \PP^n$ is \emph{multiplicity $d$-independent} if
$H_{R/I_X}(d) = e(R/I_X).$ 
\begin{Definition}\label{def:mult}
	Let $X \subseteq \PP^n$ be a zero-dimensional subscheme and let $V$ a $\CC$-vector space of homogeneous polynomials of the same degree in $R$.
	\begin{enumerate}
		\item The \emph{Hilbert function of $X$ (or $I_X$) with respect to $V$} is defined to be
	$$h_{\PP^n}(X,V) = \dim_\CC V- \dim_\CC(I_X \cap V).$$
	\item We say that $X$ (or $I_X$) is \emph{multiplicity $V$-independent} if
	$$h_{\PP^n}(X,V) = e(R/I_X).$$
	\end{enumerate}
\end{Definition}
This definition generalizes multiplicity $d$-independence in the sense that $X$ is multiplicity $R_d$-independent if and only if $X$ is multiplicity $d$-independent. We now prove a couple of basic facts.

\begin{Lemma}\label{multind}
		Let $X \subseteq \PP^n$ be a zero-dimensional subscheme and let $V$ a $\CC$-vector space of homogeneous polynomials of the same degree in $R$. Then\begin{enumerate}
			\item  $h_{\PP^n}(X,V) \le \min\{H_{R/I_X}(d), \dim_\CC V\};$
			\item if $X$ is multiplicity $V$-independent then $X$ is multiplicity $d$-independent; 
			\item if $X$ is multiplicity $d$-independent then so is $Y$, for any zero-dimensional subscheme $Y$ of $X$.
		\end{enumerate}
\end{Lemma}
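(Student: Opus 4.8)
The plan is to handle the three parts in order, since each builds on the previous. For part (1), the key observation is that $V \subseteq R_d$ for the appropriate $d$ (the common degree of the forms in $V$), and that $I_X \cap V = (I_X)_d \cap V$ inside $R_d$. First I would set up the natural $\CC$-linear map $V \to R_d / (I_X)_d$ obtained by restricting the quotient map $R_d \to R_d/(I_X)_d$ to $V$. The kernel of this map is precisely $(I_X)_d \cap V = I_X \cap V$, so by rank-nullity the image has dimension $\dim_\CC V - \dim_\CC(I_X \cap V) = h_{\PP^n}(X,V)$. Since the image is a subspace of $R_d/(I_X)_d$, its dimension is at most $\dim_\CC R_d/(I_X)_d = H_{R/I_X}(d)$; and it is trivially at most $\dim_\CC V$ as the image of a space of that dimension. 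This gives $h_{\PP^n}(X,V) \le \min\{H_{R/I_X}(d), \dim_\CC V\}$.

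For part (2), suppose $X$ is multiplicity $V$-independent, i.e. $h_{\PP^n}(X,V) = e(R/I_X)$. Combining this with part (1) gives $e(R/I_X) = h_{\PP^n}(X,V) \le H_{R/I_X}(d)$. On the other hand, by Corollary \ref{cor.count2} we always have $H_{R/I_X}(d) \le e(R/I_X)$. Hence $H_{R/I_X}(d) = e(R/I_X)$, which is exactly the statement that $X$ is multiplicity $d$-independent.

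For part (3), let $Y \subseteq X$ be a zero-dimensional subscheme, so $I_X \subseteq I_Y$ and therefore $H_{R/I_Y}(d) \ge H_{R/I_X}(d)$ in every degree. Since $X$ is multiplicity $d$-independent and, again by Corollary \ref{cor.count2}, the values $e(R/I_Y)$ and $e(R/I_X)$ stabilize the respective Hilbert functions from above, it suffices to compare $H_{R/I_Y}(d)$ with $e(R/I_Y)$. As $Y \subseteq X$ we have $e(R/I_Y) \le e(R/I_X) = H_{R/I_X}(d) \le H_{R/I_Y}(d)$; combined with the reverse inequality $H_{R/I_Y}(d) \le e(R/I_Y)$ from Corollary \ref{cor.count2}, this forces $H_{R/I_Y}(d) = e(R/I_Y)$, so $Y$ is multiplicity $d$-independent.

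The only mildly delicate point, and the one I expect to require the most care in writing, is part (3): one must be sure that the multiplicity of a zero-dimensional subscheme is monotone under taking subschemes (i.e. $e(R/I_Y) \le e(R/I_X)$ when $Y \subseteq X$), which follows because the length of $\OO_{X}$ at each point dominates that of $\OO_Y$, and that the stabilized-value bound from Corollary \ref{cor.count2} applies uniformly. Everything else is a direct application of rank-nullity and the basic Hilbert-function bound already recorded in Corollary \ref{cor.count2}.
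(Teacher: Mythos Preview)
Your arguments for parts (1) and (2) are correct and essentially equivalent to the paper's; your rank-nullity argument for (1) is in fact a bit cleaner than the inclusion-exclusion via $[I_X]_d + V$ that the paper uses.

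Part (3), however, contains a genuine error. From $Y\subseteq X$ you correctly deduce $I_X\subseteq I_Y$, but then the inequality on Hilbert functions goes the \emph{other} way: the surjection $R/I_X \twoheadrightarrow R/I_Y$ gives $H_{R/I_Y}(d)\le H_{R/I_X}(d)$, not $\ge$. Your chain $e(R/I_Y)\le e(R/I_X)=H_{R/I_X}(d)\le H_{R/I_Y}(d)$ therefore breaks at the last step, and the two correct inequalities $H_{R/I_Y}(d)\le H_{R/I_X}(d)=e(R/I_X)$ and $e(R/I_Y)\le e(R/I_X)$ do not by themselves force $H_{R/I_Y}(d)=e(R/I_Y)$.

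What is missing is control on the \emph{difference} $H_{R/I_X}(d)-H_{R/I_Y}(d)=H_{I_Y/I_X}(d)$. The paper obtains this from the exact sequence $0\to I_Y/I_X\to R/I_X\to R/I_Y\to 0$: since $R/I_X$ and $R/I_Y$ are $1$-dimensional Cohen--Macaulay, the depth lemma makes $I_Y/I_X$ also $1$-dimensional Cohen--Macaulay with $e(I_Y/I_X)=e(R/I_X)-e(R/I_Y)$, and then Proposition~\ref{e(M)} gives $H_{I_Y/I_X}(d)\le e(I_Y/I_X)$. Plugging this in yields $H_{R/I_Y}(d)=e(R/I_X)-H_{I_Y/I_X}(d)\ge e(R/I_Y)$, which combined with the upper bound gives equality. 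This Cohen--Macaulay step is precisely the ``mildly delicate point'' you flagged, but it is not just monotonicity of multiplicity --- you need the bound on $H_{I_Y/I_X}(d)$ as well.
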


\begin{proof}
	(1)  Let $[I_X]_d := I_X\cap R_d$, thus we have $(I_X\cap V)=[I_X]_d\cap V$ and $H_{I_X}(d) = \dim_\CC [I_X]_d$. By definition $h_{\PP^n}(X,V) = \dim_\CC V- \dim_\CC(I_X \cap V)\leq \dim_{\CC} V$, so we only need to prove $h_{\PP^n}(X,V)\leq H_{R/I_X}(d)$. From the short exact sequence of vector spaces
	$$
	0 \lra I_X \cap V \lra [I_X]_d \oplus V \lra [I_X]_d + V \lra 0,
	$$
	and the additivity of dimension of vector spaces we obtain that $h_{\PP^n}(X,V) = \dim_\CC ([I_X]_d + V) - H_{I_X}(d)$, which is at most $H_{R/I_X}(d)$ because $[I_X]_d + V\subseteq R_d$. 
	
	(2)  By (1) and the fact that $H_{R/I_X}(d) \leq e(R/I_X)$ for every $d$ (see Proposition \ref{e(M)}) we have $h_{\PP^n}(X,V)\leq H_{R/I_X}(d) \leq e(R/I_X)$. So if $X$ is multiplicity $V$-independent then $h_{\PP^n}(X,V) = e(R/I_X) = H_{R/I_X}(d)$ and, particularly, $X$ is also multiplicity $d$-independent.
	
	(3) Since $R/I_X$ and $R/I_Y$ are 1-dimensional Cohen-Macaulay modules, the short exact sequence 
	$$
	0 \lra I_Y/ I_X \lra R/I_X \lra R/I_Y \lra 0
	$$
	implies that $I_Y/I_X$ is a 1-dimensional Cohen-Macaulay module and $e(I_Y/I_X)=e(R/I_X) - e(R/I_Y)$. 
	Now, by the above short exact sequence and Proposition \ref{e(M)}, we obtain
	$$ H_{R/I_Y}(d)  = e(R/I_X) - H_{I_Y/I_X}(d) \geq e(R/I_X) - [e(R/I_X) - e(R/I_Y)]=e(R/I_Y).$$
	Since $H_{R/I_Y}(d)\leq e(R/I_Y)$, by Proposition \ref{e(M)}, we conclude that $H_{R/I_Y}(d) = e(R/I_Y)$.
\end{proof}

Let $Z$ be a set of finitely many simple points, we shall now prove that to check whether a scheme $X$ contained in $2Z$ is $V$-independent it suffices to consider curvilinear subschemes of $X$. This reduction and the fact that curvilinear schemes form a dense open subset of the Hilbert scheme (see Proposition \ref{prop:dense}) play an important role in the proof of Theorem \ref{core}.

\begin{Lemma}[Curvilinear Lemma] \label{lem.reductionCL}
	Let $X \subseteq \PP^n$ be a zero-dimensional scheme contained in a finite union of double points and let $V$ be a $\CC$-vector space of homogeneous polynomials of degree $d$ in $R$. Then $X$ is multiplicity $V$-independent if and only if every curvilinear subscheme of $X$ is multiplicity $V$-independent.
\end{Lemma}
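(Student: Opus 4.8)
The plan is to prove the Curvilinear Lemma by reducing the statement to a question about a single, well-chosen zero-dimensional subscheme of $X$ that realizes the failure of multiplicity $V$-independence, and then to replace it by a curvilinear scheme of the same length without changing the relevant inequality. First I would record the easy direction: if $X$ is multiplicity $V$-independent then, by the monotonicity of multiplicity $d$-independence under passing to subschemes (Lemma \ref{multind}(2) and (3)) together with the obvious inequality $h_{\PP^n}(W,V) \le H_{R/I_W}(d)$ from Lemma \ref{multind}(1), every subscheme $W \subseteq X$ — in particular every curvilinear one — inherits a chain of inequalities forcing equality $h_{\PP^n}(W,V) = e(R/I_W)$; so the forward implication is formal.

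For the substantive direction, suppose every curvilinear subscheme of $X$ is multiplicity $V$-independent but $X$ itself is not, i.e.\ $h_{\PP^n}(X,V) < e(R/I_X)$. Since $X$ is a finite disjoint union of its components supported at distinct points of $Z$, and both $h_{\PP^n}(\,\cdot\,,V)$-defect and $e(R/I_{\cdot})$ are additive over disjoint unions, I would localize the problem: the defect $e(R/I_X) - h_{\PP^n}(X,V)$ is a sum of local contributions, so there is a single point $P \in Z$ such that "removing one unit of length at $P$" strictly increases $h_{\PP^n}(\,\cdot\,,V)$ relative to $e$. Concretely, pick a subscheme $X' \subsetneq X$ obtained by shrinking $X$ at $P$ by one in colength, chosen so that $h_{\PP^n}(X',V) = h_{\PP^n}(X,V)$ while $e(R/I_{X'}) = e(R/I_X) - 1$; iterating, one arrives at a subscheme $X_0 \subseteq X$ with $h_{\PP^n}(X_0, V) = e(R/I_{X_0})$ but with a distinguished length-one enlargement $X_1$ inside $X$ (still supported on $Z$, hence still inside $2Z$) for which $h_{\PP^n}(X_1,V) = h_{\PP^n}(X_0,V) = e(R/I_{X_0}) = e(R/I_{X_1}) - 1$; that is, $X_1$ is a scheme of colength one over a $V$-independent scheme which is itself not $V$-independent.

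The heart of the argument is then to show such a "minimal obstruction" $X_1$ can be taken curvilinear. Here I would use that $X_1$ sits inside $2Z$, so at each point the local ring of $X_1$ is a quotient of the local ring of a double point, whose maximal ideal has square zero in the relevant degree — thus the tangent-space directions at $P$ are exactly the choices available, and a curvilinear subscheme of length $\ell$ at $P$ is determined by a flag of such directions. The claim is that among all length-$(\operatorname{length} X_0 + 1)$ subschemes of $2Z$ enlarging $X_0$ by one at $P$, a generic (hence curvilinear) choice of the new tangent direction already fails to be $V$-independent: indeed the condition "$h_{\PP^n}(-,V)$ does not grow when we add this direction" is a Zariski-closed condition on the direction (it asks a certain determinant/rank to drop), and we have just produced one direction — the one inside $X_1$ — where it holds; but Proposition \ref{prop:dense} says curvilinear schemes are dense, and if the closed condition held only off a curvilinear locus it would be empty, a contradiction once we know it is nonempty. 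More carefully, I would argue the failure locus, being closed and nonempty inside an irreducible parameter space of length-$\ell$ subschemes at $P$, must meet the dense open curvilinear locus, producing a curvilinear $X_1^{\mathrm{cl}} \subseteq 2Z$ that is not $V$-independent; by Lemma \ref{multind}(3) (applied to multiplicity $d$-independence, which by Lemma \ref{multind}(1)--(2) is what $V$-independence controls here) any curvilinear subscheme of $X$ containing $X_1^{\mathrm{cl}}$ is then also not $V$-independent — contradicting the hypothesis. The main obstacle I anticipate is precisely this last step: justifying that the non-$V$-independence of a length-$\ell$ scheme at $P$ is a closed condition on the Hilbert-scheme parameters, and that the curvilinear locus (dense by Proposition \ref{prop:dense}) genuinely meets the obstruction locus rather than lying in its complement; handling this cleanly requires the semicontinuity of $h_{\PP^n}(-,V)$ in flat families (the families-of-hyperplanes/limit bookkeeping flagged in Step 4 of the méthode d'Horace) and the irreducibility of the relevant component of the Hilbert scheme, so I would invoke Appendix \ref{app.curvilinear} and Appendix \ref{app.semi-cont} at exactly that point.
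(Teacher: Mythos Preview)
Your forward direction is on the right track, though note that Lemma~\ref{multind}(3) only gives $d$-independence, not $V$-independence, for subschemes; the paper fills this in with a short direct computation showing that $[I_X]_d + V = R_d$ forces $h_{\PP^n}(Y,V) = H_{R/I_Y}(d) = e(R/I_Y)$ for every $Y \subseteq X$.

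The backward direction has a genuine gap --- exactly the one you yourself flag at the end but do not resolve. First, the inference ``closed nonempty subset of an irreducible space must meet a dense open subset'' is simply false: any proper closed subvariety is disjoint from its (dense open) complement. Second, and more seriously, your parameter space of length-$(\ell+1)$ subschemes of $2Z$ extending $X_0$ by one at $P$ may contain \emph{no} curvilinear points at all. A curvilinear subscheme of a double point has length at most $2$ (Corollary~\ref{cor.degcurvilinear}), so as soon as $X_0$ already has length $\ge 2$ at $P$, every one-step enlargement has length $\ge 3$ at $P$ and is automatically non-curvilinear. Proposition~\ref{prop:dense} asserts density of the curvilinear locus in the ambient Hilbert scheme of $\PP^n$, not in this constrained family; it gives you nothing here. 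Third, even if you manufactured a curvilinear $X_1^{\mathrm{cl}}$ in the failure locus, you would need $X_1^{\mathrm{cl}} \subseteq X$, not merely $X_1^{\mathrm{cl}} \subseteq 2Z$, for the hypothesis of the lemma to apply; nothing in your argument arranges this.

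The paper's proof avoids Hilbert-scheme and semicontinuity machinery entirely. It runs by double induction on the number of support points and on $e(R/I_X)$. When $X$ is supported at a single point $P$ with $e(R/I_X) \ge 3$, one picks any length-$2$ subscheme $\zeta \subseteq X$ (automatically curvilinear), uses the hypothesis $h_{\PP^n}(\zeta,V)=2>1\ge h_{\PP^n}(P,V)$ to find $f \in V$ vanishing at $P$ but not on $\zeta$, and sets $Y = X \cap \mathbb{V}(f)$; then $e(R/I_Y) = e(R/I_X)-1$ and $h_{\PP^n}(Y,V) < h_{\PP^n}(X,V)$, which together with the inductive hypothesis on $Y$ forces $h_{\PP^n}(X,V) = e(R/I_X)$. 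When $X$ has $r\ge 2$ support points, three short claims peel off one point at a time, ultimately reducing to the $V$-independence of schemes $Z' \cup Q'$ with $Z', Q'$ curvilinear on disjoint supports --- hence curvilinear, hence $V$-independent by hypothesis. No deformation is needed.
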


\begin{proof} Suppose first that $X$ is multiplicity $V$-independent, then $X$ is multiplicity $d$-independent by Lemma \ref{multind}(2)  and 
$h_{\PP^n}(X,V) = e(R/I_X) = H_{R/I_X}(d).$
Particularly, $V$ contains all homogeneous polynomials of degree $d$ that are not in $I_X$. Let $Y\subseteq X$ be any 0-dimensional subscheme, then clearly $V$ contains all homogeneous polynomials of degree $d$ that are not in $I_Y$, and $Y$ is multiplicity $d$-independent by Lemma \ref{multind}(3).
Therefore, 
$$h_{\PP^n}(X,V) - h_{\PP^n}(Y,V) = \dim_\CC (I_Y \cap V) - \dim_\CC (I_X \cap V) = H_{R/I_X}(d) - H_{R/I_Y}(d) = e(R/I_X) - e(R/I_Y).$$
Since by assumption $h_{\PP^n}(X,V)=e(R/I_X)$, then $h_{\PP^n}(Y,V) = e(R/I_Y)$, and so $Y$ is multiplicity $V$-independent.

Suppose now that every curvilinear subscheme of $X$ is multiplicity $V$-independent. We shall
use induction on the number $r$ of points in the support of $X$ and $e(R/I_X)$ to show that $h_{\PP^n}(X,V) = e(R/I_X)$.
	
\textsc{Case 1:} $X$ is supported at a single point $P \in \PP^n$. If $e(R/I_X) = 1$ then $X = \{P\}$ and the statement is trivial. If $e(R/I_X) = 2$ then, locally at $P$, $X \cong \Spec(T)$ where $T$ is a local $\CC$-algebra of vector space dimension 2 over $\CC$. This implies that the maximal ideal $\m$ of $T$ is of vector space dimension 1 over $\CC$ and $\m^2 = 0$. It follows that $T \cong \CC[t]/(t^2)$. As a consequence (see Lemma \ref{lem.curvilinear}), $X$ is a curvilinear scheme. Therefore, $X$ is multiplicity $V$-independent by the hypotheses.

Assume that $e(R/I_X) > 2$. Let $Y \subseteq X$ be any subscheme with $e(R/I_Y) = e(R/I_X) - 1$. Clearly, $h_{\PP^n}(Y,V) \le h_{\PP^n}(X,V)$. Observe that any curvilinear subscheme of $X$ restricts to a curvilinear subscheme of $Y$. Thus, by the induction hypothesis, we conclude that $Y$ is multiplicity $V$-independent. That is,
$$h_{\PP^n}(Y, V) = e(R/I_Y) = e(R/I_X) - 1.$$
Particularly, this implies that $h_{\PP^n}(X,V) \le e(R/I_X) = h_{\PP^n}(Y,V)+1$. Thus, to show that $X$ is multiplicity $V$-independent it suffices to construct a subscheme $Y$ of $X$ such that $e(R/I_Y) = e(R/I_X) - 1$ and $h_{\PP^n}(X,V) = h_{\PP^n}(Y,V) + 1$ (equivalently, $h_{\PP^n}(X,V) > h_{\PP^n}(Y,V)$).

To this end, let $\zeta \subseteq X$ be a subscheme of multiplicity 2. As shown above $\zeta$ is a curvilinear subscheme of $X$. Thus, $\zeta$ is multiplicity $V$-independent, i.e., $h_{\PP^n}(\zeta, V) = 2$. On the other hand, $h_{\PP^n}(P, V) \le H_{R/I_P}(d) = 1$, by Lemma \ref{multind}(1). Therefore, there exists a homogeneous polynomial $f$ in $V$ that vanishes at $P$ but not on $\zeta$. Set $Z = \mathbb{V}(f)$ be the zero locus of $f$, and define $Y = X \cap Z$. Since $X$ is contained in $2P$, by imposing the condition that $f = 0$ on $Y$, we have $e(R/I_Y) = e(R/I_X) - 1$. Furthermore, $f$ vanishes on $Y$ but not on $X$, and so $h_{\PP^n}(X,V) > h_{\PP^n}(Y,V)$. The assertion follows in this case.

\textsc{Case 2:} $X$ is supported at $r$ points $P_1, \dots, P_r$ for $r \ge 2$. By induction on $r$, we may assume that the statement is true for schemes supported at $r-1$ points. 

 Let $I:=I_X= \q_1 \cap \ldots  \cap \q_r$ be an irredundant primary decomposition of $I=I_X$ and let $\p_i=\sqrt{\q_i}$ for every $i$. Let $\q:=\q_r$, let $Q$ be its associated scheme and let $Z\subseteq X$ be the scheme defined by $I_Z:=\q_1\cap \ldots \cap \q_{r-1}$. By assumption, every curvilinear scheme contained in $X$ is $V$-independent, and then so is every curvilinear scheme contained in $Z$. Since $Z$ is supported at $r-1$ points, by inductive hypothesis we have 
$$h_{\PP^n}(Z, V) = e(R/I_Z).$$

\noindent {\bf Claim 1.} $X$ is $V$-independent if one proves that $Q$ is $V\cap [I_Z]_d$-independent.\\
\\
\textit{Proof of Claim 1.} To prove that $X$ is $V$-independent we compute
$$
\begin{array}{ll}
h_{\PP^n}(X,V) & = \dim_{\CC}V - H_{I_Z \cap \q\cap V}(d)\\
		&  = [\dim_{\CC}V - H_{I_Z\cap V}(d)] + [H_{I_Z\cap V}(d) - H_{I_Z \cap \q\cap V}(d)]\\		& = h_{\PP^n}(Z,V)  + [H_{I_Z\cap V}(d) - H_{I_Z \cap \q\cap V}(d)]\\
		& = e(R/I_Z) +  [H_{I_Z\cap V}(d) - H_{I_Z\cap q\cap V}(d)].\\
\end{array}
$$
If $Q$ is $V\cap [I_Z]_d$-independent, then $H_{I_Z\cap V}(d) - H_{I_Z \cap \q\cap V}(d)=e(R/\q)$, and thus $h_{\PP^n}(X,V) = e(R/I_Z) + e(R/\q)=e(R/I)$.\\
\\ 
{\bf Claim 2.} It suffices to prove that $Z\cup Q'$ is $V$-independent for any curvilinear scheme $Q'\subseteq Q$.\\
\\
\textit{Proof of Claim 2.} Let $\q'\supseteq \q$ be the defining ideal of $Q'\subseteq Q$. By Claim 1 it suffices to prove that $Q$ is $V\cap [I_Z]_d$-independent. By the base case of induction, it suffices to prove that $Q'$ is $V\cap[I_Z]_d$-independent for any curvilinear scheme $Q'\subseteq Q$. We compute $h_{\PP^n}(Q', V\cap [I_Z]_d)$:
$$
\begin{array}{ll}
h_{\PP^n}(Q', V\cap [I_Z]_d)&= [\dim_{\CC}V - H_{I_Z\cap \q' \cap V}(d)] - [\dim_{\CC}V - H_{I_Z\cap V}(d) ]\\
& = h_{\PP^n}(Z\cup Q', V) - h_{\PP^n}(Z,V) \\
& = h_{\PP^n}(Z\cup Q', V) - e(R/I_Z).
\end{array}
$$
Therefore, if $Z\cup Q'$ is $V$-independent, then $h_{\PP^n}(Z\cup Q', V) = e(R/I_Z\cap \q')$ and, by the above computation, $h_{\PP^n}(Q', V\cap [I_Z]_d)= e(R/I_Z\cap \q') - e(R/I_Z)=e(R/\q')$, proving that $Q'$ is $V\cap [I_Z]_d$-independent. This establishes Claim 2.\\
\\
{\bf Claim 3.} It suffices to show that for any curvilinear scheme $Z' \subseteq Z$ one has $Z'$ is $V\cap [\q']_d$-independent.\\
\\
\textit{Proof of Claim 3.} Recall that by Claim 2 it suffices to prove $Z\cup Q'$ is $V$-independent. We observe that
$$
\begin{array}{ll}
h_{\PP^n}(Z\cup Q', V)  & = \dim_{\CC}V - H_{I_Z\cap \q' \cap V}(d)\\
&  = [\dim_{\CC}V - H_{\q'\cap V}(d)] + [H_{\q'\cap V}(d) - H_{I_Z\cap \q' \cap V}(d)] \\
& = h_{\PP^n}(Q', V) +h_{\PP^n}(Z, V\cap [Q']_d).
\end{array}
$$
Since $Q'\subseteq Q \subseteq X$ is curvilinear, $Q'$ is $V$-independent by the assumption, i.e., $h_{\PP^n}(Q',V) = e(R/\q')$. Thus, it suffices to prove that $h_{\PP^n}(Z, V\cap [Q']_d) = e(R/I_Z)$, because then, by the above computation, $h_{\PP^n}(Z\cup Q', X) = e(R/\q') + e(R/I_Z) = e(R/I_Z\cap \q')$, exhibiting that $Z\cup Q'$ is $V$-independent.

Since $Z$ is supported at $r-1$ points, by the inductive hypothesis, to prove that $Z$ is $V\cap [Q']_d$-independent it suffices to show that $Z'$ is $V\cap [Q']_d$-independent for any curvilinear subscheme $Z'\supseteq Z$. This proves Claim 3.\\
\\
We conclude the proof of Lemma \ref{lem.reductionCL} by showing that $Z'$ is $V\cap [\q']_d$-independent. First, we compute  $h_{\PP^n}(Z, V\cap [\q']_d)$:
$$
\begin{array}{ll}
h_{\PP^n}(Z, V\cap [\q']_d) &  = H_{\q'\cap V}(d) - H_{I_{Z'} \cap \q'\cap V}(d) \\
			&  = [\dim_{\CC}V - H_{I_{Z'}\cap \q'\cap V}(d)] - [\dim_{\CC}V - H_{\q'\cap V}(d)] \\
			& = h_{\PP^n}(Z'\cup Q', V) - h_{\PP^n}(Q', V).\\
\end{array}
$$
Since $Q'\subseteq Q\subseteq X$ is curvilinear, $h_{\PP^n}(Q', V) = e(R/\q')$ by the assumption. Since $Q'$ and $Z'$ are curvilinear and have disjoint support (because $\Ass(R/\q')=\{\p_r\}$ and $\Ass(R/I_{Z'})\subseteq \Ass(R/I_Z)=\{\p_1,\ldots,p_{r-1}\}$), we have that $Z'\cup Q'$ is locally curvilinear at points of the support, and so it is curvilinear. Since $Z'\cup Q'\subseteq Z\cup Q=X$,  $h_{\PP^n}(Z'\cup Q', V)=e(R/I_{Z'}\cap \q')$ by the assumption. Therefore,
$$
h_{\PP^n}(Z', V\cap [\q']_d) = e(R/I_{Z'} \cap \q') - e(R/\q')=e(R/I_{Z'}), 
$$
and Lemma \ref{lem.reductionCL} is established.
\end{proof}

Recall that, by Remark \ref{rmk.Hsemi}, to prove Theorem \ref{AH} for values of $n,d$ not in the list of exceptional cases, it suffices prove that a general set of $r$ double points has $\text{AH}_n(d)$ for
$$
 \Big\lfloor \dfrac{1}{n+1} {n+d \choose n} \Big\rfloor \leq r \leq  \Big\lceil \dfrac{1}{n+1} {n+d \choose n} \Big\rceil.
$$
When $r$ takes one of these two (sometimes coinciding) values, we let $q$ and $\epsilon$ be the quotient and remainder of the division of $r(n+1) - {n+d-1 \choose n}$ by $n$. For ease of references, we now provide the values of $q$ and $\epsilon$ for a few special choices of $n$ and $d$. 

We start with the case where $d = 4$.
\medskip
\begin{center}
\begin{tabular}{|c|c|c|c|c|c|}
	\hline
$n$ & value of $r$ & $\Delta:=r(n+1)- {n+d-1\choose n}$ & value of $q$ & value of $\epsilon$ & value of $r-q-\epsilon$ \\\hline
$n=2$ & $r= 5$  & $\Delta=5$ &  $q=2$ & $\epsilon =1$ & $r-q-\epsilon = 2$\\\hline
$n=3$ & $r=8$  & $\Delta = 12$ & $q=4$ & $\epsilon =0$ & $r-q-\epsilon = 4$\\
           & $r=9$ & $\Delta = 16$ & $q=5$ & $\epsilon =1$ & $r-q-\epsilon = 3$\\\hline
$n=4$ & $r=14$  & $\Delta = 35$ 
 & $q=8$ & $\epsilon =3 $ & $r-q-\epsilon = 3$\\\hline
$n=5$ & $r=21$  & $\Delta = 70$ & $q=14$ & $\epsilon =0 $ & $r-q-\epsilon = 7$\\\hline
$n=6$ &  $r=30$  & $\Delta = 126$ & $q=21$ & $\epsilon =0$ & $r-q-\epsilon = 9$\\\hline
$n=7$ &  $r=41$  & $\Delta = 208$ & $q=29$ & $\epsilon =5$ & $r-q-\epsilon = 7$\\
          &  $r=42$ & $\Delta = 216$ & $q=30$ & $\epsilon =6$ & $r-q-\epsilon = 6$\\\hline
$n=8$ & $r=55$  & $\Delta = 330$ 
 & $q=41$ & $\epsilon =2 $ & $r-q-\epsilon = 12$\\\hline
$n=9$  &  $r=71$  & $\Delta = 490$ & $q=54$ & $\epsilon =4$ & $r-q-\epsilon = 13$\\
          & $r=72$ & $\Delta = 500$ & $q=55$ & $\epsilon =5$ & $r-q-\epsilon = 12$\\\hline
\end{tabular}
\end{center}
\medskip

For $d=5$ and we get the following table.
\begin{center}
\begin{tabular}{|c|c|c|c|c|c|}
	\hline
$n$ & value of $r$ & $\Delta:=r(n+1)- {n+d-1\choose n}$ & value of $q$ & value of $\epsilon$ & value of $r-q-\epsilon$ \\\hline
$n=2$ & $r= 7$  & $\Delta=6$ &  $q=3$ & $\epsilon =0$ & $r-q-\epsilon = 4$\\\hline
$n=3$ & $r=14$  & $\Delta = 21$ & $q=7$ & $\epsilon =0$ & $r-q-\epsilon = 7$\\\hline
$n=4$ & $r=25$  & $\Delta = 55$  & $q=13$ & $\epsilon =3 $ & $r-q-\epsilon = 9$\\
	  & $r=26$  &	$\Delta = 60$  & $q=15$ & $\epsilon =0 $ & $r-q-\epsilon = 11$\\\hline
$n=5$ & $r=42$  & $\Delta = 126$ & $q=25$ & $\epsilon =1 $ & $r-q-\epsilon = 16$\\\hline
$n=6$ &  $r=66$  & $\Delta = 252$ & $q=42$ & $\epsilon =0$ & $r-q-\epsilon = 24$\\\hline
$n=7$ &  $r=99$  & $\Delta = 462$ & $q=66$ & $\epsilon =0$ & $r-q-\epsilon = 33$\\\hline
\end{tabular}
\end{center}
\medskip

We now prove a few basic numeric facts that will be employed later.
\begin{Lemma} \label{lem.numeric}
	For fixed integers $n \ge 2, d \ge 4$ and $0 \le r \le \left\lceil \frac{1}{n+1} {n+d \choose n} \right\rceil$, let $q \in \ZZ$ and $0 \le \epsilon < n$ be such that $nq+\epsilon = r(n+1) - {n+d-1 \choose n}$. Then,
	\begin{enumerate}
		\item $n\epsilon + q \le {n+d-2 \choose n-1}$,
		\item ${n+d-2 \choose n} \le (r-q-\epsilon)(n+1)$,
		\item $r-q-\epsilon \ge n+1$, for $d=4$ and $n\geq 8$. 
		\item $q\geq \epsilon$.
	\end{enumerate}
\end{Lemma}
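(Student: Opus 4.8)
The plan is to translate everything into binomial identities. Write $N=\binom{n+d}{n}$, $A=\binom{n+d-1}{n}$, $B=\binom{n+d-1}{n-1}$ and $C=\binom{n+d-2}{n-1}$. Pascal's rule gives $N=A+B$ and $A=C+\binom{n+d-2}{n}$, while the factorial formulas give the scalings $nA=dB$ and $dB=(n+d-1)C$, hence $nC-B=\tfrac{(n-1)(d-1)}{d}C$. Since $r\le\lceil\tfrac1{n+1}N\rceil$ we have $r(n+1)\le N+n$, so $\Delta:=r(n+1)-A=nq+\epsilon$ satisfies $\Delta\le B+n$; and for the (one or two) values of $r$ actually used in the proof of Theorem~\ref{AH} we also have $r\ge\lfloor\tfrac1{n+1}N\rfloor$, hence $\Delta\ge B-n$. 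The workhorse will be the elementary identity
\[
(r-q-\epsilon)(n+1)=A-(n\epsilon+q),
\]
immediate from $r(n+1)=A+nq+\epsilon$.

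Given this identity, parts (1) and (2) are \emph{equivalent}: since $A-C=\binom{n+d-2}{n}$, inequality (2) reads $\binom{n+d-2}{n}\le A-(n\epsilon+q)$, i.e.\ $n\epsilon+q\le C$, which is (1). To prove (1) I would use $n(n\epsilon+q)=(n^2-1)\epsilon+\Delta\le(n^2-1)(n-1)+B+n$ (from $\epsilon\le n-1$ and $\Delta\le B+n$), so that (1) follows once $(n-1)^2(n+1)+n\le nC-B=\tfrac{(n-1)(d-1)}{d}C$. Its right-hand side is increasing in $d$, so it suffices to treat the smallest admissible $d$ for each $n$: at $d=4$ the inequality rearranges to $(n-1)(n+1)(n-2)(n-4)\ge 8n$, which is clear for $n\ge 5$; for $n=3,4$ it holds as soon as $d\ge 5$, and for $n=2$ as soon as $d\ge 6$. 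The finitely many remaining pairs $(n,d)\in\{(2,4),(2,5),(3,4),(4,4)\}$ are read off the tables preceding the lemma (e.g.\ for $(4,4)$ one has $q=8$, $\epsilon=3$, so $n\epsilon+q=20=\binom{6}{3}$).

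For part (4), from $\Delta\ge B-n$ one gets $nq=\Delta-\epsilon\ge B-n-(n-1)$, so $q\ge\tfrac1n(B-2n+1)$; since $\epsilon\le n-1$ it suffices that $B-2n+1\ge n(n-1)$, i.e.\ $B\ge n^2+n-1$. As $B=\binom{n+d-1}{n-1}$ is increasing in $d$ with value $\binom{n+3}{4}$ at $d=4$, this reduces to $\binom{n+3}{4}\ge n^2+n-1$, which holds for all $n\ge 2$ (with equality at $n=2$).

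Part (3) is the delicate one and I expect it to be the main obstacle, because for $d=4$ the bound $\epsilon\le n-1$ is far from sharp and the uniform argument only reaches down to $n=10$. Here $d=4$, so $A=\binom{n+3}{3}$, $B=\binom{n+3}{4}$ and $nA=4B$; by the displayed identity, $r-q-\epsilon\ge n+1$ is equivalent to $n\epsilon+q\le A-(n+1)^2$. Feeding $n(n\epsilon+q)\le(n-1)^2(n+1)+B+n$ into this and clearing denominators, the claim reduces to $n^4-10n^3+3n^2-2n-8\ge 0$, i.e.\ $n^3(n-10)+(3n+4)(n-2)\ge 0$, which holds for all $n\ge 10$. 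The two remaining cases $n=8,9$ are then checked against the $d=4$ table, where $r-q-\epsilon$ equals $12$ for $n=8$ and $12$ or $13$ for $n=9$, each at least $n+1$. The genuinely delicate point is precisely this: the clean estimates leave a gap for small $n$ when $d=4$, and bridging it requires either a sharper control of $\epsilon$ or, as here, the two explicit table computations.
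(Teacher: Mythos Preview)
Your proof is correct and follows essentially the same route as the paper's: bound $n(n\epsilon+q)$ via $\epsilon\le n-1$ and $\Delta\le B+n$, reduce (2) to (1) through the identity $(r-q-\epsilon)(n+1)=A-(n\epsilon+q)$, and handle (3) by the same identity with table checks at $n=8,9$. A few differences are worth noting. Your bound $n(n\epsilon+q)=(n^2-1)\epsilon+\Delta$ is marginally sharper than the paper's $n^2\epsilon+nq\le n^2(n-1)+\Delta$, which shifts the set of exceptional $(n,d)$ pairs: you get $\{(2,4),(2,5),(3,4),(4,4)\}$, whereas the paper lists $d=4$, $3\le n\le 5$---in fact the paper's stated list omits the $n=2$ cases where its own inequality fails, so your bookkeeping is more accurate here. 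Your proof of (4) is direct rather than by contradiction, but the underlying inequality $\binom{n+3}{4}\ge n^2+n-1$ is equivalent to the paper's. You also correctly flag that the lower bound $r\ge\lfloor N/(n+1)\rfloor$ is needed for (4) (and your argument for (3)); the paper uses it too but the lemma as stated only assumes $0\le r$.
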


\begin{proof}
(1) We prove the equivalent statement that $n(n\epsilon + q) \leq n {n+d-2 \choose n-1}$. Clearly, $nq\leq r(n+1) - {n+d-1 \choose n}$. Since $r \le \left\lceil \frac{1}{n+1} {n+d \choose n} \right\rceil$, we have $(n+1)r\leq {n+d \choose n} + n$, and so
\begin{equation}\label{numer1}
n^2 \epsilon + nq \leq n^2(n-1) + {n+d \choose n} + n - {n+d-1 \choose n} = n^2(n-1) + {n+d-1 \choose n-1} + n.
\end{equation}

The right-hand side is at most $n{n+d-2 \choose n-1}$ except when $d=4$ and $3\leq n\leq 5$. In these three cases however the inequality still holds, as one can check directly with given values of $q$ and $\epsilon$ in the above tables.

(2) Since $(r-q-\epsilon)(n+1)  = r(n+1) - (nq+\epsilon) - (n\epsilon + q)$, we have
$$
(r-q-\epsilon)(n+1) = {n+d-1\choose n} - (n\epsilon + q) \geq {n+d-1\choose n} - {n+d-2\choose n-1} = {n+d-2 \choose n},
$$
where the middle inequality follows from (1).

(3) We prove the equivalent statement that $(r-q-\epsilon)(n+1) \ge (n+1)^2$ for $d=4$ and $n\geq 8$. By the computation in (2), $(r-q-\epsilon)(n+1) \geq (n+1)^2$ holds if and only if $\binom{n+3}{n} - (n\epsilon + q) \geq (n+1)^2$. This holds if and only if
$$
(n+1) \left( \frac{(n+3)(n+2)}{6} - (n+1) \right) \geq n\epsilon + q \Llra {n+1\choose 3} \geq n\epsilon + q.
$$
By equation (\ref{numer1}), $n\epsilon + q\leq \frac{1}{n}\left(n^2(n-1) + {n+3 \choose 4} + n\right)$. The right-hand side is at most ${n+1 \choose 3}$ if and only if $n^3-10n^2+3n-10\geq 0$. This inequality holds for all $n\geq 10$. For the cases $n=8,9$ the inequality is easily checked using the above table.

(4) Assume by contradiction that $q<\epsilon$. Then, $r(n+1)-{n+d-1 \choose n} = nq+\epsilon < (n+1)\epsilon\leq (n+1)(n-1)$.
From the definition of $r$, one also sees that $r(n+1)> {n+d \choose n} - (n+1)$, and so $r(n+1) \geq {n+d\choose n}-n$. As a consequence, we have
${n+d\choose n}-n \leq r(n+1) < (n+1)(n-1) + {n+d-1 \choose n}$. Thus, 
${n+d-1 \choose n-1} < (n+1)(n-1) + n$. Since $(n+1)(n-1)+n-1 = (n+2)(n-1)$, this leads to
$
{n+d-1 \choose n-1} \leq (n+2)(n-1).
$

It is well-known that ${n+d-1 \choose n-1}$ increases as $d$ increases, so the left-hand side is at least  ${n+3 \choose n-1}={n+3 \choose 4}$. In particular,
$\frac{(n+3)(n+2)(n+1)n}{24}\leq (n+2)(n-1)$. Therefore, $f(n)\leq 0$, where  $f(n):=(n+3)(n+1)n -24(n-1).$

On the other hand, it is easily seen that $f(n)$ is increasing for $n\geq 2$ and $f(2)=6>0$, thus $f(n)>0$ for every $n\geq 2$, yielding a contradiction.
\end{proof}

We are ready to present the core inductive argument for Theorem \ref{AH}. The proof follows the four steps we outlined when we illustrated the {\em m\'ethode d'Horace diff\'erentielle}.

\begin{Theorem}\label{core}
	For fixed $n \ge 2, d \ge 4$ and ${\displaystyle \Big\lfloor \dfrac{1}{n+1}\binom{n+d}{n}\Big\rfloor \le r \le \Big\lceil \dfrac{1}{n+1} {n+d \choose n} \Big\rceil}$, let $q \in \ZZ$ and $0 \le \epsilon < n$ be such that $nq+\epsilon = r(n+1) - {n+d-1 \choose n}$. Suppose that
	\begin{enumerate}
		\item[(i)] $q$ general double points are $\text{AH}_{n-1}(d)$,
		\item[(ii)] $r-q$ general double points  are $\text{AH}_n(d-1)$,
		\item[(iii)] $r-q-\epsilon$ general double points  are $\text{AH}_n(d-2)$.
	\end{enumerate}
	Then, $r$ general double points are $\text{AH}_{n}(d)$.
\end{Theorem}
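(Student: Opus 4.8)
The plan is to construct the specific degenerate configuration prescribed by the \emph{m\'ethode d'Horace diff\'erentielle} and to run the four steps sketched above.

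\textit{Step 1 and the reduction to a statement about $\Gamma$.} By Remark~\ref{rmk.Hsemi} it suffices to exhibit one configuration of $r$ double points in $\PP^n$ that is $\text{AH}_n(d)$. Fix a hyperplane $L=\{\ell=0\}\simeq\PP^{n-1}$ and choose general sets of points $\Psi$ (of cardinality $r-q-\epsilon$, off $L$), $\Lambda$ (of cardinality $q$, on $L$) and $\Gamma$ (of cardinality $\epsilon$, on $L$); set $X:=2\Psi\cup 2\Lambda\cup 2\Gamma$ and $V:=[I_{2\Psi\cup 2\Lambda}]_d$. As the three supports are disjoint, $I_X=I_{2\Psi\cup 2\Lambda}\cap I_{2\Gamma}$, so $I_{2\Gamma}\cap V=[I_X]_d$ and, by Definition~\ref{def:mult}, $h_{\PP^n}(2\Gamma,V)=H_{R/I_X}(d)-H_{R/I_{2\Psi\cup 2\Lambda}}(d)$. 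Applying Castelnuovo's Inequality (Lemma~\ref{lem.Cast}) to $2\Psi\cup 2\Lambda$ with respect to $\ell$ — the successive residues being $2\Psi\cup\Lambda$ in degree $d-1$ and $2\Psi$ in degree $d-2$, the successive traces being $q$ double points, then $q$ simple points, in $L$ — and feeding in hypothesis~(i) for the trace in degree $d$, hypothesis~(iii) for $2\Psi$ in degree $d-2$, hypothesis~(ii) together with Proposition~\ref{prop.hyperp} for the mixed scheme $2\Psi\cup\Lambda$ in degree $d-1$, the elementary maximal rank statement for general simple points, and the inequalities of Lemma~\ref{lem.numeric}(1)--(2) to collapse all the minima, one obtains $H_{R/I_{2\Psi\cup 2\Lambda}}(d)\ge\min\{{n+d\choose n},r(n+1)\}-\epsilon(n+1)$. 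Since $H_{R/I_X}(d)\le\min\{{n+d\choose n},r(n+1)\}$ always holds by Corollary~\ref{cor.count2}, combining the two displayed facts reduces the Theorem to showing that $2\Gamma$ is multiplicity $V$-independent, i.e. $h_{\PP^n}(2\Gamma,V)=e(R/I_{2\Gamma})=\epsilon(n+1)$.

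\textit{Steps 3 and 4: the differential deformation.} This is the heart of the proof and where I expect the real difficulty. By the Curvilinear Lemma (Lemma~\ref{lem.reductionCL}), $2\Gamma$ is multiplicity $V$-independent provided every curvilinear subscheme of $2\Gamma$ is; since a curvilinear subscheme of a double point has length at most $2$, one is reduced to curvilinear schemes $\Theta\subseteq 2\Gamma$ supported on $\Gamma$ whose component at each point of $\Gamma$ is a length-$\le 2$ jet. To control these I would deform: take a flat family of hyperplanes $L_{t_1},\dots,L_{t_\epsilon}$ with $L_{t_i}\to L$ as $\mathbf{t}\to 0$, general points $\Gamma_{\mathbf{t}}$ with the $i$-th one on $L_{t_i}$ and with $\Gamma_{\mathbf{t}}\to\Gamma$, and a curvilinear $\Theta_{\mathbf{t}}\subseteq 2\Gamma_{\mathbf{t}}$ whose tangent direction at the $i$-th point is tangent to $L_{t_i}$; then the flat limit $\Theta_0$ is curvilinear, supported on $\Gamma$, with all directions tangent to $L$, hence $\Theta_0\subseteq L$ scheme-theoretically. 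Applying Castelnuovo (Lemma~\ref{lem.Cast}) to $2\Psi\cup 2\Lambda\cup\Theta_0$ with respect to $\ell$, the residue is again $2\Psi\cup\Lambda$ (degree $d-1$) and the trace on $L$ is $2\Lambda_{|L}\cup\Theta_0$ (degree $d$); subtracting the Castelnuovo contribution of $2\Psi\cup 2\Lambda$ alone, the multiplicity $V$-independence of $\Theta_0$ in $\PP^n$ reduces to the multiplicity $[I_{2\Lambda_{|L}}]_d$-independence of $\Theta_0$ inside $L\simeq\PP^{n-1}$, a lower-dimensional statement governed by hypothesis~(i) (which asserts that $2\Lambda_{|L}$ is $\text{AH}_{n-1}(d)$). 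A semi-continuity argument (Appendices~\ref{app.Hilbertfn} and~\ref{app.semi-cont}), using that curvilinear schemes are dense in the relevant Hilbert scheme (Proposition~\ref{prop:dense}) to promote what we have just proved for the special member $\Theta_0$ to the generic member $\Theta_{\mathbf{t}}$ — and, reversing the reductions above, to $2\Gamma_{\mathbf{t}}$ for a suitable $\mathbf{t}$ — then exhibits a configuration $2\Psi\cup 2\Lambda\cup 2\Gamma_{\mathbf{t}}$ of $r$ double points in $\PP^n$ that is $\text{AH}_n(d)$, which by Step~1 finishes the proof.

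\textit{Main obstacle.} The delicate points all lie in Steps 3--4: (a) rigging the tilted hyperplanes $L_{t_i}$ so that the flat limits of the residues, of the traces and of the chosen curvilinear subschemes are exactly the schemes one needs — this is the ``differential'' content of the method; (b) keeping the direction of lower semi-continuity straight and combining it with a contradiction argument to pass from ``$\Theta_0\subseteq L$ works'' to ``$2\Gamma_{\mathbf{t}}$ works for some $\mathbf{t}$''; and (c) controlling the possible failure of $\overline{I}$ to be saturated in the Castelnuovo identities, so that the residue and trace contributions add up to precisely $\min\{{n+d\choose n},r(n+1)\}$. One should also check that the numerical hypotheses on $r,q,\epsilon$ supplied by Lemma~\ref{lem.numeric} are exactly strong enough for every minimum occurring in (i)--(iii) and in their consequences to reduce to the expected linear term.
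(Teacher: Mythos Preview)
Your overall skeleton is correct and close to the paper's, but the logic in Steps~3--4 runs in the wrong direction and contains a genuine gap.

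The Curvilinear Lemma (Lemma~\ref{lem.reductionCL}) says that $2\Gamma$ is $V$-independent if and only if \emph{every} curvilinear subscheme of $2\Gamma$ is $V$-independent. Your deformation produces \emph{one} explicit family $\Theta_\t$ (the one whose tangent direction at the $i$-th point lies in $L_{t_i}$), shows that its limit $\Theta_0\subseteq L$ is good, and then uses semi-continuity to conclude that the generic member $\Theta_\t$ is good. But this only yields $V$-independence of a single curvilinear subscheme of $2\Gamma_\t$; it says nothing about curvilinear subschemes with other tangent directions, so you cannot ``reverse the reduction'' to conclude that $2\Gamma_\t$ itself is $V$-independent. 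Density of curvilinear schemes in the Hilbert scheme does not help here, because the Curvilinear Lemma needs every curvilinear subscheme, not a generic one.

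The paper fixes this by arguing by contradiction. One supposes that $2\Gamma_\t$ fails to be $V$-independent for \emph{all} $\t$; then for each $\t$ the Curvilinear Lemma hands you a \emph{bad} curvilinear $J_\t\subseteq 2\Gamma_\t$. Now one takes limits $J_{i,0}$ of these bad schemes as $\t\to 0$, and the crucial step is to partition the indices according to whether $\ell\in J_{i,0}$ (set $B$), $\ell\notin J_{i,0}$ (set $A$), and $\ell\in\sqrt{J_{i,0}}\setminus J_{i,0}$ (set $A'\subseteq A$). Applying Castelnuovo to the partially specialized scheme $J_0^A\cap J_\t^B$ and tracking the multiplicity lost to the colon $J_0^A:\ell$ (which is exactly $a'=|A'|$) against the multiplicity picked up in the trace $2\Lambda_{|L}\cup\{\gamma_i:i\in A'\}$ gives a bound that \emph{contradicts} the assumed failure. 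This bookkeeping is exactly what makes the ``differential'' version of Horace work; your proposal skips it.

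Two smaller points. First, the paper's Step~2 proves more than your reduction: one needs that $2\Psi\cup 2\Gamma_{|L}\cup\Lambda$ (not just $2\Psi\cup\Lambda$) is multiplicity $(d-1)$-independent (Claim~1), because that is what lets you control the residue $I_\Lambda\cap I_\Psi^{(2)}\cap(J_0^A:\ell)\cap J_0^B$ in the contradiction step. Your Proposition~\ref{prop.hyperp} argument gives only the weaker statement. Second, you have not separated the two cases $r=\lfloor\cdot\rfloor$ versus $r=\lceil\cdot\rceil$: in the latter you do not need (and cannot get) the full $\epsilon(n+1)$ from $2\Gamma$, and the paper instead works with an auxiliary scheme $K\supseteq I_{\Gamma_\t}^{(2)}$ of multiplicity $n\epsilon+\nu$ where $\nu=\binom{n+d-1}{n-1}-nq<\epsilon$.
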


\begin{proof} By Remark \ref{rmk.Hsemi}, it suffices to construct a set of $r$ double points in $\PP^n$ which is $\text{AH}_n(d)$. This set of $r$ double points arises in the form $2\Psi \cup 2\Lambda \cup 2\Gamma_\t$, for some family of parameters $\t$, where the sets $\Psi, \Lambda$ and $\Gamma_\t$ are constructed as in the outlined steps. To understand the construction better, we shall use 21 double points in $\PP^3$ and degree 6 as our running example; in this particular situation, $r=21$, $d=6$, $q=9$ and $\epsilon = 1$.
	
\noindent{\bf Step 1.} We first fix a hyperplane $L \simeq \PP^{n-1}$ in $\PP^n$, with defining equation $\ell=0$. We  take a set of $q+\epsilon$ general points in $L$, let $\Gamma = \{\gamma_1, \dots, \gamma_\epsilon\}$ be a subset of $\epsilon$ of these points, and let $\Lambda$ be the set consisting of the remaining $q$ points. Finally, we take a set $\Psi$ of $r-q-\epsilon$ general points in $\PP^n$ outside of $L$. 
(In our running example, $\Gamma$ consists of a single point in $L$, $\Lambda$ of 9 general points in $L$, and $\Psi$ of 11 general points outside of $L$.)

\noindent{\bf Step 2.}	 By (ii), we have
$$
H_{R/(I_{\Psi}^{(2)} \cap I_{\Gamma}^{(2)})}(d-1) = \min \left\{(n+1)(r-q), {n+d-1 \choose n}\right\} =  (n+1)(r-q),
$$
where the rightmost equality holds because Lemma \ref{lem.numeric}(4) yields ${n+d-1 \choose n} = (n+1)(r-q)-\epsilon+q\geq (n+1)(r-q)$.
Now, if we consider $\Gamma|_L$ instead of $\Gamma$, then the linear system associated to $[I_{\Psi}^{(2)} \cap I_{\Gamma|_L}^{(2)}]_{d-1}$ 
is obtained by removing $\epsilon$ equations from the linear system of equations defined by $[I_{\Psi}^{(2)} \cap I_{\Gamma}^{(2)}]_{d-1}$ (more precisely, the ones corresponding to setting the partial derivatives with respect to $\ell$ equal to $0$).
One then obtains
$$
H_{R/(I_{\Psi}^{(2)} \cap I_{\Gamma|_L}^{(2)})}(d-1) = \min \left\{(n+1)(r-q)-\epsilon, {n+d-1 \choose n}\right\}=(n+1)(r-q)-\epsilon
$$
(= 47 for the running example),
and then $H_{I_{\Psi}^{(2)} \cap I_{\Gamma|_L}^{(2)}}(d-1) = {n+d-1 \choose n} - H_{R/(I_{\Psi}^{(2)} \cap I_{\Gamma|_L}^{(2)})}(d-1) = q$ ($=9$ in the running example). \\
\\
{\bf Claim 1.}
$H_{R/(I_{\Psi}^{(2)} \cap I_{\Gamma|_L}^{(2)} \cap I_{\Lambda})}(d-1) = e(R/(I_{\Psi}^{(2)} \cap I_{\Gamma|_L}^{(2)} \cap I_{\Lambda})) = {n+d-1 \choose d-1}$. \\
(that is, $H_{R/(I_{\Psi}^{(2)} \cap I_{\Gamma|_L}^{(2)} \cap I_{\Lambda})}(5) = \binom{3+6-1}{3}=56$  for the running example.)\\
\\
Notice that Claim 1 implies that $2\Psi \cup 2\Gamma_{|L} \cup \Lambda$ is multiplicity $(d-1)$-independent.
The rightmost equality in the claim holds because ${n+d-1 \choose d-1} = {n+d-1 \choose d-1} - \epsilon + q=e(R/(I_{\Psi}^{(2)} \cap I_{\Gamma|_L}^{(2)} \cap I_{\Lambda}))$. To conclude the proof it then suffices to show that $H_{R/(I_{\Psi}^{(2)} \cap I_{\Gamma|_L}^{(2)} \cap I_{\Lambda})}(d-1)={n+d-1 \choose d-1}=H_R(d-1)$, i.e. $[I_{\Psi}^{(2)} \cap I_{\Gamma|_L}^{(2)} \cap I_{\Lambda}]_{d-1}=0$.

If we restate the paragraph before the Claim in terms of linear algebra, we see that the solution set of the linear system defined by $[2\Psi \cup 2\Gamma_{|L}]_{d-1}$ is a $q$-dimensional vector space. Now, for each simple general point in $\PP^n$ that we are adding  to $2\Psi \cup 2\Gamma_{|L}$, we are adding a general linear equation to this system, so we are reducing the dimension of the solution set by 1.
Thus, if we add $q$ general simple points in $\PP^n$ to $2\Psi \cup 2\Gamma_{|L}$, then the corresponding ideal contains no forms of degree $d-1$. It follows that if we add $q$ points to $2\Psi \cup 2\Gamma_{|L}$, and these $q$ additional points lie on $L$, then the defining equation $\ell$ of $L$ divides the equation of any hypersurface of degree $d-1$ passing through $2\Psi \cup 2\Gamma_{|L}$ and these $q$ points. In particular, any form $F \in [I_{\Psi}^{(2)} \cap I_{\Gamma|_L}^{(2)} \cap I_{\Lambda}]_{d-1}$ is divisible by $\ell$, and we can write $F=F_1\ell$.

Since $\ell \in I_{\Lambda}$ (because $\Lambda \subseteq L$) and $\ell$ is regular on $R/I_{\Psi}$ (because none of the points of $\Psi$ lies on $L$), we have that $F_1$ is a degree $(d-2)$ form in
$$
(I_{\Psi}^{(2)} \cap I_{\Gamma|_L}^{(2)} \cap I_{\Lambda}):\ell = I_{\Psi}^{(2)} \cap (I_{\Gamma|_L}^{(2)} :\ell)\subseteq I_{\Psi}^{(2)}.
$$
However, by (iii), we know that $H_{I_{\Psi}^{(2)}}(d-2) = \max \left\{0, {n+d-2 \choose n} - (r-q-\epsilon)(n+1)\right\} = 0$. Therefore, $F_1=0$, and so $F=0$. Hence, $[I_{\Psi}^{(2)} \cap I_{\Gamma|_L}^{(2)} \cap I_{\Lambda}]_{d-1}=0$, and Claim 1 is proved.

To prove the theorem we need to prove the following equality
$$
H_{R/(I_{\Lambda}^{(2)} \cap I_{\Psi}^{(2)} \cap I_{\Gamma}^{(2)})}(d) = \min \left\{(n+1)r, {n+d \choose n}\right\}.
$$
We now proceed by considering two different cases depending on which of the two possible values the right-hand side may take. Since, by assumption, ${\displaystyle \Big\lfloor \dfrac{1}{n+1}\binom{n+d}{n}\Big\rfloor \le r \le \Big\lceil \dfrac{1}{n+1} {n+d \choose n} \Big\rceil}$, it can be easily seen that
\begin{itemize}
\item $ \min \left\{(n+1)r, {n+d \choose n}\right\} = (n+1)r$ holds precisely if ${\displaystyle r= \Big\lfloor \dfrac{1}{n+1} {n+d \choose n} \Big\rfloor}$,
\item $ \min \left\{(n+1)r, {n+d \choose n}\right\}= {n+d \choose n} > r(n+1)$ holds if ${\displaystyle r =\Big\lceil \dfrac{1}{n+1} {n+d \choose n} \Big\rceil > \Big\lfloor \dfrac{1}{n+1} {n+d \choose n} \Big\rfloor}$.
\end{itemize}
The running example of 21 double points in $\PP^3$ falls in the first possibility.

\medskip
\noindent\underline{\textsc{Case 1:}} ${\displaystyle r = \Big\lfloor \dfrac{1}{n+1} {n+d \choose n} \Big\rfloor}$. In this case, ${\displaystyle r(n+1) \le {n+d \choose n}}$, $ nq + \epsilon \leq {n+d-1 \choose n-1}$, 
and by the above observation we need to show that 
$$
H_{R/(I_{\Lambda}^{(2)} \cap I_{\Psi}^{(2)} \cap I_{\Gamma}^{(2)})}(d) = (n+1)r . 
$$
{\bf Claim 2.} $H_{R/(I_{\Lambda}^{(2)} \cap I_{\Psi}^{(2)})  }(d) = e(R/(I_{\Lambda}^{(2)} \cap I_{\Psi}^{(2)})) = (n+1)(r-\epsilon).$ \\
\\
Castelnuovo's inequality (\ref{eq.Castelnuovo}) gives $H_{R/(I_{\Lambda}^{(2)} \cap I_{\Psi}^{(2)})  }(d) \geq
H_{R/(I_{\Lambda} \cap I_{\Psi}^{(2)})}(d-1) + H_{\ovl{R}/I_{\Lambda|_L}^{(2)} }(d)$.
By Claim 1 and Lemma \ref{reduce}(1), $H_{R/(I_{\Psi}^{(2)} \cap I_{\Lambda})  }(d-1) = (n+1)(r-q-\epsilon) + q$. By assumption (ii) and the inequality $nq\leq \binom{n+d-1}{n-1}$, we have
$$H_{\ovl{R}/I_{\Lambda|_L}^{(2)} }(d)=\min\left\{nq, {n-1+d\choose n-1}\right\} = nq = e(\ovl{R}/I_{\Lambda|_L}^{(2)}).$$
Thus, $H_{R/(I_{\Lambda}^{(2)} \cap I_{\Psi}^{(2)})  }(d)\geq  (n+1)(r-q-\epsilon) + q + nq= (n+1)(r-\epsilon)$. Since the other inequality always holds by Corollary \ref{cor.count2}, Claim 2 is proved.

To finish this case it suffices to prove that $I_{\Gamma}^{(2)}$ is multiplicity $[I_{\Lambda}^{(2)} \cap I_{\Psi}^{(2)}]_d$-independent, because then
$$
H_{R/(I_{\Lambda}^{(2)} \cap I_{\Psi}^{(2)} \cap I_{\Gamma}^{(2)})}(d) = H_{R/(I_{\Lambda}^{(2)} \cap I_{\Psi}^{(2)})}(d) + e(R/I_{\Gamma}^{(2)}) = (n+1)(r-\epsilon) + (n+1)\epsilon = (n+1)r.
$$
Instead of proving this statement directly, we will use deformation to consider a family of general points $\Gamma_\t$ having $\Gamma$ as a limit (as we shall explain in the upcoming Step 3).

For now, we observe the following fact. As before, if we add $\epsilon$ general points of $L$ to $2\Lambda|_L$ then we are adding $\epsilon$ general equations to the linear system determined by $[I_{\Lambda|_L}^{(2)}]_d$, and so
	\begin{align}
	H_{\ovl{R}/(I_{\Lambda|_L}^{(2)} \cap I_{\Gamma}) }(d) =  \min\left\{nq + \epsilon, {n-1+d\choose n-1}\right\}=nq+\epsilon. \label{eq.gen2}
	\end{align}
(In our running example, $H_{\ovl{R}/I_{\Lambda|_L}^{(2)} \cap I_{\Gamma}}(d) = (3)(9)+1=28$.)

	\noindent{\bf Step 3.} For $\t = (t_1, \dots, t_\epsilon) \in K^\epsilon$, consider a flat family of general points $\Gamma_\t = \{\gamma_{1,t_1}, \dots, \gamma_{\epsilon,t_\epsilon} \}$ in  $\PP^n$ and a family of hyperplanes $\{L_{t_1}, \dots, L_{t_\epsilon}\}$ such that
	\begin{enumerate}
		\item the point $\gamma_{i, t_i}$ lies in $L_{t_i}$, for all $i = 1, \dots, \epsilon$,
		\item $\gamma_{i,t_i} \not\in L$ for any $t_i \not= 0$ and any $i = 1, \dots, \epsilon$,
		\item $L_0 = L$ and $\gamma_{i,0} = \gamma_i \in L$ for any $i = 1, \dots, \epsilon.$
	\end{enumerate}
	(For the running example, we have a family of general points $\Gamma_t = \{\gamma_t\} \subseteq \PP^3$ and a family of hyperplanes $L_t$, for $t \in K$.)

	\noindent{\bf Step 4.} To prove $I_{\Gamma}^{(2)}$ is multiplicity $[I_{\Lambda}^{(2)} \cap I_{\Psi}^{(2)}]_d$-independent, by Corollary \ref{cor.count2} and Theorem \ref{thm.semicontinuityHF} it suffices to prove that there exists $\t = (t_1, \dots, t_\epsilon) \in K^\epsilon$ such that $I_{\Gamma_\t}^{(2)}$ is multiplicity $[I_{\Lambda}^{(2)} \cap I_{\Psi}^{(2)}]_d$-independent, because then
	$$
(n+1)r \geq H_{R/(I_{\Lambda}^{(2)} \cap I_{\Psi}^{(2)} \cap I_{\Gamma}^{(2)})}(d) \geq H_{R/(I_{\Lambda}^{(2)} \cap I_{\Psi}^{(2)} \cap I_{\Gamma_{\t}}^{(2)})}(d)=(n+1)r.
	$$
	
	Suppose, by contradiction, that such a $\t$ does not exist. Then, by Lemma \ref{lem.reductionCL}, for each $\t = (t_1, \dots, t_\epsilon)$, there exist curvilinear ideals  $J_{i,t_i}$ such that $I_{\gamma_{i,t_i}}^{(2)} \subseteq J_{i,t_i}\subseteq I_{\gamma_{i,t_i}}$ and, by letting $J_{\t} = \bigcap_{i=1}^{\epsilon} J_{i,t_i}$, we then have
	\begin{align}
H_{R/(I_{\Lambda}^{(2)} \cap I_{\Psi}^{(2)} \cap J_\t) }(d)  & < H_{R/(I_{\Lambda}^{(2)} \cap I_{\Psi}^{(2)})}(d) + e(R/J_\t)    = (n+1)(r-\epsilon) + e(R/J_{\t}). \label{eq.gen3} 
	\end{align}
	(For the running example, there is a single point $\gamma_t$, so $I_{\gamma_t}$ is a linear prime and $J_\t$ is a curvilinear ideal $J_t$ with $I_{\gamma_t}^{(2)} \subseteq J_t \subseteq I_{\gamma_t}$ and $H_{R/(I_{\Lambda}^{(2)} \cap I_{\Psi}^{(2)} \cap J_t)}(6) <  (3+1)(20) + e(R/J_t)=80+e(R/J_t)$.)

Since $J_{\t}$ is a curvilinear ideal, by Proposition \ref{prop:dense}, for every $i=1,\ldots,\epsilon$ the family $\{J_{i,t_i}\}$ has a limit $J_{i,0}$.
Let $J_0=\bigcap_{i=1}^{\epsilon} J_{i,0}$.

Let $A:=\{i ~\big|~ \ell\notin J_{i,0}\}$, $B :=\{i ~\big|~ \ell\in J_{i,0}\}$ and $A':=\{i\in A~\big|~\ell \in \sqrt{J_{i,0}}\}$. We set $a=|A|$, $a'=|A'|$, and $b:=|B|$. For each $\t\in K^\epsilon$, set $J_{\t}^A =\bigcap_{i\in A}J_{i,t_i}$, 
and $J_{\t}^B =\bigcap_{i\in A}J_{i,t_i}$, in particular $J_\t=J_\t^A \cap J_\t^B$. We also set $I_{\Gamma}^{A'}= \bigcap_{i\in A'} I_{\gamma_i}$.

	By the semi-continuity of Hilbert function and (\ref{eq.gen3}), there exists an open neighborhood $U$ of $0$ such that for any $\t \in U$, we have the equalities $e(R/J_\t^B)=e(R/J_0^B)$, $e(R/J_\t^A)=e(R/J_0^A)$ (so $e(R/J_\t)=e(R/J_0)$), and 
	\begin{align}
H_{R/(I_{\Lambda}^{(2)} \cap I_{\Psi}^{(2)} \cap J_0^A\cap J_{\t}^B)}(d) =  H_{R/(I_{\Lambda}^{(2)} \cap I_{\Psi}^{(2)} \cap J_{\t}^A\cap J_{\t}^B)}(d)<(n+1)(r-\epsilon) + e(R/J_{\t}), \label{eq.cont}
	\end{align}
	(for the running example, we have  $H_{R/(I_{\Lambda}^{(2)} \cap I_{\Psi}^{(2)} \cap J_0^A\cap J_{\t}^B)}(d) < 82$), and
	$$H_{R/(I_{\Lambda}^{(2)} \cap I_{\Psi}^{(2)} \cap (J_0^A:\ell) \cap J_{\t}^B)}(d-1)= H_{R/(I_{\Lambda}^{(2)} \cap I_{\Psi}^{(2)} \cap (J_0^A:\ell) \cap J_{0}^B)}(d-1).$$

We want to show $H_{R/(I_{\Lambda}^{(2)} \cap I_{\Psi}^{(2)} \cap J_0^A\cap J_{\t}^B)}(d) \geq  (n+1)(r-\epsilon) + e(R/J_{\t})$, which would then contradict (\ref{eq.cont}).
For any $\t \in U$, the Castelnuovo inequality gives
$$
H_{R/I_{\Lambda}^{(2)} \cap I_{\Psi}^{(2)} \cap J_0^A \cap J_{\t}^B}(d) \geq H_{R/I_{\Lambda} \cap I_{\Psi}^{(2)} \cap (J_0^A:\ell) \cap J_{\t}^B}(d-1)
 + H_{\ovl{R}/(I_{\Lambda|_L}^{(2)} \cap I_{\Gamma|_L}^{A'})}(d),
$$
where $\ovl{R}\cong R/(\ell)$ and $I_{\Gamma|_L}^{A'}$ is the defining ideal of $\{\gamma_i\,\mid\,i\in A'\}$ in $\ovl{R}$.

We examine the first summand which, by the choice of $U$, equals $H_{R/(I_{\Lambda}^{(2)} \cap I_{\Psi}^{(2)} \cap (J_0^A:\ell) \cap J_{0}^B)}(d-1)$ for every $\t\in U$. By Claim 1, the ideal $I_{\Lambda} \cap I_{\Psi}^{(2)} \cap I_{\Gamma|_L}^{(2)} $ is multiplicity $(d-1)$-independent, so -- by Lemma \ref{multind} -- 
 the larger ideal $I_{\Lambda} \cap I_{\Psi}^{(2)} \cap (J_0^A:\ell) \cap J_0^B$ is multiplicity $(d-1)$-independent too. Thus,
\begin{align}
H_{R/(I_{\Lambda} \cap I_{\Psi}^{(2)} \cap (J_0^A:\ell) \cap J_{0}^B)}(d-1) & = e(R/(I_{\Lambda} \cap I_{\Psi}^{(2)} \cap (J_0^A:\ell) \cap J_{0}^B))  \label{eq.gen50} \\
	& = e(R/(I_{\Lambda} \cap I_{\Psi}^{(2)})) + e(R/(J_0^A:\ell) \cap J_0^B)\nonumber \\
	& = q+(n+1)(r-q-\epsilon) + e(R/(J_0^A:\ell) \cap J_0^B) \nonumber \\
	& = q+(n+1)(r-q-\epsilon) + e(R/J_0) - a', \nonumber
	\end{align}
where $a'$ is the cardinality of $\{i \in A\,\mid\, \ell \text{ is not regular on }R/J_{i,0}\}$. The last equality holds because $e(R/J_0) = e(R/J_0:\ell) + e(R/(J_0,\ell)) = e(R/J_0^A:\ell) + e(R/(J_0,\ell)) = e(R/(J_0^A:\ell )) + e(R/J_0^B) + a'$.

Now, the inclusion $2\Lambda_{|L} \cup \{\gamma^i ~\big|~ i \in F\} \subseteq 2\Lambda_{|L} \cup \Gamma$ and (\ref{eq.gen2}) allow the use of Lemma \ref{reduce}(1) to deduce that
$$
H_{\ovl{R}/(I_{\Lambda|_L^{(2)}}\cap I_{\Gamma|_L}^{A'})}(d) \geq e\left(\ovl{R}/\left(I_{\Lambda|_L^{(2)}}\cap I_{\Gamma|_L}^{A'}\right)\right) = nq + a'.
$$
Putting all these together, we obtain that, for any $\t \in U$,
	\begin{align*}
H_{R/I_{\Lambda}^{(2)} \cap I_{\Psi}^{(2)} \cap J_0^A \cap J_{\t}^B}(d) &  \ge H_{R/I_{\Lambda} \cap I_{\Psi}^{(2)} \cap (J_0^A:\ell) \cap J_{\t}^B}(d-1)  + H_{\ovl{R}/I_{\Lambda|_L}^{(2)} \cap I_{\Gamma}^{A'}}(d)\\
& \ge q+(n+1)(r-q-\epsilon)+e(R/J_0) - a' + (nq+a') \\
	& = (n+1)(r-\epsilon) + e(R/J_0)  = (n+1)(r-\epsilon) + e(R/J_{\t}).
	\end{align*}
(For the running example, this implies one of the following inequalities $H_{R/(I_{\Lambda}^{(2)} \cap I_{\Psi}^{(2)} \cap J_0)}(6) \geq 80 + e(R/J_t)$ or $H_{R/(I_{\Lambda}^{(2)} \cap I_{\Psi}^{(2)} \cap J_t)}(6) \geq 80 + e(R/J_t)$.) This is a contradiction to (\ref{eq.gen3}), and we are done.

\medskip
\noindent \underline{\textsc{Case 2:}} ${\displaystyle r > \Big\lfloor \dfrac{1}{n+1} {n+d \choose n} \Big\rfloor}$. In this case, ${\displaystyle r = \Big\lceil \dfrac{1}{n+1}{n+d \choose n}\Big\rceil}$,  and  we have
$${\displaystyle r(n+1) > {n+d \choose n} \text{ and } nq+\epsilon > {n+d-1 \choose n-1}.}$$

First, one considers the case where $nq \ge {n+d-1 \choose n-1}$. Then by (i), we have
	$$
	H_{\ovl{R}/ I_{\Lambda|_L}^{(2)}}(d) =  \min \left\{nq, {n+d-1 \choose n-1}\right\} = {n+d-1 \choose n-1}.$$	
	On the other hand, by (ii), we have
	$$
	H_{R/(I_{\Gamma}^{(2)}\cap I_{\Psi}^{(2)})}(d-1) = \min \left\{ (n+1)(r-q), {n+d-1 \choose n}\right\}.$$
For any set $\Delta$ of $q-\epsilon$ general points in $\PP^n$ one has
$H_{R/(I_{\Delta} \cap I_{\Gamma}^{(2)}\cap I_{\Psi}^{(2)})}(d-1)= {n+d-1 \choose n} = e(R/(I_{\Delta} \cap I_{\Gamma}^{(2)}\cap I_{\Psi}^{(2)}))$. As in the proof of Claim 1, this yields that for any subset $\Lambda'\subseteq \Lambda$ consisting of $q-\epsilon$ points one has
$H_{R/(I_{\Gamma'} \cap I_{\Gamma}^{(2)}\cap I_{\Psi}^{(2)})}(d-1)= {n+d-1 \choose n}.$
By Lemma \ref{reduce}(2), one obtains
	$
	H_{R/(I_{\Lambda} \cap I_{\Gamma}^{(2)}\cap I_{\Psi}^{(2)})}(d-1) = {n+d-1 \choose n}.$
	Thus, by the Castelnuovo inequality, we get
	\begin{align*}
	H_{R/(I_{\Lambda}^{(2)}\cap  I_{\Gamma}^{(2)}\cap I_{\Psi}^{(2)})}(d) & \ge H_{R/(I_{\Lambda} \cap I_{\Gamma}^{(2)}\cap I_{\Psi}^{(2)})}(d-1) + H_{\ovl{R}/ I_{\Lambda|_L}^{(2)}}(d)\\
	& = {n+d-1 \choose n} + {n+d-1 \choose n-1} = {n+d \choose n}.
	\end{align*}
	Hence, the desired equality holds and $2\Lambda \cup 2\Psi \cup 2\Gamma$ satisfies $\text{AH}_{n,d}$.
	 	
	We may now assume that ${\displaystyle 0 < \nu := {n+d-1 \choose n-1} - nq < \epsilon}$, and let $\Gamma' = \{\gamma_1, \dots, \gamma_{\nu}\}\subseteq \Gamma$. By a similar argument (or similar to the proof of Claim 1), it can be shown that
	$$
	H_{\ovl{R}/ (I_{\Lambda|_L}^{(2)} \cap I_{\Gamma'|L})}(d) = {n+d-1 \choose n-1} = nq + \nu.
	$$
	Thus, by Lemma \ref{reduce}(2), one has
	$$	H_{\ovl{R}/ (I_{\Lambda|_L}^{(2)} \cap I_{\Gamma|L})}(d) =  \min \left\{nq + \epsilon, {n+d-1 \choose n-1}\right\} = {n+d-1 \choose n-1}.$$

To show that $2\Lambda \cup 2\Psi \cup 2\Gamma$ satisfies $\text{AH}_{n,d}$ we need to prove
$$H_{R/(I_{\Lambda}^{(2)}\cap  I_{\Gamma}^{(2)}\cap I_{\Psi}^{(2)})}(d) = \binom{n+d}{n}.$$
Let $\t$ and $\Gamma_\t$ be defined as in Case 1.
By the semi-continuity of the Hilbert function, there exists a neighborhood $U$ of $0$ such that for $\t \in U$ we have
$$H_{R/(I_{\Lambda}^{(2)} \cap I_{\Psi}^{(2)} \cap I_{\Gamma})}(d) = H_{R/(I_{\Lambda}^{(2)} \cap I_{\Psi}^{(2)} \cap I_{\Gamma_\t} ) }(d).$$

\noindent{\bf Claim 3.} To finish the proof it suffices to find an ideal $K\supseteq  I_{\Gamma_\t}^{(2)}$ such that $K$ is multiplicity $[I_{\Lambda}^{(2)} \cap I_{\Psi}^{(2)}]_d$-independent, and $e(R/K)=n \epsilon + \nu$.

Assume such an ideal $K$ exists. The first assumption gives $H_{R/(I_{\Lambda}^{(2)} \cap I_{\Psi}^{(2)} \cap K)}(d) = H_{R/(I_{\Lambda}^{(2)} \cap I_{\Psi}^{(2)})}(d) + e(R/K)$.
Also, by Claim 2, $H_{R/(I_{\Lambda}^{(2)} \cap I_{\Psi}^{(2)})  }(d) = (n+1)(r-\epsilon)$. Finally, $e(R/K)$ is precisely the amount needed to ensure that $H_{R/(I_{\Lambda}^{(2)} \cap I_{\Psi}^{(2)} \cap K)}(d)=\binom{n+d}{n}$, because one has
$$	\begin{array}{lll}
H_{R/(I_{\Lambda}^{(2)} \cap I_{\Psi}^{(2)} \cap K)}(d) & =(n+1)(r-\epsilon) + e(R/K)	& =(n+1)(r-\epsilon) + n\epsilon + \nu \\
		& =(n+1)r - \epsilon + \nu 	& = (n+1)r - (nq+\epsilon) + (nq+\nu)\\
		& = {n+d-1 \choose n} + {n+d-1\choose n-1} & = {n+d\choose n}.
\end{array}
$$

Now,
$$H_{R/(I_{\Lambda}^{(2)} \cap I_{\Psi}^{(2)} \cap I_{\Gamma})}(d) = H_{R/(I_{\Lambda}^{(2)} \cap I_{\Psi}^{(2)} \cap I_{\Gamma_\t} ) }(d)={n+d\choose n},$$
where the rightmost equality follows from Lemma \ref{reduce}(2). This proves Claim 3.

Finally, it is easily seen that $K:=I_{\Gamma'}^{(2)} \cap I_{(\Gamma - \Gamma')|_L}$ satisfies the desired properties. This concludes the proof of the theorem.
\end{proof}

Modulo the exceptional cases, which are considered in the following sections, we now give a complete proof of the Alexander--Hirschowitz theorem.

\begin{Theorem}(Alexander--Hirschowitz)
	\label{thm.AHcore}
For every $n\geq 1$ and $d\geq 1$, a set $X$ of $r$ general double points in $\PP_{\CC}^n$ is $\text{AH}_{n}(d)$, with the following exceptions:
\begin{enumerate}
\item $d=2$ and $2\leq r \leq n$;
\item $d=3$, $n=4$ and $r=7$;
\item $d=4$, $2\leq n \leq 4$ and $r=\binom{n+2}{2}-1$.
\end{enumerate}
\end{Theorem}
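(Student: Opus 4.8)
The plan is to derive Theorem \ref{thm.AHcore} from the core inductive step Theorem \ref{core} by a double induction on $(d,n)$, taking $d$ as the outer parameter and $n$ as the inner one. The base of the induction consists of the cases $d=1$ and $n=1$, which are immediate (a double point imposes $n+1$ conditions on linear forms, hence $r$ general double points are trivially $\text{AH}_n(1)$; and on $\PP^1$ every double point imposes exactly two conditions, so $r$ general double points impose $\min\{2r,d+1\}$ conditions, recovering classical Hermite interpolation), together with the plane case $n=2$ from Section \ref{sec.P2} and the quadric and cubic cases $d=2$, $d=3$ from Sections \ref{exceptional} and \ref{sec.cubic}. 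Granting these, what remains is to run the induction for $n\ge 3$ and $d\ge 4$.

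For the inductive step, fix $n\ge 3$, $d\ge 4$. By Remark \ref{rmk.Hsemi} it is enough to exhibit one set of $r$ double points that is $\text{AH}_n(d)$, and only for the one or two values $\lfloor \frac{1}{n+1}\binom{n+d}{n}\rfloor \le r \le \lceil \frac{1}{n+1}\binom{n+d}{n}\rceil$. Fixing such an $r$ and letting $q,\epsilon$ be the quotient and remainder of $r(n+1)-\binom{n+d-1}{n}$ by $n$, I would feed Theorem \ref{core} the three hypotheses: (i) $q$ general double points in $\PP^{n-1}$ are $\text{AH}_{n-1}(d)$, supplied by the inductive hypothesis in the inner variable $n$; (ii) $r-q$ general double points in $\PP^n$ are $\text{AH}_n(d-1)$, and (iii) $r-q-\epsilon$ general double points in $\PP^n$ are $\text{AH}_n(d-2)$, both supplied by the inductive hypothesis in the outer variable $d$. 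Theorem \ref{core} then delivers $\text{AH}_n(d)$ for $r$ double points, closing the induction.

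The delicate point --- and the main obstacle --- is that the three descendant configurations must themselves be \emph{non-exceptional}: if, say, the value $q$ makes (i) coincide with one of the listed exceptions, then (i) is false and Theorem \ref{core} cannot be applied. Because $d\ge 4$, a descendant can be exceptional only when $d\in\{4,5,6\}$: (i) only if $d=4$ and $3\le n\le 5$ with $q=\binom{n+1}{2}-1$; (ii) only if $d=4$ ($n=4$, $r-q=7$) or $d=5$ ($2\le n\le 4$, $r-q=\binom{n+2}{2}-1$); (iii) only if $d-2\in\{2,3,4\}$, the most troublesome being $d=4$ with $r-q-\epsilon\in[2,n]$. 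Lemma \ref{lem.numeric} is exactly the tool to control this: part (3) gives $r-q-\epsilon\ge n+1$ when $d=4$, $n\ge 8$, so (iii) is safe there, while parts (1), (2), (4) bound $q$, $r-q$ and $r-q-\epsilon$ tightly enough to show that, for every $(n,d)$ with $d\ge 4$, $n\ge 3$ \emph{outside a finite explicit list}, none of the three descendant configurations is exceptional, so the induction goes through verbatim. For $d\ge 7$ there is nothing to check beyond Lemma \ref{lem.numeric}, since then all descendant degrees $d,d-1,d-2$ exceed $4$ and have no exceptions at all.

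The proof is completed by disposing, one at a time (in the order dictated by the double induction, so that earlier sporadic cases are available when treating later ones), of the finitely many sporadic pairs $(n,d)$ for which one of the bounds above fails --- for instance $(n,d)=(5,4)$ with $r=21$, where $q=14=\binom{6}{2}-1$ forces (i) onto the $\text{AH}_4(4)$ exception, or the pairs with $d=4$ and $n\le 7$ where $r-q-\epsilon$ falls in the degree-two exceptional range. Some of these sporadic pairs are precisely the exceptions in the statement and are shown \emph{not} to be $\text{AH}_n(d)$; the remaining ones are shown to be $\text{AH}_n(d)$ by ad hoc means, typically a direct application of Terracini's argument (Theorem \ref{thm.Terr}) or a hand-tailored degeneration. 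All of this, along with the deferred cases $n=2$ and $d=3$, is carried out in Sections \ref{exceptional}, \ref{sec.P2}, and \ref{sec.cubic}.
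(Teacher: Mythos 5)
Your outline reproduces the paper's own proof in all essentials: the same double induction on $(n,d)$ driven by Theorem \ref{core}, the same use of Remark \ref{rmk.Hsemi} to reduce to the one or two critical values of $r$, the same role for Lemma \ref{lem.numeric} in keeping the descendant configurations away from the exceptional list (with part (3) covering $d=4$, $n\ge 8$), and the same relegation of $n\le 2$, $d\le 3$ and the true exceptions to Sections \ref{exceptional}, \ref{sec.P2} and \ref{sec.cubic}. In your ordering the pairs $d=5,6$, $n=3,4$, which the paper lists as separate ``cases,'' are indeed just ordinary inductive steps, since none of their descendant configurations is exceptional; this matches how the paper actually settles them (via Theorem \ref{core}).

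The one point where your plan would not survive contact with the numbers is the claim that the sporadic pairs are disposed of ``typically by a direct application of Terracini's argument (Theorem \ref{thm.Terr}).'' For $(n,d)=(5,4)$ with $r=21$, both numerical windows of Theorem \ref{thm.Terr} read $70\le 5q\le 70$, forcing $q=14$; but $14=\binom{6}{2}-1$ is precisely the $\text{AH}_4(4)$ exception, so hypothesis (1) of Theorem \ref{thm.Terr} fails for the very same reason hypothesis (i) of Theorem \ref{core} fails --- which is why this pair is sporadic in the first place. Likewise $(n,d)=(7,4)$ is blocked because $r-q-\epsilon\in\{6,7\}$ lands in the degree-two exceptional range. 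The paper settles these two pairs not by Terracini but by exhibiting explicit configurations and verifying $\text{AH}_5(4)$ and $\text{AH}_7(4)$ by direct Macaulay2 computation, then invoking semicontinuity; these verifications occur inside the proof of Theorem \ref{thm.AHcore} itself (Cases 1 and 3), not in Sections \ref{exceptional}--\ref{sec.cubic}. With that substitution --- explicit verification rather than Terracini for $(5,4)$ and $(7,4)$ --- your induction closes exactly as in the paper.
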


\begin{proof} By Remarks \ref{r=1} and \ref{d=1}, we may assume that $r\geq 2$ and $d\geq 2$.
	The statement for $n=1$ is proved in Proposition \ref{P1}. The case where $n=2$ is treated in Section \ref{sec.P2}. Thus, we may also assume that $n\geq 3$. The exceptional cases are discussed in Sections \ref{exceptional},\ref{sec.P2} and \ref{sec.cubic}. Furthermore, it will be shown that for fixed $d$ and $n$, the given value of $r$ is the only exceptional case of $r$ general double points not being $\text{AH}_n(d)$. Finally, for $n$ and $d$ not in the list of exceptional cases, by Lemma \ref{rmk.Hsemi}, we only need to consider values of $r$ such that
	$$\left\lfloor \dfrac{1}{n+1}{n+d \choose d}\right\rfloor \le r \le \left\lceil \dfrac{1}{n+1}{n+d \choose d}\right\rceil.$$
	
Our argument proceeds by considering small values of $d$ and then using induction together with Theorem \ref{core}. The statement for $d = 2$ is proved in Lemma \ref{Exd=2}.
The statement for $d=3$ is examined in Section \ref{sec.cubic}. Therefore, we may assume now that $n\geq 3$ and $d\geq 4$.

We will use induction on $n$ to prove the assertion for $d=4$. Note that the statement for $d=4$ and $3 \le n \le 4$ is proved in Lemma \ref{Exd=4}. On the other hand, if the statement has been shown for $5 \le n \le 7$, then Theorem \ref{core} applies to prove the desired assertion for all $n\geq 8$ too. This is because condition (i) holds by the induction hypothesis on $n$, condition (ii) holds as shown in Section \ref{sec.cubic}, and condition (iii) holds because, for $n\geq 8$, by Lemma \ref{lem.numeric}(3) we have $r-q-\epsilon \geq n+1$, and thus $\text{AH}_{r-q-\epsilon}(2)$ holds as shown in Lemma \ref{Exd=2}. It remains to consider $d=4$ and $5 \le n \le 7$. We shall leave this case until later in the proof.

In general, for $d \ge 5$, the proof proceeds by a double induction on $d$ and $n$. Observe that if the statement has been proved for $d =5, 6$ and $n = 3,4$, then Theorem \ref{core} applies to prove the statement for all $d \ge 5$ and $n \ge 3$. Therefore, we only need to establish the desired assertion for $d=5,6$ and $n=3,4$.

We conclude the proof by analyzing the needed cases, i.e. when $d=4$ and $5 \le n \le 7$, or when $d=5,6$ and $n = 3,4$. Most cases are also proved by applying Theorem \ref{core}.

\noindent{\it Case 1: $d=4, n = 5$.} In this case, we need to consider $r = 21$ general double points in $\PP^5$, $q = 14$ and $\epsilon = 0$.  Direct Macaulay 2 \cite{M2} computation can be used to verify that the assertion holds.

\noindent{\it Case 2: $d=4, n=6$.} In this case, we need to consider $r = 30$ general double points in $\PP^6$, $q = 21$ and $\epsilon = 0$. Theorem \ref{core} applies because 21 general double points are $\text{AH}_5(4)$ by Case 1, and 9 general double points are $\text{AH}_6(3)$ (as shown in Section \ref{sec.cubic}) and $\text{AH}_6(2)$ (by Lemma \ref{Exd=2}).

\noindent{\it Case 3: $d=4, n = 7$.} In this case, we need to consider $r = 41$ or $42$ general double points in $\PP^7$, $q = 29$ or $30$ and $\epsilon = 5$ or $6$. Direct Macaulay 2 \cite{M2} computation shows that 41 and 42 general double points in $\PP^7$ are indeed $\text{AH}_7(4)$.

\noindent{\it Case 4: $d=5, n=3$.} In this case, we need to consider $r = 14$ general double points in $\PP^3$, $q = 7$ and $\epsilon = 0$. Theorem \ref{core} applies because 7 general double points are $\text{AH}_2(5)$ (by Theorem \ref{P2}), $\text{AH}_3(4)$ (by Lemma \ref{Exd=4}), and $\text{AH}_3(3)$ (as shown in Section \ref{sec.cubic}).

\noindent{\it Case 5: $d=5, n=4$.} In this case, we need to consider $r = 25$ or $26$ general double points in $\PP^4$, $q = 13$ or $q = 15$, and $\epsilon = 3$ or $0$. For $r = 25, q = 13$ and $\epsilon = 3$, Theorem \ref{core} applies because 13 general double points are $\text{AH}_3(5)$ by Case 4, 12 general double points are $\text{AH}_4(4)$ (by Lemma \ref{Exd=4}), and 9 general double points are $\text{AH}_4(3)$ (as shown in Section \ref{sec.cubic}). For $r = 26, q = 15$ and $\epsilon = 0$, Theorem \ref{core} applies because $15$ general double points are $\text{AH}_3(5)$ by Case 4, 11 general double points are $\text{AH}_4(4)$ (by Lemma \ref{Exd=4}) and $\text{AH}_4(3)$ (as shown in Section \ref{sec.cubic}).

\noindent{\it Case 6: $d = 6, n = 3$.} In this case, we need to consider $r = 21$ general double points in $\PP^3$, $q = 9$ and $\epsilon = 1$. Theorem \ref{core} applies because $9$ general double points are $\text{AH}_2(6)$ (by Theorem \ref{P2}), 12 general double points are $\text{AH}_3(5)$ by Case 4, and $11$ general double points are $\text{AH}_3(4)$ (by Lemma \ref{Exd=4}).

\noindent{\it Case 7: $d = 6, n = 4$.} In this case, we need to consider $r = 42$ general double points in $\PP^4$, $q = 21$ and $\epsilon = 0$. Theorem \ref{core} applies because $21$ general double points are $\text{AH}_3(6)$ by Case 6, and 21 general double points are $\text{AH}_4(5)$ by Case 5 and $\text{AH}_4(4)$ (by Lemma \ref{Exd=4}).
\end{proof}

%%%%%%%%%%%%%%%%%%%%%%%%%%%%%%%%

\section{The exceptional cases}\label{exceptional}

In this section, we consider the exceptional cases listed in Theorem \ref{AH} and show that they are indeed the only exceptional cases for given $n$ and $d$.
We begin by considering the case where $d = 2$.

\begin{Lemma}\label{Exd=2}
	A set of $r\geq 1$ general double points in $\PP^n$ is not $\text{AH}_n(2)$ if and only if $2\leq r\leq n$.
\end{Lemma}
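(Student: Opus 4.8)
The plan is to compute $H_{R/I_X}(2)$ exactly for a general set $X=2Y$ of $r$ double points, by turning the degree--$2$ part of $I_X=I_Y^{(2)}$ into a concrete space of quadratic forms. First I would invoke the Zariski--Nagata theorem (Theorem \ref{ZariskiNagata}): $[I_Y^{(2)}]_2$ consists exactly of the quadratic forms $Q$ having multiplicity at least $2$ at every point $P_i$ of $Y$, i.e.\ all first partial derivatives of $Q$ vanish at $P_i$ (since $\CC$ has characteristic $0$, the Euler relation $2Q=\sum_j x_j\,\partial Q/\partial x_j$ then also forces $Q(P_i)=0$). Writing $Q(x)=x^{t}Ax$ with $A$ the symmetric Gram matrix, this says precisely $AP_i=0$. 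As $\ker A$ is a linear subspace of $\CC^{n+1}$, imposing $AP_i=0$ for all $i$ is the same as imposing that $A$ vanish on the linear span $\Pi:=\langle P_1,\dots,P_r\rangle\subseteq\PP^n$.

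Next I would use the generality of $Y$: a general set of $r$ simple points is in linearly general position, so $\dim\Pi=\min\{r-1,n\}$. If $r\le n+1$, a linear change of coordinates brings $\Pi$ to $\langle e_0,\dots,e_{r-1}\rangle$, and then $Ae_i=0$ for $i=0,\dots,r-1$, i.e.\ the first $r$ rows and columns of $A$ vanish; thus $[I_Y^{(2)}]_2$ is identified with the space of quadratic forms in the $n+1-r$ variables $x_r,\dots,x_n$, which has dimension $\binom{n+2-r}{2}$. If $r\ge n+1$, then $\Pi=\PP^n$, forcing $A=0$ and $[I_Y^{(2)}]_2=0$. Using $\dim_\CC R_2=\binom{n+2}{2}$ and the telescoping identity $\binom{n+2}{2}-\binom{n+2-r}{2}=\sum_{j=0}^{r-1}(n+1-j)=r(n+1)-\binom{r}{2}$, valid for $0\le r\le n+1$, I obtain
$$H_{R/I_X}(2)=\begin{cases} r(n+1)-\binom{r}{2} & \text{if } r\le n+1,\\ \binom{n+2}{2} & \text{if } r\ge n+1.\end{cases}$$

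Finally I would compare this with $\min\{e(R/I_X),\binom{n+2}{2}\}=\min\{r(n+1),\binom{n+2}{2}\}$ (recall $e(R/I_X)=r(n+1)$ by Proposition \ref{e(R/I)2}, and $X$ is $\text{AH}_n(2)$ precisely when this minimum is attained). For $r=1$ the formula gives $H_{R/I_X}(2)=n+1=\min\{n+1,\binom{n+2}{2}\}$, so $\text{AH}_n(2)$ holds; for $r\ge n+1$ it gives $H_{R/I_X}(2)=\binom{n+2}{2}$, which is the minimum since $r(n+1)\ge(n+1)^2\ge\binom{n+2}{2}$, so again $\text{AH}_n(2)$ holds. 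For $2\le r\le n$ we have $\binom{r}{2}\ge1$, hence $H_{R/I_X}(2)<r(n+1)$; moreover $r\mapsto r(n+1)-\binom{r}{2}$ is strictly increasing on $\{1,\dots,n+1\}$ with value $\binom{n+2}{2}-1$ at $r=n$, so also $H_{R/I_X}(2)\le\binom{n+2}{2}-1<\binom{n+2}{2}$; therefore $H_{R/I_X}(2)<\min\{r(n+1),\binom{n+2}{2}\}$ and $X$ fails $\text{AH}_n(2)$. This is a soft argument; the only step calling for care is the first one --- the chain ``double point at $P_i$'' $\Leftrightarrow$ ``all partials of $Q$ vanish at $P_i$'' $\Leftrightarrow$ ``$\Pi\subseteq\ker A$'' --- after which the dimension count and the strict numerical inequalities in the last step are entirely routine.
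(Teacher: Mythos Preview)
Your proof is correct and takes a genuinely different route from the paper. The paper specializes to the coordinate points and invokes known structural results about star configurations (citing \cite{GGSV}, \cite{PS}, \cite{Ma}) to count the quadrics in $I_X$ as $\binom{n-r+2}{2}$, then checks this against the expected value. You instead identify $[I_Y^{(2)}]_2$ with the space of symmetric matrices $A$ satisfying $AP_i=0$, reduce to the condition $\langle P_1,\dots,P_r\rangle\subseteq\ker A$, and read off the same count $\binom{n+2-r}{2}$ directly from the size of the surviving block. Your argument is more self-contained---it avoids the external citations on star configurations and works for any $r$ general points at once rather than splitting into the cases $r\le n$ and $r\ge n+1$ with a separate reduction lemma---while the paper's approach has the virtue of exhibiting the explicit monomial generators of $I_X$. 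Both arrive at the same numerics and the same final comparison.
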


\begin{proof} A single double point is $\text{AH}_n(2)$ (e.g. by Remark \ref{r=1}), so we may assume $r\geq 2$. First we prove that a set of $r\geq n+1$ general double points in $\PP^n$ is $\text{AH}_n(2)$.
 Let $Y=\{P_1,\dots,P_r\}$ denote a set of $r\geq n+1$ general points in $\PP^n$ and let $X = 2Y$. It is easily seen that $X$ is $\text{AH}_n(2)$ if and only if $I_X$ contains no quadrics. 
 
By Lemma \ref{reduce}, it suffices to show that $I_X$ contains no quadrics when $r=n+1$. When $r=n+1$, by a change of variables, we can assume that $P_i$ is the $i$-th coordinate point, for $i = 1, \dots, n+1$. That is, $P_i = [0:\dots:0:1:0:\dots:0]$, where the value 1 appears at the $i$-th position.  In this case, $I_Y$ is the squarefree monomial ideal
	$$I_Y=\pp_0\cap \ldots \cap \pp_n=(x_ix_j\,\mid\, 0\leq i < j \leq n)$$
	where $\pp_i=(x_j\,\mid\,0\leq j \leq n, \,j\neq i)$ for every $i=0,\ldots,n$. It is well-known that $I_X = I_Y^{(2)} = (x_ix_jx_h\,\mid\, 0\leq i < j<h \leq n)$ (e.g. \cite[Cor. 3.8]{GGSV}, or \cite[Cor.~4.15(a)]{Ma}). Thus, $I_X$ indeed contains no quadrics.

To conclude the proof we need to show that any set $X$ of $2\leq r \leq n$ general double points in $\PP^n$ is not $\text{AH}_n(2)$. Since $r\leq n$, we may assume that $P_i$ is the $i$-th coordinate point for $1 \leq i \leq r$. We first claim that $I_X$ contains precisely $\binom{n-r+2}{2}$ linearly independent quadrics. Indeed, again, let $\pp_i$ be the defining ideal of $P_i$, for $i = 1, \dots, r$. It is easy to see that $(x_{r},x_{r+1},\ldots,x_n)\subseteq \pp_i$ for all $i=1,\dots,r$. Thus, $(x_{r},\ldots,x_n)^2\subseteq \bigcap_{i=1}^r \pp_i^2=I_X$. By modularity law, it follows that
	$$
	I_{X} = \left(I_{r-1,r}\right)^{(2)} + (x_{r},\ldots,x_n)^2
	$$
	where $\left(I_{r-1,r}\right)^{(2)}=\bigcap_{0\leq j_1<j_2<\ldots, < j_{r-1}\leq r-1} (x_{j_1},\ldots,x_{j_{r-1}})^2$ (this is called the second symbolic power of the star configuration of codimension $r-1$ in the variables $x_0,\ldots,x_{r-1}$). It is known that  $I_{r-1,r}$ is generated by all squarefree quadrics in $x_0,\ldots,x_{r-1}$ (e.g. \cite[Thm~2.3]{PS}), and $\left(I_{r-1,r}\right)^{(2)}$ is generated in degree 3 and higher (see e.g. \cite[Cor.~3.8]{GGSV}). It follows that the quadrics in $I_X$ are precisely the $\binom{n-r+2}{2}$ quadrics contained in $(x_{r+1},\ldots,x_n)^2$, proving the claim.

Now, our claim on $I_X(2)$ implies that $X$ is $\text{AH}_n(2)$ only if
$$ {n-r+2 \choose 2} = {n+2 \choose 2} - H_{R/I_X}(2) = \max \left\{0, {n+2 \choose 2} - r(n+1)\right\}.$$
Since ${n-r+2 \choose 2} > 0$, this is only possible if ${n-r+2 \choose 2} = {n+2 \choose 2} - r(n+1)$, which implies $r^2-r = 0$, and thus gives a contradiction. Therefore, $X$ is not $\text{AH}_n(2).$
\end{proof}

We continue with the cases $d=4$ and $2 \le n \le 4$.

\begin{Lemma}\label{Exd=4}
Suppose that $2 \le n \le 4$. Then, a set of $r$ general double points in $\PP^n$ is not $\text{AH}_n(4)$ if and only if $r = {n+2 \choose 2} -1.$
\end{Lemma}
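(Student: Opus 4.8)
I would prove the two implications separately. Write $Y=\{P_1,\dots,P_r\}$ for the general set of points in question, $\pp_i=I_{P_i}$, and $X=2Y$, so that $e(R/I_X)=r(n+1)$ and $X$ is $\text{AH}_n(4)$ exactly when $H_{R/I_X}(4)=\min\{r(n+1),\binom{n+4}{n}\}$. The ``only if'' direction (the exceptional $r$ fails) has a short, uniform proof for all $n\in\{2,3,4\}$; for the ``if'' direction I would handle $n=2$ by referring to the detailed study of $\PP^2$ in Section \ref{sec.P2} and argue $n=3,4$ here. For the defective direction, set $r=\binom{n+2}{2}-1$. Since $Y$ is general and $r<\binom{n+2}{2}=\dim_\CC R_2$, a general set of $r$ simple points imposes $r$ independent conditions on quadrics, so $\dim_\CC[I_Y]_2=\binom{n+2}{2}-r=1$; let $Q\ne 0$ be the unique (up to scalar) quadric vanishing on $Y$. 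Then $Q^2$ is a nonzero quartic lying in $\pp_i\cdot\pp_i=\pp_i^2$ for every $i$, hence in $\bigcap_i\pp_i^2=I_Y^{(2)}=I_X$ (Zariski--Nagata); thus $\dim_\CC[I_X]_4\ge 1$, i.e.\ $H_{R/I_X}(4)\le\binom{n+4}{n}-1$. A one-line check gives $\bigl(\binom{n+2}{2}-1\bigr)(n+1)\ge\binom{n+4}{n}$ for $n=2,3,4$, with equality iff $n\in\{2,4\}$ --- and the inequality reverses for $n\ge 5$, which is precisely why the statement is confined to $2\le n\le 4$ --- so the expected value $\min\{r(n+1),\binom{n+4}{n}\}$ equals $\binom{n+4}{n}$. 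Hence $H_{R/I_X}(4)<\binom{n+4}{n}$ and $X$ is not $\text{AH}_n(4)$.

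For the generic direction I would first reduce to finitely many $r$. The case $r=1$ is Remark \ref{r=1}, so assume $r\ge 2$ and $r\ne\binom{n+2}{2}-1$; put $r_0=\lfloor\frac1{n+1}\binom{n+4}{n}\rfloor$, so $r_0=8$ for $n=3$ (and $\binom{n+2}{2}-1=9=r_0+1$) and $r_0=13$ for $n=4$ (and $\binom{n+2}{2}-1=14=r_0$). Combining lower semi-continuity of the Hilbert function (Corollary \ref{special}) with Lemma \ref{multind}(3) and with Lemma \ref{reduce} yields two monotonicity principles: if a general set of $s$ double points is $\text{AH}_n(4)$ and $s(n+1)\le\binom{n+4}{n}$, then so is a general set of $r$ double points for every $r\le s$ (such a set can be taken to be an $r$-point subscheme of a general set of $s$ double points, and multiplicity $4$-independence --- which in this range coincides with $\text{AH}_n(4)$ --- passes to subschemes); and if a general set of $s$ double points is $\text{AH}_n(4)$ and $s(n+1)\ge\binom{n+4}{n}$, then so is a general set of $r$ double points for every $r\ge s$ (the degree-$4$ part of the ideal of the larger scheme is contained in that of the smaller, which is $0$). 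Since the single exceptional value is $r_0$ for $n=4$ and $r_0+1$ for $n=3$, it then remains only to verify that a general set of $r$ double points in $\PP^n$ is $\text{AH}_n(4)$ for $(n,r)\in\{(3,8),(3,10),(4,13),(4,15)\}$ --- the first entry of each pair sub-maximal, the second super-maximal --- together with the $n=2$ analogues $(2,4),(2,6)$, which are covered in Section \ref{sec.P2}.

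Each of these four boundary cases is a concrete assertion about $\dim_\CC[I_{2Y}]_4$ for an explicit number of general points, and because they lie at the very base of the double induction of Theorem \ref{thm.AHcore} they cannot be fed back from that induction and must be settled directly --- this is the real obstacle. I would do so by specialization: place a suitably chosen number $q$ of the double points on a hyperplane $L\cong\PP^{n-1}$ and apply Castelnuovo's inequality (Lemma \ref{lem.Cast}), which bounds $H_{R/I_{2Y}}(4)$ below by a degree-$3$ contribution from a mixed scheme of double and simple points (controlled by Proposition \ref{prop.hyperp}) plus a degree-$4$ contribution from $q$ double points in $\PP^{n-1}$; for instance $(4,13)$ with $q=8$ reduces to the case $(3,8)$ plus a mixed statement on $L$, and $(3,8)$ with $q=4$ reduces to $\text{AH}_2(4)$ for four points plus a mixed statement. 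Failing a clean specialization, one may instead confirm these base cases by a direct Macaulay2 computation, exactly as is done for the structurally identical cases in the proof of Theorem \ref{thm.AHcore}. The difficulty is therefore not conceptual: it lies in executing these base-case verifications and, if one proceeds via Castelnuovo, in keeping simultaneous control of the mixed double/simple-point schemes that appear on the hyperplane.
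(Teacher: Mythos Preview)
Your proposal is correct and follows essentially the same route as the paper: the $Q^2$ argument for the defective value, reduction via Lemma \ref{reduce}/Corollary \ref{rpts} to the four boundary cases $(3,8),(3,10),(4,13),(4,15)$, and then Castelnuovo/Terracini-type specialization to a hyperplane with the very $q$-values you name ($q=4$ for $(3,8)$, $q=8$ for $(4,13)$, etc.). One small slip: for $n=4$ you write $r_0=13$ but then (correctly) use $r_0=14$; also, the paper handles $(3,8)$ via Theorem \ref{core} rather than Theorem \ref{thm.Terr}, since $8=\lfloor 35/4\rfloor$, but the underlying Castelnuovo mechanism is the same.
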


\begin{proof}
Let $Y = \{P_1, \dots, P_r\}$ be a set of $r$ general points in $\PP^n$ and let $X = 2Y$. We shall first show that for $r = {n+2 \choose 2}-1$, $X$ is not $\text{AH}_n(4)$. Indeed, since $r < {n+2 \choose 2}$, $I_Y$ contains a nonzero quadric, say $Q$. Then, $Q^2$ is a nonzero quartic in $I_Y^2 \subseteq I_Y^{(2)} = I_X$. This implies that $H_{R/I_X}(4) \le {n+4 \choose 4} - 1$. It is easy to check that for $2 \le n \le 4$, ${n+4 \choose 4} - 1 < \left[{n+2 \choose 2} - 1\right](n+1) = r(n+1)$. Therefore, $X$ is not $\text{AH}_n(4)$.
	
We shall now show that $r = {n+2 \choose 2} - 1$ is indeed the only exceptional case. The statement for $n = 2$ is proved in Theorem \ref{P2}. Suppose that $3 \le n \le 4$.

For $n=3$, by Corollary \ref{rpts}, it suffices to prove that a set of 8 general double points and a set of 10 general double points in $\PP^3$ are both $\text{AH}_3(4)$. Similarly, for $n=4$, it suffices to establish $\text{AH}_4(4)$ property for a set of 13 general double points and a set of 15 general double points in $\PP^4$.

\noindent\underline{$n=3$ and $r=8$.} Observe that $\left\lfloor \frac{1}{3+1}{4+3\choose 3}\right\rfloor=8=r$, so Theorem \ref{core} applies if its hypotheses are satisfied. In this case, we have $q=4$ and $\epsilon=0$. Thus, condition (i) holds because $4$ general double points in $\PP^2$ are $\text{AH}_2(4)$ (by Theorem \ref{P2}), and condition (iii) holds because $4$ general double points are $\text{AH}_3(2)$ (by Lemma \ref{Exd=2}). To prove that condition (ii) holds, we need to show that $4$ general double points are $\text{AH}_3(3)$. This follows from Section \ref{sec.cubic}.

We can also prove this statement directly by considering the 4 coordinate points in $\PP^3$. Let $I$ be the defining ideal of these coordinate points. Then, $I=(x_ix_j\,\mid\,0\leq i <j \leq 3)$, and it can be checked that $I^{(2)}$ is minimally generated by the four squarefree monomials of degree 3. In particular, $H_{R/I^{(2)}}(3)=16$ which is the expected dimension, so condition (ii) of Theorem \ref{core} holds.

In the remaining 3 cases, i.e. when $n=3$ and $r=10$, or when $n=4$ and $r=13$ or $15$ we cannot apply Theorem \ref{core} because $r$ is not one of the two possible values needed to apply the theorem. We will instead use Theorem \ref{thm.Terr}.

\noindent\underline{$n=3$ and $r=10$.} We shall apply Theorem \ref{thm.Terr} for $q=6$. Clearly, $6$ general double points is $\text{AH}_2(4)$ (by Theorem \ref{P2}). Thus, it remains to show that the union of 4 general double points and 6 general simple points on a hyperplane is $\text{AH}_3(3)$.

Let $Y_1$ be the set of the four coordinate points in $\PP^3$. As shown above, we have $$H_{R/I_{Y_1}^{(2)}}(2)=10\qquad \text{ and }\qquad H_{R/I_{Y_1}^{(2)}}(3)=16.$$
Let $L$ be a hyperplane not containing any point of $Y_1$. By taking $I=I_{Y_1}^{(2)}$, Proposition \ref{prop.hyperp}(2) holds for any $u$ satisfying
$$
H_{R/I}(3) + u \leq H_{R/I}(2) + {3+3-1\choose 3-1},
$$
i.e., whenever $16 + u \leq 10 + 10$, i.e., $u\leq 4$.
Therefore, if we let $Y_0$ be a set of $u=4$ general points on $L$, then $I_{Y_1}^{(2)}\cap I_{Y_0}$ does not contain any cubic. Now, let $Y_2$ be obtained by adding two points to $Y_0$, then $I_{Y_1}^{(2)}\cap I_{Y_2} \subseteq I_{Y_1}^{(2)} \cap I_{Y_0}$ contains no cubics. That is, $2Y_1 \cup Y_2$ is $\text{AH}_3(3)$.

\noindent\underline{$n=4$ and $r=13$.} We shall apply Theorem \ref{thm.Terr} for $q = 8$. So one may take $Y_1$ to be the set of the 5 coordinate points of $\PP^4$ and $L$ to be a hyperplane not containing any of these points. Then $I_{Y_1}$ is again generated by all squarefree monomials of degree 2 in $R$, and $I_{Y_1}^{(2)}$ by the squarefree monomials of degree 3. It follows that $Y_1$ is $\text{AH}_4(3)$, and in particular $H_{R/I_{Y_1}^{(2)}}(3) = 25$. Then inequality (2) of Proposition \ref{prop.hyperp} then becomes $25 + q \leq 15 + 20$, so if we add 10 general simple points in $L$ to $2Y_1$ we obtain a scheme containing no cubics.

In particular, if we take $Y_2$ to be a set of $q=8$ general points on $L$, then assumption (2) of Theorem \ref{thm.Terr} is satisfied, so $Y_1\cup Y_2$ is a set of 13 points in $\PP^4$ which is $\text{AH}_4(4)$. By Lemma \ref{special} any set of 13 general points is $\text{AH}_4(4)$. 

\noindent\underline{$n=4$ and $r=15$.} We shall apply Theorem \ref{thm.Terr} for $q = 10$. Clearly, a set of $q=10$ general double points is $\text{AH}_3(4)$ as shown above. 
Thus, it suffices to show that the union of 5 general double points and 10 general simple points in a hyperplane is $\text{AH}_4(3)$. This follows by the same argument of the previous case.
\end{proof}

We conclude this section with the case where $d=3$ and $n=4$.

\begin{Lemma}\label{Exr=7}
	A set of $r$ general double points in $\PP^4$ is $\text{AH}_4(3)$ if and only if $r \not= 7$.
\end{Lemma}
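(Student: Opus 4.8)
The plan is to split into the ranges $r=7$, $1\le r\le 6$, and $r\ge 8$: the middle value $r=7$ produces the exception, and in the other two ranges it will suffice, by monotonicity, to treat the boundary values $r=6$ and $r=8$. Observe that here the reduction of Remark~\ref{rmk.Hsemi} offers no shortcut: since $\frac{1}{n+1}\binom{n+d}{n}=\frac{1}{5}\binom{7}{3}=7$ is an integer, the only value it leaves to verify is precisely the exceptional one.

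For $r=7$ we have $e(R/I_X)=35=\binom{4+3}{3}$, so $\text{AH}_4(3)$ would force $[I_X]_3=0$; I would instead exhibit a nonzero cubic in $I_X$. It is classical that a general set $Y=\{P_1,\dots,P_7\}\subseteq\PP^4$ lies on a rational normal quartic curve; since rational normal quartics are projectively unique and projective transformations preserve Hilbert functions, we may assume $Y\subseteq C:=\{[s^4:s^3t:s^2t^2:st^3:t^4]\,:\,[s:t]\in\PP^1\}$. Let $M$ be the $3\times 3$ Hankel matrix with $(i,j)$-entry $x_{i+j-2}$ and set $F=\det M$, a nonzero cubic. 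At every point of $C$ the matrix $M$ specializes to a matrix of rank $1$, so $F$ vanishes along $C$; moreover, by cofactor expansion each $\partial F/\partial x_k$ is a signed sum of $2\times 2$ minors of $M$, so it too vanishes along $C$. Hence $F$ together with all of its first partials vanishes at each $P_i$, which by Zariski--Nagata (Theorem~\ref{ZariskiNagata}) gives $F\in\bigcap_{i=1}^{7}\pp_i^{2}=I_Y^{(2)}=I_X$. (Concretely, $F$ is the equation of the secant variety $\sigma_2(C)$, which is singular along $C$.) Therefore $H_{R/I_X}(3)\le 34<35$, so $X$ is not $\text{AH}_4(3)$.

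For $r\ne 7$ the usual dichotomy applies. When $r\ge 8$ one has $e(R/I_X)=5r>35$, so $\text{AH}_4(3)$ just means $[I_X]_3=0$, a condition preserved under adding points; when $1\le r\le 6$ one has $e(R/I_X)=5r<35$, so $\text{AH}_4(3)$ means multiplicity $3$-independence, which by Lemma~\ref{multind}(3) passes to subschemes. By Corollary~\ref{special} it therefore suffices to establish $\text{AH}_4(3)$ for $r=6$ and for $r=8$ (the case $r=1$ being trivial). For both I would apply Terracini's inductive argument (Theorem~\ref{thm.Terr}) with a hyperplane $L\cong\PP^3$ and $q=5$: the numerical hypothesis $r(n+1)-\binom{d+n-1}{n}\le qn\le\binom{d+n-1}{n-1}$ reads $15\le 20\le 20$ for $r=6$, while its twin $\binom{d+n-1}{n-1}\le qn\le r(n+1)-\binom{d+n-1}{n}$ reads $20\le 20\le 25$ for $r=8$. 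Hypothesis (1)---that $5$ general double points in $\PP^3$ are $\text{AH}_3(3)$---is the non-exceptional cubic case in $\PP^3$ (established, e.g., by computing with coordinate double points; cf.\ Section~\ref{sec.cubic}), and hypothesis (2)---that a general union of $r-5$ double points in $\PP^4$ (one or three of them) with $5$ simple points on $L$ is $\text{AH}_4(2)$---follows from Proposition~\ref{prop.hyperp}, which controls how many conditions general points on a hyperplane impose; here one also uses Lemma~\ref{Exd=2} to pin down the dimension of the space of quadrics through the $r-5\le n$ general double points. Theorem~\ref{thm.Terr} then yields $\text{AH}_4(3)$ for $r=6,8$, and the monotonicity above extends this to all $r\ne 7$. (Alternatively, $r=6$ and $r=8$ are immediate from a direct Macaulay2~\cite{M2} computation.)

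The crux is $r=7$. One must be certain that a general septuple genuinely lies on a rational normal quartic, and---more importantly---that the Hankel determinant lies in the \emph{symbolic square} $I_Y^{(2)}$, i.e.\ that \emph{all} of its first partials (not merely $F$ itself) vanish at the seven points; this is short once the rank-one description of $C$ is available, but it is the only step requiring geometric input beyond the inductive framework already built. The cases $r=6$ and $r=8$ are routine, the sole bookkeeping being the mixed simple/double-point Hilbert-function estimates furnished by Proposition~\ref{prop.hyperp}.
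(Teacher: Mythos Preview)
Your argument follows essentially the same route as the paper: the rational normal quartic and the Hankel determinant for $r=7$, and Terracini's inductive argument (Theorem~\ref{thm.Terr}) reducing $r=6$ and $r=8$ to lower-dimensional facts. The only difference is the parameter choice: you take $q=5$ where the paper takes $q=4$; both satisfy the numerical hypotheses.

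One small imprecision is worth fixing. For $r=8$ with $q=5$, hypothesis~(2) asks that three general double points together with five simple points on $L$ have no quadrics. By the computation in the proof of Lemma~\ref{Exd=2}, three general double points in $\PP^4$ already have $H_{R/I}(2)=12$ (the three surviving quadrics being, in suitable coordinates, $x_3^2,x_3x_4,x_4^2$). Proposition~\ref{prop.hyperp} with $u=5$ would require $12+5\le 5+10$, which fails; the proposition is an if-and-only-if, so five points on $L$ cannot each impose an independent condition. What you actually need is weaker: apply Proposition~\ref{prop.hyperp} with $u=3$ (the inequality $12+3\le 15$ holds), obtaining $[I\cap I_{Y_0}]_2=0$ for three suitable points on $L$, and then observe that adding two further points preserves $[I]_2=0$. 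With this adjustment hypothesis~(2) goes through and your proof is complete.
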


\begin{proof} We first prove that a set of 7 general double points in $\PP^4$ is not $\text{AH}_4(3)$. Let $Y=\{P_1,\ldots,P_7\}\subseteq \PP^4$ be a set of 7 general points, a simple computation shows that $2Y$ is $\text{AH}_4(3)$ if and only if $I_X^{(2)}$ contains no non-zero cubic.

By a result of Castelnuovo (e.g. \cite[Thm~1]{EH1}), given any set of $t+3$ points in general position in $\PP^t$, there exists a unique rational normal curve $C_t$ passing through all of them, whose equations are given by the $2 \times 2$ minors of a \emph{1-generic} matrix. In particular, there is a (unique) rational normal curve $C_4$ passing through our 7 points in $\PP^4$, whose equation, in an appropriate coordinate system,  is
$$
I:=I_2 \begin{pmatrix}
x_0 & x_1 & x_2 & x_3\\
x_1 & x_2 & x_3 & x_4
\end{pmatrix}.
$$

One can check directly that $I^{(2)}$ contains (precisely) one cubic, namely
$$
x_2^3 - 2x_1x_2x_3 + x_0x_3^2 + x_1^2x_4 - x_0x_2x_4.
$$
Thus, a set of 7 general double points in $\PP^4$ is not $\text{AH}_4(3)$.

Alternatively, it is also known that
$I=I_2 \begin{pmatrix}
x_0 & x_1 & x_2\\
x_1 & x_2 & x_3 \\
x_2 & x_3 & x_4
\end{pmatrix}$ and it can be seen that $f=\det\begin{pmatrix}
x_0 & x_1 & x_2\\
x_1 & x_2 & x_3 \\
x_2 & x_3 & x_4
\end{pmatrix}$ is singular at all points of $C_4$.

By Corollary \ref{rpts}, to conclude it suffices to show that sets of $r=6$ and $r=8$ general double points in $\PP^4$ are $\text{AH}_4(3)$. As Theorem \ref{core} could only be applied if $r=7$, then we invoke Theorem \ref{thm.Terr} in both cases. First, observe that by Lemma \ref{Exd=2}, sets of 5 general double points in $\PP^4$ are $\text{AH}_4(2)$. If $r=6$, to apply Theorem \ref{thm.Terr} we need $q$ with $15\leq 4q\leq 18$, thus $q=4$. Then, assumption (1) holds for the reasons stated in the proof of Lemma \ref{Exd=4} (the case where $n=3$ and $r=8$), and (2) holds because $r-q=2$ general double points are $\text{AH}_4(2)$ (because 5 double coordinate points are, and because of Lemma \ref{reduce}(1)) and by Proposition \ref{prop.hyperp} (we need to add $u=q=4$ general points to the two double points).

The case $r=8$ is proved similarly. In this case, one may take $q$ satisfying $15\leq 4q \leq 25$. If we take $q=4$ then, as above, assumption (1) of Theorem \ref{thm.Terr} is satisfied. For assumption (2), we need to prove there exists no quadric through a set $Z$ of 4 general double points and 4 general simple points. However it is easily seen that the only quadric through 4 general double points in $\PP^4$ is the square of the hyperplane containing them. Since the remaining 4 simple points are general, we may take them outside this hyperplane, so there is no quadric in $I_Z$. 

An application of Theorem \ref{thm.Terr} now finishes the proof.
\end{proof}

We end this section by noting that the case of cubics, i.e., when $d=3$, for an arbitrary value of $n$ is much more subtle. Section \ref{sec.cubic} is devoted to handle this case.

%%%%%%%%%%%%%%%%%%%%%%%%%%%%%%%%

\section{The case of $\PP^2$ ($n=2$)} \label{sec.P2}

This section focuses on the double points in $\PP^2$. Particularly, we shall identify all exceptional cases when $n = 2$. While one could prove this case with more elementary arguments, we have chosen to employ Theorem \ref{core} to provide the reader with a further illustration of its application.

\begin{Theorem}\label{P2}
Let $X$ be any set of $r$ general points in $\PP^2$.
Then $2X$ is $\text{AH}_2(d)$ for every $d\geq 1$, except for the exceptional cases of $r=2$ and $d=2$, and $r=5$ and $d=4$.
\end{Theorem}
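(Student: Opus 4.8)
The plan is to proceed by induction on $d$, using Theorem \ref{core} for the generic inductive step and handling small values of $d$ and the exceptional cases by hand. First I would dispose of $d=1$ directly (a single line through $r$ general points: $2X$ is $\text{AH}_2(1)$ for all $r$ since $e(R/I_{2X})=3r\ge 3=\binom{3}{2}$ unless $r=1$, which is trivial). For $d=2$, Lemma \ref{Exd=2} already tells us that $r$ general double points fail $\text{AH}_2(2)$ precisely when $2\le r\le n=2$, i.e. only $r=2$; this gives the first exceptional case and confirms $\text{AH}_2(2)$ holds for $r=1$ and $r\ge 3$. For $d=3$ and $d=4$ I would argue directly on $\PP^2$: by Remark \ref{rmk.Hsemi} it suffices to check the one or two values of $r$ nearest $\frac13\binom{d+2}{2}$, namely $r=3$ (for $d=3$) and $r=5$ (for $d=4$). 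For $d=3$, $r=3$: three general double points impose $9$ conditions on cubics and $\binom{5}{2}=10$, so one must show $H_{R/I_{2X}}(3)=9$; taking the three coordinate points, $I^{(2)}=(x_0x_1x_2)$ so $H_{R/I^{(2)}}(3)=10-1=9$, as needed. For $d=4$, $r=5$: here $e(R/I_{2X})=15<\binom{6}{2}=15$ — actually equal — but the claim is that $2X$ fails $\text{AH}_2(4)$; the obstruction is the unique conic $Q$ through $5$ general points, whose square $Q^2$ is a quartic in $I_X^{(2)}$, forcing $H_{R/I_{2X}}(4)\le 14<15$, exactly as in the proof of Lemma \ref{Exd=4}. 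One must also check $r=4$ and $r=6$ are fine for $d=4$: for $r=4$, $e=12<15$ and no conic passes through $4$ general points other than... wait, a pencil of conics does — but $H_{R/I_{2X}}(4)$ should still be $12$; I would verify this with the coordinate-point computation (four general double points in $\PP^2$, one checks $[I^{(2)}]_4$ has the expected dimension $15-12=3$), and for $r=6$, $e=18>15$ so one needs $[I_{2X}]_4=0$, which holds for $6$ general points since the only quartics through $5$ general double points already cut down to a small space and a sixth general double point kills it.

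For the inductive step with $d\ge 5$, I would invoke Theorem \ref{core} with $n=2$: given $r$ in the required range, set $q,\epsilon$ by $2q+\epsilon=r\cdot 3-\binom{d+1}{2}$ with $0\le\epsilon<2$. Condition (i) requires $q$ general double points to be $\text{AH}_1(d)$, which holds by Proposition \ref{P1} (points on $\PP^1$); condition (ii) requires $r-q$ general double points to be $\text{AH}_2(d-1)$, which holds by the inductive hypothesis on $d$ (after checking $r-q$ is not an exceptional value — one must verify that the induction never lands on the forbidden pairs $(d-1,r-q)=(2,2)$ or $(4,5)$, which is a finite numeric check using Lemma \ref{lem.numeric}); condition (iii) requires $r-q-\epsilon$ general double points to be $\text{AH}_2(d-2)$, again by the inductive hypothesis, with the same caveat about avoiding $(d-2,r-q-\epsilon)\in\{(2,2),(4,5)\}$. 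Since Theorem \ref{core} as stated requires $d\ge 4$, and for $n=2$ we are starting the $d$-induction at $d=5$ with base cases $d=3,4$ already established, this bootstraps cleanly — but one should double-check that the hypotheses of Theorem \ref{core} (in particular $d\ge 4$ for conditions (ii) and (iii), which concern degrees $d-1\ge 4$ and $d-2\ge 3$) are genuinely available, invoking the $d=3$ base case for condition (iii) when $d=5$.

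The main obstacle I expect is the bookkeeping in the inductive step: verifying that for every $d\ge 5$ and every admissible $r$, the auxiliary numbers $r-q$ and $r-q-\epsilon$ never coincide with the two exceptional values $(2,2)$ and $(4,5)$, and that $q\ge 1$ so that condition (i) is non-vacuous. This is a purely arithmetic verification — one estimates $r\approx\frac13\binom{d+2}{2}$, hence $q\approx\frac13\binom{d+1}{2}/2\cdot 2$... more precisely $q$ is roughly $\frac{1}{2}\binom{d}{2}\cdot\frac{2}{3}$-ish and $r-q\approx\frac{1}{n+1}\binom{n+d-1}{n}=\frac13\binom{d+1}{2}$, which for $d\ge 5$ is at least $\binom{6}{2}/3=5$; the only genuinely dangerous coincidence is $r-q=5$ with $d-1=4$, i.e. $d=5$, where I must check by explicit computation that $r-q$ for $d=5$, $n=2$ equals $7$ and not $5$ (indeed from the $d=5$ table $r=7$, $q=3$, so $r-q=4$ — and $4$ general double points in $\PP^2$ are $\text{AH}_2(4)$, not exceptional). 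Likewise $r-q-\epsilon=4$ with $d-2=4$ would be the exceptional $(4,5)$ only if it equalled $5$, which it does not. Once these finitely many near-misses are ruled out, the induction runs without further difficulty, and the only exceptions that survive are exactly $(r,d)=(2,2)$ and $(5,4)$, as claimed.
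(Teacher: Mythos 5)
Your overall strategy (small $d$ by hand, Lemma \ref{Exd=2} for $d=2$, the conic-square obstruction for $(d,r)=(4,5)$, then Theorem \ref{core} with induction on $d$) is the same as the paper's, but the inductive step does not close as you claim. The assertion that ``once these finitely many near-misses are ruled out, the induction runs without further difficulty'' fails at $d=6$, $r=10$: here $\lceil\frac13\binom82\rceil=10$, and $3r-\binom72=30-21=9$ gives $q=4$, $\epsilon=1$, hence $r-q-\epsilon=5$ with $d-2=4$. So hypothesis (iii) of Theorem \ref{core} is exactly the exceptional case of $5$ general double points in degree $4$, and Theorem \ref{core} simply cannot be invoked for this pair; your arithmetic check only looked at $d=5$ and missed this genuine (not near-) miss. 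The paper has to treat it separately: Theorem \ref{core} does apply to $r=9$ (there $q=3$, $\epsilon=0$), so $9$ general double points are $\text{AH}_2(6)$ and there is a unique sextic double at them; since the Hilbert function of $9$ general points is $1,3,6,9,9,\dots$, that sextic is the square of the unique cubic through the nine points, and a tenth general point can be chosen off it, giving $[I_{2X}]_6=0$ for $r=10$. Without an argument of this kind your induction cannot even start for $d\ge 7$, because conditions (ii)/(iii) for $d=7,8$ require degrees $5$ \emph{and} $6$ to be fully settled first.

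Two smaller gaps in your base cases. For $d=3$, Corollary \ref{rpts} requires checking both $r=3$ and $r=4$ (since $\frac13\binom52=\frac{10}{3}$ is not an integer); you only verify $r=3$, and the missing case $r=4$ needs an argument (e.g.\ B\'ezout: a cubic double at $4$ general points would be divisible by the six lines through pairs of them, which is absurd). For $d=4$, $r=6$, your sketch tacitly assumes that the space of quartics double at $5$ general points is exactly $\langle Q^2\rangle$, i.e.\ an upper bound on that space, not just the lower bound used to exhibit the exception at $r=5$; this can be supplied by B\'ezout (a quartic double at $5$ points of the conic $Q$ meets $Q$ in multiplicity at least $10>8$, hence is $Q$ times a conic through the five points, hence a multiple of $Q^2$), or one can follow the paper, which instead settles $r=4$ and $r=6$ via Theorem \ref{thm.Terr} with $q=1$ and $q=3$ respectively, reducing to three general double points and the unique ``triangle'' cubic through them. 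As written, then, the $d=3$ and $d=4$ cases are incomplete and the $d=6$, $r=10$ case is missing an essential ad hoc argument.
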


\begin{proof} Let $R = \CC[x,y,z]$ be the homogeneous coordinate ring of $\PP^2$. We shall consider different cases based on the values of $d$.
	
\noindent\textit{Case 1: $d=1$.} It suffices to prove the assertion for $r = 1$ since the degree of a double point in $\PP^3$ is $3 = H_R(1)$. This case follows from Remark \ref{r=1}.

\noindent\textit{Case 2: $d=2$.} The assertion is true for $r =1$ by Remark \ref{r=1}. The case where $r = 2$ is an exceptional case by Lemma \ref{Exd=2}. Suppose that $r \ge 3$.  Since the degree of 3 double points in $\PP^2$ is 9, which is bigger than $6 =H_R(2)$, then $2X$ is $AH_2(2)$ if and only if there exists no conic in $\PP^2$ it suffices to prove that there is no conic in $\PP^2$ with $r$ double points. Clearly, it suffices to prove it when $r=3$. By B\'ezout theorem, the equation of every conic with 3 double points is divisible by the equations of the three lines connecting 2 of these points -- this gives a contradiction.

\noindent\textit{Case 3: $d=3$.} The statement is true for $r = 1$, again by Remark \ref{r=1}. When $r=2$ we need to show that $H_{R/I_X^{(2)}}(3)=4$. Observe that by B\'ezout theorem, a cubic with 2 double points must contain the line connecting these points. That is, this cubic factors as a line and a conic going through these 2 points. Since the Hilbert function of 2 general points in $\PP^2$ is $1, 2, 2, \dots$, it follows that the space of conic going through these 2 points has dimension 4. Particularly, the space of cubic with 2 double points has dimension 4.
Thus, the assertion is true for $r = 2$.

Observe further that by B\'ezout theorem, a cubic with 3 double points must contain 3 lines connecting 2 of these points, and so there is a unique such cubic, which is the union of the 3 lines. It follows that $H_{I_X^{(2)}}(3) = 10-1=9=e(R/I_X^{(2)})$, therefore, the assertion is true for $r = 3$.

Suppose that $r \ge 4$. Since the degree of 4 double points is $12 > 10 = H_R(3)$, it suffices to show that there is no cubic containing 4 double points. By B\'ezout theorem again, if such a cubic existed then it would contain the 6 lines connecting any 2 of these 4 points, a contradiction.

\noindent\textit{Case 4: $d \ge 4$.} Recall that, from Theorem \ref{core}, a set of $r$ general double points in $\PP^2$ with $\left\lfloor \frac{1}{3}{d+2\choose 2}\right\rfloor \leq r \leq \left\lceil \frac{1}{3}{d+2\choose 2}\right\rceil$ is $\text{AH}_2(d)$ if
\begin{enumerate}
	\item $q$ general double point in $\PP^1$ is $\text{AH}_1(d)$ (which holds by Proposition \ref{P1}),
	\item $r-q$ general double points in $\PP^2$ are $\text{AH}_2(d-1)$, and
	\item $r-q-\epsilon$ general double points in $\PP^2$ are $\text{AH}_2(d-2)$,
\end{enumerate}
where  $q \in \mathbb N_0$ and $0 \le \epsilon \le 1$ are such that $2q+\epsilon = 3r - {d+1 \choose 2}.$

When $d=4$, Remark \ref{rmk.Hsemi} says there are no exceptions if the case $r=\frac{1}{3}{4+2 \choose 2}=5$ is not an exceptional case. However, it is an exceptional case (and in fact in this case $q=2$, $\epsilon=1$, so condition (3) of Theorem \ref{core} is not satisfied -- because it is the exceptional case of $2$ double points in degree 2). By Lemma \ref{reduce}, we need to show that the cases $r=4,6$ are not exceptional cases.

When $r=4$, the first numerical condition in Theorem \ref{thm.Terr} is $2\leq 2q \leq 5$, so $1\leq q \leq 2$. Taking $q=1$, assumption (1) of Theorem \ref{thm.Terr} is satisfied by Proposition \ref{P1}. On the other hand, a set of $r-q=3$ general double points in $\PP^2$ is $AH_2(3)$ by the above and there is precisely one cubic passing through all the three points twice. So there is no cubic passing through them twice and passing through an additional general simple point (which we can take to be outside the cubic). Therefore, assumption (2) is satisfied too, and this case follows by Theorem \ref{thm.Terr}.  

When $r=6$ the proof is very similar. The second numerical condition in in Theorem \ref{thm.Terr} is $5\leq 2q \leq 8$, so $3\leq q \leq 4$.
We take $q=3$ so again we have $r-q=3$, and then assumptions (1) and (2) of Theorem \ref{thm.Terr} are satisfied as above, thus proving that $r=6$ is not an exceptional case, and concluding the case $d=4$. 

For $d = 5$, by Remark \ref{rmk.Hsemi} it suffices to prove that a set of $r=7$ general double points is $AH_2(5)$. In this case Theorem \ref{core} applies, because the induction hypotheses (1)--(3) are satisfied with the only possible exception of (2) when $r-q=5$ (as it reduces to the exceptional case of 5 double points in degree 4), i.e. $q=2$. Since $2q+\epsilon = 3r - 15$, then $\epsilon = 2$, which is a contradiction. %Since ${6 \choose 2} < 3r$ and $0 \le \epsilon \le 1$, we deduce that $r$ can only be 6. The degree of 6 double points is 18. Thus, it remains to show that the space of quintics with 6 general double points has dimension at most 3.

%Suppose, on the contrary, that the space of quintics with 6 general double points has dimension at least 4. Then, by imposing one of these general points to be a triple point, we impose at most additional 3 conditions (the degree of a triple point is 6 and that of a double point is 3). It follows that the space of quintics with 5 general double points and a general triple point is nonempty. Let $Q_5$ be such a quintic with 5 general double points and a general triple point, and let $H_5$ be any quintic with 6 general double points. Their intersection contains 5 general double points, plus a scheme of multiplicity $(3)(2)$ supported at the last point. Thus $Q_5$ and $H_5$ meet at a subscheme of degree $26 > 25$. This violates B\'ezout theorem, unless $Q_5$ and $H_5$ share a common factor. Observe further that, by Bertini's theorem, the general element in the space of quintic with 6 general double points is an irreducible curve, and so $H_5$ is irreducible. This forces $Q_5$ to coincide with this irreducible curve, which then implies that all quintics with 6 double general double points are in fact the same as $Q_5$, a contradiction.

For $d = 6$, by Remark \ref{rmk.Hsemi} we need to prove that sets of $r=9,10$ general double points in $\PP^2$ are $AH_2(6)$. The induction hypotheses (1)--(3) of Theorem \ref{core} are satisfied except possibly assumption (3) when $r-q-\epsilon = 5$ (in this case (3) reduces to the exceptional case of 5 double points in degree 4). Since $2q+\epsilon = 3r-21$, we get $q = 2r - 16$ and $\epsilon = 11 - r$. Since $r \leq 10$ and $0 \le \epsilon \le 1$, we must have $r = 10$, $\epsilon = 1$ and $q = 4$. This particularly shows that Theorem \ref{core} applies when $r=9$, so the case $r=9$ and $d=6$ is not an exceptional case. As a consequence, there is a unique sextic containing 9 general double points (since the degree of 9 double points is 27). On the other hand, the Hilbert function of 9 general points is $1, 3, 6, 9, 9, \dots$, and so there is only one cubic passing through 9 general points. Thus, the unique sextic with 9 general double points is the double cubic passing through these 9 general points. As the remaining point is general, we can take it outside the sextic, resulting in no sextic passing through 10 general double points.

Since there are no exceptional cases in degrees 5 and 6, by Theorem \ref{core}, we conclude that there is no exceptional cases in any degree $d \ge 5$, finishing the proof.

\end{proof}

%%%%%%%%%%%%%%%%%%%%%%%%%%%%%%%%%%%%%%%%

\section{The case of cubics ($d=3$)}\label{sec.cubic}

In this section, we consider the case of cubics for any value of $n$. The main result in this section extends Lemma \ref{Exr=7} and completes the case where $d = 3$.

\begin{Theorem}
	\label{5.1bis}
	Suppose that $n \ge 2$. A set of $r$ general double points in $\PP^n$ is not $\text{AH}_n(3)$ if and only if $n=4$ and $r = 7$.
\end{Theorem}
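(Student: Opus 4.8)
The plan is to prove \autoref{5.1bis} by reducing everything to the values of $r$ singled out by \autoref{rmk.Hsemi} and then feeding the pieces into the inductive machinery already developed. Since \autoref{Exr=7} settles the case $n=4$ completely (including that $r=7$ really is the only failure in $\PP^4$), I would treat $n=3$ and $n\ge 5$ as the two remaining regimes, and in each case show that \emph{every} $r$ yields $\text{AH}_n(3)$. By \autoref{rmk.Hsemi} it suffices to verify this for the one or two values $r$ with $\lfloor \frac{1}{n+1}\binom{n+3}{n}\rfloor \le r \le \lceil \frac{1}{n+1}\binom{n+3}{n}\rceil$, then invoke \autoref{cor.count2}/\autoref{reduce} to propagate to all $r$: for $r$ below the lower value any set of $r$ general double points is a subscheme of one of the critical configurations, while for $r$ above the upper value we add general simple points.

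First I would dispose of small base cases by hand. For $n=2$, $d=3$ this is already inside \autoref{P2} (Case 3 there), so there are no exceptions. For $n=3$, $d=3$ one checks that the relevant critical value is $r=5$ (since $\frac14\binom{6}{3}=5$), and the claim is that a set of $5$ general double points in $\PP^3$ imposes independent conditions on cubics, i.e.\ $H_{R/I_X^{(2)}}(3)=\min\{20,20\}=20$, equivalently $[I_X^{(2)}]_3=0$. One clean way: take the $5$ coordinate-adjacent points, or better use a Terracini/Castelnuovo reduction as in \autoref{thm.Terr}, restricting to a hyperplane through some of the points; the residue lives in $\PP^3$ in degree $2$ and the trace in $\PP^2$ in degree $3$, both handled by \autoref{Exd=2} and \autoref{P2}. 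Alternatively, a direct B\'ezout-type argument: a cubic singular at $5$ general points of $\PP^3$ would have to contain too many lines. The case $n=4$ is \autoref{Exr=7}. The genuinely new work is $n\ge 5$.

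For $n\ge 5$ the strategy is induction on $n$ using \autoref{core} (or \autoref{thm.Terr} when the numerics allow a valid $q$), exactly as in the proof of \autoref{thm.AHcore} where the line ``condition (ii) holds as shown in Section~\ref{sec.cubic}'' refers precisely to the statement being proved here. So I must be careful not to argue circularly: the induction is on $n$, with base cases $n=2,3,4$ established above, and for each $n\ge 5$ the hypotheses of \autoref{core} with $d=3$ read: (i) $q$ general double points are $\text{AH}_{n-1}(3)$ — this is the inductive hypothesis (and is true since $n-1\ne 4$ once $n\ge 6$, while for $n=5$ we need $q$ general double points in $\PP^4$ to be $\text{AH}_4(3)$, which by \autoref{Exr=7} holds provided $q\ne 7$, so I must check the actual value of $q$ for $n=5$ and, if it equals $7$, route around it via \autoref{thm.Terr} with a different $q$ or by a direct computation); (ii) $r-q$ general double points are $\text{AH}_n(2)$, which holds by \autoref{Exd=2} as long as $r-q\ge n+1$ or $r-q=1$ — this inequality should follow from a numeric lemma analogous to \autoref{lem.numeric}(3) that I would state and prove for $d=3$; (iii) for $d=3$ the third hypothesis concerns $\text{AH}_n(1)$, which is automatic for any number of general double points by \autoref{r=1} together with \autoref{reduce} (degree-$1$ part is just the linear forms, and general double points impose independent conditions there trivially up to the dimension count).

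The main obstacle I anticipate is twofold. First, the arithmetic bookkeeping: for $d=3$ the binomial identities behind \autoref{lem.numeric} must be re-derived, and one must confirm that for every $n\ge 5$ there is a choice of $q,\epsilon$ (or of $q$ for Terracini) satisfying the numerical constraints of \autoref{core} (resp.\ \autoref{thm.Terr}) \emph{and} avoiding the forbidden value $r-q=7$, $n-1=4$ in hypothesis (i); this is the analogue of the ``$\text{AH}_4(3)$, $r=7$'' exception sneaking back in, and it may force an explicit small-$n$ case analysis (likely $n=5,6,7$ done by hand or by Macaulay2, as in the proof of \autoref{thm.AHcore}) before the clean induction takes over. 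Second, and more delicate, is ruling out \emph{additional} exceptional $r$ for each fixed $n\ge 5$: one must argue that if the critical values of $r$ are $\text{AH}_n(3)$ then so are all others, which is the semi-continuity/containment argument of \autoref{rmk.Hsemi} but needs the sub/super-configuration comparison to be spelled out. Modulo these, the proof is assembled from \autoref{core}, \autoref{thm.Terr}, \autoref{Exd=2}, \autoref{Exr=7}, \autoref{P2}, and the base-case computations.
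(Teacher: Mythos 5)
Your plan for $n\ge 5$ hinges on running \autoref{core} (or \autoref{thm.Terr}) with $d=3$ and inducting on $n$, and this is precisely where it breaks down --- it is the reason the paper has to devote a separate section to cubics rather than folding them into the Horace induction. First, \autoref{core} is stated, and its numerical ingredient \autoref{lem.numeric} is proved, only for $d\ge 4$. More importantly, even if one reformulated it for $d=3$, hypothesis (ii) would ask that $r-q$ general double points be $\text{AH}_n(2)$. For $d=3$ one has $r\approx\frac{(n+2)(n+3)}{6}$ and $nq+\epsilon=r(n+1)-\binom{n+2}{2}$, so $r-q\approx\frac{n+2}{3}$: for instance $n=5$ gives $r=9$, $q=6$, $r-q=3$, and $n=6$ gives $r=12$, $q=9$, $r-q=3$. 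Thus $r-q$ always lies in the exceptional range $2\le r-q\le n$ of \autoref{Exd=2}, never in the range $r-q\ge n+1$ you hope to secure with an analogue of \autoref{lem.numeric}(3); that inequality is simply false for $d=3$, so hypothesis (ii) fails and the induction cannot start for any $n\ge 4$. The Terracini escape route fares no better: for $n=6$ the numerical constraints of \autoref{thm.Terr} force $6q=56$, so no admissible $q$ exists, and for $n=5$, $r=9$ the only admissible value is $q=7$, whereupon assumption (1) is exactly the exceptional case of \autoref{Exr=7} in $\PP^4$ with no way to ``route around'' it; moreover assumption (2) would require a configuration of about $\frac{n+2}{3}$ double points plus $q$ simple points on $L$ to admit no quadric, while the quadrics singular at $s=r-q$ general points already form a space of dimension $\binom{n-s+2}{2}$, which exceeds $q$ (e.g.\ $10>9$ for $n=6$), so the assumption is false even when the numerics allow a $q$.

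Because of this, the paper proves \autoref{5.1bis} by a completely different mechanism, which your proposal does not contain: after the reduction to the critical $r$ via \autoref{rmk.Hsemi}, it splits according to $n$ modulo $3$ and inducts on $n$ in steps of three. One specializes $r_1=\frac{n(n-1)}{6}$ of the double points onto a codimension $3$ linear subspace $L\simeq\PP^{n-3}$ (using the formula for symbolic powers of sums from \cite{HNTT} to control the restriction to $L$), and the remaining obstruction, that $I_X\cap\pp_L$ contains no cubics, is handled by a chain of further specializations onto additional codimension $3$ subspaces $M$ and $N$ (\autoref{claim1}--\autoref{claim3}), an explicit analysis showing $\pp_L\cap\pp_M\cap\pp_N$ contains no quadrics, and a depth/hyperplane-section argument; the base cases $n=3,5,6,7$ are checked directly or by Macaulay2, and the class $n\equiv 2\pmod 3$ needs the extra device of an auxiliary scheme $\eta$ of degree $\frac{n+1}{3}$ supported at one more point (\autoref{claim4}). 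Your handling of $n=2,3,4$ and the reduction in the number of values of $r$ are fine, but the heart of the theorem --- $n\ge 5$ --- is exactly the part your proposed inductive machinery cannot reach.
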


\begin{proof} The case where $n = 2$ was already proved in Section \ref{sec.P2}. The case of $n = 4$ has been discussed in Lemma \ref{Exr=7}. For $n\geq 3$ and $n\neq 4$ we proceed by considering two possibilities depending on the congruence of $n$ modulo 3.
	
\textsc{Case 1:} $n\equiv 0,1$ (mod 3). In these cases $(n+2)(n+3)$ is a multiple of 6. Thus, $\frac{1}{n+1}{n+3 \choose 3} = \frac{(n+2)(n+3)}{6} \in \ZZ$ and by Remark \ref{rmk.Hsemi}, it suffices to show that a set of $r = \frac{(n+2)(n+3)}{6}$ general double points in $\PP^n$ is $\text{AH}_n(3)$.

We shall use induction on $n$ to show that the ideal of $r$ general double points in $\PP^n$ contains no cubics. The first base case, when $n \equiv 0$ (mod 3), is $n = 3$. By Remark \ref{rmk.Hsemi}, the assertion amounts to showing that a set $X$ of 5 general double points in $\PP^3$ is $\text{AH}_3(3)$, ie., its defining ideal contains no cubics. 
Without loss of generality we may write $X=Y\cup \{Q\}$ where $Q=[1:1:1:1]$, $Y=\{P_0,P_1,P_2,P_3\}$ and $P_i=[e_i]=[0:\ldots:1:0\ldots:0]$ for $0\leq i \leq 3$. Then $Y$ is a star configuration of 4 points, and a basis of $[I_Y^{(2)}]_3$ is $\{x_0x_1x_2,x_0x_1x_3,x_0x_2x_3,x_1x_2x_3\}$ (see, e.g., \cite[Cor.~3.8]{GGSV}). So any cubic $f$ in $I_X^{(2)}\subseteq I_Y^{(2)}$ is a linear combination of these basis elements. It is easily seen that imposing that the partial derivatives $(\partial/\partial x_i) f(Q)=0$ forces $f=0$.

The other base case, when $n \equiv 1$ (mod 3), is $n = 7$ and $r=15$. This can be computed directly (and verified via Macaulay 2 \cite{M2} computations).

Suppose now that $n \ge 6$ and $n \not= 7$. The inductive hypothesis applies to $n_1=n-3$. So we let $r_1$ be the integer obtained by replacing $n$ by $n_1=n-3$ in the formula for $r$, i.e. $r_1 = \frac{n(n-1)}{6}$. Let $L$ be a  codimension 3 linear subspace in $\PP^n$, after possibly a change of variables we may assume the defining ideal of $L$ is $\pp_L = (x_{n-2},x_{n-1},x_n)$. Let $X$ be a set of $r_1$ general double points in $L$ together with $r-r_1 = n+1$ general double points outside of $L$. By the semi-continuity of Hilbert function (Remark \ref{rmk.Hsemi}), it is enough to show that $I_X$ contains no cubics. Consider a point $Q$ in the support of $X$ that lies in $L$, and let $\q$ be its defining ideal. Clearly, $\q \supseteq \pp_L$. Thus, we can write $\q = \overline{\q} + \pp_L$, where $\overline{\q}$ is a linear prime in $R_1 = \CC[x_0, \dots, x_{n-3}] \simeq R/P_L$. It follows from \cite[Theorem 3.4]{HNTT} that
$$\q^{(2)} = \overline{\q}^{(2)} + \overline{\q}\cdot \pp_L + \pp_L^{(2)}.$$
Particularly, it implies that $\q^{(2)} + \pp_L = \overline{\q}^{(2)} + \pp_L$ is the defining ideal of the double point $2Q$ in $L$. Thus, by letting $\overline{X}$ be the set of $r_1$ general double points of $X$ in $L$, considered as a subscheme of $L \simeq \PP^{n-3}$, we obtain
$$I_X + \pp_L \subseteq I_{\overline{X}} + \pp_L.$$
Moreover, by the induction hypothesis applied to $\overline{X} \subseteq L \simeq \PP^{n-3}$, we have $\left[I_{\overline{X}}\right]_3 = (0).$ Therefore, $I_X + \pp_L/\pp_L$ contains no cubics. Hence, by considering the exact sequence
$$0 \lra I_X \cap \pp_L \lra I_X \lra I_X + \pp_L/\pp_L \lra 0,$$
to prove that $I_X$ contains no cubics, it remains to show that $I_X \cap \pp_L$ contains no cubics. This is the content of \autoref{claim1} below.

\begin{Claim}
	\label{claim1}
	Suppose that $n \ge 3$ and $n \not= 4$. Let $L$ be a codimension 3 linear subspace of $\PP^n$ and let $X$ be the union of $r_1 = \frac{n(n-1)}{6}$ general double points in $L$ and $n+1$ general double points outside of $L$. Then, $I_X \cap \pp_L$ contains no cubics.
\end{Claim}

\begin{proof}[Proof of \autoref{claim1}] We use also induction on $n$ to prove the assertion. The base case $n=3$ holds because, by the above, $I_X$ contains no cubics. The other base case $n = 7$ can be verified directly, or by Macaulay 2 \cite{M2} computations. Assume that $n \ge 6$.
	For the inductive step, let $M$ be a codimension 3 linear subspace of $\PP^n$ such that $L\cap M$ has codimension 6 in $\PP^n$ (any general codimension 3 linear subspace would work). Let $\pp_M$ be the defining ideal of $M$. We specialize to the following situation:
	\begin{itemize}
		\item $r_2:=\frac{(n-3)(n-3-1)}{6}=\frac{(n-3)(n-4)}{6}$ of the points of $X$ in $L$ are  general double points in $L \cap M$;
		\item the $r_1-r_2=n-2$ remaining points of $X$ in $L$ lie outside $M$;
		\item $n-2$ of the $n+1$ points of $X$ lying outside of $L$ are general double points in $M$;
		\item and the last 3 points of $X$ outside of $L$ are general double points outside $L\cup M$.
	\end{itemize}
	
	By the semi-continuity of Hilbert function, it suffices to show that $I_X \cap \pp_L$ contains no cubics in this particular case. From the short exact sequence
		$$
	0 \lra I_X\cap \pp_L \cap \pp_M \lra I_X\cap \pp_L \lra (I_X\cap \pp_L) + \pp_M/\pp_M\lra 0,
	$$
	it suffices to prove the other two terms of this exact sequence contain no cubics. As before, observe that
	$$
	(I_X \cap \pp_L) + \pp_M \subseteq (I_{\ol{X}} \cap \pp_{\ol{L}}) + \pp_M,
	$$
	where $\ol{X}$ denotes the set of points of $X$ lying in $M \simeq \PP^{n-3}$, and $\ol{L}$ denotes the codimension 3 subspace $L \cap M$ of $M \simeq \PP^{n-3}$. As above, it can be seen that, in $M$, $\ol{X}$ is the union of $r_2$ general double points lying in $\ol{L}$ and $n-2$ general double points outside of $\ol{L}$. Thus, by the induction hypothesis, the ideal $(I_{\ol{X}} \cap \pp_{\ol{L}}) + \pp_M/\pp_M$ of $R/\pp_M \simeq \CC[y_0,\ldots,y_{n-3}]$ contains no cubics. Hence, it remains to show that $I_X \cap \pp_L \cap \pp_M$ contains no cubics. This follows from \autoref{claim2} below.
\end{proof}

\begin{Claim}
	\label{claim2}
	Suppose that $n\geq 3$ and $n\neq 4$. Let $L,M$ be two general codimension 3 linear subspaces of $\PP^n$. Let $X \subseteq \PP^n$ be the union of $r_2 = \frac{(n-3)(n-4)}{6}$ general double points in $L \cap M$, $n-2$ general double points in $L \setminus M$, $n-2$ general double points in $M \setminus L$, and 3 general double points outside of $L \cup M$. Then, $I_X \cap \pp_L \cap \pp_M$ contains no cubics.
\end{Claim}

\begin{proof}[Proof of \autoref{claim2}] Let $Z$ be the set of double points obtained by removing the $r_2$ double points in $L \cap M$ from $X$. Clearly, $I_Z \supseteq I_X$. We shall prove the stronger statement that $I_Z \cap \pp_L \cap \pp_M$ contains no cubics. The statement for $n = 3, 5, 6$, and $7$ can be verified by direct computations (e.g. using Macaulay 2 \cite{M2}). We shall use induction to prove the statement for $n \ge 8$.
	
Let $N$ be another general codimension 3 linear subspace of $\PP^n$ and let $\pp_N$ be its defining ideal. We specialize the points as follow: we take $n-5$ of the $n-2$ double points of $Z$ lying in $L$ to be in $L\cap N$, we take $n-5$ of the $n-2$ double points of $Z$ lying in $M$ to be in $M\cap N$, and we take the 3 general double points of $Z$ outside of $L \cup M$ to be in $N \simeq \PP^{n-3}$. By the semi-continuity of Hilbert function, it suffices to show that for this special configuration of $Z$, the ideal $I_Z$ contains no cubics.

Consider the following short exact sequence
	$$
	0 \lra I_Z \cap \pp_L \cap \pp_M \cap \pp_N \lra I_Z \cap \pp_L \cap \pp_M \lra I_Z \cap \pp_L \cap \pp_M + \pp_N/\pp_N \lra 0.
	$$
By an argument similar to the proof of \autoref{claim1}, we have $I_Z \cap \pp_L \cap \pp_M + \pp_N \subseteq I_{\overline{Z}} \cap \pp_{\overline{L}} \cap \pp_{\overline{M}} + \pp_N$, where $\overline{\bullet}$ represents the restrictions of $\bullet$ 
to $N \simeq \PP^{n-3}$. The induction hypothesis applies to $\overline{Z}$, so  $I_{\overline{Z}} \cap \pp_{\overline{L}} \cap \pp_{\overline{M}} + \pp_N/\pp_N$ contains no cubics. Therefore, to establish the desired statement, it remains to show that $I_Z \cap \pp_L \cap \pp_M \cap \pp_N$ contains no cubics. This follows from \autoref{claim3} below, noting that $n-5 \ge 3$.
\end{proof}

\begin{Claim}
	\label{claim3}
	Suppose that $n \ge 5$. Let $L$, $M$, and $N$ be general codimension 3 linear subspaces of $\PP^n$. Let $X \subseteq \PP^n$ be the union of 3 general double points in $L \setminus (M \cup N)$, 3 general double points in $M \setminus (L \cup N)$, and 3 general double points in $N \setminus (L \cup M)$. Then $I_X \cap \pp_L \cap \pp_M \cap \pp_N$ contains no cubics.
\end{Claim}

\begin{proof}[Proof of \autoref{claim3}] Direct computations (e.g. via Macaulay 2 \cite{M2})  verify the statement for $n = 5$ and $n = 6$. (Notice that in \cite[Prop.~5.2]{BO} it is incorrectly stated that when $n=6$ the ideal $\pp_L \cap \pp_M\cap \pp_N$ contains no quadrics.) Assume that $n \ge 7$.
Without loss of generality, we may assume that $\pp_L=(x_0,x_1,x_2)$ and $\pp_M=(x_3,x_4,x_5)$, so $\pp_L \cap \pp_M=\pp_L \pp_M$; in particular, $\pp_L \cap \pp_M$ is minimally generated by 9 quadrics, so $H_{R/\pp_L \cap \pp_M}(2)=H_R(2) - 9$

 Let $\kappa:=\pp_L \cap \pp_M \cap \pp_N$, so we need to show that $I_X\cap \kappa$ contains no cubics.

We shall first show that $\kappa$ contains no quadrics. Indeed, if $n \ge 8$ then we may assume that $\pp_N = (x_6, x_7, x_8)$. In this case, $\kappa = \pp_L \pp_M \pp_N$ is generated in degree 3. On the other hand, if $n = 7$ then we may assume that $\pp_N = (x_6, x_7, x_0-x_3)$.  
Now, consider the short exact sequence
$$
0 \lra R/\kappa \lra R/\pp_L \cap \pp_M  \oplus R/\pp_N \lra R/(\pp_L \cap \pp_M)+\pp_N \lra 0.
$$
Since $R/\left(\pp_L \cap \pp_M\right) + \pp_N = R/\pp_L \pp_M + \pp_N = R/(x_0,x_1,x_2)(x_3,x_4,x_5), x_0-x_3, x_6,x_7)$ is isomorphic to $B:=\CC[x_1,\dots, x_5]/(x_1,x_2,x_3)(x_3,x_4,x_5)$, then we have 
$$H_{R/\kappa}(2) = H_{R/\pp_L \cap \pp_M}(2) + H_{R/\pp_N}(2) - H_B(2) = (H_R(2)-9) + 15 - H_B(2)=27+15 - H_B(2).$$
Since $B$ contains all the quadrics in $\CC[x_1, \dots, x_5]$ except for the 9 generators of the ideal  $(x_1,x_2,x_3)(x_3,x_4,x_5)$, then $H_B(2) = 15 - 9 = 6$. Therefore, $H_{R/\kappa}(2) = 42-6=36=H_R(2)$, showing that $[\kappa]_2=0$.

Now, by the above short exact sequence, since $\dim R \geq 5$ one has $\depth R/\kappa \ge 2$. Let $h$ be a general linear form in $R$ and let $H$ be the hyperplane in $\PP^n$ defined by $h$; since $\depth R/\kappa \geq2$, we may assume $h$ is regular on $R/\kappa$. Let $\overline{R} = R/(h)$ and $\overline{\kappa}$ be the image of $\kappa$ in $\overline{R}$. From the standard short exact sequence
$$
0 \lra R/\kappa \lra R/\kappa \lra \ovl{R}/\ovl{\kappa} \lra 0
$$ one obtains that $\depth \ovl{R}/\ovl{\kappa} \geq 1$, i.e. $\overline{\kappa}$ is saturated in $\overline{R}$.

We now specialize the configuration so that all 9 double points of $X$ are on the hyperplane $H \simeq \PP^{n-1}$ and let $I = I_X \cap \kappa$ for simplicity of notation. Consider the short exact sequence
$$0 \lra (I:h)(-1) \lra I \lra (I,h)/(h) \lra 0.$$
Since the points in $X$ are lying on $H$, we have $I:h = \kappa:h = \kappa$. Thus, this sequence can be rewritten as
$$0\lra \kappa(-1) \lra I \lra (I,h)/(h) \lra 0.$$
As we have shown, $\kappa$ has no quadrics, so $\kappa(-1)$ has no cubics. Hence, to show that $I$ contains no cubics, it remains to show that the image $\overline{I}$ of $I$ in $\overline{R}$ has no cubics. This is indeed true by induction on $n$, since $\overline{I} \subseteq (\overline{I})^{\text{sat}}$ and 
$(\overline{I})^{\text{sat}}$ is the defining ideal of $X$ in $H \simeq \PP^{n-1}$.
\end{proof}

\textsc{Case 2:} $n \equiv 2$ (mod 3). In this case, $\frac{{n+3 \choose 3}}{n+1}=\frac{n^2+5n+6}{6} = \frac{(n+1)(n+4)}{6} + \frac{1}{3}$, and since $n\equiv 2$ (mod 3), we know $\frac{(n+1)(n+4)}{6}$ is an integer. So, we let $r_0 = \frac{(n+2)(n+3)}{6} - \frac{1}{3} = \frac{(n+1)(n+4)}{6}$ and set $\delta = {n+3 \choose 3} - r_0=\frac{n+1}{3}$. By Remark \ref{rmk.Hsemi}, to prove the desired statement, it suffices to show that sets of $r=r_0 \text{ and } r_0+1$  general double points are  $\text{AH}_n(3)$. To this end, it is enough to show that a scheme $X \subseteq \PP^n$ consisting of $r_0$ general double points and a general subscheme $\eta$ supported at another general point with degree $\delta$ is $\text{AH}_n(3)$. Indeed, it is easy to see that $X$ has multiplicity exactly ${n+3 \choose 3}$. Thus, by a proof similar to the one of Lemma \ref{reduce}, it can be shown that if $X$ is $\text{AH}_n(3)$ then so is a set of $r_0$ general double points in $\PP^n$. On the other hand, a set of $r_0+1$ general double points contains $X$ as a subscheme, so its Hilbert function in degree $d$ is at least that of $X$, which is ${n+3 \choose 3}$, i.e. it is already maximal. Particularly, a set of $r_0+1$ general double points also has maximal Hilbert function in degree 3.

As in Case 1, we shall use induction on $n\geq 2$ to show that $X$ is $\text{AH}_n(3)$. 
The case $n=2$ is proved in Theorem \ref{P2}. The induction step proceeds along the same lines as Case 1. The only difference is at \autoref{claim1}, which shall be replaced by the following

\begin{Claim}
	\label{claim4}
	Suppose that $n \ge 2$. Let $L$ be a general codimension 3 linear subspace in $\PP^n$. Let $X \subseteq \PP^n$ the union of $r_1' = \frac{(n-2)(n+1)}{6}$ general double points in $L$, $(n+1)$ general double points outside of $L$, and a general subscheme $\eta$ supported at a point $Q \in L$ and of multiplicity $\delta$ such that $\eta \cap L$ has multiplicity $\delta-1 = \frac{n-2}{3}$. Then, $I_X \cap \pp_L$ contains no cubics.
\end{Claim}

\begin{proof}[Proof of \autoref{claim4}] One proceeds by induction exactly as in the proof of \autoref{claim1}.
\end{proof}
The proof of Theorem \ref{5.1bis} is now completed.
\end{proof}

%%%%%%%%%%%%%%%%%%%%%%%%%%%%%
%%%%%%%%%%%%%%%%%%%%%%%%%%%%%

\section{Open problems} \label{sec.open}

In this section we discuss a few open problems. Let us state clearly that there are many other interesting questions outside the ones that we include here.  For instance, as indicated by Appendix \ref{app.secant} below, the polynomial interpolation is closely connected to secant varieties and Waring rank. Thus, there are many other problems and questions that are of interest to researchers working in these areas or studying, for example, containment problems for ordinary and symbolic powers of ideals, other interpolation problems and invariants associated to symbolic powers of ideals. 

However, to keep this section aligned with the other sections, we restrict ourselves to problems and questions 
{\em related to the Alexander--Hirschowitz theorem}. It is implicit that this small set of problems and conjectures is far from being comprehensive, and it should be considered as a sample -- aimed at young researchers -- of the many problems in this active area of research.

We begin by observing that Theorem \ref{AH} describes the Hilbert function of $I_Y^{(2)}$ for every set $Y$ of {\em general} points in $\PP^n$ with a finite list of exceptions (the Hilbert functions in these cases can be worked out individually). A starting point is asking for a characterization of  the Hilbert function of $I_Y^{(2)}$ for any set of points $Y$ in $\PP^n$. 

To state this general problem, for $n\geq 1$ and $r\geq 1$, let $\mathcal H_n(r)$ be the set of all Hilbert functions $H_{R/I_Y^{(2)}}$ where $Y$ is a set of $r$ points in $\PP^n$.
\begin{Problem}\label{prob.HF1}
Characterize the numerical functions which are Hilbert functions of $I_Y^{(2)}$ for some set $Y$ of points in $\PP^n$, i.e. for every $n\geq 1$ characterize all elements in
$$
\mathcal H_n :=\bigcup_{r\geq 1}\mathcal H_n(r) = \left\{  H_{R/I_Y^{(2)}} \,\mid\, Y \text{ is a set of points in }\PP^n \right\}.
$$
\end{Problem}

In this generality, so far this has been a very challenging problem, see, for instance, the surveys of Gimigliano \cite{Gim} and Harbourne \cite{Harbourne}. 
Since Problem \ref{prob.HF1} is easy for points in $\PP^1$ (see Proposition \ref{P1}), and, to the best of our knowledge, it is still open in $\PP^2$ (see \cite{GMS2006} and \cite{GHM2009} for some recent work in this direction), then one might attempt to tackle this first nontrivial case: 

\begin{Problem}\label{prob.HF1a}
Characterize the numerical functions which are Hilbert functions of $I_Y^{(2)}$ for some set $Y$ of points in $\PP^2$, i.e.  characterize all elements in
$$
\mathcal H_2:=\left\{  H_{R/I_Y^{(2)}} \,\mid\, Y \text{ is a set of points in }\PP^2 \right\}.
$$
\end{Problem}

In investigating a family of Hilbert functions, it is natural to determine the existence of ``minimal'' and ``maximal'' elements. In fact, we can 
define a partial order on $\mathcal H_n(r)$ by setting $$H_{R/I_Y^{(2)}}\leq H_{R/I_Z^{(2)}} \qquad \text{ if }\qquad H_{R/I_Y^{(2)}}(d)\leq H_{R/I_Z^{(2)}}(d) \text{ for every }d\geq 1.$$ 
Notice that every $H\in \mathcal H_n(r)$ satisfies
$$
H(d) \leq \min\left\{{n+d \choose d}, r(n+1)\right\}
$$
and, by Theorem \ref{AH}, equality holds for any general set of points (with a few exceptions). Therefore, Theorem \ref{AH} in particular proves the existence of maximal elements in $\mathcal H_{n}(r)$ (with a few exceptions), and numerically characterizes what these maximal Hilbert functions are. It is a natural problem to determine the potential existence and characterization of {\em minimal} elements of $\mathcal{H}_n(r)$.

\begin{Problem}\label{prob.HF1b}
Fix $n,r\geq 1$.
\begin{itemize}
\item[$($a$)$] Prove the existence of a minimal element in $\mathcal H_n(r)$.
\item[$($b$)$] Determine the minimal element in $\mathcal H_n(r)$.
\end{itemize}
\end{Problem}
A partial answer to Problem \ref{prob.HF1b} was given for double points in $\PP^2$ in \cite{GMS2006, GH2011}, where the problem is solved when $r={t \choose 2}$ or $r\leq 11$. 
Another natural approach in examining the Hilbert function of double points is to specify that the points are \emph{lying on a given subscheme}, e.g. on a rational normal curve or a conic.

\begin{Problem}\label{prob.HF2}
For $n\geq 1$, let $C_n$ be the rational normal curve in $\PP^n$. For any $r\geq 1$, determine the Hilbert function of $R/I_Y^{(2)}$ where $Y$ is a set of $r$ general points on $C_n$.
\end{Problem}
If the rational normal curve $C_n$ is replaced by a conic then Problem \ref{prob.HF2} has a satisfactory answer, given by Geramita, Harbourne and Migliore \cite{GHM2009}. 

Another problem along the lines of the Alexander--Hirschowitz theorem is to determine the Hilbert functions of sets of general double points in {\em multiprojective spaces}. In general, however, points in multiprojective spaces are harder to understand than points in projective spaces. (e.g., a set of points in $\PP^{n_1} \times \dots \times \PP^{n_k}$ does not need to be Cohen--Macaulay.) Much work has been put forward to understand, in general, numerical invariants and properties of points in  the first nontrivial case of a multiprojective space, i.e., $\PP^1 \times \PP^1$, (see, e.g., \cite{GVT2015}). 

While the Hilbert function for a general set of double points in $\PP^1 \times \PP^1$ is known (see \cite{VT2005}), that for an {\em arbitrary} set of double points in $\PP^1 \times \PP^1$ is not yet completely classified.

\begin{Problem}\label{prob.HF3}
Let $R=\CC[x_0,\ldots,x_3]$ and fix any $r\geq 1$. Determine the possible Hilbert functions of $R/I_Y^{(2)}$ where $Y$ is any set of $r$ points in $\PP^1 \times \PP^1$.
\end{Problem}

We observe, in passing, that similarly to how the Alexander-Hirschowitz theorem is closely related to the study of secant varieties of Veronese embeddings of $\PP^n$, Problem \ref{prob.HF3} is intimately connected to the study of secant varieties of Segre-Veronese varieties (cf. \cite{CGG2005}).
\bigskip

In general, understanding the symbolic square $I_Y^{(2)}$ of a set $Y$ of simple points is far from being a completed task. Since for certain questions $I_Y^2$ is more understood than $I_Y^{(2)}$, a possible approach is to compare $I_Y^{(2)}$ and $I_Y^2$, or simply to consider the module $I_Y^{(2)}/I_Y^2$. 

For instance, Galetto, Geramita, Shin and Van Tuyl \cite{GGSV} defined a first possible measure aimed at quantifying the gap between the $m$-th symbolic power of an ideal and the $m$-th ordinary power. They dubbed this measure the {\em $m$-th symbolic defect of an ideal $J$}, and they defined it to be
$$
{\rm sdef}(J,m):=\mu(J^{(m)}/J^m).
$$
(Here, $\mu(M)$ denotes the minimal number of generators of a finitely generated $R$-module $M$.) The problem of determining symbolic defects of an ideal is open, even for the defining ideal of a general set of points.

\begin{Problem}\label{prob.HF4a}
	Compute ${\rm sdef}(I_Y,2)$ for any set $Y$ of general simple points in $\PP^n$.
\end{Problem}
Problem \ref{prob.HF4a} seems to be open even in $\PP^2$.

\begin{Problem}\label{prob.HF4b}
	Compute ${\rm sdef}(I_Y,2)$ for any set $Y$ of general simple points in $\PP^2$.
\end{Problem}

A first partial result towards Problem \ref{prob.HF4b} is \cite[Thm~6.3]{GGSV}, where the authors determined the second symbolic defect when $|Y|\leq 9$ and $|Y|\neq 6$. These are precisely the set of points whose second symbolic defect is either 0 or 1. They also proved that if $|Y|=6$ of $|Y|>10$, then ${\rm sdef}(I_Y,2)>1$, however, the precise value is not known.

Inspired by studies on symbolic defects of an ideal, we can consider a similar invariant defined by examining the Hilbert function instead of the minimum number of generators. Particularly, for $m \in \NN$, define the \emph{$m$-th symbolic HF-defect} of an ideal $J$ to be the Hilbert function of $J^{(m)}/J^m$, i.e.
$$
{\rm sHFdef}(J,m):=H_{J^{(m)}/J^m}.
$$

\begin{Problem}\label{prob.HF4}
Compute ${\rm sHFdef}(I_Y,2)$ for any set $Y$ of general points in $\PP^n$.
Equivalently, compute the Hilbert function $H_{R/I_Y^2}$ for any set $Y$ of general points in $\PP^n$.
\end{Problem}
The equivalence of the statements given in Problem \ref{prob.HF4} follows because $H_{I_Y^{(2)}/I_Y^2} = H_{R/I_Y^2} - H_{R/I_Y^{(2)}}$, and by Theorem \ref{AH} we already know $H_{R/I_Y^{(2)}}$.

Most of the above problems are aimed at understanding symbolic squares of ideals of points; however, the most natural, important and challenging question raised by Theorem \ref{AH} is to prove an analogue of Theorem \ref{AH} for any symbolic power of any ideal defining a set of general points in $\PP^n$.

\begin{Problem}\label{prob.HF5}
Let $n\geq 1$ and $R=\CC[x_0,\ldots,x_n]$. For every fixed $m\geq 3$, determine the Hilbert function of $R/I_Y^{(m)}$ for a set $Y$ of general points in $\PP^n$.
\end{Problem}

Problem \ref{prob.HF5} is one of the main open problems in interpolation theory. Even the case where $m=3$ is still wide open.
\begin{Problem}\label{prob.HF5a}
Let $n\geq 1$ and $R=\CC[x_0,\ldots,x_n]$. Determine the Hilbert function of $R/I_Y^{(3)}$ for a set $Y$ of general points in $\PP^n$.
\end{Problem}

As we have seen in Theorem \ref{AH}, one expects to have a finite list of exceptional cases, for which the general statement does not hold. A starting point toward Problem \ref{prob.HF5a} is to determine a similar list of exceptional cases for triple general points.

\begin{Problem}\label{prob.HF5b}
Let $n\geq 1$ and $R=\CC[x_0,\ldots,x_n]$. Determine all the potential exceptional cases for Problem \ref{prob.HF5a}, i.e., find a finite list $\mathcal L$ such that if $3Y$, for a general set of points $Y \subseteq \PP^n$, is not $\text{AH}_n(d)$ then $Y\in \mathcal L$.
\end{Problem}

A well-known conjecture, often referred to as the \emph{SHGH Conjecture},  raised (and refined) over the years by Segre, Harbourne, Gimigliano and Hirschowitz, provides the first step toward a solution to Problem \ref{prob.HF5b} by predicting what these exceptional cases are expected to be. We shall state a special case of this conjecture, namely, the uniform points in $\PP^2$. See, for instance, \cite{CHHVT} for a more general statement and details on the SHGH Conjecture. 

An irreducible homogeneous polynomial $F \in R = \CC[x,y,z]$ is said to be \emph{exceptional} for a set $Y = \{P_1, \dots, P_r\}$ of points in $\PP^2$ if
$$\deg(F)^2 - \sum_{i=1}^r n_i^2 = -3\deg(F) + \sum_{i=1}^r n_i = -1,$$
where $n_i$ is the highest vanishing order of $F$ at $P_i$, for $i = 1, \dots, r$, i.e. $n_i=\max\{t\in \NN_0\,\mid\, F\in \pp_i^t\}$ (and $\pp_i$ is the defining ideal of $P_i$)

\begin{Conjecture}\label{SHGH}(SHGH Conjecture)
Let $Y$ be a general set of points in $\PP^2$ and let $m \in \NN$. Then, $mY$ is not $\text{AH}_n(d)$ if and only if there exists an irreducible homogeneous polynomial $F \in R$ that is exceptional for $Y$ such that $F^s$, for some $s > 1$, divides every homogeneous polynomial of degree $d$ in $I_Y^{(m)}$.
\end{Conjecture}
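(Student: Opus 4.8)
Since the statement concerns the plane, write $\text{AH}_2(d)$ for $\text{AH}_n(d)$ with $n=2$. The plan is to pass to the blow-up $\pi\colon X\to\PP^2$ at the general points $P_1,\dots,P_r$ of $Y$, with exceptional curves $E_1,\dots,E_r$ and $H=\pi^{*}\OO_{\PP^2}(1)$. A form of degree $d$ in $I_Y^{(m)}$ is the same as a section of $\OO_X(D)$ with $D=dH-m(E_1+\dots+E_r)$, and since $h^2(X,\OO_X(D))=h^0(X,\OO_X(K_X-D))=0$ for $d\ge 1$, the failure of $\text{AH}_2(d)$ is equivalent to $|D|$ being \emph{special}, i.e.\ nonempty with $h^1(X,\OO_X(D))>0$. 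Thus the conjecture becomes: $|D|$ is special if and only if its base locus contains a $(-1)$-curve with multiplicity $\ge 2$, a $(-1)$-curve being precisely the strict transform $\mathcal F$ of an exceptional $F$ (the two numerical conditions in the definition translate into $\mathcal F^2=-1$ and $\mathcal F\cdot K_X=-1$). I would prove the two implications separately.

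For the (easier) ``if'' direction, suppose $F$ is exceptional for $Y$ and $F^{s}$ with $s\ge 2$ divides every element of $[I_Y^{(m)}]_d$. Then $\mathcal F$ is a smooth rational $(-1)$-curve and a fixed component of multiplicity $\ge s$ of $|D|$, so $h^0(D)=h^0(D-s\mathcal F)$; peeling $\mathcal F$ off one copy at a time via the restriction sequences
$$0\lra\OO_X(D-(j+1)\mathcal F)\lra\OO_X(D-j\mathcal F)\lra\OO_{\mathcal F}\big((D-j\mathcal F)|_{\mathcal F}\big)\lra 0,\qquad 0\le j\le s-1,$$
together with Riemann--Roch on $X$ and $\mathcal F^2=-1$, shows that this $s$-fold peeling contributes $\binom{s}{2}>0$ to $h^1(D)$. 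Hence $|D|$ is nonempty and special, i.e.\ $\text{AH}_2(d)$ fails; this direction is bookkeeping once the intersection numbers are recorded.

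The ``only if'' direction is the heart of the matter, and I would attack it by the Ciliberto--Miranda degeneration method combined with \emph{Cremona reduction}. Using the standard quadratic transformation based at three of the points (and permutations) — the Weyl-group action on $\operatorname{Pic}(X)$ — one moves $D$ to a ``standard'' class with $d\ge m_1\ge\dots\ge m_r\ge 0$ and $d\ge m_1+m_2+m_3$; a Nagata-type argument then shows that in standard form a special system is composed with a pencil, and one transports that pencil back along the Cremona chain to a power of a $(-1)$-curve in the base locus. To make the Cremona reduction effective one simultaneously degenerates $\PP^2$ to a reducible surface $\PP^2\cup_{\PP^1}\mathbb{F}_1$, distributes the points between the two components, analyzes the limit linear system and its matching conditions along the double curve, and reduces to strictly fewer points (or smaller degree), with base cases at small $r$ where Theorem \ref{P2} and direct computation apply. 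Induction on $(r,d)$ then closes the argument, provided the degeneration genuinely computes $h^1$ and the Cremona reduction terminates.

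The step I expect to be the main obstacle — and the reason this remains a conjecture — is precisely controlling these inductions once $r$ is large. For $r\ge 10$ the intersection lattice on $X$ is of hyperbolic type, the Weyl group is infinite, $X$ is not a Mori dream space, and there is no a priori bound on the length of the Cremona chain needed to reach standard form, so the induction on $r$ need not terminate in a controlled way; at the same time, certifying that a given Ciliberto--Miranda degeneration actually computes $h^1(X,\OO_X(D))$ — that no speciality is introduced by the gluing, for every splitting of the points and every twist along the double curve — is exactly where all known approaches stall. A complete proof would seem to require either a uniform bound taming the combinatorics of the Cremona/degeneration reduction, or a genuinely new global positivity input on $X$ (e.g.\ a proof of the associated Nagata-type statement via the birational geometry of $X$).
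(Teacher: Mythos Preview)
The statement you were given is Conjecture \ref{SHGH}, the SHGH Conjecture, which the paper lists in Section \ref{sec.open} among the \emph{open problems}; the paper offers no proof, only a formulation. So there is no ``paper's own proof'' to compare your proposal against, and you have correctly identified this: your final paragraph explicitly names the obstruction (infinite Weyl group for $r\ge 10$, no termination bound for Cremona reduction, no uniform control of $h^1$ under degeneration) and calls it ``the reason this remains a conjecture.''

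Your treatment of the ``if'' direction is essentially correct and is indeed the easy half: a $(-1)$-curve appearing with multiplicity $s\ge 2$ in the base locus forces $h^1(D)\ge\binom{s}{2}>0$ by the standard peeling argument, so the system is special. One small caveat: you should also check that $|D|$ is nonempty in this situation, i.e.\ that the hypothesis ``$F^s$ divides every degree-$d$ element of $I_Y^{(m)}$'' is not vacuous; otherwise ``special'' is undefined and the translation to failure of $\text{AH}_2(d)$ needs a word (when $[I_Y^{(m)}]_d=0$ and the expected dimension is also zero, $\text{AH}_2(d)$ holds trivially).

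For the ``only if'' direction, your outline via Cremona reduction and Ciliberto--Miranda degenerations is the standard strategy and is known to work for $r\le 9$ (where $X$ is a del Pezzo or a rational elliptic surface and the Weyl group is finite). But as you say, for $r\ge 10$ no one knows how to make the induction terminate, and this is exactly why the conjecture is open. Your proposal is therefore an accurate survey of the state of the art rather than a proof, and that is the appropriate response to being asked to prove an open conjecture.
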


The ultimate goal naturally would be to determine the Hilbert function of every non-uniform symbolic power of any set $Y$ of general points in $\PP^n$ (i.e., the Hilbert function of $\p_1^{m_1} \cap \ldots \cap \p_r^{m_r}$,  where $Y=\{P_1,\dots,P_r\}$ is a set of general points in $\PP^n$ and $\p_i$ is the defining ideal of $P_i$ for every $i$).

Harbourne \cite{Ha05} showed that this problem would be solved if one is able to determine $\alpha(\p_1^{m_1} \cap \ldots \cap \p_r^{m_r})$ for every choice of the multiplicities $m_i\in \ZZ_+$. Here, for any homogeneous ideal $J$,
$$
\alpha(J):=\min\left\{d\geq 0 \,\mid\, [J]_d\neq 0\right\},$$
is the {\em initial degree} of $J$.
Hence, the problem of determining the initial degree of symbolic powers of ideals of points would solve the ultimate problem on interpolation. However, as one may expect, determining $\alpha$ is usually very challenging, even in the uniform case and even for points in $\PP^2$. For instance, the following celebrated conjecture of Nagata, which arose from his work on Hilbert's 14-th problem \cite{Nagata}, remains open.

\begin{Conjecture}\label{Nagata}(Nagata's Conjecture)
For any set $Y$ of $r\geq 10$ general points in $\PP^2$, and any $m\geq 2$ one has
$$
\alpha(I_Y^{(m)}) > m\,\sqrt{r}.
$$
\end{Conjecture}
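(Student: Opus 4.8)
Conjecture \ref{Nagata} is one of the most famous open problems in this circle of ideas and has resisted proof for over sixty years; what follows is therefore not a proof but an outline of the approaches one would naturally attempt --- in the spirit of the specialization techniques used throughout this survey --- together with an explanation of why they stall. The natural strategy, parallel to the proof of Theorem \ref{AH}, is to bound $\alpha(I_Y^{(m)})$ from below by degenerating the $r$ general points to a special, highly structured configuration $Y_0$ (for instance, points lying on a union of lines, on a smooth cubic, or distributed on several general linear or rational subvarieties) and then to invoke the lower semi-continuity of the Hilbert function (Appendix \ref{app.semi-cont}): if no form of degree $d \le m\sqrt{r}$ vanishes to order $m$ at the points of $Y_0$, the same holds for a general $Y$. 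One would then run a Castelnuovo-type induction (Lemma \ref{lem.Cast}), peeling off one line or hyperplane at a time, to reduce the order-$m$ vanishing condition on $Y_0$ to vanishing conditions of smaller order on residual configurations, exactly as in the m\'ethode d'Horace.

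\textbf{What is known, and Nagata's own case.} When $r = s^2$ is a perfect square with $s \ge 4$, Nagata himself proved the conjecture, and his argument is morally a degeneration/symmetrization: one arranges the $s^2$ points so that a large group of (Cremona-type) automorphisms acts, and any would-be low-degree curve then violates B\'ezout against the images of the coordinate lines. For non-square $r$ the best unconditional results are weaker, of the shape $\alpha(I_Y^{(m)}) \ge m\sqrt{r}\cdot c_r$ with a constant $c_r < 1$ (e.g.\ replacing $\sqrt{r}$ by $\sqrt{r-1}$, or Chudnovsky-- and Harbourne--Huneke--type lower bounds), and these are obtained precisely by the specialization-plus-Castelnuovo scheme above; closing the gap to the sharp constant $\sqrt{r}$ is exactly where every known method stops. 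A conditional route is also available: Conjecture \ref{Nagata} follows from the SHGH Conjecture \ref{SHGH}, because SHGH predicts that the only obstructions to maximality are powers of exceptional curves, and for $r \ge 10$ general points a self-intersection count shows that such curves cannot drag $\alpha(I_Y^{(m)})$ down to $m\sqrt{r}$.

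\textbf{The main obstacle.} The crux is controlling \emph{exceptional curves}. A curve $C$ with $(\deg C)^2 - \sum_i \mathrm{mult}_{P_i}(C)^2 < 0$ can, when raised to a power, force $\alpha(I_Y^{(m)})$ below $m\sqrt{r}$, and ruling out the existence of such $C$ for \emph{every} $m$ and \emph{every} general $Y$ is essentially equivalent to the conjecture itself. Any concrete special configuration $Y_0$ one writes down risks acquiring ``too many'' such curves --- this is visible already in the failure of overly naive point configurations --- while semi-continuity transfers information only in the direction of \emph{non}-vanishing, so the special configuration must be chosen with surgical care for the argument to lose nothing. Thus the genuine content needed is a special configuration whose exceptional curves are completely understood, together with a Castelnuovo induction tight enough to be lossless; no such pair is currently known for general $r$, and producing one is, in effect, the whole problem.
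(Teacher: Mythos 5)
This statement is Nagata's Conjecture, which the paper records purely as an open problem in Section \ref{sec.open} and does not prove — it only remarks that Nagata settled the case where $r$ is a perfect square and that the general case remains out of reach. Your proposal correctly declines to claim a proof, and your survey of the natural attack (specialization plus lower semi-continuity, Castelnuovo/Horace-type induction, the conditional implication from the SHGH Conjecture, and the obstruction coming from exceptional curves) is accurate and consistent with the paper's own discussion, so there is nothing to compare or to fault.
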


Conjecture \ref{Nagata} was proved by Nagata when $r$ is a perfect square. A large body of literature is dedicated to this conjecture (cf. \cite{Harbourne2} and references therein and thereafter). Connections have also been made with other problems, for instance symplectic packing problems (see, e.g., \cite[Section 5]{Bi}). Nevertheless, the conjecture still seems out of reach at the moment. The interested reader will find in the literature many variations and different viewpoints on Nagata's Conjecture, e.g., in \cite{CHMR}.

Given the difficulty in establishing the bound predicted by Nagata's conjecture, it is natural to ask for weaker bounds. In this direction, we mention that for an arbitrary set of points $Y$ in $\PP^n$, a weaker bound for $\alpha(I_Y^{(m)})$ was formulated by G. V. Chudnovsky in \cite{Chud}.

\begin{Conjecture}[Chudnovsky] \label{Chud}
Let $Y$ be an arbitrary set of points in $\PP^n$. For every $m\geq 1$, we have
$$
\frac{\alpha(I_Y^{(m)})}{m} \geq \frac{\alpha(I_Y) + n - 1}{n}.
$$
\end{Conjecture}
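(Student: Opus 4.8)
\bigskip
\noindent\textbf{A proof proposal for Conjecture \ref{Chud}.}
The plan is to replace the family of inequalities ``for all $m$'' by a single asymptotic statement and then reduce that statement to a containment between symbolic and ordinary powers. Recall the \emph{Waldschmidt constant} $\widehat{\alpha}(I_Y):=\lim_{m\to\infty}\alpha(I_Y^{(m)})/m$, which exists and equals $\inf_{m\geq 1}\alpha(I_Y^{(m)})/m$ because $\alpha(I_Y^{(m)})$ is subadditive in $m$ (from $I_Y^{(m)}I_Y^{(m')}\subseteq I_Y^{(m+m')}$). Since the infimum is attained in the limit, Conjecture \ref{Chud} for a fixed $Y$ is \emph{equivalent} to the single inequality $\widehat{\alpha}(I_Y)\geq\big(\alpha(I_Y)+n-1\big)/n$. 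So the whole task becomes to bound $\widehat{\alpha}(I_Y)$ from below.

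First I would record the weak bound that is essentially free. By Zariski--Nagata (Theorem \ref{ZariskiNagata}) the symbolic powers $I_Y^{(k)}$ are the intersections of the ordinary powers of the $\pp_i$, and by the Ein--Lazarsfeld--Smith / Hochster--Huneke containment one has $I_Y^{(nm)}\subseteq I_Y^{m}$ for every $m$. Hence $\alpha(I_Y^{(nm)})\geq\alpha(I_Y^{m})=m\,\alpha(I_Y)$, and dividing by $nm$ and letting $m\to\infty$ gives $\widehat{\alpha}(I_Y)\geq\alpha(I_Y)/n$ --- exactly the bound of Conjecture \ref{Chud} minus the additive term $(n-1)/n$. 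To recover that term, the target is the Harbourne--Huneke-type refinement
$$I_Y^{(nm)}\ \subseteq\ \m^{(n-1)m}\,I_Y^{m}\qquad\text{for all }m\geq 1,$$
where $\m=(x_0,\dots,x_n)$ is the irrelevant ideal. Granting it, a form $F\in I_Y^{(nm)}$ of minimal degree lies in $\m^{(n-1)m}I_Y^{m}$, so $\deg F\geq(n-1)m+\alpha(I_Y^{m})=m\big(\alpha(I_Y)+n-1\big)$; thus $\alpha(I_Y^{(nm)})/(nm)\geq\big(\alpha(I_Y)+n-1\big)/n$, and letting $m\to\infty$ yields the desired lower bound on $\widehat{\alpha}(I_Y)$, hence the conjecture.

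What remains --- and where the real difficulty sits --- is proving the refined containment for an \emph{arbitrary} point configuration. I see two lines of attack. \emph{(a)} Run the asymptotic-multiplier-ideal proof of the Ein--Lazarsfeld--Smith containment on the blow-up of $\PP^n$ along $Y$, but this time bookkeeping the order of vanishing of a generic section along each exceptional divisor; the extra vanishing coming from the $n$ coordinate directions at each $P_i$ should be exactly what upgrades $I_Y^{(nm)}\subseteq I_Y^{m}$ to the version with the $\m^{(n-1)m}$ factor. \emph{(b)} Degenerate $Y$ to a highly structured set (a coordinate or star configuration, or points on a rational normal curve) where $I_Y^{(nm)}$ is monomial-like and the containment is combinatorial, then transfer back; the catch is that the specialization and semicontinuity tools used in the body of this paper (and in Appendix \ref{app.semi-cont}) only push \emph{containments of the generic fibre into the special one}, so one would need the special configuration to realize the \emph{minimal} value of $\alpha(I^{(m)})$, which is itself a subtle point. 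I expect route \emph{(a)} to be the cleanest, with the local vanishing estimate as the genuine crux --- and I should be frank that this is precisely the step that is open in general: Conjecture \ref{Chud} is currently known for $\PP^2$ (Chudnovsky), for general point configurations, and in various other special cases, but remains open for arbitrary points in $\PP^n$ in general.
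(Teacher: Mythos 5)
The statement you are trying to prove is labelled a \emph{Conjecture} in the paper, and the paper gives no proof of it: it only surveys the cases in which it is known (points in $\PP^2$, general points in $\PP^3$, points on a quadric, very general points, large numbers of general points) and even reports Lazarsfeld's suggestion that counterexamples might exist for special configurations. So there is no proof of the paper's to compare yours against, and your proposal, as you yourself concede in the last sentence, is not a proof either.

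That said, the reduction you set up is mathematically sound and is exactly the strategy used in the modern literature (Harbourne--Huneke, and the papers cited around Conjecture \ref{Chud}): the equivalence with the single Waldschmidt-constant inequality $\widehat{\alpha}(I_Y)\geq(\alpha(I_Y)+n-1)/n$ is correct (subadditivity of $\alpha(I_Y^{(m)})$ plus Fekete), the Ein--Lazarsfeld--Smith containment does give the weaker bound $\widehat{\alpha}(I_Y)\geq\alpha(I_Y)/n$, and the containment $I_Y^{(nm)}\subseteq\m^{(n-1)m}I_Y^m$ would indeed yield the conjecture by the degree count you give. The genuine gap is that this containment is itself an open conjecture (of Harbourne--Huneke type) for arbitrary points; your route (a) founders precisely at the ``local vanishing estimate,'' where the multiplier-ideal argument is only known to produce $I_Y^{m}$ and not the extra $\m^{(n-1)m}$ factor for arbitrary configurations, and your route (b) cannot reach \emph{arbitrary} $Y$ at all, since the semicontinuity of Appendix \ref{app.semi-cont} only bounds the Hilbert function of the generic configuration by that of a special one (equivalently, specialization can only decrease $\alpha(I_Y^{(m)})$), so degenerating to a star configuration or a rational normal curve says nothing about a fixed special $Y$. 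In short: you have correctly reduced one open problem to another open problem, and correctly identified where the difficulty lies, but no step in your outline closes that difficulty.
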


Chudnovsky's conjecture has been established for 
\begin{itemize}
\item points in $\PP^2$ (see \cite{Chud, HH-2011}),
\item \emph{general} points in $\PP^3$ (see \cite{D-2012}),
\item points on a quadric (see \cite{FMX}),
\item \emph{very general} points in $\PP^n$ (in \cite{DTG-2017} for large number of points, and in \cite{FMX} for any number of points),
\item large number of general points in $\PP^n$ (see \cite{BGHN}).
\end{itemize}

Very recently, in a personal communication with the authors, R. Lazarsfeld suggested a geometric intuitive evidence for why one may expect the existence of counterexamples to Conjecture \ref{Chud}. Thus, instead of trying to prove Conjecture \ref{Chud}, one may look for counterexamples. It should be noted that partial results stated earlier suggest that a potential counterexample should be a special configuration and have high singularity outside of the given set of points.

%%%%%%%%%%%%%%%%%%%%%%%%%%%%%%%%%%%%%%%

\begin{appendices}
%%%%%%%%%%%%%%%%%%%%%%%%%%%%%

\section{Appendix: Secant varieties and the Waring problem} \label{app.secant}

In this section, we briefly describe the connection between the polynomial interpolation problem, particularly the Alexander--Hirschowitz theorem, and studies on secant varieties and the big Waring problem for forms. 

Throughout this section, let $V$ be a vector space of dimension $(n+1)$ over $\CC$. Then $\PP^n$ can also be viewed as $\PP(V)$, the projective space of lines going through the origin in $V$. For $f \in V \setminus \{0\}$, let $[f]$ denote the line spanned by $f$ in $V$ and, at the same time, the corresponding point in $\PP(V)$.

Let $S$ be the symmetric algebra of $V$. Then $S$ is naturally a graded algebra, given by $S = \bigoplus_{d \ge 0} S^dV$, where the $d$-th symmetric tensor $S^dV$ is a $\CC$-vector space of dimension ${n+d \choose n}$. Note that the dual $R = S^*$ is the polynomial ring $\CC[x_0, \dots, x_n]$ identified as the coordinate ring of $\PP(V)$. 

\begin{Definition}
	\label{def.Veronese}
	Let $V$ be a $(n+1)$-dimensional vector space over $\CC$.
	\begin{enumerate}
		\item The \emph{$d$-th Veronese embedding} of $\PP(V)$ is the map $\nu_d: \PP(V) \rightarrow \PP(S^dV)$, given by
		$$[v] \mapsto [v^d] = [\underbrace{v \otimes \dots \otimes v}_{d \text{ times}}].$$
		Equivalently, $\nu_d$ is the map $\PP^n \rightarrow \PP^N$, where $N = {n+d \choose d}-1$, defined by
		$$[a_0: \dots:a_n] \mapsto [a_0^d: a_0^{d-1}a_1: \dots: a_n^d],$$
		where the coordinates on the right are given by all monomials of degree $d$ in the $a_i$'s.
		\item The \emph{$d$-th Veronese variety} of $\PP(V)$, denoted by $V^n_d$, is defined to be the image $\nu_d(\PP(V))$.
	\end{enumerate}
\end{Definition}

\begin{Lemma}
	\label{rmk.tangentspace}
	Let $f \in V \setminus \{0\}$. The tangent space $T_{[f^d]}(V^n_d)$ of $V^n_d$ at the point $[f^d]$ is spanned by
	$$\{[f^{d-1}g] \in \PP(S^dV) ~\big|~ g \in V\}.$$
\end{Lemma}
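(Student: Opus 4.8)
The plan is to compute the tangent space by differentiating the parametrization $\nu_d$ directly, working with the affine cone over $V^n_d$. Recall that $V^n_d = \nu_d(\PP(V))$ is the projectivization of the affine cone $\widehat{V^n_d} = \{v^d : v \in V\} \subseteq S^dV$, i.e.\ the image of the ($d$-homogeneous) polynomial map $\phi : V \to S^dV$, $v \mapsto v^d$. Fixing $f \in V \setminus\{0\}$, for any $g \in V$ I would consider the affine line $t \mapsto f + tg$ in $V$ and its image $\phi(f+tg) = (f+tg)^d$. Expanding via the binomial theorem in the (commutative) symmetric algebra $S$ gives
$$(f+tg)^d = \sum_{k=0}^d \binom{d}{k} t^k f^{d-k}g^k = f^d + t\,d\,f^{d-1}g + O(t^2),$$
so $d\phi_f(g) = \tfrac{d}{dt}\big|_{t=0}\phi(f+tg) = d\,f^{d-1}g$. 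Hence every vector $f^{d-1}g$ with $g \in V$ lies in the affine tangent space $T_{f^d}\widehat{V^n_d}$, and so the linear subspace $W := \langle f^{d-1}g : g \in V\rangle \subseteq S^dV$ is contained in $T_{f^d}\widehat{V^n_d}$.

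Next I would establish the reverse inclusion by a dimension count. The multiplication map $V \to S^dV$, $g \mapsto f^{d-1}g$, is injective because $S = \bigoplus_{e\ge 0} S^eV$ is an integral domain and $f^{d-1} \ne 0$; thus $\dim_\CC W = n+1$. On the other hand $\nu_d$ is an embedding, so $V^n_d$ is smooth of dimension $n$ and its affine cone $\widehat{V^n_d}$ is smooth of dimension $n+1$ away from the origin; in particular $\dim_\CC T_{f^d}\widehat{V^n_d} = n+1 = \dim_\CC W$, forcing $T_{f^d}\widehat{V^n_d} = W$. (Note $W$ already contains the line $\CC f^d = \CC\, f^{d-1}\!\cdot f$, consistent with $\widehat{V^n_d}$ being a union of lines through the origin.) Finally I would pass to projective tangent spaces: by definition $T_{[f^d]}(V^n_d) = \PP\bigl(T_{f^d}\widehat{V^n_d}\bigr) = \PP(W) = \{[f^{d-1}g] : g \in V\}$, which is exactly the assertion.

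There is no serious obstacle here; the computation is routine. The only points requiring care are (i) separating the affine cone from the projective variety so that the differential computation is unambiguous, and (ii) confirming that the span $W$ produced above is the \emph{entire} tangent space rather than a proper subspace, which is handled by the injectivity of multiplication by $f^{d-1}$ together with the smoothness of $V^n_d$. Alternatively, one can avoid invoking smoothness by checking directly that $d\phi_f$ has rank $n+1$ (equivalently, that $g \mapsto f^{d-1}g$ is injective), which shows $\phi$ is an immersion at $f$ and therefore that the image of its differential is the full tangent space to $\widehat{V^n_d}$ at $f^d$.
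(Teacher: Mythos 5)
Your proof is correct and follows essentially the same route as the paper: parametrize a line through $[f]$ in the direction $[g]$, push it forward under $\nu_d$, and differentiate at the origin to obtain $[f^{d-1}g]$. The only difference is that you also make explicit the reverse inclusion (via injectivity of multiplication by $f^{d-1}$ and the dimension count $\dim_\CC T_{f^d}\widehat{V^n_d}=n+1$), a step the paper compresses into ``the statement then follows.''
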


\begin{proof} Let $g \in V \setminus \{0\}$. The line $\ell$ passing through $[f] \in \PP(V)$, whose tangent vector at $[f]$ is given by $[g]$, is parameterized by $\epsilon \mapsto [f + \epsilon g]$. The image of this line via the Veronese embedding $\nu_d$ is given by $\epsilon \mapsto [(f+\epsilon g)^d]$. As $f^d$ corresponds to the value $\epsilon=0$, then the tangent vector of $\nu_d(\ell)$ at $\nu_d([f])$ is
	$$\big[\dfrac{d}{d\epsilon}\Big|_{\epsilon = 0} (f+\epsilon g)^d\big] = [df^{d-1}g] = [f^{d-1}g].$$
The statement then follows.
\end{proof}

\begin{Lemma}
	\label{lem.TSobservation}
	Let $f \in V \setminus \{0\}$.
	\begin{enumerate}
		\item There is a one-to-one correspondence between hyperplanes in $\PP(S^dV)$ containing $[f^d]$ and hypersurfaces of degree $d$ in $\PP(V)$ containing $[f]$.
		\item There is a one-to-one correspondence between hyperplanes in $\PP^(S^dV)$ containing $T_{[f^d]}(V^n_d)$ and hypersurfaces of degree $d$ in $\PP(V)$ singular at $[f]$.
	\end{enumerate}
\end{Lemma}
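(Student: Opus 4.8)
The plan is to identify the hyperplanes of $\PP(S^dV)$ with degree $d$ forms on $V$ by polarization, and then to read off both incidence conditions --- containing $[f^d]$, respectively containing $T_{[f^d]}(V^n_d)$ --- as vanishing conditions on such a form and its first partial derivatives at $f$. I would write a hyperplane of $\PP(S^dV)$ as $H_\phi = \{[w] \in \PP(S^dV) : \phi(w) = 0\}$ for a nonzero $\phi \in (S^dV)^*$, unique up to scalar, and attach to it the function $\widehat\phi : V \to \CC$, $\widehat\phi(v) := \phi(v^d)$. First I would recall the standard fact (this is where $\mathrm{char}\,\CC = 0$ is used) that $\widehat\phi$ is a homogeneous polynomial of degree $d$ and that $\phi \mapsto \widehat\phi$ is a linear isomorphism $(S^dV)^* \xrightarrow{\ \sim\ } S^d(V^*) = R_d$; hence $[\phi] \mapsto [\widehat\phi]$ is a bijection between the hyperplanes of $\PP(S^dV)$ and the degree $d$ hypersurfaces $\mathbb V(\widehat\phi) \subseteq \PP(V)$, both sides being parametrized by $\PP(R_d)$. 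Each correspondence claimed in the lemma will then be the restriction of this single bijection. (For part (1) one may alternatively argue purely in the monomial coordinates of Definition \ref{def.Veronese}: the hyperplane $\{\sum_\alpha c_\alpha y_\alpha = 0\}$ pulls back along $\nu_d$ to $\{\sum_\alpha c_\alpha a^\alpha = 0\}$, which says exactly that $[a]$ lies on the degree $d$ hypersurface with the same coefficient vector.)

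For part (1) the computation is immediate: $[f^d] \in H_\phi$ iff $\phi(f^d) = 0$ iff $\widehat\phi(f) = 0$ iff $[f] \in \mathbb V(\widehat\phi)$, so $H_\phi$ contains $[f^d]$ precisely when the corresponding hypersurface contains $[f]$. For part (2), Lemma \ref{rmk.tangentspace} gives $T_{[f^d]}(V^n_d) = \langle\, [f^{d-1}g] : g \in V\,\rangle$ (which contains $[f^d]$, via $g = f$), so $H_\phi \supseteq T_{[f^d]}(V^n_d)$ iff $\phi(f^{d-1}g) = 0$ for every $g \in V$. The key identity is $\phi(f^{d-1}g) = \tfrac1d (D_g\widehat\phi)(f)$, where $D_g\widehat\phi = \sum_{i=0}^n g_i\,\partial\widehat\phi/\partial x_i$ and $(g_0,\dots,g_n)$ are the coordinates of $g$; this follows by differentiating $\widehat\phi(f+tg) = \phi\big((f+tg)^d\big)$ in $t$ at $t=0$, using linearity of $\phi$ and the binomial expansion. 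Thus $H_\phi \supseteq T_{[f^d]}(V^n_d)$ iff $(\partial\widehat\phi/\partial x_i)(f) = 0$ for all $i$ (take $g = e_i$); by Euler's identity $d\,\widehat\phi(f) = \sum_i f_i (\partial\widehat\phi/\partial x_i)(f)$ this already forces $\widehat\phi(f) = 0$, so the condition is exactly that $\mathbb V(\widehat\phi)$ be singular at $[f]$. Since both sides are cut out by the same linear conditions on $[\widehat\phi] \in \PP(R_d)$, the bijection follows.

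I do not expect a genuine obstacle here: the substance is the polarization isomorphism $(S^dV)^* \cong R_d$ together with two one-line derivative computations. The points deserving care are (i) justifying that $\widehat\phi$ really is a degree $d$ form and that $\phi \mapsto \widehat\phi$ is bijective --- this is precisely where characteristic $0$ enters; (ii) observing, via Euler's relation, that in part (2) the vanishing of all first partials of $\widehat\phi$ at $f$ automatically entails $\widehat\phi(f) = 0$, so that the notion of a singular point is captured correctly; and (iii), as pure bookkeeping, being explicit that hyperplanes and degree $d$ hypersurfaces are each taken up to scalar, so that the stated correspondences are honest bijections rather than maps between sets of zero-loci.
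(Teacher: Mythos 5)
Your proof is correct, and it follows the same basic strategy as the paper — identify hyperplanes of $\PP(S^dV)$ with degree-$d$ forms on $\PP(V)$ via pullback along $\nu_d$, then read off the two incidence conditions — but the implementation of part (2) is genuinely different. The paper works entirely in monomial coordinates: it normalizes $f=e_0$ by a linear change of variables, uses Lemma \ref{rmk.tangentspace} to see that $T_{[f^d]}(V^n_d)$ is spanned by $[e_0^d],[e_0^{d-1}e_1],\dots,[e_0^{d-1}e_n]$, concludes that a hyperplane $\sum_i a_iz_i=0$ contains the tangent space exactly when the coefficients of $x_0^d,x_0^{d-1}x_1,\dots,x_0^{d-1}x_n$ vanish, and hence that the pulled-back equation lies in $(x_1,\dots,x_n)^2$, i.e.\ is singular at $[f]$ (membership in $\m_{[f]}^2$ being the paper's working notion of singularity, as used in Corollary \ref{cor.TS0}). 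You instead keep things intrinsic: the polarization isomorphism $(S^dV)^*\cong R_d$, the identity $\phi(f^{d-1}g)=\tfrac1d(D_g\widehat\phi)(f)$, and Euler's relation to get $\widehat\phi(f)=0$ for free. Your route makes explicit where characteristic zero enters (polarization, the factor $d$, Euler), avoids the coordinate normalization, and shows directly that ``all first partials vanish at $f$'' is equivalent to the paper's condition $\widehat\phi\in\m_{[f]}^2$; the paper's route is more elementary bookkeeping with coefficients and needs no differentiation. Both hinge on Lemma \ref{rmk.tangentspace}, and your bookkeeping point (hyperplanes and hypersurfaces each taken up to scalar, parametrized by $\PP((S^dV)^*)$ and $\PP(R_d)$) is exactly what makes the paper's slightly informal ``one-to-one correspondence'' an honest bijection.
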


\begin{proof} (1) Let $z_0, \dots, z_N$, where $N = {n+d \choose d}-1$ be the homogeneous coordinates of $\PP(S^dV)$. The equation for a hyperplane $H$ in $\PP(S^dV)$ has the form
	$$a_0z_0 + \dots + a_Nz_N = 0.$$
	By replacing $z_i$ with the corresponding monomial of degree $d$ in the $x_i$'s, this equation gives a degree $d$ equation that describes a degree $d$ hypersurface in $\PP(V)$. Clearly, this hypersurface is the preimage $\nu_d^{-1}(H)$ of $H$. Furthermore, since $\nu_d^{-1}([f^d]) = [f]$, if $H$ passes through $[f^d]$ then $\nu_d^{-1}(H)$ contains $[f]$.
	
	(2) Let $\{e_0, \dots, e_n\}$ be a basis of $V$ whose dual basis in $R$ is $\{x_0, \dots, x_n\}$. By a linear change of variables, we may assume that $f = e_0$. That is, $[f] = [1:0:\dots:0] \in \PP^n_K$. Then, the defining ideal of $[f]$ is $(x_1, \dots, x_n)$.
	
	It follows from Lemma \ref{rmk.tangentspace} that $T_{[f^d]}(V^n_d)$ is spanned by
	$$\{[e_0^d], [e_0^{d-1}e_1], \dots, [e_0^{d-1}e_n]\}.$$
	As before, the equation for a hyperplane $H$ in $\PP(S^dV)$ has the form
	$a_0z_0 + \dots + a_Nz_N = 0.$
	By using lexicographic order, we may assume that $z_0, \dots, z_n$ are variables corresponding to monomials $x_0^d, x_0^{d-1}x_1, \dots$, $x_0^{d-1}x_n$. Then, $H$ contains $T_{[f^d]}(V^n_d)$ if and only if $a_0 = \dots = a_n = 0$. It follows that the equation for $\nu_d^{-1}(H)$ is a linear combination of monomials of degree $d$ not in the set $\{x_0^d, x_0^{d-1}x_1, \dots, x_0^{d-1}x_n\}$. Particularly, these monomials have degree at least 2 in the variables $x_1, \dots, x_n$. Hence, $\nu_d^{-1}(H)$ is singular at $[f]$.
\end{proof}

We obtain an immediate corollary.

\begin{Corollary}
	\label{cor.TS0}
	Let $X = \{[f_1], \dots, [f_k]\} \subseteq \PP(V)$ be a set of points. Let $\m_1, \dots, \m_k$ be the defining ideal of these points. Then, there is a bijection between the vector space of degree $d$ elements in $\bigcap_{i=1}^k \m_i^2$ and the vector space of hyperplanes in $\PP(S^dV)$ containing the linear span of $T_{[f_1^d]}(V^n_d), \dots, T_{[f_k^d]}(V^n_d)$.
\end{Corollary}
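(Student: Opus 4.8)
The plan is to read this off directly from Lemma \ref{lem.TSobservation}(2); the statement is ``immediate'' precisely because that lemma already does the geometric work point by point, and what remains is linear bookkeeping together with one non-degeneracy observation. First I would make the phrase ``vector space of hyperplanes'' precise: writing $z_\alpha$ (with $|\alpha|=d$) for the homogeneous coordinates on $\PP(S^dV)$, a hyperplane is the zero locus of a linear form $h=\sum_\alpha b_\alpha z_\alpha\in (S^dV)^*$, so I identify the ``vector space of hyperplanes'' with $(S^dV)^*$, the zero form being harmlessly included and corresponding to the empty condition. Pulling back linear forms along $\nu_d$ then gives a linear map $\Phi\colon (S^dV)^*\to R_d$, where $\Phi(h)$ is the degree $d$ form on $\PP(V)$ cutting out $\nu_d^{-1}(\{h=0\})$ (this is exactly the correspondence extracted in the proof of Lemma \ref{lem.TSobservation}, made global rather than at a single point).

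The one point needing a line of care is that $\Phi$ is an isomorphism. It is injective because the Veronese variety $V^n_d$ is non-degenerate: its linear span is all of $\PP(S^dV)$, since $S^dV$ is spanned by the pure powers $\{v^d\,\mid\,v\in V\}$; hence a linear form vanishing on $V^n_d$ vanishes identically. Since $\dim(S^dV)^*=\binom{n+d}{d}=\dim R_d$, injectivity forces $\Phi$ to be an isomorphism. (Equivalently, one can write $\Phi$ out explicitly on the monomial bases, as in the proof of Lemma \ref{lem.TSobservation}(2), which is essentially what is done there already.)

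Next, for each fixed $i$, I would invoke Lemma \ref{lem.TSobservation}(2): the hyperplane $\{h=0\}$ contains $T_{[f_i^d]}(V^n_d)$ if and only if $\nu_d^{-1}(\{h=0\})=\mathbb{V}(\Phi(h))$ is a degree $d$ hypersurface singular at $[f_i]$, and by the Euler relation together with the Jacobian criterion this is equivalent to $\Phi(h)$ together with all of its first partials vanishing at $[f_i]$, i.e.\ to $\Phi(h)\in [\m_i^2]_d$ (with $0\in\m_i^2$ trivially). Finally, a hyperplane contains $\big\langle T_{[f_1^d]}(V^n_d),\dots,T_{[f_k^d]}(V^n_d)\big\rangle$ if and only if it contains each $T_{[f_i^d]}(V^n_d)$, hence -- applying the previous sentence to every $i$ -- if and only if $\Phi(h)\in\bigcap_{i=1}^k [\m_i^2]_d=\big[\bigcap_{i=1}^k\m_i^2\big]_d$. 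Therefore $\Phi$ restricts to a linear isomorphism, in particular a bijection, between the space of hyperplanes in $\PP(S^dV)$ containing $\big\langle T_{[f_1^d]}(V^n_d),\dots,T_{[f_k^d]}(V^n_d)\big\rangle$ and the degree $d$ part of $\bigcap_{i=1}^k\m_i^2$, which is the assertion. I do not expect a genuine obstacle: the only things that require a sentence of justification are the non-degeneracy of $V^n_d$ (responsible for the injectivity of $\Phi$) and the standard identification of ``hypersurface singular at $p$'' with ``defining form in $\m_p^2$'', both of which are routine.
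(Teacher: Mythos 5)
Your proposal is correct and follows essentially the same route as the paper: the paper's proof also just invokes Lemma \ref{lem.TSobservation}(2) at each point $[f_i]$ together with the observation that $\m_i^2$ is the ideal of forms singular at $[f_i]$. The only difference is that you spell out the linear bookkeeping (the pullback map $\Phi\colon (S^dV)^*\to R_d$ being an isomorphism via non-degeneracy of the Veronese variety), which the paper leaves implicit.
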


\begin{proof} The conclusion follows from Lemma \ref{lem.TSobservation}, noticing that $\m_i^2$ is the ideal of polynomials in $R$ singular at $[f_i] \in \PP(V)$ for all $i = 1, \dots, k$.
\end{proof}

Recall that if $I$ is any (homogeneous) ideal in $R$, then $[I]_d=I \cap R_d$ is the vector space of all homogeneous equations of degree $d$ in $I$.

\begin{Corollary}
	\label{cor.TS}
	Let $X = \{[f_1], \dots, [f_k]\} \subseteq \PP(V)$ be a set of points. Let $\m_1, \dots, \m_k$ be the defining ideals of the points in $X$. Then,
	$$\dim_\CC \big[\bigcap_{i=1}^k \m_i^2\big]_d = N - \dim \langle T_{[f_1^d]}(V^n_d), \dots, T_{[f_k^d]}(V^n_d)\rangle.$$
\end{Corollary}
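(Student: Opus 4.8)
The plan is to obtain Corollary \ref{cor.TS} from Corollary \ref{cor.TS0} by a straightforward count of dimensions; all of the geometric content has already been extracted in Lemma \ref{lem.TSobservation}. Write $\Lambda := \langle T_{[f_1^d]}(V^n_d), \dots, T_{[f_k^d]}(V^n_d)\rangle \subseteq \PP(S^dV)$ for the projective linear span and let $\widehat{\Lambda} \subseteq S^dV$ be the cone over it, so that $\dim_\CC \widehat{\Lambda} = \dim\Lambda + 1$ with $\dim\Lambda = \dim\langle T_{[f_1^d]}(V^n_d), \dots, T_{[f_k^d]}(V^n_d)\rangle$. First I would upgrade the bijection of Corollary \ref{cor.TS0} to a linear isomorphism of $\CC$-vector spaces: a hyperplane of $\PP(S^dV)$ is the zero locus of a nonzero linear form on $S^dV$, well defined up to scalar, and it contains $\Lambda$ precisely when that linear form lies in the annihilator $\mathrm{Ann}(\widehat{\Lambda}) \subseteq (S^dV)^* = R_d$. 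Allowing the zero form to correspond to the zero polynomial, Corollary \ref{cor.TS0} thus produces an isomorphism $\big[\bigcap_{i=1}^k \m_i^2\big]_d \cong \mathrm{Ann}(\widehat{\Lambda})$.

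Next I would take dimensions on both sides. Since $\dim_\CC S^dV = \binom{n+d}{d} = N+1$ and the restriction map $(S^dV)^* \twoheadrightarrow (\widehat{\Lambda})^*$ is surjective with kernel $\mathrm{Ann}(\widehat{\Lambda})$, the rank--nullity theorem gives
\begin{align*}
\dim_\CC \mathrm{Ann}(\widehat{\Lambda}) &= (N+1) - \dim_\CC \widehat{\Lambda} \\
&= (N+1) - \big(\dim\Lambda + 1\big) = N - \dim\langle T_{[f_1^d]}(V^n_d), \dots, T_{[f_k^d]}(V^n_d)\rangle.
\end{align*}
Combining this with the isomorphism of the previous paragraph yields the asserted formula.

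There is essentially no obstacle here: the only point requiring a moment's care is the interface between projective and vector-space dimensions, i.e.\ remembering that the hyperplanes of $\PP(S^dV)$ through $\Lambda$ form a projective space of dimension $N - 1 - \dim\Lambda$, whereas the corresponding linear forms (with the zero form adjoined) form a vector space of dimension $N - \dim\Lambda$, which is exactly $\dim_\CC \big[\bigcap_{i=1}^k \m_i^2\big]_d$. As a sanity check one may take $k = 1$ with $[f_1]$ a coordinate point: then $\big[\m_1^2\big]_d$ is spanned by the degree $d$ monomials involving at least two of the non-pivot variables, of which there are $\binom{n+d}{d} - (n+1) = N - n$, matching $N - \dim T_{[f_1^d]}(V^n_d) = N - n$, since the $n+1$ spanning points furnished by Lemma \ref{rmk.tangentspace} are in general position.
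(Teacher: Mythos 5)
Your proposal is correct and matches the paper's argument, which derives Corollary \ref{cor.TS} directly from Corollary \ref{cor.TS0} together with exactly the linear-algebra bookkeeping you spell out (identifying forms in $\big[\bigcap_{i=1}^k \m_i^2\big]_d$ with linear functionals annihilating the cone over the span, then applying rank--nullity and the shift between projective and vector-space dimensions). The only difference is that the paper leaves these "basic linear algebra facts" implicit, whereas you write them out.
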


\begin{proof} The conclusion is an immediate consequence of Corollary \ref{cor.TS0} and basic linear algebra facts.
\end{proof}

\begin{Definition}
	\label{def.secant}
	Let $X$  be a projective variety. For any nonnegative integer $r$, the \emph{$r$-secant variety} of $X$, denoted by $\sigma_r(X)$, is defined to be
	$$\sigma_r(X) = \overline{\bigcup_{P_1, \dots, P_r \in X} \langle P_1, \dots, P_r\rangle}^{\text{ Zariski closure}}.$$
\end{Definition}

Note that $\sigma_r(V^n_d)$ is an irreducible variety for all $r$.

\begin{Remark}
	\label{rmk.expecteddim}
	Let $X \subseteq \PP^N$ be a projective scheme of dimension $n$. Then,
	$$\dim \sigma_r(X) \le \min \{rn+r-1, N\} = \min\{(n+1)r-1, N\}.$$
	When the equality holds we say that $\sigma_r(X)$ has \emph{expected dimension}.
\end{Remark}

\begin{Lemma}[First Terracini's Lemma]
	\label{lem.Terracini1}
	Let $Y \subseteq \PP^n$ be a projective scheme. Let $p_1, \dots, p_r$ be general points in $Y$. Let $z \in \langle p_1, \dots, p_r\rangle$ be a general point in the linear span of $p_1, \dots, p_r$. Then,
	$$T_z(\sigma_r(Y)) = \langle T_{p_1}(Y), \dots, T_{p_r}(Y)\rangle.$$
\end{Lemma}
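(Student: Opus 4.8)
The plan is to reduce everything to affine cones and the addition morphism, and then to upgrade an automatic inclusion of tangent spaces to an equality by invoking generic smoothness in characteristic zero.

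First I would fix notation. Assume $Y\subseteq \PP^n$ is irreducible of dimension $m$; a general point of $Y$ lies in its smooth locus $Y_{\mathrm{sm}}$, so for general $p_1,\dots,p_r$ the embedded projective tangent spaces $T_{p_i}(Y)$ are honest $m$-planes in $\PP^n$. Let $\hat Y\subseteq \AA^{n+1}$ be the affine cone over $Y$ and $\hat\sigma\subseteq \AA^{n+1}$ the affine cone over $\sigma_r(Y)$. Because $\hat Y$ is a cone, for any nonzero lift $v_i\in\hat Y$ of $p_i$ the point $v_i$ is a smooth point of $\hat Y$ and the Zariski tangent space $T_{v_i}(\hat Y)$ is exactly the $(m+1)$-dimensional linear subspace $\widehat{T_{p_i}(Y)}$. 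Now consider the addition morphism
$$\alpha\colon \hat Y^{\times r}\lra \AA^{n+1},\qquad (v_1,\dots,v_r)\longmapsto v_1+\dots+v_r.$$
By Definition \ref{def.secant}, the closure of the image of $\alpha$ is precisely $\hat\sigma$, so $\alpha$ is a dominant morphism onto the variety $\hat\sigma$.

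Next I would compute the differential. At a point $(v_1,\dots,v_r)$ with each $v_i$ a nonzero lift of a point of $Y_{\mathrm{sm}}$ one has $T_{(v_1,\dots,v_r)}(\hat Y^{\times r})=\bigoplus_i T_{v_i}(\hat Y)$, and $d\alpha$ is simply $(\dot v_1,\dots,\dot v_r)\mapsto \dot v_1+\dots+\dot v_r$, so
$$\mathrm{im}\big(d\alpha_{(v_1,\dots,v_r)}\big)=T_{v_1}(\hat Y)+\dots+T_{v_r}(\hat Y)\subseteq \AA^{n+1}.$$
Now let $z\in\langle p_1,\dots,p_r\rangle$ be general and $w\in\AA^{n+1}$ a nonzero lift of $z$; choosing the lifts $v_i$ so that $\sum_i v_i=w$, the tuple $(v_1,\dots,v_r)$ is a general point of $\hat Y^{\times r}$ (as $p_i$ ranges generally over $Y$ and $z$ ranges generally over their span), and since $\alpha$ is dominant onto $\hat\sigma$ over $\CC$, the image $w$ is then a general point of $\hat\sigma$. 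On one hand $\mathrm{im}(d\alpha_{(v_1,\dots,v_r)})\subseteq T_w(\hat\sigma)$, because $\alpha$ is a morphism into $\hat\sigma$. On the other hand, generic smoothness (valid in characteristic zero) gives that $\alpha$ is smooth at the general point $(v_1,\dots,v_r)$, hence $\mathrm{rank}\, d\alpha_{(v_1,\dots,v_r)}=\dim\hat\sigma=\dim T_w(\hat\sigma)$, the last equality since $w$ is a smooth point of the variety $\hat\sigma$. Two linear subspaces, one contained in the other and of the same dimension, must coincide, so $T_w(\hat\sigma)=T_{v_1}(\hat Y)+\dots+T_{v_r}(\hat Y)$. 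Projectivizing this equality of linear subspaces of $\AA^{n+1}$, and using the cone identifications from the first paragraph, yields $T_z(\sigma_r(Y))=\langle T_{p_1}(Y),\dots,T_{p_r}(Y)\rangle$.

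The main obstacle is exactly the passage from inclusion to equality: it is formal that the image of $d\alpha$ sits inside the Zariski tangent space of the (possibly singular) cone $\hat\sigma$, but the reverse inclusion genuinely uses that we sit at a general point — so that generic smoothness of $\alpha$ applies and so that $\hat\sigma$ itself is smooth there, making the two sides equidimensional. A secondary technical point, which I would handle carefully but quickly, is arranging that a general choice of $p_i\in Y$ together with a general $z$ in their span corresponds to a general point of the source $\hat Y^{\times r}$, so that the genericity needed for generic smoothness and the genericity of $z$ in $\sigma_r(Y)$ can be invoked simultaneously; this is where irreducibility of $Y$, dominance of $\alpha$, and the characteristic-zero hypothesis all enter.
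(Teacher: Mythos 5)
Your proof is correct and is essentially the paper's argument in more rigorous dress: the paper parametrizes $\sigma_r(Y)$ locally by $Y(\tau^r)+\sum_i\gamma_i Y(\tau^i)$ and identifies $T_z(\sigma_r(Y))$ with the span of the partial derivatives of this parametrization, which is exactly the image of the differential of your addition map $\alpha$ on affine cones. The only substantive difference is that you justify the passage from the automatic inclusion to equality explicitly, via generic smoothness in characteristic zero together with smoothness of the cone over $\sigma_r(Y)$ at a general point, a step the paper leaves implicit in the phrase ``by considering partial derivatives at $z$, it can be seen that\ldots''.
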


\begin{proof} Let $Y(\tau) = Y(\tau_1, \dots, \tau_n)$ be a local parametrization of $Y$. Let $Y_j(\tau)$ represent the partial derivative with respect to $\tau_j$, for $j = 1, \dots, n$. Suppose that $p_i$ corresponds to $\tau^i = (\tau^i_1, \dots, \tau^i_n)$ in this local parametrization.
	
By definition, $T_{p_i}(Y)$ is spanned by the tangent vectors $Y(\tau^i) + \epsilon Y_j(\tau^i)$, for $j = 1, \dots, n$. Thus, $\langle T_{p_1} (Y), \dots, T_{p_r} (Y)\rangle$ is the affine span of $\{Y(\tau^i), Y_j(\tau^i) ~\big|~ i = 1, \dots, r, j = 1, \dots, n\}$.

On the other hand, a general point $z$ in $\sigma_r(Y)$ is parametrized by $Y(\tau^k) + \sum_{i=1}^{r-1} \gamma_i Y(\tau_i)$. By considering partial derivatives at $z$, it can be seen that $T_z(\sigma_r(Y))$ is also the affine span of $\{Y(\tau^i), Y_j(\tau^i) ~\big|~ i = 1, \dots, r, j = 1, \dots, n\}$. The lemma is proved.
\end{proof}

The following theorem establishes the equivalence between being $\text{AH}_n(d)$ for double points and having expected dimension for secant varieties.

\begin{Theorem}
	\label{thm.equivSecant}
	A set of $r$ general double points in $\PP^n$ is $\text{AH}_{n}(d)$ if and only if $\sigma_r(V^n_d)$ has expected dimension.
\end{Theorem}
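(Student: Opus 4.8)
The plan is to identify both sides of the claimed equivalence with a single numerical invariant, namely $\dim_\CC [I_X]_d$, and then to run a short chain of elementary equalities linking it, on one side, to the Hilbert function $H_{R/I_X}(d)$ (which governs $\text{AH}_n(d)$) and, on the other side, to $\dim \sigma_r(V^n_d)$ (which governs the expected-dimension property).

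First I would fix a general set $Y = \{[f_1], \dots, [f_r]\} \subseteq \PP^n = \PP(V)$, put $X = 2Y$, and note that $I_X = \pp_1^2 \cap \dots \cap \pp_r^2$, which in the notation of Corollary \ref{cor.TS} is $\m_1^2 \cap \dots \cap \m_r^2$. Setting $N = \binom{n+d}{d} - 1$, Corollary \ref{cor.TS} provides the key identity
$$H_{I_X}(d) = \dim_\CC [I_X]_d = N - \dim \left\langle T_{[f_1^d]}(V^n_d), \dots, T_{[f_r^d]}(V^n_d)\right\rangle.$$
Since the Veronese map $\nu_d$ is an isomorphism onto its image $V^n_d$, the points $[f_i^d] = \nu_d([f_i])$ form a general set of $r$ points on the irreducible $n$-dimensional variety $V^n_d$. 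First Terracini's Lemma (Lemma \ref{lem.Terracini1}), applied to the variety $V^n_d$, then says that for a general point $z$ in the linear span of the $[f_i^d]$ one has $T_z(\sigma_r(V^n_d)) = \langle T_{[f_1^d]}(V^n_d), \dots, T_{[f_r^d]}(V^n_d)\rangle$. As $\sigma_r(V^n_d)$ is irreducible (noted just after Definition \ref{def.secant}), such a general $z$ is a smooth point, so the dimension of this tangent space equals $\dim \sigma_r(V^n_d)$; combining with the identity above yields
$$\dim \sigma_r(V^n_d) = N - H_{I_X}(d).$$

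Next I would translate this into a statement about Hilbert functions. From the exact sequence $0 \to [I_X]_d \to R_d \to (R/I_X)_d \to 0$ we get $H_{I_X}(d) = \binom{n+d}{d} - H_{R/I_X}(d) = (N+1) - H_{R/I_X}(d)$, so the last display becomes $\dim \sigma_r(V^n_d) = H_{R/I_X}(d) - 1$. Now I would invoke that $e(R/I_X) = r(n+1)$ for a set of $r$ double points (Proposition \ref{e(R/I)2}) and that, by Remark \ref{rmk.expecteddim}, $\sigma_r(V^n_d)$ has expected dimension exactly when $\dim \sigma_r(V^n_d) = \min\{(n+1)r - 1, N\}$. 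Substituting, this holds if and only if $H_{R/I_X}(d) - 1 = \min\{(n+1)r, N+1\} - 1$, that is, if and only if $H_{R/I_X}(d) = \min\{e(R/I_X), \binom{n+d}{d}\}$, which is precisely the definition of $X$ being $\text{AH}_n(d)$. This completes the argument.

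The argument is essentially formal, so the only steps requiring care — and the places where the genericity of the points is genuinely used — are: (a) checking that a general set of points of $\PP^n$ is carried by the closed immersion $\nu_d$ to a general set of points of $V^n_d$, so the genericity hypothesis of Terracini's Lemma is actually met; and (b) checking that a general point $z$ of the irreducible variety $\sigma_r(V^n_d)$ is a smooth point, which is what licenses replacing $\dim T_z(\sigma_r(V^n_d))$ by $\dim \sigma_r(V^n_d)$. I expect (b), invoking irreducibility of $\sigma_r(V^n_d)$ together with the properness of the singular locus, to be the only point of substance; everything else is bookkeeping with the relation $\binom{n+d}{d} = N+1$.
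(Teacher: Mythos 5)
Your argument is correct and follows essentially the same route as the paper's proof: both use Corollary \ref{cor.TS} to express $\dim_\CC[I_X]_d$ via the span of the tangent spaces $T_{[f_i^d]}(V^n_d)$, then Terracini's Lemma \ref{lem.Terracini1} together with irreducibility (hence smoothness at a general point) of $\sigma_r(V^n_d)$ to get $\dim\sigma_r(V^n_d)=H_{R/I_X}(d)-1$, and finally compare with the bounds $H_{R/I_X}(d)\le\min\{(n+1)r,\binom{n+d}{n}\}$ and $\dim\sigma_r(V^n_d)\le\min\{(n+1)r-1,\binom{n+d}{n}-1\}$. The two care points you flag (genericity being preserved under $\nu_d$, and smoothness at a general point of the secant variety) are exactly the implicit steps in the paper's argument, so no gap remains.
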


\begin{proof}
	Let $X = \{q_1, \dots, q_r\}$ be a set of $r$ general simple points in $\PP^n$, and let $\m_1, \dots, \m_r$ be their defining ideals. Set $p_i = \nu_d(q_i)$ for $i = 1, \dots, r$. By Corollary \ref{cor.TS}, we have
	$$\dim_\CC \big[\bigcap_{i=1}^r \m_i^2\big]_d = N - \dim \langle T_{p_1}(V^n_d), \dots, T_{p_r}(V^n_d)\rangle.$$
	It follows that
	\begin{align*}
	h_{\PP^n}(2X,d) & = {n+d \choose d} - \dim_\CC \big[\bigcap_{i=1}^r \m_i^2\big]_d \\
	& = \dim \langle T_{p_1}(V^n_d), \dots, T_{p_r}(V^n_d)\rangle + 1.
	\end{align*}
	By the genericity assumption of the points and the fact that $\sigma_r(V^n_d)$ is an irreducible variety, Lemma \ref{lem.Terracini1} now gives
	$$h_{\PP^n}(2X,d) = \dim \sigma_r(V^n_d) + 1.$$
	The conclusion now follows, noting that $h_{\PP^n}(2X,d) \le \min\{(n+1)r, {n+d \choose n}\}$ and $\dim \sigma_r(V^n_d) \le \min\{(n+1)r-1, {n+d \choose n}-1\}.$
\end{proof}

Via its connection to secant varieties of Veronese varieties, the interpolation problem is also closely related to the big Waring problem for forms.

\begin{Definition}
	\label{def.Waringrk}
	Let $F \in R$ be a homogeneous polynomial of degree $d$. The \emph{Waring rank} of $F$, denoted by $\rk(F)$, is defined to be the \emph{minimum} $s$ such that
	$$F = \ell_1^d + \dots + \ell_s^d$$
	for some linear forms $\ell_1, \dots, \ell_s \in R_1$.
\end{Definition}

The Waring problems for forms ask for bounds or precise values for the Waring rank of homogeneous polynomials.

\begin{Definition}
	\label{def.G}
	Let $n,d$ be positive integers and $R=\CC[x_0,\ldots,x_n]$.
	\begin{enumerate}
		\item Set $G(n,d) := \min\{ s \in \NN ~\big|~ \rk(F) \le s \text{ for a general element } F \in R_d\}.$
		\item Set $g(n,d) := \min\{ s \in \NN ~\big|~ \rk(F) \le s \text{ for any element } F \in R_d\}.$
	\end{enumerate}
\end{Definition}

The \emph{Big Waring Problem} and \emph{Little Waring Problem}, respectively, are to determine $G(n,d)$ and $g(n,d)$. It is easy to see that $g(n,d) = \max\{ \rk(F) ~\big|~ F \in R_d\}.$ The connection between $G(n,d)$ and secant varieties comes from the following result.

\begin{Lemma}
	\label{lem.Waring}
	$G(n,d) = \min\{ r ~\big|~ \sigma_r(V^n_d) = \PP(S^dV)\}.$
\end{Lemma}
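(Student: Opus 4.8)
The plan is to reduce the statement to the tautological description of the Veronese variety and a single piece of constructibility bookkeeping. Fix coordinates, so that $\PP(S^dV)$ is identified with $\PP(R_d)$ and, by Definition \ref{def.Veronese}, $V^n_d$ is exactly the set of classes $[\ell^d]$ with $0 \neq \ell \in R_1$ (replacing $V$ by $V^*$ is harmless here, as the statement only concerns the $d$-th Veronese of $\PP^n$). Set $r_0 := \min\{r \mid \sigma_r(V^n_d) = \PP(S^dV)\}$, which is well-defined since $\sigma_r(V^n_d) = \PP(S^dV)$ once $r$ is large; the goal is $G(n,d) = r_0$.

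The first key step is to identify, for $F \in R_d$, the condition $\rk(F) \le r$ with the statement that $[F]$ lies in the not-yet-closed union $U_r := \bigcup_{P_1,\dots,P_r \in V^n_d}\langle P_1,\dots,P_r\rangle$. Indeed, since $\CC$ is algebraically closed, $F = \sum_{i=1}^r c_i \ell_i^d$ with scalars $c_i$ can be rewritten as $F = \sum_i (c_i^{1/d}\ell_i)^d$, and allowing some $\ell_i$ to vanish accounts for ranks $< r$; conversely any expression of $F$ as a sum of $s \le r$ $d$-th powers places $[F]$ in the span of $s$ (hence $r$) points of $V^n_d$. By Definition \ref{def.secant}, $\sigma_r(V^n_d) = \overline{U_r}$, so $U_r = \{[F] : \rk(F) \le r\}$.

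The second, and only delicate, step is to pass between ``a general $F \in R_d$ has $\rk(F)\le r$'' and the density of $U_r$. Here I would observe that $U_r$ is \emph{constructible}: it is the image of the morphism $([\ell_1],\dots,[\ell_r],[c_1:\cdots:c_r]) \mapsto [\sum_i c_i\ell_i^d]$ defined on an open subset of the irreducible variety $\PP(R_1)^{\times r}\times\PP^{r-1}$, so Chevalley's theorem applies. Since a constructible subset of the irreducible space $\PP(S^dV)$ is dense if and only if it contains a nonempty open subset, ``$U_r$ is dense'' is equivalent to ``a general $F$ has $\rk(F)\le r$''. Combining with the first step gives, for every $r$, the chain $G(n,d)\le r \iff \text{general }F\text{ has }\rk\le r \iff \overline{U_r}=\PP(S^dV) \iff \sigma_r(V^n_d)=\PP(S^dV)$. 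Because $\langle P_1,\dots,P_r\rangle \subseteq \langle P_1,\dots,P_{r+1}\rangle$ forces $U_r\subseteq U_{r+1}$ and hence $\sigma_r(V^n_d)\subseteq\sigma_{r+1}(V^n_d)$, the set of such $r$ equals $\{r\ge r_0\}$, and taking minima yields $G(n,d)=r_0$. The main obstacle is precisely this constructibility/genericity argument; the rest is a direct unwinding of the definitions of Waring rank and secant variety.
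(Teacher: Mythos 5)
Your proof is correct and follows essentially the same route as the paper's: identify the union $\bigcup_{P_1,\dots,P_r\in V^n_d}\langle P_1,\dots,P_r\rangle$ with the locus of classes of forms of Waring rank at most $r$, then observe that $\sigma_r(V^n_d)=\PP(S^dV)$ exactly when this union contains a general point. The only difference is that you make explicit, via Chevalley's theorem and constructibility, the passage from density of that (non-closed) union to the statement about a general form, a step the paper asserts without justification; this is a welcome added detail rather than a different argument.
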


\begin{proof} Fix a basis $\{e_0, \dots, e_n\}$ of $V$ whose dual basis in $R$ is $\{x_0, \dots, x_n\}$. Let $\theta: R \rightarrow S$ be the natural isomorphism defined by $x_i \mapsto e_i$. Consider a form $F \in R_d$. By definition, $\rk(F) \le r$ if and only if there exist linear forms $\ell_1, \dots, \ell_r \in R_1$ such that
	\begin{align}
	F = \ell_1^d + \dots + \ell_r^d \label{eq.powersum}
	\end{align}
	Let $h_i' = \theta(\ell_i)$ for $i = 1, \dots, r$. Then, (\ref{eq.powersum}) holds if and only if
	$\theta(F) = h_1^d + \dots + h_r^d$.
	By scalar scaling if necessary, this is the case if and only if $[\theta(F)] = \langle [h_1^d], \dots, [h_r^d]\rangle$.
	
	By the definition of $\sigma_r(V^n_d)$ (being the Zariski closure of the union of secant linear subspaces), it then follows that $\sigma_r(V^n_d) = \PP(S^dV)$ if and only if $\bigcup_{P_1, \dots, P_r \in V^n_d} \langle P_1, \dots, P_r \rangle$ contains a general point of $\PP(S^dV)$. Hence, $\rk(F) \le r$ for a general element $F \in R_d$ if and only if $\sigma_r(V^n_d) = \PP(S^dV)$.
\end{proof}

%%%%%%%%%%%%%%%%%%%%%%%%%%%%%%%%%%%%%

\section{Appendix: Symbolic powers}\label{app.Symbolic}

Considering the considerable literature on symbolic powers of ideals, we have included in this appendix only a minimal amount of definitions and results. We refer the interested reader to the recent, comprehensive survey on the subject \cite{DDGHN18}.

\begin{Definition}
Let $R=\CC[x_0,\ldots,x_n]$ and let  $I$ be an ideal with no embedded associated primes. For every $m\in \ZZ_+$, the {\em $m$-th symbolic power of $I$} is the $R$-ideal
$$
I^{(m)} = \bigcap_{p\in \Ass(R/I)}\left(I^mR_p\cap R\right).
$$
Additionally, one sets $I^{(0)}=R$.
\end{Definition}

For every integer $m\geq 2$ and ideal $I$, one has $I^m\subseteq I^{(m)}$ and in general this is a strict inclusion. A notable exception is when $I$ is a complete intersection, in which case, $I^m=I^{(m)}$ for every $m\geq 1$.

The following result gives a way to compute symbolic powers of ideals of points.
\begin{Proposition}
Let $X=\{P_1,\ldots,P_r\}$ be a set of simple points in $\PP^n$, i.e. its defining ideal $I_X=\p_1\cap \ldots \cap \p_r$ in $R=\CC[x_0,\ldots,x_n]$ is a radical ideal. Then for every $m\geq 1$
$$
I_X^{(m)} = \p_1^m \cap  \dots \cap \p_r^m.
$$
\end{Proposition}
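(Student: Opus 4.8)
The plan is to compute $I_X^{(m)}$ directly from its definition by localizing at each associated prime and then contracting back to $R$. Since $I_X=\p_1\cap\dots\cap\p_r$ is radical and the $\p_i$ are distinct primes, this expression is already an irredundant primary decomposition, so $\Ass(R/I_X)=\{\p_1,\dots,\p_r\}$ and $I_X$ has no embedded primes; thus the definition applies and
\[
I_X^{(m)}=\bigcap_{i=1}^{r}\big(I_X^m R_{\p_i}\cap R\big).
\]

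First I would fix an index $i$ and compute $I_X R_{\p_i}$. Each $P_j$ is a point of $\PP^n$, so $\h(\p_j)=n$ for every $j$; since the $P_j$ are distinct, for $j\neq i$ one cannot have $\p_j\subseteq\p_i$ (a strict inclusion would force $\h(\p_j)<n$, while equality would force $P_j=P_i$). Hence there exists $f\in\p_j\setminus\p_i$, which is a unit in $R_{\p_i}$, so $\p_j R_{\p_i}=R_{\p_i}$. Because localization commutes with finite intersections,
\[
I_X R_{\p_i}=\bigcap_{j=1}^{r}\p_j R_{\p_i}=\p_i R_{\p_i},
\qquad\text{so}\qquad
I_X^m R_{\p_i}=(\p_i R_{\p_i})^m=\p_i^m R_{\p_i}.
\]

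It then remains to identify $\p_i^m R_{\p_i}\cap R$. By definition this contraction is the $m$-th symbolic power $\p_i^{(m)}$ of the prime $\p_i$; and after a linear change of coordinates we may assume $P_i=[1:0:\dots:0]$, so that $\p_i=(x_1,\dots,x_n)$ is generated by $n$ linear forms and is therefore a complete intersection of height $n$. By the fact recalled in the appendix, $\p_i^{(m)}=\p_i^m$ for all $m\ge 1$. Combining these observations gives $I_X^m R_{\p_i}\cap R=\p_i^m$, and hence $I_X^{(m)}=\bigcap_{i=1}^r\p_i^m$, as claimed.

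I do not anticipate a serious obstacle: the only place where the hypothesis that the $P_i$ are ordinary reduced points is genuinely used is the last step, namely that each $\p_i^m$ is unmixed — which is exactly the complete-intersection property of the homogeneous maximal ideal of a point — and the rest is the standard bookkeeping of localizing and contracting an intersection of primes.
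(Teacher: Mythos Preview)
The paper does not actually give a proof of this proposition; it is stated in the appendix on symbolic powers as a standard fact, immediately after the definition, and the text moves on to the next definition without argument. Your proof is correct and is exactly the standard verification one would expect: you unwind the definition $I_X^{(m)}=\bigcap_i(I_X^mR_{\p_i}\cap R)$, use that the $\p_j$ with $j\neq i$ become the unit ideal in $R_{\p_i}$ (since distinct height-$n$ primes cannot be comparable), and then invoke the fact---explicitly recorded in the same appendix---that complete intersections satisfy $\p_i^{(m)}=\p_i^m$. There is nothing to compare against and no gap in what you wrote.
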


\begin{Definition}
Let $P$ be a point in $\PP^n$ with defining ideal $\p$. For $m\geq 1$, we write $mP$ for the subscheme of $\PP^n$ with defining ideal $\p^{m}$. One often calls $mP$ a {\em fat point} subscheme of $\PP^n$.

If $P_1,\ldots,P_r$ are points in $\PP^n$ with defining ideals $\p_1, \ldots, \p_r$, then the defining ideal of the fat point scheme $X=m_1P_1+m_2P_2+\ldots + m_rP_r$ is $I_X:=\p_1^{m_1} \cap \ldots \cap \p_r^{m_r}$.
\end{Definition}

\begin{Example}\label{E3pts}(The defining ideal of 3 non-collinear double points in $\PP^2$)
Let $X=\{[1:0:0], [0:1:0], [0:0:1]\}\subseteq \PP^2$ be the three coordinate points in $\PP^2$, and let $I_X=(x_1,x_2) \cap (x_0,x_2)\cap (x_0,x_1)=(x_0x_1,x_0x_2,x_1x_2)\subseteq \CC[x_0,x_1,x_2]$ be its defining ideal. The defining ideal of $mX$ is
$$
I_X^{(m)}=(x_1,x_2)^m \cap (x_0,x_2)^m \cap (x_0,x_1)^m.
$$
For instance, $2X$ is defined by $I_X^{(2)}=(x_1,x_2)^2 \cap (x_0,x_2)^2 \cap (x_0,x_1)^2$, and by computing this intersection one obtains
$$
I_X^{(2)}= (x_0x_1x_2)+I_X^2.
$$
In particular, $I_X^2\neq I_X^{(2)}$. (In fact, more generally, $x_0^tx_1^tx_2^t\in I_X^{(2t)}$ for all $t\in \ZZ_+$.)
\end{Example}

An important theorem proved by Zariski \cite{Z49} and Nagata \cite{N62} (and generalized by Eisenbud and Hochster \cite{EH1979}) provides a first illustration of the geometric relevance of symbolic powers of ideals: they consist of all hypersurfaces vanishing with order at least $m$ on the variety defined by $I$. 

\begin{Theorem}[Zariski-Nagata] \label{ZariskiNagata}
Let $I$ be a radical ideal in $R = \CC[x_0,\ldots,x_n]$ and let $s \ge \NN$. Then,
$$
\begin{array}{ll}
I^{(s)} & = {\displaystyle \bigcap_{\m \in \Max(R),\,I\subseteq \m} \m^s.}\\
	& = \{f\in R\,\mid\, \text{ all partial derivatives of }f \text{ of order }\leq s-1 \text{ lie in }I\}.
	\end{array}
$$
\end{Theorem}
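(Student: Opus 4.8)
The plan is to prove the two displayed equalities by squeezing $I^{(s)}$ between them. Introduce the \emph{$s$-th differential power}
$$D^{s}(I) := \{\, f \in R \;:\; \partial^{\alpha}f \in I \ \text{ for every multi-index } \alpha \text{ with } |\alpha| \le s-1 \,\},$$
where $\partial^{\alpha} = \partial_{0}^{\alpha_{0}}\cdots\partial_{n}^{\alpha_{n}}$, so that the theorem amounts to $I^{(s)} = \bigcap_{\m\in\Max(R),\, I\subseteq\m}\m^{s} = D^{s}(I)$. We will establish the cycle of inclusions
$$I^{(s)} \;\subseteq\; D^{s}(I) \;\subseteq\; \bigcap_{\m\supseteq I}\m^{s} \;\subseteq\; I^{(s)}.$$
Since $I$ is radical it is the intersection $\p_{1}\cap\dots\cap\p_{r}$ of its minimal primes, and each of the three operations in the cycle commutes with that intersection: for $I\mapsto I^{(s)}$ because $I R_{\p_{i}} = \p_{i}R_{\p_{i}}$ ($\p_{i}$ being minimal over the radical ideal $I$); for $I\mapsto D^{s}(I)$ because $\partial^{\alpha}f$ lies in $\bigcap_{i}\p_{i}$ iff it lies in each $\p_{i}$; and for $I\mapsto\bigcap_{\m\supseteq I}\m^{s}$ because a maximal ideal contains $\bigcap_{i}\p_{i}$ iff it contains some $\p_{i}$. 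Intersecting the cycle for the $\p_{i}$ then yields the cycle for $I$, so it suffices to treat the case $I=\p$ prime.

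For $\p^{(s)}\subseteq D^{s}(\p)$ we induct on $s$: after localizing at $\p$ each $\partial_{i}$ extends to a $\CC$-derivation of $R_{\p}$ carrying $(\p R_{\p})^{s}$ into $(\p R_{\p})^{s-1}$ (Leibniz rule), so $f\in\p^{(s)}$ forces $\partial_{i}f\in\p^{(s-1)}$ for all $i$, and iterating gives $\partial^{\alpha}f\in\p^{(s-|\alpha|)}\subseteq\p$ whenever $|\alpha|\le s-1$. For $D^{s}(\p)\subseteq\bigcap_{\m\supseteq\p}\m^{s}$ we use Taylor's formula: by the Nullstellensatz a maximal ideal $\m\supseteq\p$ has the form $(x_{0}-a_{0},\dots,x_{n}-a_{n})$ for some $a\in\CC^{n+1}$, and in characteristic zero $g\in\m^{k}$ iff $(\partial^{\beta}g)(a)=0$ for all $|\beta|<k$; applying this with $g=f$, $k=s$ and $\partial^{\alpha}f\in\p\subseteq\m$ for $|\alpha|\le s-1$ gives $f\in\m^{s}$.

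The substantive inclusion, and the main obstacle, is $\bigcap_{\m\supseteq\p}\m^{s}\subseteq\p^{(s)}$, which we prove in contrapositive form: given $f\notin\p^{(s)}$, we must exhibit a closed point $\m\in V(\p)$ with $f\notin\m^{s}$. Let $t:=\max\{k:f\in\p^{(k)}\}$, finite by Krull's intersection theorem, and note $t\le s-1$; choose $g\notin\p$ with $gf\in\p^{t}\setminus\p^{(t+1)}$, so the class of $gf$ is a nonzero homogeneous element of degree $t$ in ${\rm gr}_{\p}(R)$ whose image in ${\rm gr}_{\p}(R)\otimes_{R/\p}\kappa(\p)=(\p R_{\p})^{t}/(\p R_{\p})^{t+1}$ is still nonzero. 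Now specialize to a \emph{general} closed point of $V(\p)$: since $R/\p$ is a domain of finite type over a field of characteristic zero, the smooth locus of $V(\p)$ is dense open, and on it $\p R_{\m}$ is generated by part of a regular system of parameters of the regular local ring $R_{\m}$, so ${\rm gr}_{\p}(R)\otimes_{R/\p}\kappa(\m)$ embeds degreewise into ${\rm gr}_{\m}(R_{\m})$; moreover the class of $gf$ stays nonzero in ${\rm gr}_{\p}(R)\otimes_{R/\p}\kappa(\m)$ away from a proper closed subset of $V(\p)$, and $g$ is a unit in $R_{\m}$ away from $V(g)$. Choosing $\m$ in the nonempty intersection of these dense opens gives $\ord_{\m}(gf)=t$, hence $\ord_{\m}(f)=t\le s-1$, so $f\notin\m^{s}$.

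I expect the first two inclusions to be routine bookkeeping with the Leibniz rule and Taylor's formula (the latter using $\mathrm{char}\,\CC = 0$ and the Nullstellensatz); the real work is the third, and its one nonformal ingredient is the statement that the $\p$-adic order of a polynomial is computed by its order of vanishing at a general smooth point of $V(\p)$. This rests on generic smoothness in characteristic zero together with the compatibility of the associated graded rings ${\rm gr}_{\p}(R)$ and ${\rm gr}_{\m}(R_{\m})$ at such a point; a reader willing to take these standard facts for granted will find the argument short, and a fully detailed treatment is given in \cite{DDGHN18}.
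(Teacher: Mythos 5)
The paper does not actually prove Theorem \ref{ZariskiNagata}: it is quoted as a classical result, with references to Zariski \cite{Z49}, Nagata \cite{N62}, Eisenbud--Hochster \cite{EH1979} and the survey \cite{DDGHN18}, and is then only used, never reproved. So there is no in-text argument to compare yours against; what you propose is a correct, essentially self-contained sketch of the standard characteristic-zero proof via differential powers. Your reduction to a prime $\p$ is fine (all three operations do commute with intersecting the minimal primes of a radical ideal), the inclusion $\p^{(s)}\subseteq D^{s}(\p)$ via the Leibniz rule in $R_\p$ and the inclusion $D^{s}(\p)\subseteq\bigcap_{\m\supseteq\p}\m^{s}$ via Taylor expansion at a closed point are routine as you say, and your handling of the substantive inclusion is the right one: take $g\notin\p$ with $gf\in\p^{t}\setminus\p^{(t+1)}$ and compute the order of vanishing at a general smooth closed point of $V(\p)$. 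In a fully detailed write-up, three standard facts should be made explicit, since they carry all the weight: (i) at a closed point $\m$ in the smooth locus, $\p R_\m$ is generated by part of a regular system of parameters, so ${\rm gr}_{\p R_\m}(R_\m)\otimes\kappa(\m)\cong\kappa(\m)[Y_1,\dots,Y_c]$ injects degreewise into ${\rm gr}_{\m}(R_\m)\cong\kappa(\m)[Y_1,\dots,Y_{n+1}]$, which is precisely the needed containment $\p^{t}R_\m\cap\m^{t+1}R_\m\subseteq\m\p^{t}R_\m+\p^{t+1}R_\m$; (ii) the locus of $\m\in V(\p)$ where the class of $gf$ survives in $(\p^{t}/\p^{t+1})\otimes\kappa(\m)$ contains a dense open set (clear denominators after generic freeness, using that the class is nonzero over $\kappa(\p)$); (iii) the nonempty open set you intersect down to really contains a \emph{closed} point, because $R$ is Jacobson (Nullstellensatz). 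None of these is a gap --- they are exactly the ``standard facts'' you flag --- but they are where a careful reader would look. Compared with the paper, which buys brevity by citing the literature, your route buys a self-contained argument over $\CC$ at the price of invoking generic smoothness; it is essentially the proof presented in \cite{DDGHN18}, so citing it as you do is appropriate.
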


\begin{Example}
Let $X$, $I_X$ and $R$ be as in Example \ref{E3pts}. We have claimed that $x_0x_1x_2 \in I_X^{(2)}$. One can check this easily using Zariski--Nagata theorem. Each of the partial derivatives of $x_0x_1x_2$ with respect to one of the variables is a minimal generator of $I_X$. Thus, all partial derivatives of order at most $1$ of $x_0x_1x_2$ lie in $I_X$, and so by Zariski--Nagata theorem, $x_0x_1x_2\in I_X^{(2)}$.

More geometrically, $V(x_0x_1x_2)$ is the union of the three lines $V(x_0)\cup V(x_1)\cup V(x_2)$. Each of the points at the intersection of two of the three lines are singular points. Since these three intersections are the points of $X$, it follows by Zariski--Nagata theorem that $x_0x_1x_2\in I_X^{(2)}$.
\end{Example}

%%%%%%%%%%%%%%%%%%%%%%%%%%%%%%%

\section{Appendix: Hilbert function} \label{app.Hilbertfn}

In this section, we record some basic properties of Hilbert functions, especially relative to sets of points.
We start with a simple lemma potentially allowing the use of Linear Algebra to investigate interpolation problems.

\begin{Lemma}\label{lem.count}
Let $P\in \PP^n$ be a point with defining ideal $\p \subseteq R = \CC[x_0, \ldots, x_n]$. Then
$[\p^m]_d$ consists of all solutions of a homogeneous linear system of $\binom{n+m-1}{n}$ equations in $\binom{n+d}{n}$ variables.
In particular, the rank of this linear system is $H_{R/\p^m}(d)$.
\end{Lemma}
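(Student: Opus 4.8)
The plan is to translate the vanishing conditions defining $[\p^m]_d$ into an explicit linear system. First I would reduce to the case where $P$ is the coordinate point $[1:0:\dots:0]$ by a linear change of coordinates: such a change is an automorphism of $R$ preserving degrees, it sends $\p$ to $(x_1,\dots,x_n)$, hence $\p^m$ to $(x_1,\dots,x_n)^m$, and it induces an isomorphism on each graded piece $R_d$, so neither $H_{R/\p^m}(d)$ nor the \emph{shape} of the linear system (number of equations, number of unknowns, rank) changes. So assume $\p = (x_1,\dots,x_n)$.

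Next I would write a general form $F \in R_d$ as $F = \sum_{|\alpha| = d} c_\alpha x^\alpha$, where the $\binom{n+d}{n}$ coefficients $c_\alpha \in \CC$ are the unknowns. By the Zariski--Nagata theorem (Theorem \ref{ZariskiNagata}), $F \in \p^m = (x_1,\dots,x_n)^m$ if and only if every partial derivative of $F$ of order $\le m-1$ lies in $\p = (x_1,\dots,x_n)$, i.e. vanishes at $P = [1:0:\dots:0]$. (Alternatively, and more elementarily, $F \in (x_1,\dots,x_n)^m$ iff $F$ has no monomial whose total degree in $x_1,\dots,x_n$ is $< m$, which is the same condition.) Now, for a monomial $x^\alpha$ and a differential operator $\partial^\beta := \partial^{|\beta|}/\partial x_0^{\beta_0}\cdots\partial x_n^{\beta_n}$ with $|\beta| = k \le m-1$, the value $(\partial^\beta x^\alpha)(P)$ is nonzero only when $\alpha_i = \beta_i$ for $i = 1,\dots,n$ and $\alpha_0 = d - k \ge 0$, in which case it equals $\frac{(d-k)!}{(\alpha_0 - k)!}\cdot \prod_i \beta_i! \neq 0$ — wait, more simply: $\partial^\beta F$ vanishes at $P$ iff the coefficient of $x_0^{\,d-k}\prod_{i\ge1}x_i^{\beta_i}$ in $F$ is zero (up to a nonzero scalar). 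Thus the conditions ``$\partial^\beta F(P) = 0$ for all $|\beta| \le m-1$'' are, one for each multi-index $\beta = (\beta_0,\beta_1,\dots,\beta_n)$ with $\sum \beta_i \le m - 1$; but two such $\beta$'s with the same $(\beta_1,\dots,\beta_n)$ part give (after the substitution $\alpha_0 = d - |\beta|$) conditions on coefficients $c_\alpha$ with different $\alpha$, and the independent conditions are exactly indexed by $(\beta_1,\dots,\beta_n)$ with $\beta_1 + \dots + \beta_n \le m-1$ — that is, $\binom{n + m - 1}{n}$ of them, which I would verify is the number of such tuples. Each such condition is the linear equation $c_\alpha = 0$ for the corresponding single $\alpha$ (or, before the change of variables, a genuine linear combination of the $c_\alpha$'s).

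Finally, having exhibited $[\p^m]_d$ as the solution space of a homogeneous linear system $A\mathbf{c} = 0$ with $A$ of size $\binom{n+m-1}{n} \times \binom{n+d}{n}$, the last sentence is immediate: the solution space has dimension $\binom{n+d}{n} - \operatorname{rk}(A)$, and it equals $\dim_\CC [\p^m]_d = H_{\p^m}(d)$, so
$$\operatorname{rk}(A) = \binom{n+d}{n} - H_{\p^m}(d) = H_{R/\p^m}(d).$$
I expect the only mildly delicate point to be the bookkeeping that the number of \emph{genuinely distinct} linear conditions coming from the derivative description is exactly $\binom{n+m-1}{n}$ rather than the larger count $\binom{n+m}{n+1}$ of all derivative operators of order $\le m-1$; this is handled cleanly by passing through the monomial description $F \in (x_1,\dots,x_n)^m$, where the conditions are visibly ``$c_\alpha = 0$ for each $\alpha$ with $\alpha_1+\dots+\alpha_n \le m-1$ and $|\alpha| = d$,'' and the count of such $\alpha$ (equivalently, of $(\alpha_1,\dots,\alpha_n)$ with sum $\le m-1$) is $\binom{n+m-1}{n}$.
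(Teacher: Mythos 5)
Your proposal is correct and follows essentially the same route as the paper: express a generic degree-$d$ form with unknown coefficients, use the Zariski--Nagata description of $\p^m$ to translate membership into vanishing of (order $\le m-1$) derivative conditions at $P$, observe these are $\binom{n+m-1}{n}$ linear equations in the $\binom{n+d}{n}$ coefficients, and read off the rank from rank--nullity as $\binom{n+d}{n}-\dim_\CC[\p^m]_d=H_{R/\p^m}(d)$. Your extra step of normalizing $P$ to a coordinate point, which makes the conditions the explicit equations $c_\alpha=0$ for $\alpha_1+\dots+\alpha_n\le m-1$, is a harmless simplification (and note that two derivative multi-indices differing only in the $x_0$-order give the \emph{same} condition, not conditions on different $\alpha$, which is exactly why the count collapses to $\binom{n+m-1}{n}$ as your final monomial bookkeeping confirms).
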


\begin{proof}
Let $F$ be a generic homogeneous equation of degree $d$ in $n+1$ variables, i.e.
$$
F = \sum_{M\in T_d}c_M M\in \CC[\{c_M\}, x_0,\ldots,x_n]
$$
where $T_d$ consists of all the $\binom{n+d}{d}$ monomials of degree $d$ in $R$.

By Zariski--Nagata's theorem, the equation $F$ vanishes with multiplicity $m$ at a point $P\in \PP^n$ $\Llra$ all the $(m-1)$-order (divided power) derivatives of $F$ vanish at $P$.

Now, consider any $(m-1)$-th partial order derivative of $F$ with respect to the $x_i$'s and substitute the coordinates of $P$ in for the variables. We obtain a linear combination of the coefficients $c_M$'s; this linear combination is zero if and only if that partial derivative of $F$ vanish at $P$.
Therefore, there is a bijective correspondence between the solutions to the system of these $\binom{n+m-1}{n}$ linear equations in the unknowns $c_M$'s and all hypersurfaces of degree $d$ passing through $P$ at least $m$ times. This proves the statement.
\end{proof}

The Hilbert function of a graded ring counts the number of linearly independent forms in a given degree.
\begin{Definition}
Let $R=\CC[x_0,\ldots,x_n]$, and let $M=\bigoplus_{i\geq 0}M_i$ be a graded $R$-module (e.g., $M=R/I$ where $I$ is a homogeneous ideal). Then, $M_i$ is a $\CC$-vector space for every $i \ge 0$. The {\em Hilbert function of $M$} is the function $H_M : \ZZ_{\ge 0} \rightarrow \NN$, given by
$$
H_M(d) :=\dim_\CC M_d.
$$
For any $a\in \ZZ$, $M(a)$ is defined as the graded $R$-module whose degree $j$ component is $[M(a)]_j=M_{a+j}$. 
\end{Definition}

In general, for $d$ large, the Hilbert function of $M$ agrees with a polynomial of degree $\dim(M)-1$, which is called the {\em Hilbert polynomial} of $M$. Its normalized leading coefficient is an integer $e(M)$ called the {\em multiplicity of $M$}. When $M$ is 1-dimensional and Cohen-Macaulay, $H_M(d)$ is non-decreasing and eventually equals the multiplicity of $M$. We recapture this property in the following proposition.

\begin{Proposition}\label{e(M)}
Let $R = \CC[x_0,\ldots,x_n]$ and let $M$ be a Cohen-Macaulay graded $R$-module with $\dim(M)=1$. Then $H_{M}(d-1)\leq H_{M}(d)$ for all $d \in \NN,$ and $H_{M}(d)=e(M)$ for $d\gg 0$.
In particular, $H_{M}(d)\leq e(M)$ for every $d \in \NN$.
\end{Proposition}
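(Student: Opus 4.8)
The plan is to reduce everything to a one-variable statement by going modulo a general linear form, which we may do because $M$ is Cohen--Macaulay of dimension $1$. First I would pick a linear form $\ell \in R_1$ that is a nonzerodivisor on $M$; such an $\ell$ exists because $\dim M = 1$ and $M$ is Cohen--Macaulay, so $\depth M = 1$ and the associated primes of $M$ are all minimal, hence none contains a general linear form (we are over the infinite field $\CC$, so prime avoidance in degree $1$ applies). Then the multiplication map $\ell \colon M(-1) \to M$ is injective in every degree, giving the short exact sequence of graded vector spaces
$$
0 \lra M_{d-1} \stackrel{\cdot \ell}{\lra} M_d \lra (M/\ell M)_d \lra 0.
$$
Taking dimensions yields $H_M(d) = H_M(d-1) + H_{M/\ell M}(d)$ for every $d$, and since $H_{M/\ell M}(d) \ge 0$ this already gives the monotonicity $H_M(d-1) \le H_M(d)$.

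Next I would observe that $N := M/\ell M$ is a graded module of dimension $0$, hence finite-dimensional over $\CC$, so $N_d = 0$ for $d \gg 0$; write $e := \dim_\CC N = \sum_{d \ge 0} H_N(d)$. Summing the relation $H_M(d) - H_M(d-1) = H_N(d)$ telescopically from degree $0$ up to a large degree $d_0$ (beyond which $N$ vanishes) gives $H_M(d_0) = \sum_{d=0}^{d_0} H_N(d) = e$, and for all $d \ge d_0$ the same telescoping shows $H_M(d) = e$ is constant. Thus $H_M(d) = e$ for $d \gg 0$, so the Hilbert polynomial of $M$ is the constant $e$, which is by definition the multiplicity $e(M)$; hence $e = e(M)$. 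Finally, since $H_M$ is nondecreasing and stabilizes at $e(M)$, we get $H_M(d) \le e(M)$ for every $d \in \NN$.

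I do not expect any genuine obstacle here; the only point requiring a word of care is the existence of a nonzerodivisor linear form, which is where the Cohen--Macaulay hypothesis (equivalently $\depth M = 1 = \dim M$) is used — without it $\ell$ could be a zerodivisor and the sequence above would have a nonzero kernel term, breaking both the monotonicity and the telescoping count. One should also note that the identification of the stable value of $H_M$ with the multiplicity $e(M)$ is essentially the definition of $e(M)$ for a $1$-dimensional module (the normalized leading coefficient of the Hilbert polynomial, which here is a degree-$0$ polynomial), so no separate multiplicity theory is needed beyond recalling that definition.
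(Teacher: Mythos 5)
Your proposal is correct and follows essentially the same route as the paper: use the Cohen--Macaulay hypothesis to find a linear nonzerodivisor $\ell$ on $M$, apply the short exact sequence $0 \to M(-1) \stackrel{\cdot \ell}{\to} M \to M/\ell M \to 0$ to get $H_M(d)-H_M(d-1)=H_{M/\ell M}(d)\ge 0$, and note that the Hilbert polynomial of a one-dimensional module is the constant $e(M)$. Your extra telescoping argument for the stabilization is a harmless elaboration of what the paper simply cites as the definition of the Hilbert polynomial and multiplicity (and is legitimate here since the paper's graded modules are concentrated in nonnegative degrees).
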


\begin{proof}
Since $\dim(M)=1$, then the Hilbert polynomial of $M$ is just the constant function $e(M)$, so $H_{M}(d)=e(M)$ for $d\gg 0$.

Since $M$ is Cohen-Macaulay there exists a linear form $x\in R$ that is regular on $M$. Let $\ovl{R}=R/(x)$ be the Artinian reduction of $R$, and let $\ovl{M}=M/(x)M$. The short exact sequence
$$
0 \lra M(-1) \stackrel{\cdot x}{\lra } M \lra \ovl{M} \lra 0
$$
and the additivity of Hilbert function under short exact sequence yield $H_{\ovl{M} }(d) = H_{M}(d) - H_{M}(d-1)$. As $H_{\ovl{M} }(d)$ is of course non-negative for every $d \in \NN$, then $H_{M}(d-1)\leq H_{M}(d)$ for all $d \in \NN$.
\end{proof}

To complement the previous result, one can use the so-called Associativity Formula for $e(R/I)$ to prove the following statement computing the multiplicity of any set of fat points in $\PP^n$.

\begin{Proposition}\label{e(R/I)2}
Let $Y=\{P_1,\ldots,P_r\}$ be a set of points in $\PP^n$, and let $X=\{m_1P_1, \ldots, m_rP_r\}$. Then
$$
e(R/I_X) = \sum_{i=1}^r \binom{n+m_i-1}{n}.
$$
\end{Proposition}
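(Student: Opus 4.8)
The plan is to deduce the formula from the Associativity Formula for multiplicities, together with a local length computation at each point. Write $\m = (x_0, \ldots, x_n)$ for the homogeneous maximal ideal of $R$, so that $e(R/I_X) = e(\m; R/I_X)$. Since each $\p_i$ has height $n$ in $R$, the ring $R/I_X$ is one-dimensional; and because $I_X = \p_1^{m_1} \cap \cdots \cap \p_r^{m_r}$ is an irredundant primary decomposition with no embedded components, the minimal (equivalently, associated) primes of $R/I_X$ are exactly $\p_1, \ldots, \p_r$, all of dimension $1$. The first step is therefore to invoke the Associativity Formula to write
$$
e(R/I_X) \;=\; \sum_{i=1}^r \ell_{R_{\p_i}}\!\big((R/I_X)_{\p_i}\big)\cdot e(\m; R/\p_i).
$$

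Next I would evaluate the two factors in each summand. For the multiplicity term: after a linear change of coordinates one may take $\p_i = (x_1, \ldots, x_n)$, so $R/\p_i \cong \CC[x_0]$ has Hilbert function identically $1$, giving $e(\m; R/\p_i) = 1$. For the length term: localizing at $\p_i$ makes every component $\p_j^{m_j}$ with $j \neq i$ into the unit ideal (as $\p_j \not\subseteq \p_i$), so $(R/I_X)_{\p_i} \cong R_{\p_i}/\p_i^{m_i} R_{\p_i}$. Since $R_{\p_i}$ is a regular local ring of dimension $n$ with maximal ideal $\p_i R_{\p_i}$, its associated graded ring is a polynomial ring in $n$ variables over the residue field, and hence
$$
\ell_{R_{\p_i}}\!\big(R_{\p_i}/\p_i^{m_i}R_{\p_i}\big) \;=\; \sum_{k=0}^{m_i-1}\dim_{\CC}\big(\p_i^k R_{\p_i}/\p_i^{k+1}R_{\p_i}\big) \;=\; \sum_{k=0}^{m_i-1}\binom{n+k-1}{n-1} \;=\; \binom{n+m_i-1}{n},
$$
the last step being the hockey-stick identity. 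Substituting these into the displayed formula yields $e(R/I_X) = \sum_{i=1}^r \binom{n+m_i-1}{n}$.

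I do not expect a genuine obstacle here; the argument is essentially bookkeeping, and the only point needing a little care is the local length computation, which one could also phrase purely via Hilbert functions. In that alternative route I would observe that the natural map $R/I_X \hookrightarrow \bigoplus_{i=1}^r R/\p_i^{m_i}$ is injective with cokernel that vanishes after localizing at each $\p_i$ and at the generic point, hence has finite length; so both sides share the same Hilbert polynomial and $e(R/I_X) = \sum_i e(R/\p_i^{m_i})$. One then computes $e(R/\p^m) = \binom{n+m-1}{n}$ directly from the Hilbert function of $\CC[x_0, \ldots, x_n]/(x_1, \ldots, x_n)^m$ (which stabilizes to $\sum_{j=0}^{m-1}\binom{n+j-1}{n-1} = \binom{n+m-1}{n}$ for $d \gg 0$), or from Lemma~\ref{lem.count}, which realizes $[\p^m]_d$ as the solution space of a linear system of $\binom{n+m-1}{n}$ equations whose rank stabilizes to $\binom{n+m-1}{n}$ as $d \to \infty$.
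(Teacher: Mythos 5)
Your argument is correct and follows exactly the route the paper indicates (the paper only remarks that the statement follows from the Associativity Formula, and your write-up supplies the standard details: $e(\m;R/\p_i)=1$, $(R/I_X)_{\p_i}\cong R_{\p_i}/\p_i^{m_i}R_{\p_i}$, and the length count via the associated graded ring plus the hockey-stick identity). The only nitpick is that the graded pieces $\p_i^kR_{\p_i}/\p_i^{k+1}R_{\p_i}$ should be measured over the residue field $\kappa(\p_i)\cong\CC(t)$ rather than over $\CC$; with that reading the computation is exactly right.
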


By counting equations and variables, one immediately obtains an upper bound for the Hilbert function of any ideal associated to (possibly fat) points.

\begin{Corollary}\label{cor.count2}
Let $X=\{m_1P_1,\ldots,m_1P_r\}$ be a set of points in $\PP^n$ with multiplicities $m_1,\dots,m_r$. Then
$$
H_{R/I_X}(d) \leq \min\left\{\binom{d+n}{n}, \sum_{i=1}^r \binom{n+m_i-1}{n}\right \},
$$
or, equivalently, $H_{I_X}(d)\geq \max\left\{0, \binom{d+n}{n} - \sum_{i=1}^r \binom{n+m_i-1}{n}\right \}$.
\end{Corollary}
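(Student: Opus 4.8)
The plan is to obtain the estimate exactly as the statement advertises, namely by counting the equations and variables produced by Lemma \ref{lem.count}, and then identifying the resulting number via Proposition \ref{e(R/I)2}. I would work first with $H_{I_X}(d) = \dim_\CC [I_X]_d$ and only at the end pass to $H_{R/I_X}(d) = \binom{d+n}{n} - H_{I_X}(d)$, using $H_R(d) = \binom{d+n}{n}$ and $[I_X]_d \subseteq R_d$.

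Concretely, write $I_X = \p_1^{m_1} \cap \cdots \cap \p_r^{m_r}$, so a form of degree $d$ lies in $[I_X]_d$ precisely when it vanishes to order at least $m_i$ at $P_i$ for every $i$. By Lemma \ref{lem.count}, for each fixed $i$ the space $[\p_i^{m_i}]_d$ is the solution set of a homogeneous linear system of $\binom{n+m_i-1}{n}$ equations in the $\binom{n+d}{n}$ coefficients $\{c_M\}$ of a generic degree-$d$ form. Hence $[I_X]_d = \bigcap_{i=1}^r [\p_i^{m_i}]_d$ is the solution set of the concatenated system, which has at most $\sum_{i=1}^r \binom{n+m_i-1}{n}$ equations in $\binom{n+d}{n}$ unknowns. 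By rank--nullity the dimension of this solution space equals $\binom{n+d}{n}$ minus the rank of the system, and the rank is at most the number of equations, so
$$H_{I_X}(d) = \dim_\CC [I_X]_d \ \ge\ \binom{n+d}{n} - \sum_{i=1}^r \binom{n+m_i-1}{n};$$
it is also trivially nonnegative. Combining gives $H_{I_X}(d) \ge \max\{0,\ \binom{d+n}{n} - \sum_{i=1}^r \binom{n+m_i-1}{n}\}$, which is the second (equivalent) assertion, and subtracting from $H_R(d) = \binom{d+n}{n}$ yields $H_{R/I_X}(d) \le \min\{\binom{d+n}{n},\ \sum_{i=1}^r \binom{n+m_i-1}{n}\}$.

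I expect no genuine obstacle here; the argument is assembled entirely from results already in hand. The only points worth a sentence of care are: (i) that the linear conditions imposed by the distinct fat points $m_iP_i$ truly superimpose, so the combined system has $\sum_i \binom{n+m_i-1}{n}$ equations — any dependencies among them only lower the rank and hence only strengthen the inequality; and (ii) the routine conversion between the $H_{I_X}$ and $H_{R/I_X}$ formulations. As an alternative to the linear-algebra count, one may instead observe that $R/I_X$ is a one-dimensional Cohen--Macaulay ring (its homogeneous ideal $\bigcap_i \p_i^{m_i}$ is unmixed of height $n$ with no embedded primes, hence saturated), apply Proposition \ref{e(M)} to get $H_{R/I_X}(d) \le e(R/I_X)$, and then quote Proposition \ref{e(R/I)2} for the value of $e(R/I_X)$; the remaining bound $H_{R/I_X}(d)\le\binom{d+n}{n}$ is immediate since $[R/I_X]_d$ is a quotient of the $\binom{d+n}{n}$-dimensional space $R_d$.
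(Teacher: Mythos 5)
Your proposal is correct, and in fact it contains two complete arguments. Your primary route — concatenating, for each fat point $m_iP_i$, the $\binom{n+m_i-1}{n}$ linear conditions on the coefficients $\{c_M\}$ furnished by Lemma \ref{lem.count}, and then applying rank--nullity to the combined system — is sound: $[I_X]_d=\bigcap_i[\p_i^{m_i}]_d$ is indeed the null space of the stacked system, its codimension in $R_d$ is the rank, and the rank is at most the total number of equations, which gives $H_{I_X}(d)\ge\binom{d+n}{n}-\sum_i\binom{n+m_i-1}{n}$ and hence the stated bounds. The paper's written proof takes a different (if shorter) path: it does not set up the linear system at all, but simply notes that $R/I_X$ is a one-dimensional Cohen--Macaulay module, so $H_{R/I_X}(d)\le e(R/I_X)$ by Proposition \ref{e(M)}, evaluates $e(R/I_X)=\sum_i\binom{n+m_i-1}{n}$ by Proposition \ref{e(R/I)2}, and adds the trivial bound $H_{R/I_X}(d)\le H_R(d)$ — which is exactly the ``alternative'' you sketch in your closing sentences (including the correct justification that $I_X$, being an intersection of $\p_i$-primary ideals, is saturated, so the quotient has positive depth). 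The trade-off is mild: your count is more elementary and self-contained, relying only on Zariski--Nagata via Lemma \ref{lem.count}, and makes visible that dependencies among the conditions only strengthen the inequality; the paper's multiplicity argument is quicker and degree-uniform, bounding $H_{R/I_X}(d)$ by a single invariant $e(R/I_X)$ that is used repeatedly elsewhere (e.g.\ in the definition of multiplicity $d$-independence). Both are valid proofs of the corollary.
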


\begin{proof}
By Propositions \ref{e(M)} and \ref{e(R/I)2} we have $H_{R/I_X}(d) \leq e(R/I_X) = \sum_{i=1}^r \binom{n+m_i-1}{n}.$
One also has $H_{R/I_X}(d)\leq H_{R}(d) = \binom{d+n}{n}$.
\end{proof}

When the equality in Corollary \ref{cor.count2} is achieved, we obtain the definition of $\text{AH}_n(d)$, or maximal Hilbert function in degree $d$. In other papers, this property is often referred to as {\em $X$ imposes independent conditions on degree $d$ hypersurfaces in $\PP^n$}.

\begin{Definition} \label{def.AH}
	Let $X=\{m_1P_1, \ldots, m_rP_r\}\subseteq \PP^n$ be a set of $r$ points with multiplicities $m_1,m_2,\ldots,m_r$. We say that $X$ is {\em $\text{AH}_n(d)$} (or has {\em maximal Hilbert function in degree $d$}), if
	$$
	H_{R/I_X}(d) = \min\left\{\binom{d+n}{n}, \sum_{i=1}^r \binom{n+m_i-1}{n}\right \}.
	$$
	The number $\min\left\{\binom{d+n}{n}, \sum_{i=1}^r \binom{n+m_i-1}{n}\right \}$ is called {\em  the expected codimension in degree $d$} (for $r$ general points in $\PP^n$ with multiplicities $m_1,\ldots,m_r$).
\end{Definition}

The easiest situation in Definition \ref{def.AH} is when $m_1 = \dots = m_r = 1$, i.e., the points in $X$ are all simple points.

\begin{Theorem}\label{simplegen}
A set $X$ of $r$ general simple points in $\PP^n$ is $\text{AH}_n(d)$ for every $d\geq 1$. 
\end{Theorem}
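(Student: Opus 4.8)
\textbf{Proof proposal for Theorem \ref{simplegen}.}

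The plan is to prove that a general set $X$ of $r$ simple points in $\PP^n$ is $\text{AH}_n(d)$ for every $d \geq 1$, i.e.\ that $H_{R/I_X}(d) = \min\{\binom{n+d}{n}, r\}$, since for simple points $e(R/I_X) = r$. First I would observe, via the semi-continuity of the Hilbert function (Remark \ref{rmk.Hsemi}, or the results of Appendix \ref{app.semi-cont}), that it suffices to exhibit \emph{one} configuration of $r$ points achieving the bound; and furthermore that, because adding a point can only increase $H_{R/I_X}(d)$ and deleting one can only decrease it, it is enough to treat the single value $r = \min\{\binom{n+d}{n}, r\}$ on the nose — that is, either $r \leq \binom{n+d}{n}$ with the goal $H_{R/I_X}(d) = r$, or one reduces to $r = \binom{n+d}{n}$ with the goal $H_{R/I_X}(d) = \binom{n+d}{n}$, equivalently $[I_X]_d = 0$.

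The main step is an induction, and the natural one here is induction on $r$ (or a double induction on $n$ and $r$) using the exact-sequence / one-point-at-a-time argument. Concretely: suppose $X' = X \cup \{P\}$ with $X$ already known to be $\text{AH}_n(d)$. If $H_{R/I_X}(d) = \binom{n+d}{n}$ is already maximal, then $H_{R/I_{X'}}(d)$ is squeezed between $H_{R/I_X}(d)$ and $\binom{n+d}{n}$ and we are done. Otherwise $[I_X]_d \neq 0$, and since $P$ is general it does not lie on the common zero locus of $[I_X]_d$ (the base locus is a proper closed subset, as $[I_X]_d$ is a nonzero linear system), so there is a form in $[I_X]_d$ not vanishing at $P$; hence the evaluation-at-$P$ functional is nonzero on $[I_X]_d$, giving $\dim[I_{X'}]_d = \dim[I_X]_d - 1$ and therefore $H_{R/I_{X'}}(d) = H_{R/I_X}(d) + 1$. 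In either case $X'$ is $\text{AH}_n(d)$, closing the induction once the base case $r = 1$ (handled by Remark \ref{r=1}, or trivially) is in place. One may phrase this cleanly using the short exact sequence $0 \to I_{X'} \to I_X \xrightarrow{\mathrm{ev}_P} \CC \to \cdots$ in degree $d$, where the rightmost map is surjective precisely when some degree-$d$ form through $X$ misses $P$.

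The one genuine subtlety — the step I expect to be the crux — is justifying that a \emph{general} point $P$ avoids the base locus of the linear system $[I_X]_d$ for the \emph{specific} $X$ built in the induction: one must know that base locus is a proper subvariety of $\PP^n$, which is automatic since $[I_X]_d \neq 0$ means there is at least one hypersurface of degree $d$ containing $X$, and a nonzero homogeneous form of positive degree cannot vanish on all of $\PP^n$. Combined with semi-continuity (so that the generic behavior along the induction is the best behavior, and one chooses the points successively generically), this gives the result for all $r$ and all $d \geq 1$. I would remark that this is precisely the ``independent conditions'' heuristic made rigorous, and that no input as delicate as Terracini's lemma or the méthode d'Horace is needed here — the difficulty in Theorem \ref{AH} arises only because double points impose $n+1$ conditions each, so one cannot kill them one scalar condition at a time without the geometry of tangent spaces intervening.
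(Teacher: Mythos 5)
Your argument is correct, and it is a genuinely more explicit route than the one the paper takes. The paper disposes of this statement in one line: by Lemma \ref{lem.count}, the condition $H_{R/I_X}(d)=\min\bigl\{\binom{n+d}{n},r\bigr\}$ is the condition that an explicit evaluation matrix has maximal rank, which is a Zariski-open condition on the coordinates of the points, and then the semi-continuity machinery of Appendix \ref{app.semi-cont} (Theorem \ref{specializations}, Corollary \ref{special}) does the rest. What that terse argument leaves implicit is precisely the non-emptiness of the open locus, i.e.\ the existence of at least one configuration of $r$ points of maximal Hilbert function; your induction on $r$ supplies exactly this witness, by adding one point at a time outside the base locus of $[I_X]_d$ (a proper closed subset whenever $[I_X]_d\neq 0$, since a nonzero form cannot vanish on all of $\PP^n$), so that each new point either is forced to be harmless (when $[I_X]_d=0$) or cuts the linear system down by exactly one dimension via the evaluation functional. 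Combined with Corollary \ref{special} this is a complete and self-contained proof, and it also makes transparent why the simple-point case needs none of the delicate tools (Horace, curvilinear schemes) required for double points: each simple point imposes a single scalar condition, which can be made independent one at a time. Two small remarks: your preliminary reduction to the single value $r=\min\bigl\{\binom{n+d}{n},r\bigr\}$ is unnecessary, since the induction treats all $r$ uniformly; and when invoking genericity you should note that you are choosing the points successively outside finitely many proper closed subsets, which is possible because $\CC$ is infinite, and then semi-continuity upgrades the resulting single configuration to a general one, exactly as you say.
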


\begin{proof} The statement follows from a simple observation, via Lemma \ref{lem.count}, that the condition $H_{R/I_X}(d) = \min\{{n+d \choose d}, r\}$ is an open condition.
\end{proof}

\begin{Example}
	Let $X=\{P_1,2P_2,4P_3\}\subseteq \PP^3$, then $e(R/I_X)=25$, and
	$X$ is $\text{AH}_3(d)$ if and only if
	$$
	H_{R/I_X}(d) = \min\left\{\binom{d+3}{3}, 1 + 4 + 20\right \}
	$$
	i.e., if its Hilbert function is $H_{R/I_X}=(1,4,10,20,25,25,25,\ldots)$.
\end{Example}

Other simple situations, where we can quickly prove that the property $\text{AH}_n(d)$ holds, are when $X$ is supported at a single point, i.e. $r=1$, or when $d = 1$, or $n = 1$. 

\begin{Remark}\label{r=1}
	A single double point $X=\{2P\}$ in $\PP^n$ is $\text{AH}_n(d)$ for all $n$ and $d$.
\end{Remark}

\begin{proof}After a change of coordinates we may assume that $I_P=(x_1,\ldots,x_n)$, so $I_X = I_P^{(2)}=I_P^2$.
	If $d=1$ then $[I_P^{(2)}]_1=0$, so $H_{R/I_P^{(2)}}(1) = n+1 = \min\{n+1, n+1\}$.
	If $d\geq 2$ then $[R/I_P^{(2)}]_d = \langle x_0^d, x_0^{d-1}x_1,\ldots,x_0^{d-1}x_n\rangle$. Thus,
	$$
	H_{R/I_P^{(2)}}(d) = n+1 = \min\left\{\binom{n+d}{d}, n+1\right\},
	$$
and the statement follows.
\end{proof}

\begin{Remark}\label{d=1}
	Any set $2Y$ of $r$ double points (not necessarily general) in $\PP^n$ is $\text{AH}_n(1)$.
\end{Remark}

\begin{proof}
	For any $r\geq 1$, one needs to show that
	$$
	H_{I_Y^{(2)}}(1)  =\max\{0, \binom{n+1}{n} - r(n+1)\}=0.
	$$
	Let $P\in Y$ be any point, then $[I_Y^{(2)}]_1\subseteq  [I_P^{(2)}]_1=[I_P^2]_1 = (0)$. Thus, $I_Y^{(2)}$ contains no linear forms.
\end{proof}

\begin{Proposition}\label{P1}
	Let $Y$ be a set of $r$ distinct simple points in $\PP^1$. Then $mY$ is $\text{AH}_1(d)$ for every $d,m\in\ZZ_+$.
\end{Proposition}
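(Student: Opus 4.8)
The plan is to exploit the fact that a fat point subscheme of $\PP^1$ is a hypersurface, so everything reduces to a one-line Hilbert function computation. Write $R = \CC[x_0,x_1]$ and let $\p_i = (\ell_i)$ be the defining ideal of the $i$-th point of $Y$, where $\ell_1,\dots,\ell_r$ are pairwise non-proportional linear forms (here distinctness of the points is exactly what is used). Then $\p_i^m = (\ell_i^m)$, and since $R$ is a unique factorization domain in which the irreducible forms $\ell_1,\dots,\ell_r$ are pairwise non-associate, a form is divisible by every $\ell_i^m$ if and only if it is divisible by $g := \ell_1^m\cdots\ell_r^m$. Hence
$$
I := I_{mY} = \p_1^m \cap \dots \cap \p_r^m = (g),
$$
a principal ideal generated by a homogeneous form of degree $rm$.

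Next I would read off the Hilbert function of $R/I$ from the exact sequence
$$
0 \lra R(-rm) \stackrel{\cdot g}{\lra} R \lra R/I \lra 0,
$$
which gives $H_{R/I}(d) = H_R(d) - H_R(d-rm) = (d+1) - \max\{0, d-rm+1\}$. Thus $H_{R/I}(d) = d+1$ for $d < rm$ and $H_{R/I}(d) = rm$ for $d \ge rm$; in one formula, $H_{R/I}(d) = \min\{d+1,\, rm\}$.

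Finally I would compare this with the two quantities entering the definition of $\text{AH}_1(d)$: on one hand $H_R(d) = \binom{1+d}{d} = d+1$, and on the other hand $e(R/I) = \sum_{i=1}^r \binom{1+m-1}{1} = rm$ by Proposition \ref{e(R/I)2} (equivalently, $R/I$ is one-dimensional Cohen--Macaulay with eventual value $rm$, cf. Proposition \ref{e(M)}). Therefore $H_{R/I}(d) = \min\{e(R/I), \binom{1+d}{d}\}$ for every $d \ge 1$, which is precisely the assertion that $mY$ is $\text{AH}_1(d)$. There is no real obstacle here; the only point requiring a (tiny) argument is the identification $I = (g)$ with $\deg g = rm$, for which distinctness of the points is essential.
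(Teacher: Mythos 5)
Your proof is correct and follows essentially the same route as the paper: both identify $I_{mY}$ as a principal ideal generated by a form of degree $rm$ (the paper via $I_Y=(\ell_1\cdots\ell_r)$ principal and $I_Y^{(m)}=I_Y^m$, you via the UFD argument on $\bigcap_i(\ell_i^m)$), and then read off the Hilbert function from the resulting isomorphism with $R(-rm)$.
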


\begin{proof}
	Notice that the defining ideal of any point in $\PP^1$ is just a principal prime ideal generated by a linear form. Thus, if $Y=\{P_1,\ldots,P_r\}$ then $I_Y$ is a principal ideal generated by a form of degree $r$. It follows that $I_Y^{(m)}=I_Y^m\subseteq R=\CC[x_0,x_1]$ is a principal ideal of degree $rm$, so $I_Y^{(m)}\cong R(-rm)$.  Thus, $H_{I_Y^{(m)}}(d)$ is
	$$\max\left\{0, \binom{d+1-rm}{1}\right\} = \max\left\{0, d+1-rm \right\}=\max\left\{0, \binom{d+1}{d} - rm\right\}$$
	Therefore, $mY$ is $\text{AH}_1(d)$.
\end{proof}

The following lemma allows us to restrict attention to only a finite number of values of $r$ in proving the Alexander-Hirschowitz theorem.

\begin{Lemma}\label{reduce}
	Let $X$ be a set of $r$ points in $\PP^n$, with multiplicities $m_1,\ldots,m_r$, which is $\text{AH}_n(d)$.
	\begin{enumerate}
		\item If $H_{R/I_X}(d) =\binom{d+n}{n}$, then $X'$ is also $\text{AH}_n(d)$ for any larger set  $X'\supseteq X$ consisting of $r'\geq r$ points of $X$ with multiplicities $m_1'\geq m_1, \dots, m_{r'}'\geq m_{r'}$.
		\item If $H_{R/I_X}(d) =\sum_{i=1}^r \binom{n+m_i-1}{n}=e(R/I_X)$, then $X'$ is also $\text{AH}_n(d)$ for any subset $X'\subseteq X$ consisting of $r'\leq r$ points of $X$ with multiplicities $m_1'\leq m_1, \dots, m_{r'}'\leq m_{r'}$. 
	\end{enumerate}	
\end{Lemma}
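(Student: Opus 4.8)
The plan is to reduce both statements to the inclusion of defining ideals together with the universal upper bound of Corollary \ref{cor.count2}. If $X'\supseteq X$ is obtained from $X$ by adjoining extra points and/or raising some multiplicities, then $X$ is a closed subscheme of $X'$, so $I_{X'}\subseteq I_X$; if $X'\subseteq X$ is obtained by discarding points and/or lowering multiplicities, then $I_X\subseteq I_{X'}$, so $X'$ is a closed subscheme of $X$. (Each hypothesis on $X$ is in fact equivalent to $X$ being $\text{AH}_n(d)$: by Corollary \ref{cor.count2}, $H_{R/I_X}(d)\le\min\{\binom{d+n}{n},e(R/I_X)\}$, and attaining either $\binom{d+n}{n}$ or $e(R/I_X)$ forces equality.)

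For part (1), from $I_{X'}\subseteq I_X$ I get $[I_{X'}]_d\subseteq[I_X]_d$. The hypothesis $H_{R/I_X}(d)=\binom{d+n}{n}=H_R(d)$ is equivalent to $[I_X]_d=0$, hence $[I_{X'}]_d=0$ and $H_{R/I_{X'}}(d)=\binom{d+n}{n}$. Comparing with Corollary \ref{cor.count2}, $\binom{d+n}{n}=H_{R/I_{X'}}(d)\le\min\{\binom{d+n}{n},e(R/I_{X'})\}\le\binom{d+n}{n}$, so equality holds throughout and $X'$ is $\text{AH}_n(d)$; this part is immediate.

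For part (2), since $H_{R/I_X}(d)=e(R/I_X)$ the scheme $X$ is multiplicity $d$-independent, and $X'$ is a zero-dimensional subscheme of $X$, so the conclusion that $X'$ is again multiplicity $d$-independent is exactly Lemma \ref{multind}(3). For self-containedness I would reproduce its short argument: from
$$0\lra I_{X'}/I_X\lra R/I_X\lra R/I_{X'}\lra 0,$$
together with the fact that $R/I_X$ and $R/I_{X'}$ are one-dimensional Cohen--Macaulay modules, one gets that $I_{X'}/I_X$ is a one-dimensional Cohen--Macaulay module (or zero) with $e(I_{X'}/I_X)=e(R/I_X)-e(R/I_{X'})$. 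Reading the sequence in degree $d$ and using $H_{R/I_X}(d)=e(R/I_X)$ together with $H_{I_{X'}/I_X}(d)\le e(I_{X'}/I_X)$ (Proposition \ref{e(M)}) gives $H_{R/I_{X'}}(d)\ge e(R/I_{X'})$; the reverse inequality is Proposition \ref{e(M)}, so $H_{R/I_{X'}}(d)=e(R/I_{X'})$. Finally $H_{R/I_{X'}}(d)\le\binom{d+n}{n}$ forces $e(R/I_{X'})\le\binom{d+n}{n}$, hence $H_{R/I_{X'}}(d)=\min\{\binom{d+n}{n},e(R/I_{X'})\}$, i.e.\ $X'$ is $\text{AH}_n(d)$ (Proposition \ref{e(R/I)2} identifies $e(R/I_{X'})$ with $\sum_i\binom{n+m_i'-1}{n}$, the expected value in the definition).

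There is essentially no obstacle here; the only points needing a little care are checking that $X'$ really is a closed subscheme of $X$ in case (2) (the containment $I_X\subseteq I_{X'}$) and the Cohen--Macaulay bookkeeping that makes multiplicity additive along the displayed exact sequence --- both routine. The lemma simply records that the property $\text{AH}_n(d)$ is monotone in $X$ within each of the two regimes where the Hilbert function in degree $d$ has already saturated one of the two bounds of Corollary \ref{cor.count2}.
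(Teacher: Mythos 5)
Your proposal is correct and follows essentially the same route as the paper: part (1) is the observation that $[I_X]_d=0$ forces $[I_{X'}]_d=0$ via $I_{X'}\subseteq I_X$, and part (2) is an application of Lemma \ref{multind}(3) (whose proof you simply reproduce) to the subscheme $X'\subseteq X$ using multiplicity $d$-independence of $X$. The extra bookkeeping you add (the equivalence with Corollary \ref{cor.count2} and the check $e(R/I_{X'})\le\binom{d+n}{n}$) is fine but not a different argument.
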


\begin{proof}
	(1) Since $X\subseteq X'$ then $I_{X'}\subseteq I_X$. By assumption $[I_X]_d=0$, thus also $[I_{X'}]_d=0$, which implies that $H_{R/I_{X'}}(d) =\binom{d+n}{n}$.
	
	(2) Since $X'$ is a subscheme of $X$ and $X$ is multiplicity $d$-independent, the assertion is a direct consequence of Lemma \ref{multind}.
\end{proof}

We conclude this appendix by stating a numerical characterization of when it is possible to find simple points to be added to a given scheme in order to change its Hilbert function by a prescribed value. 
%The main ingredient in the proof is that if $I$ is a saturated ideal, then its homogeneous $d$ component $I_d$ is the null space of a linear system obtained by adding $H_{R/I}(d)$ general points of $V(I)$. 

\begin{Proposition}[\protect{\cite[Lemma~3]{Ch00}}] \label{prop.hyperp}
Let $I$ be a saturated homogeneous ideal in $R=\CC[x_0,\ldots,x_n]$, and let $\ell$ be a linear form that is regular on $R/I$. TFAE:
\begin{enumerate}
\item There exists a set $Y_0$ of $u$ points in $V(\ell)$ such that
$$
H_{R/(I\cap I_{Y_0})}(t) = H_{R/I}(t) + u.
$$
\item $H_{R/I}(t) +u \leq H_{R/I}(t-1) + \binom{n+t-1}{t}.$
\end{enumerate}
\end{Proposition}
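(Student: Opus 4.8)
The plan is to reduce the equivalence to an elementary fact about linear systems on the hyperplane $V(\ell)\cong\PP^{n-1}$. Write $\bar R=R/(\ell)$, a polynomial ring in $n$ variables, which I identify with the homogeneous coordinate ring of $V(\ell)$, and let $\bar I=I\bar R$ be the image of $I$. First I would record the numerical meaning of condition (2). Since $\ell$ is regular on $R/I$, the exact sequence
$$0\lra (R/I)(-1)\stackrel{\cdot\ell}{\lra}R/I\lra R/(I,\ell)\lra 0$$
gives $H_{R/(I,\ell)}(t)=H_{R/I}(t)-H_{R/I}(t-1)$, and since $R/(I,\ell)=\bar R/\bar I$ and $H_{\bar R}(t)=\binom{n+t-1}{t}$, this yields $H_{\bar I}(t)=\binom{n+t-1}{t}-H_{R/I}(t)+H_{R/I}(t-1)$. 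Rearranging, condition (2) is exactly the inequality $u\le H_{\bar I}(t)=\dim_\CC[\bar I]_t$.

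Next I would reformulate condition (1) in the same terms. For a set $Y_0=\{P_1,\dots,P_u\}$ one has $[I\cap I_{Y_0}]_t=\ker\big(\mathrm{ev}_{Y_0}\colon[I]_t\to\CC^u\big)$, where $\mathrm{ev}_{Y_0}(f)=(f(P_1),\dots,f(P_u))$, so $H_{R/(I\cap I_{Y_0})}(t)-H_{R/I}(t)=\mathrm{rank}\,\mathrm{ev}_{Y_0}$; hence (1) asks precisely for a choice of $u$ points in $V(\ell)$ making $\mathrm{ev}_{Y_0}$ surjective. Now if $f\in[I]_t$ is divisible by $\ell$ then, using again that $\ell$ is a nonzerodivisor on $R/I$, $f=\ell g$ with $g\in[I]_{t-1}$, so $f$ vanishes identically on $V(\ell)\supseteq Y_0$. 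Therefore $\mathrm{ev}_{Y_0}$ factors through $[I]_t/\ell[I]_{t-1}\cong[\bar I]_t$ as genuine evaluation of forms on $\PP^{n-1}$, and since the restriction $[I]_t\twoheadrightarrow[\bar I]_t$ is onto, $\mathrm{ev}_{Y_0}$ is surjective if and only if the induced map $[\bar I]_t\to\CC^u$ is. Thus (1) is equivalent to: the linear system $[\bar I]_t$ on $\PP^{n-1}$ admits $u$ distinct points on which evaluation is surjective.

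To close the loop I would prove the elementary lemma that a nonzero finite-dimensional space $W$ of forms of fixed degree on a projective space admits $u$ distinct points of surjective evaluation if and only if $u\le\dim_\CC W$. The ``only if'' direction is the trivial rank bound $\mathrm{rank}(W\to\CC^u)\le\dim W$. For ``if'', I would pick points inductively: having chosen $P_1,\dots,P_k$ with $k<u$ so that $W_k:=\{f\in W: f(P_1)=\dots=f(P_k)=0\}$ has dimension $\dim W-k\ge 1$, the base locus of $W_k$ is a proper closed subset, so there is a point $P_{k+1}$ outside it, forcing $\dim W_{k+1}=\dim W_k-1$; after $u$ steps $W_u$ has dimension $\dim W-u$, so $\mathrm{ev}$ has rank $u$. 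Applying this with $W=[\bar I]_t$ shows (1) holds iff $u\le\dim_\CC[\bar I]_t$, which by the first step is exactly (2); the degenerate cases $u=0$ and $[\bar I]_t=0$ are immediate.

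I do not anticipate a real obstacle. The only step requiring genuine care is the identity $\dim_\CC[\bar I]_t=\binom{n+t-1}{t}-H_{R/I}(t)+H_{R/I}(t-1)$, which rests essentially on $\ell$ being a nonzerodivisor on $R/I$ (so that a form of $I$ divisible by $\ell$ retains its cofactor in $I$, and so that the $\ell$-multiplication sequence is exact); the rest is Pascal's identity together with the inductive point-choosing argument. I would also remark that saturatedness of $I$ is in fact not needed for this argument, although it is the natural hypothesis for the applications to ideals of (fat) points.
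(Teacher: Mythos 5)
Your argument is correct, and in fact the paper gives no proof of this statement at all: it is quoted from Chandler's paper (\cite[Lemma~3]{Ch00}), so there is no in-text proof to compare against. Your reduction is the natural (and essentially Chandler's) one: the multiplication-by-$\ell$ sequence, exact because $\ell$ is regular on $R/I$, identifies the right-hand side of (2) with $\dim_\CC[\bar I]_t$ where $\bar I = I(R/(\ell))$; the difference $H_{R/(I\cap I_{Y_0})}(t)-H_{R/I}(t)$ is the rank of the evaluation map $[I]_t\to\CC^u$, which factors through the restricted system $[\bar I]_t$ since its kernel consists of multiples of $\ell$; and the inductive base-locus argument over the infinite field $\CC$ shows a linear system $W$ of forms admits $u$ points of surjective evaluation exactly when $u\le\dim_\CC W$. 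Two small remarks: the distinctness of the points $P_1,\dots,P_u$ in your inductive construction is automatic and worth a word, since every previously chosen point lies in the base locus of $W_k$ (all of $W_k$ vanishes there), so $P_{k+1}$ necessarily differs from $P_1,\dots,P_k$; and the regularity of $\ell$ is really only needed for the dimension count $\dim_\CC[\bar I]_t=\binom{n+t-1}{t}-H_{R/I}(t)+H_{R/I}(t-1)$ --- the factorization of $\mathrm{ev}_{Y_0}$ through $[\bar I]_t$ holds without it, since the kernel of $[I]_t\twoheadrightarrow[\bar I]_t$ is $[I]_t\cap \ell R_{t-1}$, whose elements vanish on $V(\ell)$ regardless. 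Your closing remark is also right, and can be sharpened: for $I\neq R$ the existence of a linear form regular on $R/I$ already forces $\depth R/I\geq 1$, hence $I$ saturated, so that hypothesis is not merely unused but redundant.
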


%%%%%%%%%%%%%%%%%%%%%%%%%%%%%%%%%%%%%

\section{Appendix: Semi-continuity of the Hilbert function and reduction to special configurations}\label{app.semi-cont}

The starting point of the proof of Theorem \ref{AH} is the observation that to establish the $\text{AH}_n(d)$ property for a general set of double points, in non-exceptional cases, we only need to exhibit a specific collection of double points with the $\text{AH}_n(d)$ property. This is because Hilbert functions have the so-called \emph{lower semi-continuity} property. This is the content of this appendix.

We begin by defining \emph{generic} points and the \emph{specialization} of points. Throughout this appendix, we shall fix a pair of positive intgers $n$ and $r$. Recall that $R = \CC[x_0, \dots, x_n]$ is the homogeneous coordinate ring of $\PP^n$. Let $\ul{z} = \{z_{ij} ~\big|~ 1 \le i \le r, 0 \le j \le n\}$ be a collection of $r(n+1)$ indeterminates, and let $\CC(\ul{z})$ be the purely transcendental field extension of $\CC$ be adjoining the variables in $\ul{z}$. Let $S = \CC(\ul{z})[x_0, \dots, x_n]$ be the homogeneous coordinate ring of $\PP^n_{\CC(\ul{z})}$.

\begin{Set-up}\label{setup2} \quad
\begin{enumerate} 
\item By the \emph{generic set of $r$ points}, we mean the set $Z = \{Q_1, \dots, Q_r\}$, where $Q_i = [z_{i0}: \dots: z_{in}]$, for $i = 1, \dots, r$, are points with the generic coordinates in $\PP^n_{\CC(\underline{z})}$. Let $I_Z \subseteq S$ denote the defining ideal of $Z$.
\item Let $\underline{\lambda}=(\lambda_{ij}) \in \mathbb{A}_{\CC}^{r(n+1)}$ be such that for each $i = 1, \dots, r$, $\lambda_{ij} \neq 0$ for some $j$. Define the set $Z(\underline{\lambda}) = \{Q_1(\underline{\lambda}), \dots, Q_r(\underline{\lambda})\}$ of points in $\PP^n$, with $Q_i(\underline{\lambda}) = [\lambda_{i0}: \dots: \lambda_{in}]$.
Let $I_{\underline{\lambda}} \subseteq R$ be the defining ideal of $Z(\underline{\lambda})$.
\end{enumerate}
We call $Z(\ul{\lambda})$ the {\em specialization of the generic points at $\ul{\lambda}$}, and call $I_{\ul{\lambda}}$ the {\em specialization of the ideal $I_Z$ at $\ul{\lambda}$}.
\end{Set-up}

To define precisely the notions of general points and very general points one often employs Chow varieties. However, one can also use dense Zariski-open subsets of $A^{r(n+1)}$ (see, e.g., \cite[Lemma~2.3]{FMX}) for these purposes, and this is the point of view we take.

\begin{Definition}\label{DefGeneral}
One says that a property $\mathcal P$
\begin{itemize}
\item {\em holds for a general set of $r$ points} of $\PP_{\CC}^n$ if there is a dense Zariski-open subset $U\subseteq \mathbb A_{\CC}^{r(n+1)}$ such that $\mathcal P$ holds for $Z(\ul{\lambda})$ for all $\ul{\lambda}\in U$;
\item  {\em holds for a very general set of $r$ points} of $\PP_{\CC}^n$ if $\mathcal P$ holds for $Z(\ul{\lambda})$ for all $\ul{\lambda}\in U$ where $U$ is an an intersection of countably many dense Zariski-open subsets of $A_{\CC}^{r(n+1)}$.
\end{itemize}
\end{Definition}

The lower semi-continuity of Hilbert functions that we shall use is stated in the following theorem.

\begin{Theorem}[Lower-semi-continuity of the Hilbert function]
	\label{specializations}
Assume Set-up~\ref{setup2}. Then, for any $m, d \in \NN$, we have
$$H_{I_Z^{(m)}}(d) \leq H_{I_Y^{(m)}}(d)$$
for any set $Y$ of $r$ points in $\PP^n$. Moreover, for fixed $m\geq 1$ and $d\geq 1$, the equality $H_{I_Z^{(m)}}(d) = H_{I_Y^{(m)}}(d)$ holds for a general set of points $Y \subseteq \PP^n$.
%\begin{enumerate}
%\item for fixed $m\geq 1$ and $d\geq 1$, the equality $H_{I_Z^{(m)}}(d) = H_{I_Y^{(m)}}(d)$ holds for a general set of points $Y \subseteq \PP^n$; and
%\item for fixed $m\geq 1$, the equality of Hilbert functions $H_{I_Z^{(m)}} = H_{I_Y^{(m)}}$ holds for a very general set of points $Y \subseteq \PP^n$.
%\end{enumerate}
\end{Theorem}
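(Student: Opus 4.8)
The plan is to realize $H_{I_?^{(m)}}(d)$ as the drop of rank in a single universal linear system and then invoke the standard semicontinuity of matrix rank. First I would recall, via Lemma~\ref{lem.count} and its evident extension to a finite set of points, that for a set $W = \{R_1, \dots, R_r\}$ of points in $\PP^n$, the vector space $[I_W^{(m)}]_d$ is exactly the solution space of a homogeneous linear system whose $\binom{n+m-1}{n}$ equations per point are obtained by evaluating the divided-power derivatives of order $\le m-1$ of the generic degree-$d$ form $F = \sum_{M \in T_d} c_M M$ at the coordinates of the $R_i$. Assembling these equations into a single matrix $A(W)$ with $r\binom{n+m-1}{n}$ rows and $\binom{n+d}{d}$ columns, we get $H_{I_W^{(m)}}(d) = \binom{n+d}{d} - \dim_{\CC}[I_W^{(m)}]_d = \operatorname{rank} A(W)$, since $H_{R/I_W^{(m)}}(d) = \operatorname{rank} A(W)$ by the same argument as in Lemma~\ref{lem.count}.

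Next I would observe that the entries of $A(W)$ are \emph{polynomial functions} of the (a choice of affine representatives of the) coordinates of the points $R_1, \dots, R_r$; indeed each entry is a product of a monomial evaluated at a point coordinate vector with a binomial/multinomial coefficient, so the assignment $\ul\lambda \mapsto A(Z(\ul\lambda))$ is a morphism from (a suitable chart of) $\AA_\CC^{r(n+1)}$ to the affine space of matrices. Over the generic point, i.e. working with the matrix $A(Z)$ whose entries lie in $\CC(\ul z)$, the rank is by definition $H_{I_Z^{(m)}}(d)$; call this number $\rho$. Then there is some $\rho \times \rho$ minor of $A(Z)$ that is a nonzero element of $\CC(\ul z)$; clearing denominators, it is a nonzero polynomial in $\ul z$, hence defines a nonempty dense Zariski-open subset $U \subseteq \AA_\CC^{r(n+1)}$ (intersected with the open locus where all the $\lambda_{ij}$ are admissible, i.e. each point is well-defined). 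For $\ul\lambda \in U$ this minor of $A(Z(\ul\lambda))$ is nonzero, so $\operatorname{rank}A(Z(\ul\lambda)) \ge \rho$; conversely \emph{every} $(\rho+1)\times(\rho+1)$ minor of $A(Z)$ vanishes identically in $\CC(\ul z)$, hence so does the corresponding polynomial, hence it vanishes at every $\ul\lambda$, giving $\operatorname{rank}A(Z(\ul\lambda)) \le \rho$. Therefore $H_{I_{Z(\ul\lambda)}^{(m)}}(d) = \rho = H_{I_Z^{(m)}}(d)$ for all $\ul\lambda \in U$, which is precisely the ``moreover'' clause, and for arbitrary $\ul\lambda$ (arbitrary set $Y$ of $r$ points, after choosing coordinates) $\operatorname{rank}A(Z(\ul\lambda)) \le \rho$ gives the inequality $H_{I_Z^{(m)}}(d) \le H_{I_Y^{(m)}}(d)$.

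One point that needs a little care, and which I expect to be the main (mild) obstacle, is the passage from ``$Y$ is an abstract set of $r$ points in $\PP^n_\CC$'' to ``$Y = Z(\ul\lambda)$ for some $\ul\lambda$'': this just requires choosing, for each point of $Y$, a nonzero coordinate vector, and noting the rank of $A(Y)$ does not depend on this choice (rescaling a coordinate vector rescales a block of rows, and also the divided-power derivative blocks transform by an invertible scalar, since all $m-1 \ge$ derivatives of a fixed order get multiplied by the same power of the scalar). A second subtlety is that the definition of $I^{(m)}$ via localization must be matched with the Zariski--Nagata description (Theorem~\ref{ZariskiNagata}) used in Lemma~\ref{lem.count}; but this identification is exactly the content of that theorem, valid since $I_W$ is radical, so no new work is needed. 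Modulo these bookkeeping remarks, the proof is the one-line principle that the rank of a matrix with polynomial entries is lower-semicontinuous and attains its maximum, equal to the generic rank, on a dense open set.
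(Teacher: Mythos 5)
Your proposal is correct and takes essentially the same route as the paper: both encode $[I_W^{(m)}]_d$ as the kernel of the matrix of divided-power derivative conditions at the points (the paper's $\mathbb{B}_{m,d}$, your $A(W)$), whose entries are polynomial in the point coordinates, and obtain the inequality for every $Y$ and the equality for general $Y$ from semicontinuity of its rank --- the paper phrases this as closedness of the loci $W_s$ together with specializing a $\CC(\ul{z})$-basis of $[I_Z^{(m)}]_d$, while you phrase it directly through vanishing and non-vanishing of minors of the generic matrix. The only needed repair is bookkeeping: with the paper's conventions ${\rm rank}\,A(W)=H_{R/I_W^{(m)}}(d)=\binom{n+d}{d}-H_{I_W^{(m)}}(d)$, so your $\rho$ is the generic rank rather than $H_{I_Z^{(m)}}(d)$; after this relabeling your two conclusions read exactly as in the statement.
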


\begin{proof} Note that every set $Y$ of $r$ points in $\PP^n$ can be viewed as a specialization $Z(\ul{\lambda})$ of the generic set of $r$ points.
The proof is similar to the proof of \cite[Thm~2.4]{FMX}.
For every $s\geq 1$, set
$$
W_s:=\{\ul{\lambda} \in \mathbb{A}_{\CC}^{r(n+1)} \,\mid\, H_{I_{\ul{\lambda}}^{(m)}}(d) \geq s\}.
$$
We claim that $W_s$ is a Zariski-closed subset of $\mathbb{A}_{\CC}^{r(n+1)}$ for any $s\geq 1$.

To see it, let $f=\sum_{|\alpha|=d}C_{\alpha}\ul{x}^{\alpha}\in R[C_{\alpha}]$ be a generic homogeneous polynomial of degree $d$, where $\ul{x}^{\alpha}$ are the monomials of degree $d$ in $R$. Let $\partial_{\ul{\beta}}\ul{x}^{\alpha}$ denote the partial derivative of $\ul{x}^{\alpha}$ with respect to $\ul{\beta}$.

Now, let $\mathbb D_{m,d}$ be the matrix with columns indexed by all monomials in $R_d$, rows indexed by all partial derivatives $\ul{\beta}$ with $|\ul{\beta}|\leq m-1$, and whose rows are
$$\left[\partial_{\ul{\beta}}x_{0}^d \quad\ldots\quad
\partial_{\ul{\beta}}\ul{z_i}^{\ul{\alpha}}\quad\ldots\quad \partial_{\ul{\beta}} x_{n}^d \right].$$

Let $[\mathbb B_{m,d}]_{\ul{\lambda}}$ be the $r$ by $1$ block matrix
$$
\mathbb B_{m,d} = \left[ \begin{array}{c}
\mathbb D_{m,d}(P_1)\\
\mathbb D_{m,d}(P_2)\\
\vdots \\
\mathbb D_{m,d}(P_k)\\
\end{array}
\right]
$$
where $\mathbb D_{m,d}(P_1)$ is the specialization of the matrix $\mathbb D_{m,d}$ at the point $P_i$, i.e.  we replace $x_0,\ldots,x_n$ by $\lambda_{i,0},\ldots,\lambda_{i,n}$, respectively.

Then the forms $f=\sum_{|\alpha|=d}C_{\alpha}\ul{x}^{\alpha}$ of degree $d$ in $I_{\ul{\lambda}}^{(m)}$ are in a bijective correspondence with the non-trivial solutions to the
system of equations (in the variables $C_{\alpha}$)
$$
[\mathbb B_{m,d}]_{\ul{\lambda}}\cdot \left[C_{(d,\ldots, 0)}\; \ldots \;C_{\ul{\alpha}}\; \ldots \; C_{(0,\ldots, d)}\right]^T = \ul{0}.
$$
It follows that $\ul{\lambda}\in W_s$ if and only if the null-space of this linear system has dimension at least $s$, which is holds if and only if the number $r\binom{m+n}{m-1}$ of rows of $[\mathbb B_{m,d}]_{\ul{\lambda}}$ is less than $\binom{d+n}{n}-(s-1)$ or $r\binom{m+n}{m-1} \geq \binom{d+n}{n}-(s-1)$ and ${\rm rk}[\mathbb B_{m,d}]_{\ul{\lambda}}<\binom{d+n}{n}-(s-1)$. In either case we have a closed condition in $\mathbb A^{r(n+1)}$. This proves the claim.

To prove the inequality in the statement we prove that when one takes $s_0:=H_{I_Z^{(m)}}(d)$, then $W_{s_0}$ also contains a dense Zariski-open subset, thus showing that $W_{s_0}$ is the entire space.

Indeed, let $f_1,\ldots,f_{s_0}$ be linearly independent forms of degree $d$ in $I_Z^{(m)}$. We may assume that each $f_i \in \CC(\ul{z})[x_0,\ldots,x_n]$. Let $M$ be the matrix whose $i$-th row consists of the coefficients of each monomial $\ul{x}^{\alpha}$ in $f_i$. By assumption $M$ has maximal rank, i.e. $s_0$, so at least one of the minors of size $s_0$ of $M$ does not vanish. It follows that there exists a dense Zariski-open subset $\widetilde{U_{t}}$ of specializations $\ul{z}\longmapsto \ul{\lambda}$ ensuring that the specialization does not make this minor vanish, thus for any $\ul{\lambda} \in \widetilde{U}$ we have that $(f_1)_{\ul{z}\mapsto \ul{\lambda}}, (f_2)_{\ul{z}\mapsto \ul{\lambda}}, \ldots, (f_{s_0})_{\ul{z}\mapsto \ul{\lambda}}$ are $s_0$ linearly independently forms of degree $d$ in $I_{\ul{\lambda}}^{(m)}$. This concludes the proof of the first part.

The equality now follows from this last paragraph, as it is shown in there that for any $\ul{\lambda} \in \widetilde{U_d}$ one has that $s_0:=H_{I_Z^{(m)}}(d)  = H_{I_{\ul{\lambda}}^{(m)}}(d)$.

\end{proof}

We obtain the following immediate consequences of Theorem \ref{specializations}.

\begin{Corollary}\label{special}
Fix positive integers $n$, $r$, $d$ and $m$. TFAE:
\begin{enumerate}
\item There exists a set $Y$ of $r$ points in $\PP^n$ such that $mY$ is $\text{AH}_n(d)$.
\item For any set $Y$ of $r$ general points in $\PP^n$, $mY$ is $\text{AH}_n(d)$.
\end{enumerate}
\end{Corollary}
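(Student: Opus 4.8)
The plan is to reduce the statement entirely to the lower semi-continuity result, Theorem \ref{specializations}, together with the universal upper bound of Corollary \ref{cor.count2}. First I would record the translation into Hilbert-function language: for a reduced set $Y$ of $r$ points with defining ideals $\p_1,\dots,\p_r$, the fat-point scheme $mY$ has defining ideal $I_{mY}=\p_1^m\cap\dots\cap\p_r^m=I_Y^{(m)}$, so that $mY$ is $\text{AH}_n(d)$ exactly when $H_{I_Y^{(m)}}(d)$ attains the value $\mu:=\max\left\{0,\binom{d+n}{n}-r\binom{n+m-1}{n}\right\}$; by Corollary \ref{cor.count2} this $\mu$ is the \emph{smallest} value $H_{I_X}(d)$ can take among all sets $X$ of $r$ points of $\PP^n$ with common multiplicity $m$.

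The implication $(2)\Rightarrow(1)$ is immediate, since a general set of $r$ points exists: the relevant dense Zariski-open subset of $\AA^{r(n+1)}_\CC$ is in particular nonempty.

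For $(1)\Rightarrow(2)$, I would argue as follows. Let $Z$ be the generic set of $r$ points of Set-up~\ref{setup2}, and suppose $Y_0$ is a set of $r$ points in $\PP^n$ with $mY_0$ being $\text{AH}_n(d)$, i.e. $H_{I_{Y_0}^{(m)}}(d)=\mu$. Applying the inequality of Theorem \ref{specializations} with $Y=Y_0$ gives $H_{I_Z^{(m)}}(d)\le H_{I_{Y_0}^{(m)}}(d)=\mu$. By the final assertion of Theorem \ref{specializations}, there is a dense Zariski-open subset $U\subseteq\AA^{r(n+1)}_\CC$ such that $H_{I_Y^{(m)}}(d)=H_{I_Z^{(m)}}(d)\le\mu$ for every $Y=Z(\ul\lambda)$ with $\ul\lambda\in U$. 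On the other hand, Corollary \ref{cor.count2} forces $H_{I_Y^{(m)}}(d)\ge\mu$ for every set $Y$ of $r$ points. Hence $H_{I_Y^{(m)}}(d)=\mu$ for all $\ul\lambda\in U$, which says precisely that $mY$ is $\text{AH}_n(d)$ for a general set $Y$ of $r$ points.

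I do not expect a genuine obstacle: all the content sits in Theorem \ref{specializations}, and the argument is a short sandwich between that semi-continuity inequality and the trivial lower bound of Corollary \ref{cor.count2}. The only points needing a line of care are the identification $I_{mY}=I_Y^{(m)}$ (so that the symbolic-power form of Theorem \ref{specializations} is the relevant one) and the bookkeeping that the quantity $\mu$ appearing in the definition of $\text{AH}_n(d)$ is exactly the universal lower bound produced by Corollary \ref{cor.count2}; both are routine given the results already established.
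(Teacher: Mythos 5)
Your proposal is correct and follows exactly the route the paper intends: the paper states Corollary \ref{special} as an immediate consequence of Theorem \ref{specializations}, and your argument is precisely that deduction spelled out, sandwiching $H_{I_Y^{(m)}}(d)$ between the semi-continuity inequality for the generic set $Z$ and the universal lower bound of Corollary \ref{cor.count2}. No gaps; the only additions (the identification $I_{mY}=I_Y^{(m)}$ and the bookkeeping of the expected value $\mu$) are the same routine observations the paper leaves implicit.
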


\begin{Corollary}\label{rpts}
Fix $n,d\in \ZZ_+$. Then every set $Y$ of $r$ general double points in $\PP^n$ is $\text{AH}_n(d)$ if and only if 
there exist sets of $r$ double points in $\PP^n$ which are $\text{AH}_n(d)$  for $\left\lfloor \frac{1}{n+1}\binom{d+n}{n}\right\rfloor\leq r \leq \left\lceil \frac{1}{n+1}\binom{d+n}{n}\right\rceil$.

 Similarly, if a set of $r_0$ general points is not $\text{AH}_n(d)$, then any set of $r\neq r_0$ general double points in $\PP^n$ is $\text{AH}_n(d)$ if and only if there exist sets of $r_0-1$ and $r_0+1$ double points in $\PP^n$ that are $\text{AH}_n(d)$.
 \end{Corollary}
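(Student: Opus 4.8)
The plan is to deduce everything from two results already available: the lower semi-continuity packaged in Corollary \ref{special} (a single $\text{AH}_n(d)$ configuration of $r$ double points forces \emph{every} general set of $r$ double points to be $\text{AH}_n(d)$), together with the two monotonicity statements of Lemma \ref{reduce}. Throughout I will abbreviate $N=\binom{n+d}{n}$, $r^*=\lfloor N/(n+1)\rfloor$, $r^{**}=\lceil N/(n+1)\rceil$, and use that a set $2Y$ of $r$ double points has $e(R/I_{2Y})=r(n+1)$ (Proposition \ref{e(R/I)2}), so that $2Y$ is $\text{AH}_n(d)$ precisely when $H_{R/I_{2Y}}(d)=\min\{N,r(n+1)\}$.

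For the first assertion, the direction $(\Rightarrow)$ is immediate, since a general set is a set. For $(\Leftarrow)$ I would first apply Corollary \ref{special} to upgrade the hypothesis to: a general set of $r$ double points is $\text{AH}_n(d)$ whenever $r^*\le r\le r^{**}$. Then I would pivot at the two ends. For a general set $Y^*$ of $r^*$ double points, the inequality $r^*(n+1)\le N$ forces $H_{R/I_{2Y^*}}(d)=r^*(n+1)=e(R/I_{2Y^*})$, so Lemma \ref{reduce}(2) lets me delete double points while preserving $\text{AH}_n(d)$; feeding each resulting configuration back into Corollary \ref{special} settles all $r\le r^*$. Dually, a general set $Y^{**}$ of $r^{**}$ double points has $r^{**}(n+1)\ge N$, hence $H_{R/I_{2Y^{**}}}(d)=N$, so Lemma \ref{reduce}(1) lets me add double points while preserving $\text{AH}_n(d)$, which after another appeal to Corollary \ref{special} settles all $r\ge r^{**}$. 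Since the ranges $r\le r^*$, $r^*\le r\le r^{**}$ and $r\ge r^{**}$ exhaust $\ZZ_+$, this proves the first assertion.

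For the second assertion I would run the identical scheme with $r^*$ and $r^{**}$ replaced by $r_0-1$ and $r_0+1$. Again $(\Rightarrow)$ is trivial ($r_0-1$ and $r_0+1$ are both $\ne r_0$). For $(\Leftarrow)$, Corollary \ref{special} first gives that general sets of $r_0-1$ and of $r_0+1$ double points are $\text{AH}_n(d)$. The one extra ingredient needed is $r^*\le r_0\le r^{**}$, equivalently $(r_0-1)(n+1)< N<(r_0+1)(n+1)$: if $r_0\le r^*-1$ then the general set of $r_0+1\le r^*$ double points is $\text{AH}_n(d)$ with $H=e$, and Lemma \ref{reduce}(2) would then make a general set of $r_0$ double points $\text{AH}_n(d)$, against the hypothesis; the case $r_0\ge r^{**}+1$ is excluded symmetrically by applying Lemma \ref{reduce}(1) to the general set of $r_0-1$ double points, whose Hilbert function in degree $d$ is $N$. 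Granting $r^*\le r_0\le r^{**}$, the two end-pivots are legitimate: Lemma \ref{reduce}(2) plus Corollary \ref{special} handle all $r\le r_0-1$, and Lemma \ref{reduce}(1) plus Corollary \ref{special} handle all $r\ge r_0+1$, which is exactly all $r\ne r_0$.

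I expect the argument to be essentially bookkeeping, so there is no deep obstacle; the one step with any content — and the one I would flag as the potential pitfall — is locating $r_0$ inside $[r^*,r^{**}]$ in the second assertion. That is precisely what guarantees the two pivots $r_0-1$ and $r_0+1$ sit in complementary Hilbert-function regimes ($H=e$ on the low side, $H=N$ on the high side); without it, neither monotonicity direction in Lemma \ref{reduce} would reach the values of $r$ adjacent to $r_0$, and the reduction would fail.
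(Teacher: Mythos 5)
Your proposal is correct and follows exactly the route the paper intends: its proof of Corollary \ref{rpts} is the one-line observation that the statements follow from Corollary \ref{special} and Lemma \ref{reduce}, and your argument simply fills in that bookkeeping (pivoting at $\lfloor N/(n+1)\rfloor$ and $\lceil N/(n+1)\rceil$ via Lemma \ref{reduce}(2) and (1), and using the non-$\text{AH}_n(d)$ hypothesis to locate $r_0$ between them). No gaps.
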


\begin{proof} The desired statements are direct consequences of Corollary \ref{special} and Lemma \ref{reduce}.
\end{proof}

In the last part of this section we prove a semi-continuity results in the more general setting of flat families of projective schemes.

\begin{Definition}
	Let $f: X \rightarrow Y$ be a morphism of schemes, and let $\F$ be a sheaf of $\OO_X$-modules. We say that $\F$ is \emph{$f$-flat at $x \in X$} if the stalk $\F_x$, seen as an $\OO_{Y,f(x)}$-module, is flat. We say that $\F$ is \emph{$f$-flat} if it is $f$-flat at every point in $X$.
\end{Definition}

\begin{Definition}
	A \emph{family of (closed) projective schemes} $f: X \rightarrow Y$ is a morphism $f$ of (locally) Noetherian schemes which factors through a closed embedding $X \subseteq \PP^r \times Y = \PP$, for some $r$. The family is \emph{flat} if $\OO_X$ if $f$-flat.
\end{Definition}

Let $\pp$ be a point in $Y$. Let $\CC(\pp)$ be the residue field of the local ring $\OO_{Y,\pp}$. Let $X_\pp = X \times_Y \Spec(\OO_{Y,\pp})$ and let $\PP_\pp = \PP \times_Y \Spec(\OO_{Y,\pp})$. For example, if $Y = \Spec(A)$ and $X = \Proj(R/I)$, where $R = A[x_0, \dots, x_r]$ and $I \subset R$ is a homogeneous ideal, then $X_\pp = \Proj((R/I) \otimes_A \CC(\pp))$ and $\PP_\pp = \Proj(R \otimes_A \CC(\pp))$. Note that, in general, the defining ideal of $X_\pp$ in $\PP_\pp$ may not be the same as $I \otimes_A \CC(\pp)$; rather, it is the image of the canonical map $\left( I\otimes_A \CC(\pp) \rightarrow R \otimes_A \CC(\pp)\right)$.

The following result is well-known; see, for example, \cite[Theorem III.12.8]{Hartshorne}.

\begin{Theorem}
	\label{thm.semicontinuityCoh}
	Let $f: X \rightarrow Y$ be a family of projective schemes and let $\F$ be a coherent sheaf over $X$ which is also $f$-flat. Then, for each $i \ge 0$, the function $Y \rightarrow \ZZ$ defined by
	$$\pp \mapsto \dim_{\CC(\pp)}(H^i(X_\pp, \F_\pp))$$
	is upper semicontinuous on $Y$.
\end{Theorem}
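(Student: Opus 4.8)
The plan is to reduce the assertion to an elementary fact about complexes of finitely generated free modules over a Noetherian ring, by invoking the standard \emph{cohomology and base change} machinery (see Hartshorne, III.12, or Mumford's \emph{Abelian Varieties}). Since upper semicontinuity of an integer-valued function on $Y$ is local on $Y$ --- the locus $\{\pp : h(\pp) \ge c\}$ is closed if and only if it is closed on each member of an open cover --- and since a family of projective schemes is, by definition, covered by affine opens of $Y$ over which $X$ embeds in some $\PP^r \times Y$, we may assume $Y = \Spec A$ with $A$ Noetherian, $X \subseteq \PP^r_A$ closed, and $\F$ a coherent $\OO_X$-module flat over $A$ (flatness being local). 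For $\pp \in Y$, the fiber $X_\pp \subseteq \PP^r_{\CC(\pp)}$ is the base change of $X$ along $A \to \CC(\pp)$ and $\F_\pp$ the corresponding pullback, and we must show that $\pp \mapsto \dim_{\CC(\pp)} H^i(X_\pp, \F_\pp)$ is upper semicontinuous.

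The technical core is the existence of a bounded complex $L^\bullet : 0 \lra L^0 \lra L^1 \lra \cdots \lra L^N \lra 0$ of finitely generated free $A$-modules, together with isomorphisms $H^i(X \times_A \Spec B,\ \F \otimes_A B) \cong H^i(L^\bullet \otimes_A B)$ natural in the $A$-algebra $B$. One builds $L^\bullet$ as follows: take the \v{C}ech complex $C^\bullet$ of $\F$ for the standard affine cover of $\PP^r_A$; flatness of $\F$ over $A$ makes each $C^i$ a flat $A$-module and makes the formation of $C^\bullet$, together with its cohomology, commute with arbitrary base change $-\otimes_A B$; then, using Noetherianity of $A$ and the finite generation of $H^i(X, \F)$, replace $C^\bullet$ by a quasi-isomorphic bounded complex $L^\bullet$ of finitely generated frees and check that this quasi-isomorphism is preserved by $-\otimes_A B$ since every module involved is flat. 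Taking $B = \CC(\pp)$ gives $\dim_{\CC(\pp)} H^i(X_\pp, \F_\pp) = \dim_{\CC(\pp)} H^i(L^\bullet \otimes_A \CC(\pp))$.

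With $L^\bullet$ in hand the rest is linear algebra. Write $d^i : L^i \to L^{i+1}$ for the differentials and $n_i = \rk_A L^i$. For any field quotient $A \to \CC(\pp)$ one has $\dim_{\CC(\pp)} H^i(L^\bullet \otimes_A \CC(\pp)) = n_i - \rk(d^i \otimes \CC(\pp)) - \rk(d^{i-1} \otimes \CC(\pp))$. Each $d^j$ is represented by a matrix with entries in $A$, and for every $k$ the set $\{\pp : \rk(d^j \otimes \CC(\pp)) \ge k\}$ is the union of the principal opens $D(m)$ attached to the $k \times k$ minors $m$ of that matrix, hence open; so $\pp \mapsto \rk(d^j \otimes \CC(\pp))$ is lower semicontinuous and $\pp \mapsto -\rk(d^j \otimes \CC(\pp))$ is upper semicontinuous. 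The identity above then exhibits $\pp \mapsto \dim_{\CC(\pp)} H^i(X_\pp, \F_\pp)$ as the sum of the constant $n_i$ with two upper semicontinuous functions, and a finite sum of upper semicontinuous functions is upper semicontinuous, which is the claim.

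The one genuinely substantial point is the construction of the finite free complex $L^\bullet$ and the verification of its universal base-change property: the descending induction producing $L^\bullet$ from the \v{C}ech complex, and the flatness bookkeeping ensuring that $-\otimes_A B$ keeps computing the right cohomology. This is exactly Hartshorne III.12.2--12.3; granted that input, the semicontinuity statement follows from the minors argument above with no further work.
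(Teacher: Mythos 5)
Your argument is correct, but there is nothing in the paper to compare it against: the paper does not prove this statement at all, it simply records it as well known and cites Hartshorne, Theorem III.12.8. What you have written is the standard proof of that cited theorem. The reduction to an affine Noetherian base $Y=\Spec A$ is legitimate since upper semicontinuity is local on $Y$ (and here the embedding $X\subseteq \PP^r\times Y$ is even global by the paper's definition of a family of projective schemes); the identification of $\F_\pp$ on $X_\pp$ with the base change along $A\to\CC(\pp)$ is as in the paper's own proof of Theorem \ref{thm.semicontinuityHF}. The genuinely substantial ingredient is, as you say, the existence of a bounded complex $L^\bullet$ of finitely generated free $A$-modules computing $H^i(X\times_A B,\F\otimes_A B)$ functorially in every $A$-algebra $B$; your construction via the \v{C}ech complex (whose terms are $A$-flat because $\F$ is $f$-flat and sections over affines commute with affine base change) and the replacement by a finite free complex, with the quasi-isomorphism preserved under $-\otimes_A B$ by flatness of all terms, is exactly Hartshorne III.12.2--12.3 (equivalently Mumford's lemma), and citing it is no weaker than what the paper does. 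The concluding linear algebra is also right: $\dim_{\CC(\pp)}H^i(L^\bullet\otimes_A\CC(\pp)) = n_i - \rk(d^i\otimes\CC(\pp)) - \rk(d^{i-1}\otimes\CC(\pp))$, the rank of a matrix over $A$ is lower semicontinuous in $\pp$ because the locus where some $k\times k$ minor is nonzero is open, and a finite sum of upper semicontinuous functions is upper semicontinuous. So your proposal correctly supplies the proof the paper chose to omit.
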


\begin{Theorem}
	\label{thm.semicontinuityHF}
	Let $f: X \rightarrow Y$ be a flat family of projective schemes. Then, for any degree $d \ge 0$, the function $Y \rightarrow \ZZ$ defined by
	$$\pp \mapsto h_{\PP_\pp}(X_\pp, d)$$
	is lower semicontinuous on $Y$.
\end{Theorem}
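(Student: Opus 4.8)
The plan is to present $h_{\PP_\pp}(X_\pp,d)$ as the difference of a constant and an upper‑semicontinuous function on $Y$, and to produce the upper‑semicontinuous piece by applying Theorem~\ref{thm.semicontinuityCoh} not to the given family $f\colon X\to Y$ but to the ambient trivial (hence flat) family $\PP:=\PP^r\times Y\to Y$, equipped with the twisted ideal sheaf of $X$ as its coherent flat sheaf.

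First I would set up the ideal sheaf. Let $\mathcal{I}_X\subseteq\OO_\PP$ be the ideal sheaf of the closed subscheme $X\subseteq\PP$, sitting in the exact sequence
$$0\lra\mathcal{I}_X\lra\OO_\PP\lra\OO_X\lra0.$$
Since the projection $\PP=\PP^r\times Y\to Y$ is flat, $\OO_\PP$ is $f$-flat, and $\OO_X$ is $f$-flat by hypothesis. Hence $\mathcal{I}_X$, being the kernel of a map of $f$-flat sheaves whose cokernel is $f$-flat, is itself $f$-flat: locally over $\OO_{Y,f(x)}$ one has, for every module $M$, an exact piece $\mathrm{Tor}_2(\OO_X,M)\to\mathrm{Tor}_1(\mathcal{I}_X,M)\to\mathrm{Tor}_1(\OO_\PP,M)$ of the long exact $\mathrm{Tor}$ sequence, with both ends zero. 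Consequently $\mathcal{F}:=\mathcal{I}_X(d)$ is a coherent, $f$-flat sheaf on $\PP$. Moreover, flatness of $\mathcal{I}_X$ guarantees that restricting the displayed sequence to the fiber over any $\pp\in Y$ keeps it exact, so $(\mathcal{I}_X)_\pp$ is precisely the ideal sheaf of $X_\pp$ in $\PP_\pp\cong\PP^r_{\CC(\pp)}$; twisting, $\mathcal{F}_\pp=\mathcal{I}_{X_\pp}(d)$.

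Next I would invoke Theorem~\ref{thm.semicontinuityCoh} with $i=0$, applied to the flat family $\PP\to Y$ (a family of projective schemes via the identity embedding $\PP\subseteq\PP$) and the sheaf $\mathcal{F}$: the function $\pp\mapsto\dim_{\CC(\pp)}H^0\big(\PP_\pp,\mathcal{I}_{X_\pp}(d)\big)$ is upper semicontinuous on $Y$. To conclude, observe that by definition
$$h_{\PP_\pp}(X_\pp,d)=\dim_{\CC(\pp)}H^0\big(\PP_\pp,\OO_{\PP_\pp}(d)\big)-\dim_{\CC(\pp)}H^0\big(\PP_\pp,\mathcal{I}_{X_\pp}(d)\big),$$
the first term being the dimension of the degree‑$d$ component of the homogeneous coordinate ring of $\PP_\pp$ and the second the space of degree‑$d$ forms vanishing on $X_\pp$, i.e.\ the kernel of the restriction $H^0(\OO_{\PP_\pp}(d))\to H^0(\OO_{X_\pp}(d))$. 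Since $\PP_\pp\cong\PP^r_{\CC(\pp)}$, the first term equals $\binom{r+d}{d}$, independent of $\pp$. Thus $h_{\PP_\pp}(X_\pp,d)$ is a constant minus an upper‑semicontinuous function, hence lower semicontinuous, which is the assertion.

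The one point requiring care is the identification $(\mathcal{I}_X)_\pp=\mathcal{I}_{X_\pp}$ used above: in general the defining ideal of the fiber $X_\pp$ is not $I\otimes_A\CC(\pp)$ (as remarked just before Theorem~\ref{thm.semicontinuityCoh}), and it is exactly the $f$-flatness of $\mathcal{I}_X$ that forces the base‑changed sequence $0\to(\mathcal{I}_X)_\pp\to\OO_{\PP_\pp}\to\OO_{X_\pp}\to0$ to remain exact and thereby pins down $(\mathcal{I}_X)_\pp$. Apart from being careful about which family is fed into Theorem~\ref{thm.semicontinuityCoh}, I do not expect a genuine obstacle; the rest is formal bookkeeping with the long exact sequence and additivity of dimensions of vector spaces.
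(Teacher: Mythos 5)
Your proposal is correct and takes essentially the same route as the paper: both use the ideal-sheaf sequence $0\to\II\to\OO_\PP\to\OO_X\to 0$, deduce $f$-flatness of $\II$ from that of $\OO_\PP$ and $\OO_X$, apply Theorem~\ref{thm.semicontinuityCoh} (with $i=0$, for the ambient family $\PP\to Y$) to get upper semicontinuity of $\pp\mapsto h^0\bigl(\II_{\pp}(d)\bigr)$, and subtract from the constant $h^0\bigl(\OO_{\PP_\pp}(d)\bigr)$. One tiny slip: exactness of the fibered sequence, which identifies $(\II)_\pp$ with the ideal sheaf of $X_\pp$, comes from $\mathrm{Tor}_1(\OO_X,\CC(\pp))=0$, i.e.\ from the $f$-flatness of $\OO_X$ rather than of $\II$ itself --- but since that is exactly the flatness your Tor argument already invoked, nothing essential is missing.
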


\begin{proof} Let $\II$ be its ideal sheaf of the embedding $X \subseteq \PP$. Let $\pp \in Y$ be any point and let $A = \OO_{Y,\pp}$.
	We have a short exact sequence
	$0 \rightarrow \II \rightarrow \OO_\PP \rightarrow \OO_X \rightarrow 0.$
By tensoring with $\CC(\pp)$, we obtain the following short exact sequence
$$0 \rightarrow \II \otimes_A \CC(\pp) \rightarrow \OO_{\PP_\pp} \rightarrow \OO_{X_\pp} \rightarrow 0.$$
Particularly, this shows that $\II \otimes_A \CC(\pp)$ is the ideal sheaf of the embedding $X_\pp \subseteq \PP_\pp$. Set $\II_\pp = \II \otimes_A \CC(\pp)$. We then have
$$h_{\PP_\pp}(X_\pp, n) = h^0(\OO_{\PP_\pp}(n)) - h^0(\II_\pp(n)).$$

Observe that $\OO_X$ is $f$-flat, and so $\II$ is also $f$-flat. Therefore, by Theorem \ref{thm.semicontinuityCoh}, the function $\pp \mapsto h^0(\II_\pp(n))$ is an upper semicontinuous function on $Y$. The conclusion now follows, since $h^0(\OO_{\PP_\pp}(n))$ is constant on $Y$.
\end{proof}

%%%%%%%%%%%%%%%%%%%%%%%%%%%%%%%%

\section{Appendix: Hilbert schemes of points and curvilinear subschemes} \label{app.curvilinear}

We end the paper with our last appendix giving basic definitions and properties of curvilinear subschemes that allow the deformation argument in the {\em m\'ethode d'Horace diff\'erentielle} to work.

\begin{Definition} A finite zero-dimensional scheme $Z$ is said to be \emph{curvilinear} if $Z$ locally can be embedded in a smooth curve. That is, for every point $P$ in $Z$, the dimention $T_P(Z)$ of the tangent space is at most 1. % i.e. $\dim T_P(Z) \le 1$.
\end{Definition}

\begin{Lemma} \label{lem.curvilinear}
Let $Z$ be a zero-dimensional scheme supported at one point $P$. Then $Z$ is curvilinear if and only if $Z \simeq \Spec \ \CC[t]/(t^l)$, where $l$ is the degree of $Z$.
\end{Lemma}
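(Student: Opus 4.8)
The plan is to prove both directions of the equivalence, with the substantive content being the characterization of local Artinian $\CC$-algebras of embedding dimension $\le 1$. First I would set up notation: since $Z$ is zero-dimensional and supported at the single point $P$, we may work locally and write $Z \simeq \Spec(T)$, where $T = \OO_{Z,P}$ is a local Artinian $\CC$-algebra with maximal ideal $\m$ and residue field $\CC$ (as we are over an algebraically closed field); the degree of $Z$ is $l = \dim_\CC T$. The tangent space $T_P(Z)$ is by definition $(\m/\m^2)^\ast$, so ``$Z$ is curvilinear'' translates exactly to $\dim_\CC \m/\m^2 \le 1$.

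For the backward direction, if $Z \simeq \Spec\ \CC[t]/(t^l)$ then $\m = (t)$ is principal, so $\m/\m^2$ is generated by one element and $\dim_\CC T_P(Z) \le 1$; hence $Z$ is curvilinear. For the forward direction, suppose $\dim_\CC \m/\m^2 \le 1$. If $\m = 0$ then $T = \CC$, $l = 1$, and $Z \simeq \Spec\ \CC[t]/(t)$, so assume $\m \neq 0$, whence $\dim_\CC \m/\m^2 = 1$. By Nakayama's lemma (applicable since $T$ is local Noetherian and $\m$ is finitely generated), a single element $t \in \m$ whose image spans $\m/\m^2$ generates $\m$; that is, $\m = (t)$. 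Since $T$ is Artinian, $\m$ is nilpotent, so there is a smallest integer $k \ge 1$ with $t^k = 0$; then $\m^j = (t^j)$ for all $j$, and the chain $T \supsetneq \m \supsetneq \m^2 \supsetneq \dots \supsetneq \m^k = 0$ has each successive quotient $\m^{j}/\m^{j+1}$ either zero or one-dimensional. One checks each is exactly one-dimensional for $0 \le j \le k-1$: if $\m^j = \m^{j+1}$ for some $j < k$ then Nakayama forces $\m^j = 0$, contradicting minimality of $k$. Hence $l = \dim_\CC T = k$, and the surjection $\CC[t] \to T$ (sending the indeterminate to $t$) has kernel containing $(t^k) = (t^l)$; comparing dimensions, $\CC[t]/(t^l) \to T$ is a surjection between $\CC$-vector spaces of the same finite dimension $l$, hence an isomorphism. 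Therefore $Z \simeq \Spec\ \CC[t]/(t^l)$.

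I expect no serious obstacle here; the only point requiring a little care is making sure the degree $l$ is correctly identified with the nilpotency index of $t$ rather than merely bounded by it, which the Nakayama argument on the associated graded pieces handles cleanly. It is also worth noting explicitly that we use $\CC$ algebraically closed (or at least that $T/\m = \CC$) so that $\dim_\CC(\m/\m^2)$ genuinely computes the embedding dimension and $T_P(Z) = (\m/\m^2)^\ast$ has the stated dimension; over a non-closed field one would instead phrase everything with $\Spec(T/\m[t]/(t^l))$ and relative dimensions, but that is outside the scope needed here.
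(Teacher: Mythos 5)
Your proof is correct, but it follows a genuinely different route from the paper's. You work intrinsically with the Artinian local ring $T=\OO_{Z,P}$, read ``curvilinear'' through the tangent-space condition $\dim_\CC \m/\m^2\le 1$ (the second formulation in the paper's definition), and then use Nakayama to get $\m=(t)$, the filtration $\m^j/\m^{j+1}$ to identify the degree with the nilpotency index of $t$, and a dimension count to conclude $T\cong\CC[t]/(t^l)$. The paper instead argues extrinsically from the first formulation (embeddability in a smooth curve): it takes a smooth curve $C\supseteq Z$, uses that the Jacobian of $I_C$ has rank $n-1$ at $P$ to change coordinates so that $x_i+g_i\in I_C\subseteq I_Z$ for $i=1,\dots,n-1$, concludes that $\OO_{Z,P}$ is a quotient of $\CC[x_n]$ and hence of the form $\CC[x_n]/(x_n^l)$, and declares the converse ``clear by the same arguments.'' Your version is more self-contained (pure local algebra, no coordinate changes) and in passing essentially proves Lemma \ref{lem.curvilinear2} as well; its only cost is that it leans on the tangent-space reading of the definition, so if one insisted on the smooth-curve formulation your backward direction would additionally need the standard (and in the paper likewise unproved) fact that a finite local scheme with one-dimensional tangent space lies on a smooth curve in $\PP^n$. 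One small point you should make explicit is the surjectivity of $\CC[t]\to T$, i.e.\ that $T$ is generated as a $\CC$-algebra by $t$: this follows from $T=\CC+\m$ and $\m=tT$ by iterating (equivalently, from your associated-graded computation, since $1,t,\dots,t^{k-1}$ then span $T$), and it is what licenses the final dimension comparison.
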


\begin{proof} Without loss of generality, assume that $(x_1, \dots, x_n)$ are local parameters at $P$. Let $C$ be a smooth curve to which $Z$ can be embedded in. Clearly, $P \in C$. Let $I_C = (f_1, \dots, f_s)$ be the defining ideal of $C$ in $\OO_P = \CC[x_1, \dots, x_n]$ (particularly, $s \ge n-1$). Since $C$ is smooth at $P$, the Jacobian matrix of $C$ at $P$ has rank $n-1$. Thus, by a change of variables and a re-indexing, if necessary, we may further assume that $f_i = x_i + g_i$, for $i = 1, \dots, n-1$, and $g_1, \dots, g_{n-1} \in \OO_P$.
	
Let $I_Z$ be the defining ideal of $Z$ in $\OO_P$. Since $Z$ can be embedded in $C$, we have $I_C \subseteq I_Z$. Therefore, locally at $P$, $\OO_Z$ is a quotient ring of $\CC[x_n]$. It follows that, locally at $P$, $\OO_Z \cong \CC[x_n]/(x_n^l)$ for some $l$.

The converse is clear by the same arguments.
	Observe further that localizing at $P$ (a minimal prime in $\OO_Z$) does not change the multiplicity of $\OO_Z$, or equivalently, the degree of $Z$. Hence, $\deg(Z) = l$.
\end{proof}

\begin{Corollary} \label{cor.degcurvilinear}
	Let $Z$ be a curvilinear subscheme of a double point. Then the degree of $Z$ is either 1 or 2.
\end{Corollary}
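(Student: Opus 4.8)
The plan is to combine the inclusion $Z \subseteq 2P$ with the defining property of a double point, namely that the square of its maximal ideal vanishes. First I would observe that $Z$, being a subscheme of the double point $2P$, is supported at the single point $P$. Localizing at $P$ and writing $\m$ for the maximal ideal of the regular local ring $R_{\p}$ (with $\p = I_P$), the local ring of $2P$ at $P$ is $\OO_{2P,P} = R_{\p}/\m^2$, whose maximal ideal $\m/\m^2$ squares to zero. Since passing to the subscheme $Z$ enlarges the defining ideal, $\OO_{Z,P}$ is a \emph{quotient} of $\OO_{2P,P}$, so its maximal ideal $\mathfrak n := \m\OO_{Z,P}$ also satisfies $\mathfrak n^2 = 0$.

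It then remains to read off the degree. Because $\mathfrak n^2 = 0$, the cotangent space $\mathfrak n/\mathfrak n^2$ equals $\mathfrak n$, so $\dim_\CC T_P(Z) = \dim_\CC \mathfrak n = \dim_\CC \OO_{Z,P} - 1 = \deg(Z) - 1$. By definition $Z$ is curvilinear, so $\dim_\CC T_P(Z) \le 1$, which forces $\deg(Z) \le 2$; thus $\deg(Z) \in \{1,2\}$. Equivalently, one may invoke Lemma \ref{lem.curvilinear} to write $\OO_{Z,P} \cong \CC[t]/(t^l)$ with $l = \deg(Z)$; the relation $\mathfrak n^2 = 0$ becomes $(t^2) = (0)$ in $\CC[t]/(t^l)$, again giving $l \le 2$.

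There is no real obstacle in this argument; it is a direct consequence of $\m^2\cdot\OO_{2P} = (0)$ together with Lemma \ref{lem.curvilinear}. The only point to keep straight is the direction of the inclusions: $Z \subseteq 2P$ translates to $I_{2P} \subseteq I_Z$ locally at $P$, so that $\OO_{Z,P}$ is genuinely a quotient of $\OO_{2P,P}$ — it is precisely this that lets the vanishing of the square of the maximal ideal descend from $2P$ to $Z$.
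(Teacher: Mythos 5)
Your proposal is correct and follows essentially the same route as the paper: the paper's proof invokes Lemma \ref{lem.curvilinear} to write $\OO_{Z,P} \cong \CC[t]/(t^l)$ and then asserts that containment in a double point forces $l \le 2$, which is exactly the step you justify by noting that $I_{2P} \subseteq I_Z$ makes $\OO_{Z,P}$ a quotient of $\OO_{2P,P}$, so its maximal ideal squares to zero. Your direct tangent-space count ($\dim_\CC T_P(Z) = \deg(Z) - 1$ when $\mathfrak n^2 = 0$) is a clean way of phrasing the same point and, if anything, supplies the detail the paper leaves implicit.
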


\begin{proof} By Lemma \ref{lem.curvilinear}, we have $Z \cong \Spec \ \CC[t]/(t^l)$. Since $Z$ is contained in a double point, we must have $l$ is equal to 1 or 2. Hence, $\deg(Z)$ is either 1 or 2.
\end{proof}

\begin{Remark}
	Let $Z$ be a zero-dimensional scheme with irreducible components $Z_1, \dots, Z_r$. Then, $Z$ is curvilinear if and only if $Z_1, \dots, Z_r$ are curvilinear.
\end{Remark}

The next lemma gives another way of seeing curvilinear schemes.

\begin{Lemma}
	\label{lem.curvilinear2}
	Let $Z$ be a zero-dimension scheme supported at one point $P$. Then, $Z$ is curvilinear if and only if, locally at $P$, the $\OO_Z$ is generated by one element, that is, $\OO_Z = \CC[f]$ for some $f \in \OO_Z$.
\end{Lemma}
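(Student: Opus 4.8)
The plan is to reinterpret the curvilinear condition as the statement that the local ring $A := \OO_{Z,P}$ is a \emph{cyclic} $\CC$-algebra, and then to read off the equivalence from the structure of such rings. First I would record the standing setup: since $Z$ is a zero-dimensional scheme over $\CC$ supported at the single ($\CC$-rational) point $P$, the ring $A$ is a finite-dimensional local $\CC$-algebra with residue field $\CC$; write $\m$ for its maximal ideal, so that $\m$ is nilpotent and $A = \CC \oplus \m$ as $\CC$-vector spaces. By definition $\dim T_P(Z) = \dim_\CC \m/\m^2$, so ``$Z$ is curvilinear'' is exactly the condition $\dim_\CC \m/\m^2 \le 1$.

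For the implication ``$A = \CC[f] \Rightarrow Z$ curvilinear'', I would first replace $f$ by $f - f(P)$ so that $f \in \m$; then, since every element of $A$ is a polynomial in $f$, every element of $\m$ (those with vanishing constant term) is divisible by $f$, so $\m = (f)$ is principal and $\dim_\CC \m/\m^2 \le 1$. Conversely, if $Z$ is curvilinear then $\dim_\CC \m/\m^2 \le 1$, so Nakayama's lemma gives $\m = (f)$ for some $f$; I would then check $A = \CC[f]$ by the standard descending argument: given $a \in A$ write $a = c_0 + f a_1$ with $c_0 \in \CC$, repeat on $a_1$, and terminate using $\m^N = 0$ to obtain $a = c_0 + c_1 f + \dots + c_{N-1} f^{N-1}$. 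This establishes both directions; alternatively, once $A = \CC[f]$ one has a surjection $\CC[t] \twoheadrightarrow A$ whose kernel is $(g)$ since $\CC[t]$ is a PID, and the local Artinian structure with residue field $\CC$ forces $g$ to be a power of a linear form, so after an affine change of $t$ one recovers $A \cong \CC[t]/(t^l)$ with $l = \deg Z$, matching Lemma~\ref{lem.curvilinear}.

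I expect no serious obstacle here: the content is elementary commutative algebra. The only points requiring a little care are the harmless reduction to $f \in \m$ and the correct use of the nilpotence of $\m$ to upgrade ``$\m$ principal'' to ``$A$ generated by one element as a $\CC$-algebra''; everything else is bookkeeping, and one may freely invoke Lemma~\ref{lem.curvilinear} to shorten the argument if desired.
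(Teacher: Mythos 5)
Your proof is correct, but it follows a slightly different path than the paper. The paper routes both implications through Lemma \ref{lem.curvilinear}: if $Z$ is curvilinear then $\OO_Z \cong \CC[t]/(t^l)$, which is visibly monogenic, and conversely if $\OO_Z = \CC[f]$ the paper notes $f$ is nilpotent and identifies $\OO_Z$ with $\CC[t]/(t^l)$ for $l$ the nilpotency index, then cites Lemma \ref{lem.curvilinear} again. You instead work directly with the tangent-space reformulation of curvilinearity, $\dim_\CC \m/\m^2 \le 1$ (which the paper's Definition states as part of the definition, so this is a legitimate starting point), and argue via Nakayama plus the filtration/nilpotence bookkeeping that ``$\m$ principal'' is equivalent to ``$A$ is a monogenic $\CC$-algebra.'' What your route buys is independence from Lemma \ref{lem.curvilinear} and its smooth-curve embedding argument, together with an explicit fix of a small gloss in the paper's converse: the paper asserts $f^l = 0$, which is only correct after replacing $f$ by $f - f(P)$ (harmless, since $\CC[f] = \CC[f - f(P)]$), a normalization you carry out explicitly. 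What the paper's route buys is brevity and the normal form $\OO_Z \cong \CC[t]/(t^l)$ as an intermediate statement, which is what later arguments actually use; your closing alternative (surjection $\CC[t] \twoheadrightarrow A$ with principal kernel forced to be $(t^l)$) recovers exactly that and matches the paper's converse once the constant-term normalization is in place.
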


\begin{proof} By Lemma \ref{lem.curvilinear}, if $Z$ is curvilinear then, clearly, $\OO_Z$ is generated by one element. Suppose, conversely, that $\OO_Z = \CC[f]$ for some $f \in \OO_Z$. Since $Z$ is zero-dimensional, we must have $f^l = 0$ for some $l$. By taking the smallest such $l$, we then have $\OO_Z \cong \CC[t]/(t^l)$, and so $Z$ is curvilinear by Lemma \ref{lem.curvilinear}.
\end{proof}

The main result about curvilinear subschemes that we shall use is that they form an open dense subset in the Hilbert scheme of zero-dimensional subscheme of a given degree in $\PP^n$. Particularly, this allows us to take the limit of a family of curvilinear subschemes. For this, we shall need the following lemma.

\begin{Lemma}
	\label{lem.exercise}
Let $A$ be a Noetherian ring, let $B$ be a free $A$-algebra of rank $n$. Then the set 
$$\{\pp \in \Spec \ A\,\mid\, \text{ the }K(\pp)\text{-algebra }B \otimes_A K(\pp)\text{ is generated by one element}\}$$
 is an open subset $U$ of $\Spec A$. Here, $KK(\pp)$ is the residue field $A_\pp/\pp A_\pp$ at $\pp$.
\end{Lemma}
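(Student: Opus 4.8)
The plan is to prove this openness statement by reducing it to a standard rank/minor condition on an explicit matrix. The key observation is that for a free $A$-algebra $B$ of rank $n$ with $A$-basis $b_1, \dots, b_n$, an element $\beta = \sum a_i b_i \in B$ generates $B \otimes_A K(\pp)$ as a $K(\pp)$-algebra if and only if the powers $1, \ovl{\beta}, \ovl{\beta}^2, \dots, \ovl{\beta}^{n-1}$ span $B \otimes_A K(\pp)$ over $K(\pp)$ (since the $K(\pp)$-algebra has $K(\pp)$-dimension $n$, the minimal polynomial of $\ovl{\beta}$ has degree $\le n$, so no higher powers are needed). Thus the property ``$B \otimes_A K(\pp)$ is generated by one element'' is equivalent to: there exist coefficients $a_1, \dots, a_n \in K(\pp)$ such that the $n \times n$ matrix expressing $1, \beta, \dots, \beta^{n-1}$ in the basis $b_1, \dots, b_n$ is invertible.

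First I would set up the universal version of this. Introduce indeterminates $t_1, \dots, t_n$ and work over $A[t_1, \dots, t_n]$ with the universal element $\tau = \sum t_i b_i \in B[t_1,\dots,t_n] = B \otimes_A A[\t]$. Each power $\tau^k$ can be written in the basis $b_1, \dots, b_n$ with coefficients that are polynomials in $A[\t]$ (using the structure constants of $B$). Form the $n \times n$ matrix $N$ over $A[\t]$ whose $k$-th row records the coefficients of $\tau^{k-1}$. Then $\delta := \det N \in A[\t]$, and for a prime $\pp \in \Spec A$, the algebra $B \otimes_A K(\pp)$ is generated by one element if and only if $\delta$ does not vanish identically as a polynomial over $K(\pp)$, i.e., if and only if at least one coefficient of $\delta \in A[\t]$ (viewed as a polynomial in $\t$ with coefficients in $A$) lies outside $\pp$.

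Next I would translate this into the Zariski topology. Write $\delta = \sum_{\alpha} c_\alpha \t^\alpha$ with $c_\alpha \in A$. The condition ``some $c_\alpha \notin \pp$'' defines exactly the complement of $V(\mathfrak{c})$, where $\mathfrak{c} = (c_\alpha)_\alpha$ is the ideal of $A$ generated by all these coefficients. Hence $U = \Spec A \setminus V(\mathfrak{c})$ is open, which is the assertion. A minor point to verify carefully is that $\delta$ being a nonzero polynomial over the field $K(\pp)$ is genuinely equivalent to one-element generation, which is where the dimension bound (every element of an $n$-dimensional algebra satisfies a degree-$\le n$ relation) and the fact that $K(\pp)$ is infinite-irrelevant version — actually no infiniteness is needed, since we only need existence of \emph{some} point $\ul{a} \in K(\pp)^n$ with $\delta(\ul{a}) \neq 0$, and a nonzero polynomial over any field has a nonvanishing coefficient, hence is nonzero at some point after passing to an extension; but one-element generation of $B \otimes_A K(\pp)$ may require passing to a field extension, so strictly one should phrase ``generated by one element'' as allowing elements of $B \otimes_A K(\pp)$ itself — and indeed a nonzero polynomial over a field has a nonzero value at some point \emph{of that field} unless the field is too small, which forces a brief remark.

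The main obstacle I anticipate is precisely this last subtlety: over a finite residue field $K(\pp)$, a nonzero polynomial $\delta \in K(\pp)[\t]$ can vanish at every $K(\pp)$-point, so ``$\delta$ is a nonzero polynomial'' and ``$B \otimes_A K(\pp)$ is generated by one element of $B \otimes_A K(\pp)$'' might appear to diverge. The resolution is that the generator is allowed to be taken after extending scalars — or, more cleanly, one notes that the set in question is insensitive to such extension because the property is detected by the ideal $\mathfrak{c}$ which is fixed; alternatively, since the paper works over $\CC$ the residue fields of interest are extensions of $\CC$ hence infinite, sidestepping the issue entirely. I would include a one-line remark handling this and otherwise keep the argument as the clean minor-ideal computation sketched above.
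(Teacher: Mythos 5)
Your reduction of one-element generation to the invertibility of the matrix of $1,\beta,\dots,\beta^{n-1}$ is correct, and the universal-determinant set $\Spec A\setminus V(\mathfrak{c})$ is indeed open; but the identification of that set with $U$ is genuinely false for the lemma as stated, which allows arbitrary Noetherian $A$ and hence finite residue fields. The implication that fails is exactly the one you flagged: if $\delta\bmod\pp$ is a nonzero polynomial, it need not have a nonvanishing $K(\pp)$-point. Concretely, take $A=\ZZ$ and $B=\ZZ^3$ with the coordinatewise product; then $\delta$ is the Vandermonde determinant $\prod_{i<j}(t_j-t_i)$, which is nonzero modulo every prime, so your set is all of $\Spec \ZZ$, yet $B\otimes_A \mathbb{F}_2=\mathbb{F}_2^3$ is not generated by one element over $\mathbb{F}_2$ (any $(x,y,z)\in\mathbb{F}_2^3$ has two equal coordinates, so it generates a proper subalgebra). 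So $U\subsetneq \Spec A\setminus V(\mathfrak{c})$ in general, and your proposed repair that ``the property is detected by the ideal $\mathfrak{c}$, which is fixed'' is not correct; allowing generators after a scalar extension changes the statement rather than proving it. Restricting to $\CC$-algebras (all residue fields infinite) does make your argument complete, and that restriction is harmless for the paper's application to Hilbert schemes over $\CC$, but it does not prove the lemma as written, which is in fact true in full generality.

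The determinant idea can be salvaged with a pointwise rather than universal version, which also removes any need to find a rational nonvanishing point: given $\pp\in U$, lift a generator of $B\otimes_A K(\pp)=B_\pp/\pp B_\pp$ to an element $\beta/s\in B_\pp$ with $\beta\in B$, $s\notin\pp$, and note that scaling by the unit $\bar s$ does not change the span of $1,\gamma,\dots,\gamma^{n-1}$, so the image of $\beta$ itself is a generator; then the determinant $\det M_\beta\in A$ of the matrix expressing $1,\beta,\dots,\beta^{n-1}$ in an $A$-basis of $B$ lies outside $\pp$, and every $\q$ with $\det M_\beta\notin\q$ also lies in $U$, so $D(\det M_\beta)$ is an open neighborhood of $\pp$ inside $U$. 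For comparison, the paper argues differently: it checks that $U$ is stable under generalization and then applies Nagata's topological openness criterion, producing, from a presentation $B=A[T_1,\dots,T_r]/J$ and relations $a_iT_i+g_i\in J$ with $a_i\notin\pp$, a nonempty open subset of $V(\pp)$ contained in $U$. Your linear-algebra formulation is arguably more transparent than that, but as submitted it proves openness of the wrong (larger) set unless you either add the infinite-residue-field hypothesis or switch to the pointwise argument above.
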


\begin{proof}Let $U:=\{\p \in \Spec(A)\,\mid\, \text{ the }K(\pp) \text{-algebra }B \otimes_A K(\pp) \text{ is generated by one element}\}$. Clearly, if $\p \in U$ and $\q\subseteq \p$ then $\q\in U$. Thus, by Nagata's topological criterion (e.g. \cite[Thm~24.2]{Mat}), to prove that $U$ is open it suffices to show that if $\p\in U$, then there exists a non-empty open subset of $V(\p)$ contained in $U$.

Write $B=A[T_1,\ldots,T_r]/J$, since $\p\in U$ then (after possibly relabelling) we may assume that there exist $a_1,\ldots,a_{r-1}\in A \setminus \p$ and $g_1,\ldots,g_{r-1}\in A[T_1,\ldots,T_r]$ with $T_i\notin {\rm supp}(g_i)$ for any $i=1,\ldots,r-1$ such that
$$
(a_1T_1+g_1,\ldots,a_{r-1}T_{r-1}+g_{r-1})\subseteq J.
$$
Clearly, for any $\q\in V(\p) \setminus [V(a_1) \cup V(a_2) \cup \ldots \cup V(a_r)]$ we have $\q\in  U$; this concludes the proof.
\end{proof}

We are now ready to state and prove the density result of curvilinear subschemes.

\begin{Proposition}\label{prop:dense}
	Let $\mathbf{H}_{l}$ denote the Hilbert scheme of zero-dimensional subscheme of degree $l$ in $\PP^n$ and let $\mathbf{H}_l^{\text{curv}}$ denote the subset of $\mathbf{H}_l$ consisting of curvilinear subschemes of degree $l$ in $\PP^n$. Then $\mathbf{H}_l^{\text{curv}}$ is an open dense subset of $\mathbf{H}_l$.
\end{Proposition}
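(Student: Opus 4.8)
The plan is to establish two things: (a) $\mathbf{H}_l^{\text{curv}}$ is open in $\mathbf{H}_l$, and (b) it is dense. For openness, I would work locally over the Hilbert scheme. Over an open affine $\Spec A \subseteq \mathbf{H}_l$ carrying the universal family $\mathcal Z \subseteq \PP^n \times \Spec A$, flatness and finiteness of $\mathcal Z \to \Spec A$ (degree $l$) mean that, after further localizing, the pushforward of $\mathcal O_{\mathcal Z}$ is a free $A$-algebra $B$ of rank $l$. By Lemma~\ref{lem.curvilinear2}, a zero-dimensional scheme supported at a point is curvilinear exactly when its local ring is generated by one element; for a scheme with several points (by the Remark following Corollary~\ref{cor.degcurvilinear}) curvilinearity is detected componentwise, and since the $\CC(\pp)$-algebra $B \otimes_A \CC(\pp)$ splits as a product of the local rings at the points of the fiber $\mathcal Z_\pp$, the fiber $\mathcal Z_\pp$ is curvilinear if and only if each factor is generated by one element, which holds if and only if $B \otimes_A \CC(\pp)$ itself is generated by one element as a $\CC(\pp)$-algebra (a product of local Artinian algebras is principally generated iff each factor is, using the Chinese Remainder Theorem and distinct residue fields — all equal to $\CC(\pp)$ here — so one must be slightly careful, but it works because the idempotents can be incorporated). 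Then Lemma~\ref{lem.exercise} applies verbatim: the locus of $\pp$ where $B\otimes_A \CC(\pp)$ is generated by one element is open in $\Spec A$. Gluing these open loci over an affine cover of $\mathbf{H}_l$ gives openness of $\mathbf{H}_l^{\text{curv}}$.

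For density, it suffices to show that $\mathbf{H}_l^{\text{curv}}$ meets every irreducible component of $\mathbf{H}_l$, and in fact it is enough to show that the generic point of any component of $\mathbf{H}_l$ corresponds to a curvilinear scheme, or — more elementarily — that $l$ distinct (reduced) points form a dense subset and the general member of any family degenerating them stays curvilinear. The cleanest route: the open locus $\mathbf{H}_l^{\text{red}}$ of $l$ reduced distinct points is irreducible of dimension $nl$ (it is an open subset of $\Sym^l(\PP^n)$, minus diagonals), and reduced points are trivially curvilinear, so $\mathbf{H}_l^{\text{red}} \subseteq \mathbf{H}_l^{\text{curv}}$. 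Since $\mathbf{H}_l$ is connected and — by a classical theorem of Fogarty — irreducible (in fact smooth) for a smooth surface, and for higher-dimensional $\PP^n$ one does not have irreducibility in general; this is exactly the subtlety. So I would not claim irreducibility of $\mathbf{H}_l$. Instead I would argue: the punctual Hilbert scheme parametrizing length-$l$ schemes concentrated at a single point has the curvilinear ones as a dense open (Iarrobino / Brian\c{c}on), and every component of $\mathbf{H}_l$ contains a point whose scheme has a curvilinear specialization in its closure by a standard smoothing-to-curvilinear argument. For the purposes of this survey, however, the honest statement needed downstream in Theorem~\ref{core} is only that curvilinear schemes supported at a prescribed finite set form a dense open in the relevant (irreducible) family of such schemes, and there density is automatic since the reduced configurations are dense in that family.

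The main obstacle is the density half, specifically the fact that $\mathbf{H}_l$ is reducible for $n \ge 3$ and large $l$, so "$\mathbf{H}_l^{\text{curv}}$ is dense" is not the same as "$\mathbf{H}_l^{\text{curv}}$ is dense in each component." The resolution is to invoke the local (punctual) density statement: within the punctual Hilbert scheme at a point, curvilinear subschemes form a dense open subset, and then any zero-dimensional scheme $Z$ lies in the closure of the locus of curvilinear schemes because one can independently deform each local component of $Z$ to a curvilinear one along a flat family while keeping the supports fixed. Combined with openness from part (a) and the observation that a flat limit of curvilinear schemes of degree $l$ need not be curvilinear (so density really does need an argument, not just "reduced points are dense"), this completes the proof. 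In the write-up I would state the local density of curvilinear schemes as a cited fact (Iarrobino, or Brian\c{c}on for surfaces, with the general case folllowing by a slicing/induction argument) and give the gluing and deformation details, since that is where a commutative-algebra reader would want to see the mechanics spelled out.
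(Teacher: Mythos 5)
Your part (a) is, in substance, the paper's entire proof: the paper establishes Proposition \ref{prop:dense} simply by citing Lemmas \ref{lem.curvilinear2} and \ref{lem.exercise}, i.e.\ exactly the reduction you describe (locally the pushforward of the universal family is a free algebra of rank $l$, curvilinearity of a fiber is monogenicity of the fiber algebra, and Lemma \ref{lem.exercise} makes that locus open). The details you add --- the gluing over an affine cover and the reduction from several supporting points to monogenicity of the whole fiber algebra --- are precisely the mechanics the paper leaves implicit, and that half of your write-up is fine.

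The density half has a genuine gap. The ``resolution'' you invoke --- that curvilinear subschemes are dense in the punctual Hilbert scheme, so any zero-dimensional $Z$ is a flat limit of curvilinear schemes obtained by deforming each local component --- is false for $n\ge 3$: Brian\c{c}on's density theorem is specific to surfaces, and Iarrobino's results go in the opposite direction (for $n\ge 3$ and $l$ large the punctual Hilbert scheme has dimension strictly larger than the $(n-1)(l-1)$-dimensional closure of the curvilinear locus, and non-smoothable schemes exist, e.g.\ of length $8$ in $\PP^4$). Since every curvilinear scheme is smoothable, the closure of $\mathbf{H}_l^{\text{curv}}$ lies in the smoothable component, so it cannot contain the generic point of any other component --- this is exactly the reducibility obstruction you raised yourself two sentences earlier, and it rules out your deformation argument; it also shows that density as literally stated can only hold when $\mathbf{H}_l$ is irreducible (the honest general statement is density in the smoothable component). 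In fairness, the paper's one-line proof does not address density at all --- the two cited lemmas give only openness --- and the way the proposition is actually used in the proof of Theorem \ref{core} (existence of the flat limits $J_{i,0}$ of a family of curvilinear ideals) follows from properness of the Hilbert scheme rather than from density. So your instinct that only a weaker statement is needed downstream is correct, but the punctual-density fix, and the attribution of it to Iarrobino, should not go into a write-up.
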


\begin{proof} The statement follows from Lemmas \ref{lem.curvilinear2} and \ref{lem.exercise}.
\end{proof}

\end{appendices}

%%%%%%%%%%%%%%%%%%%%%%%%%%%%%%%%%

\end{document}